\def\norm#1#2{\|#1\|_{#2}}
\def\inte#1{
\displaystyle\mathop{#1\kern0pt}^\circ }
\let\e=\varepsilon
\let\z=\zeta
\let\f=\frac
\let\p=\psi
\let\Lam=\Lambda
\let\wt=\widetilde
\def\cA{{\mathcal A}}
\def\cJ{{\mathcal J}}
\def\cK{{\mathcal K}}
\def\cN{{\mathcal N}}
\def\cQ{{\mathcal Q}}
\def\cS{{\mathcal S}}
\def\cT{{\mathcal T}}
\def\dH{\dot{H}}
\def\dB{\dot{B}}
\def\virgp{\raise 2pt\hbox{,}}
\def\cdotpv{\raise 2pt\hbox{;}}
\def\eqdefa{\buildrel\hbox{\footnotesize def}\over =}
\def\C{\mathop{\mathbb C\kern 0pt}\nolimits}
\def\DD{\mathop{\mathbb D\kern 0pt}\nolimits}
\def\EE{\mathop{{\mathbb E \kern 0pt}}\nolimits}
\def\K{\mathop{\mathbb K\kern 0pt}\nolimits}
\def\N{\mathop{\mathbb N\kern 0pt}\nolimits}
\def\Q{\mathop{\mathbb Q\kern 0pt}\nolimits}
\def\R{\mathop{\mathbb R\kern 0pt}\nolimits}
\def\SS{\mathop{\mathbb S\kern 0pt}\nolimits}
\def\ZZ{\mathop{\mathbb Z\kern 0pt}\nolimits}
\def\TT{\mathop{\mathbb T\kern 0pt}\nolimits}
\def\P{\mathop{\mathbb P\kern 0pt}\nolimits}
\def\na{\nabla}
\def\p{\partial}
\newcommand{\beq}{\begin{equation}}
\newcommand{\eeq}{\end{equation}}
\newcommand{\ben}{\begin{eqnarray}}
\newcommand{\een}{\end{eqnarray}}
\newcommand{\beno}{\begin{eqnarray*}}
\newcommand{\eeno}{\end{eqnarray*}}
\newcommand{\vv}[1]{\boldsymbol{#1}}
\def\div{\text{div}\,}
\newtheorem*{Main Theorem}{Main Theorem}
\newtheorem{theorem}{Theorem}[section]
\newtheorem{lemma}[theorem]{Lemma}
\newtheorem{proposition}[theorem]{Proposition}
\newtheorem{corollary}[theorem]{Corollary}
\newtheorem{definition}[theorem]{Definition}
\newtheorem{remark}[theorem]{Remark}
\numberwithin{equation}{section}
\begin{document}
\title[Well-posedness]{Long time existence for a class of   weakly transverse Boussinesq systems}

\author[Qi Li]{Qi Li}
\address{School of Mathematical Sciences, Beihang University\\  100191 Beijing, China}
\email{Ricci@buaa.edu.cn}

\author{Jean-Claude Saut}
\address{Laboratoire de Math\' ematiques, UMR 8628\\
Universit\' e Paris-Saclay et CNRS\\ 91405 Orsay, France}
\email{jean-claude.saut@universite-paris-saclay.fr}

\author[Li XU]{Li Xu}
\address{School of Mathematical Sciences, Beihang University\\  100191 Beijing, China}
\email{xuliice@buaa.edu.cn}

\maketitle

\vspace{1cm}
\textit{Abstract}. We prove the existence on long time scales of the solutions to the Cauchy problem for a  version of weakly transverse Boussinesq systems arising in the modeling of surface water waves.  This system is much more complicated than the isotropic Boussinesq systems because  dispersion is only present in the x-direction,  leading to anisotropic eigenvalues in the linearized system. This anisotropic character leads to loss of  y-derivatives for the solutions. To overcome this main difficulty our strategy is to symmetrize the system by introducing suitable good unknowns in the sense of \cite{Alin}.

\vspace{1cm}
Keywords : Weakly transverse Boussinesq systems. Long time existence. Good unknowns.

\setcounter{tocdepth}{1}
 \tableofcontents

\setcounter{equation}{0}
\section{Introduction}
\subsection{The general setting}

This paper is concerned with systems arising in the modeling of weakly nonlinear surface waves. More precisely we consider the {\it weakly transverse} (or KP) regime which  can be specified in terms of the relevant characteristics of the wave, namely, its typical amplitude $a$, the mean depth $h$, the typical wavelength $\lambda$  along the longitudinal direction (say, the $x$ axis) and $\mu$ , the wavelength along the transverse direction (say, the $y$ axis):




$$\frac{a}{h}=\varepsilon,\quad \frac{\lambda^2}{h^2}=\frac{S_1}{\varepsilon},\quad \frac{\mu^2}{h^2}=\frac{S_2}{\varepsilon^2},$$
where $\varepsilon\ll 1$ is a small dimensionless parameter, while $S_1\sim 1$ and $S_2\sim 1$ (so that the Stokes number is $S_1$ along the longitudinal direction and $S_2/{\epsilon}$ along the transverse one). For simplicity, we set $S_1 = S_2 = 1$ throughout this paper.

In this regime one usually derives the Kadomtsev-Petviashvili (KP) equation:

\begin{equation}\label{KP}
u_t+u_x+\varepsilon uu_x+\varepsilon u_{xxx}+\varepsilon(T-\frac{1}{3})\partial_x^{-1}u_{yy}=0,
\end{equation}
where $T\geq 0$ is the surface tension parameter.

However, the non-physical zero mass condition inherent to the structure of the KP equation implies the poor precision of the KP approximation. In fact, see \cite{La1, LS} this precision is $o(1)$ while for the isotropic Boussinesq systems the optimal rate is $O(\varepsilon^2t),$ see \cite{BCL}. 

In order to overcome this shortcoming, a class of {\it weakly transverse Boussinesq systems} was introduced in \cite{LS}. These new systems provide a much more precise approximation than the KP equation and do not require any zero mass assumption. Actually the error is the same that in the case of isotropic Boussinesq systems, namely $O(\varepsilon^2 t).$

Nevertheless,  to fully justify those systems one needs to prove the existence of solutions to the Cauchy problem on a {\it large} time scale, that is $O(1/\varepsilon).$ This is by no way an easy task, even for the "classical" abcd Boussinesq systems derived in \cite{BCS1, BCS2} (see \cite{Bu, Bu2, MSZ, SX, SX2, SX3, SWX, KS}).

Actually even the local well-posedness of the Cauchy problem for the weakly transverse Boussinesq systems is not trivial and we will have to introduce an equivalent system, also consistent with the water wave system and for which we will prove local and long time existence.

\begin{remark}
The systems we will study are different from that in  the class of {\it fully symmetric} weakly transverse Boussinesq systems introduced in \cite{LS} for which the existence in long time is relatively standard.
\end{remark}

\vspace{0.3cm}

We thus consider in this paper  the class of weakly transverse Boussinesq systems derived in  \cite{LS})
\beq\label{WTB 1}\left\{\begin{aligned}
&(1-b\e \p_x^2)v_t+(1+a\e \p_x^2)\z_x+\e(vv_x+\f12ww_x)+\f{\e^{\f32}}{2}wv_y=0,\quad t>0,\, (x,y)\in\R^2,\\
&(1-e\e \p_x^2)w_t+\e^{\f12}(1+f\e \p_x^2)\z_y+\f\e2vw_x+\e^{\f32}(ww_y+\f12vv_y)=0,\\
&(1-d\e \p_x^2)\z_t+(1+c\e \p_x^2)v_x+\e^{\f12}(1+g\e\p_x^2)w_y+\e(\z v)_x+\e^{\f32}(\z w)_y=0,
\end{aligned}\right.\eeq
where $\z(t,x,y)$ is the elevation of the surface wave and $(v(t,x,y),w(t,x,y))$ is an $O(\e^2)$ approximation of the horizontal velocity.
Here $a, b, c, d, e, f, g$ are modeling parameters which satisfy the
constraint \footnote {We neglect surface tension effects}
\beq\label{constraint}
a+b+c+d=\f13,\quad d+e+f+g=\f23.
\eeq
We remark that \eqref{constraint} follows from the explicit expressions of the parameters chosen in \cite{LS}. In this article, we use the constraint \eqref{constraint} instead of the explicit expressions stated in \cite{LS}.

According to the weakly transverse regime, in \eqref{WTB 1},  the small parameter $\varepsilon$ is defined by
$$\varepsilon=\f{a_0}{h}\sim\f{h^2}{\lambda^2}\sim\f{h}{\mu},$$
where $h$ denotes the mean depth of the fluid, $a_0$ denotes a typical amplitude of the wave and $\lambda,\mu$ denote a typical wavelength along $x$-axis and $y$-axis respectively. Notice that the scales of the variables $x$ and $y$ are $1$ and $\e^{\f12}$ in \eqref{WTB 1}.

The dispersion relation for the linearization of \eqref{WTB 1} at $(0, 0, 0)$ is
\beq\label{DR}
\Lam^2(\xi_1,\xi_2)=\xi_1^2\cdot\f{(1-a\e \xi_1^2)(1-c\e \xi_1^2)}{(1+b\e \xi_1^2)(1+d\e \xi_1^2)}+\e\xi_2^2\cdot\f{(1-f\e \xi_1^2)(1-g\e \xi_1^2)}{(1+e\e \xi_1^2)(1+d\e \xi_1^2)}.
\eeq

Therefore, the linearization of \eqref{WTB 1} around the null solution is well-posed provided that 
\beq\label{cases}\begin{aligned}
&(i)\, b\geq0,\,d\geq 0,\,e\geq0,\,a\leq0,\,c\leq0,\,f\leq0,\,g\leq0;\\
\text{or}\quad&(ii)\, b\geq0,\,d\geq 0,\,e\geq0,\,a\leq0,\,c\leq0,\,f=g;\\
\text{or}\quad&(iii)\, b\geq0,\,d\geq 0,\,e\geq0,\,a=c,\,f\leq0,\,g\leq0;\\
\text{or}\quad&(iv)\, b\geq0,\,d\geq 0,\,e\geq0,\,a=c,\,f=g.\\ 
\end{aligned}\eeq

In view of \eqref{constraint} and \eqref{cases},  achieving local well-posedness for the Cauchy problem of \eqref{WTB 1} presents challenges due to the loss of $y$-derivative. To our knowledge, there is no local well-posedness result for the Cauchy problem of \eqref{WTB 1}. The goal of this paper is to derive an equivalent system, which is also consistent with the water wave system, and for which we can establish the long time well-posedness of the Cauchy problem. Inspired by the methodology introduced in \cite{BCL}, by introducing a nonlinear transformation for $\zeta$, we formulate a new system that has the same accuracy as the original one and turns out to be symmetrizable in some special cases under the curl-free condition of the velocity field. Subsequently, for this new symmetrizable system, we ascertain the existence of solutions to the Cauchy problem over extended time intervals of order  $1/\varepsilon$ that is the {\it hyperbolic} time scale..

In this paper, we only consider the  cases 
\beq\label{case a}
b=e\geq0,\quad a=f=g\leq 0,\quad c\leq0,\quad d\geq0,
\eeq
with $(a,b,c,d,e,f,g)$ being satisfying the constraint \eqref{constraint}. For such cases, the curl-free condition
\beq\label{curl free}
  \varepsilon^{\frac{1}{2}} v_y= w_x.
\eeq
is preserved for all time $t>0$ in the lifespan, if it holds for $t=0$. The curl-free assumption \eqref{curl free} is crucial to derive the symmetrizable new system.

\subsection{Derivation of the new system} 
Assume that $(a,b,c,d,e,f,g)$ satisfies \eqref{constraint}. We introduce the nonlinear change of variable
\begin{equation}\label{nonlinear transform}
	\tilde\zeta = \zeta - \frac{\e}{4} |\zeta|^2.
\end{equation}

 Thanks to the equation for $\z$ in \eqref{WTB 1}, we have
\beq\label{A 1}\begin{aligned}
&(1-d\e\p_x^2)\tilde\z_t=(1-d\e\p_x^2)\z_t-\frac{\e}{2} \z\z_t+O(\e^2),\\
=&-(1+c\e\p_x^2)v_x-\e^{\f12}(1+g\e\p_x^2)w_y-\e\z v_x-\e^{\f32}\z w_y-\e v\z_x
-\e^{\f32}w\z_y-\frac{\e}{2} \z\z_t+O(\e^2).
\end{aligned}\eeq
Here and in what follows, it should be noted that the precision $O(\e^2)$ includes both linear and nonlinear terms that encompass $(v, w, \zeta)$ and their derivatives with respect to $\partial_t$, $\partial_x$, and $\e^{\frac{1}{2}}\partial_y$. A similar interpretation applies to the precision $O(\e)$ as well.

Due to \eqref{WTB 1} and \eqref{nonlinear transform}, we get 
\beno
\p_t\z=-v_x-\e^{\f12}w_y+O(\e)\quad\text{and}\quad\tilde\z=\z+O(\e),
\eeno
which along with\eqref{A 1} implies 
\beq\label{A 2}
(1-d\e \p_x^2 )\tilde\z_t + (1+c\e \p_x^2 ) v_x + \e^{\frac{1}{2}}(1+g \e\p_x^2 )w_y + \e v\tilde\z_x  + \e^{\frac{3}{2}} w\tilde\z_y+  \frac{\e}{2}\tilde\z v_x + \frac{\e^{\frac{3}{2}}}{2} \tilde\z w_y = O(\e^2).
\eeq

Following a similar derivation process as for \eqref{A 2}, we can derive the equations for $v$ and $w$. Subsequently, we arrive at the following system for $(v,w,\wt\z)$ :
\begin{equation}\label{approx of WTB1}
	\left\{\begin{aligned}
	&(1-b\e \p_x^2) v_t + (1+a \e \p_x^2) \tilde\z_x + \e (vv_x+ \frac{1}{2} ww_x) + \frac{\e^{\frac{3}{2}}}{2}  wv_y + \frac{\e}{2} \tilde\z\tilde\z_x = O(\e^2),  \\
	&(1-e\e \p_x^2) w_t + \e^{\frac{1}{2}}(1+f \e \p_x^2) \tilde\z_y +  \frac{\e }{2}vw_x + \e^{\frac{3}{2}} (ww_y+\frac{1}{2}vv_y+ \frac{1}{2}\tilde\z\tilde\z_y) = O(\e^2),\\
	&(1-d\e \p_x^2 )\tilde\z_t + (1+c\e \p_x^2 ) v_x + \e^{\frac{1}{2}}(1+g \e\p_x^2 )w_y + \e v\tilde\z_x  + \e^{\frac{3}{2}} w\tilde\z_y +  \frac{\e}{2}\tilde\z v_x+ \frac{\e^{\frac{3}{2}}}{2} \tilde\z w_y = O(\e^2).
	\end{aligned}\right.	
\end{equation}

Thus,  by neglecting the $O(\e^2)$ terms in \eqref{approx of WTB1}, we derive the following system (omitting the tildes $\tilde{}$ )
\beq\label{WTB case a}\left\{\begin{aligned}
&(1-b\e \p_x^2)v_t+(1+a\e\p_x^2)\z_x+\e (vv_x+ \frac{1}{2} ww_x) + \frac{\e^{\frac{3}{2}}}{2}  wv_y + \f\e2 \z\z_x=0,\quad t>0,\, (x,y)\in\R^2,\\
&(1-e\e \p_x^2)w_t+\e^{\f12}(1+f\e\p_x^2)\z_y+\frac{\e }{2}vw_x + \e^{\frac{3}{2}} (ww_y+\frac{1}{2}vv_y+ \frac{1}{2}\z\z_y)=0,\\
&(1-d\e \p_x^2)\z_t+(1+c\e\p_x^2)v_x+\e^{\f12}(1+g\e\p_x^2)w_y+\e v\z_x+\e^{\f32} w\z_y+ \f\e2 \z v_x + \f{\e^{\f32}}{2}\z w_y=0.
\end{aligned}\right.\eeq

\begin{definition}[see Definition 1 of \cite{BCL}]\label{def of consistent} The system \eqref{WTB 1} is consistent with a system $S$ of $3$ equations for 
$(v, w,\tilde\z)$, if for all sufficiently smooth solutions $(v,w,\z)\in C([0,T/\e];H^s_\e(\R^2))$ of \eqref{WTB 1}, the triplet $(v,w,\wt\z\eqdefa\z-\f{\e}{4}|\z|^2)\in C([0,T/\e];H^N_\e(\R^2))$ solves $S$ up to a small residual $\e^2 R$ called precision, where $R$ is bounded in $L^\infty([0,T/\e];H^N_\e(\R^2))$ $(N\geq 2)$, uniformly with respect to $\e\in(0,1)$. Here   $H^s_\e(\R^2)$ is the Sobolev spaces $H^s(\R^2)$ equipped with the following norm 
\beno
\|f\|_{H^s_\e}^2\eqdefa\e^{-\f12}\int_{\R^2}(|\xi_1|^2+\e|\xi_2|^2)^s|\hat{f}(\xi)|^2d\xi.
\eeno
\end{definition}

Thanks to the formal derivation and standard product estimates, we reach the following consistency result.

\begin{proposition}\label{consistent prop}
Assuming that $(a,b,c,d,e,f,g)$ satisfies \eqref{constraint}, the system \eqref{WTB case a} is consistent with the system \eqref{WTB 1} in the sense of Definition \ref{def of consistent} with the precision $O(\e^2)$.
\end{proposition}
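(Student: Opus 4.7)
The derivation already presented in \eqref{A 1}--\eqref{approx of WTB1} contains essentially the entire proof; what remains is to (i) write each ``$=O(\e^2)$'' term in \eqref{approx of WTB1} as $\e^2 R$ for an explicit polynomial expression $R$ in $(v,w,\z,\tilde\z)$ and a bounded number of their spatial derivatives, and (ii) bound each such $R$ in $L^\infty([0,T/\e];H^N_\e)$ uniformly in $\e\in(0,1)$, assuming $(v,w,\z)\in C([0,T/\e]; H^s_\e)$ for some sufficiently large $s$.

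For (i), the first and second equations of \eqref{WTB 1} are handled in parallel: I write $\z_x = \tilde\z_x + \f{\e}{2}\z\z_x$ (respectively $\z_y = \tilde\z_y + \f{\e}{2}\z\z_y$) and substitute. The operator $(1+a\e\p_x^2)$ applied to the quadratic correction contributes a genuine $\e^2$ residual of the form $\e^2\p_x^2(\z\z_x)$, while trading $\z$ for $\tilde\z$ inside $\f{\e}{2}\z\z_x$ costs another $O(\e^2)$ term thanks to $\z-\tilde\z=\f{\e}{4}|\z|^2$. The third equation is the genuine computation, already sketched in \eqref{A 1}--\eqref{A 2}: I differentiate \eqref{nonlinear transform} in time, apply $(1-d\e\p_x^2)$, and eliminate $(1-d\e\p_x^2)\z_t$ using the third equation of \eqref{WTB 1}; the resulting cross term $\f{\e}{2}\z\z_t$ is then simplified via the leading-order identity $\z_t=-v_x-\e^{\f12}w_y+O(\e)$ obtained from the same equation. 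Collecting terms and replacing $\z$ by $\tilde\z$ wherever it still appears reproduces the third line of \eqref{WTB case a} up to an explicit $\e^2$ remainder.

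For (ii), each residual is a polynomial in $(v,w,\z,\tilde\z)$ of bounded degree involving at most a fixed finite number $k_0$ of spatial derivatives $\p_x,\p_y$, with every $\p_y$ carrying its natural power of $\e^{\f12}$ dictated by the weakly transverse scaling. This matches the Fourier weight $(|\xi_1|^2+\e|\xi_2|^2)^{1/2}$ in the definition of $\|\cdot\|_{H^s_\e}$, so standard tame Moser-type product estimates hold in these anisotropic spaces uniformly in $\e\in(0,1)$; choosing $s\geq N+k_0$ then closes the estimate. The main obstacle is purely bookkeeping: one must verify that each substitution ($\z\to\tilde\z$ or $\z_t\to-v_x-\e^{\f12}w_y$) occurs inside a factor already of size $O(\e)$, so that the cumulative gain is genuinely $\e^2$ rather than $\e^{\f32}$, and that every $\p_y$ differentiation of the correction $\f14|\z|^2$ is paired with the correct power of $\e^{\f12}$ required by the $H^N_\e$ weight.
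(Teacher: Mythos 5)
Your proposal is correct and follows essentially the same route as the paper, which proves Proposition \ref{consistent prop} simply by the formal derivation \eqref{A 1}--\eqref{approx of WTB1} (substituting $\z=\tilde\z+\f{\e}{4}|\z|^2$, using $\z_t=-v_x-\e^{1/2}w_y+O(\e)$ from the third equation, and collecting the explicit $\e^2$ residuals) combined with standard product estimates in $H^N_\e$. Your additional bookkeeping remarks — that each substitution occurs inside a factor already of size $O(\e)$ and that every $\p_y$ carries its $\e^{1/2}$ weight — are exactly the points the paper leaves implicit.
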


\begin{remark}
\begin{enumerate}
	\item
	Since \eqref{WTB case a} is consistent with \eqref{WTB 1} and \eqref{WTB 1} is consistent with the water wave system, \eqref{WTB case a} is also consistent with the  water wave system.
	\item If $b=e\geq 0$, $a=f$, the curl-free condition $\e^{\f12}v_y=w_x$ will be preserved for all time $t>0$ in the lifespan of the solutions to \eqref{WTB 1} , and also of the solution to \eqref{WTB case a}, if it holds at $t=0$. With the curl-free condition $\e^{\f12}v_y=w_x$, the new system \eqref{WTB case a} turns out to  be symmetrizable. 
\end{enumerate}
\end{remark}

\subsection{Main result}

In this paper, we aim to demonstrate the large time existence on time scale $1/\e$ for the Cauchy problem of \eqref{WTB case a} with the curl-free condition \eqref{curl free} for cases \eqref{case a}. 

Observing that the scales of  variables $x$ and $y$ are $1$ and $\e^{\f12}$ in \eqref{WTB case a}, we introduce the following scaling for the sake of simplicity
\beq\label{scaling}
\wt{v}(t,x,y)=v(t,x,\e^{\f12}y),\quad \wt{w}(t,x,y)=w(t,x,\e^{\f12}y),
\quad \wt{\z}(t,x,y)=\z(t,x,\e^{\f12}y).
\eeq 
Then $(\wt{v},\wt{w},\wt{\z})$ satisfies \eqref{WTB case a} with $\e^{\f12}\p_y$ being replaced by $\p_y$ and $\wt{v}_y=\wt{w}_x$. Using curl-free condition  $\wt{v}_y=\wt{w}_x$, omitting  $\wt{}$  of $(\wt{v},\wt{w},\wt{\z})$, \eqref{WTB case a} reads
\beq\label{WTB 2}\left\{\begin{aligned}
&(1-b\e \p_x^2)v_t+(1+a\e \p_x^2)\z_x+\e vv_x+\e w v_y+\f\e2 \z\z_x=0,\quad t>0,\, (x,y)\in\R^2,\\
&(1-e\e \p_x^2)w_t+(1+f\e \p_x^2)\z_y+\e vw_x+\e ww_y+\f\e2\z\z_y=0,\\
&(1-d\e \p_x^2)\z_t+(1+c\e \p_x^2)v_x+(1+g\e\p_x^2)w_y+\e v\z_x+\e w \z_y+ \f\e2 \z v_x + \f{\e}{2}\z w_y=0.
\end{aligned}\right.\eeq
\begin{remark} \begin{enumerate}
\item To preserve the curl-free condition $v_y=w_x$, one needs to assume that $b=e\geq0,\, a=f$.
\item We remark that the nonlinear terms of system \eqref{WTB 2} are symmetric. However, according to \eqref{constraint}, \eqref{cases} and the restriction $b=e\geq0,\, a=f$, none of the possible cases for \eqref{WTB 2} is symmetric (for both linear and nonlinear terms).
\end{enumerate}
\end{remark}
We set the initial data for \eqref{WTB 2} as follows
\beq\label{initial data}
v|_{t=0}=v_0,\quad w|_{t=0}=w_0,\quad \z|_{t=0}=\z_0,
\eeq
and $\p_yv_0=\p_xw_0$.

 The non-zero eigenvalue of the linearization of  new system \eqref{WTB 2} are
\beq\label{eigen value a}
\lambda_{\pm}(\xi)=\pm i\Lambda(\xi)\quad\text{with}\quad\Lambda(\xi)=\Bigl(\xi_1^2\cdot\f{(1-a\e \xi_1^2)(1-c\e \xi_1^2)}{(1+b\e \xi_1^2)(1+d\e \xi_1^2)}+\xi_2^2\cdot\f{(1-f\e \xi_1^2)(1-g\e \xi_1^2)}{(1+e\e \xi_1^2)(1+d\e \xi_1^2)}\Bigr)^{\f12}.
\eeq
Due to \eqref{eigen value a} and \eqref{case a}, we rewrite $\Lambda(\xi)=\cJ_\e^{-1}(\xi_1)\cA(\xi)$ with
\beq\label{operator 1}
\cJ_\e(\xi_1)=\f{(1+b\e \xi_1^2)^{\f12}(1+d\e \xi_1^2)^{\f12}}{1-g\e \xi_1^2},\quad\cA(\xi)=\bigl(\xi_1^2\cdot\cK_\e(\xi_1)+\xi_2^2\bigr)^{\f12},\quad
\cK_\e(\xi_1)=\f{1-c\e \xi_1^2}{1-g\e \xi_1^2}.
\eeq
We remark that operators $\cK_\e(D_x)$ and $\cA(D)$ are crucial in the procedure of deriving the well-posedness theory of \eqref{WTB 2}.

From \eqref{constraint} and \eqref{case a}, we deduce that $c=g-\f13\leq-\f13$ and $b+d\geq\f23$. Then
\begin{itemize}
\item[1).] {\it if $g=0$}, there holds
\beq\label{operator 2}
\cA(\xi)\sim\cK_\e^{\f12}(\xi_1)|\xi_1|+|\xi_2|,\quad\cK_\e(\xi_1)=1+\f13\e \xi_1^2;
\eeq
\item[2).] {\it if $g<0$}, there holds
\beq\label{operator 3}
\cA(\xi)\sim\cK_\e^{\f12}(\xi_1)|\xi_1|+|\xi_2|\sim |\xi|,\quad\cK_\e(\xi_1)\sim 1.
\eeq
\end{itemize}

 According to the properties of $\cA(\xi)$ in \eqref{operator 2} and \eqref{operator 3}, we only consider the following two typical cases for simplicity:
\beq\label{special case}\begin{aligned}
	\textbf{case\, 1:}\quad &a=f=g=0, b=d=e=\f13,c=-\f13,\\
	\textbf{case\, 2:}\quad & a=f=g= -\frac{1}{6},b=d=e=\frac{1}{2},c=-\frac{1}{2}.
\end{aligned}\eeq
The corresponding eigenvalues are 
\begin{equation}\label{eigen value}
	\left\{  \begin{aligned}
		&\textbf{case 1:}\quad \lambda_{1,\pm} = \pm i \Lam_1(\xi), \quad
		\text{with}\quad \Lam_1(\xi) = \left( \xi_1^2\cdot\frac{1}{1+\f\e3 \xi_1^2}+{\xi_2^2} \cdot \frac{1}{(1+\f\e3 \xi_1^2)^2} \right)^{\f12},\\
		&\textbf{case 2:}\quad \lambda_{2,\pm} = \pm i \Lam_2(\xi), \quad
		\text{with}\quad \Lam_2(\xi) = \left(  \xi_1^2 \cdot \frac{1+\f\e6 \xi_1^2}{1+\f\e2 \xi_1^2} + \xi_2^2\cdot\left(  \frac{1+\f\e6 \xi_1^2}{1+\f\e2 \xi_1^2} \right)^2\right)^\f12  .
	\end{aligned} \right.
\end{equation}
We remark that
\beno
\Lambda_1(\xi)\sim(1+\f\e3\xi_1^2)^{-1}\bigl((1+\f\e3\xi_1^2)^{\f12}|\xi_1|+|\xi_2|\bigr),\quad\Lambda_2(\xi)\sim |\xi|.
\eeno

The main result of this paper is stated as follows:
\begin{theorem}\label{main theorem} Let $s>3$, $\e\in(0,1)$ and $J_\e=J_\e(D_x)=1-b\e\p_x^2.$
\begin{enumerate}
\item {(\bf Long time existence for case 1)} For $a=f=g=0, b=d=e=\f13,c=-\f13$, if $(v_0,w_0,\z_0)$ satisfies
\beq\label{initial 1}\begin{aligned}
&\bigl(J_\e v_0,\,J_\e^{\f12}\p_x v_0,\,\p_y v_0\bigr)\in H^s(\R^2),\quad
\bigl(J_\e^{\f12} w_0,\,\p_x w_0,\,J_\e^{-\f12}\p_y w_0\bigr)\in H^s(\R^2),\\
&\bigl(J_\e^{\f12}\z_0,\,\p_x\z_0,\,J_\e^{-\f12}\p_y\z_0\bigr)\in H^s(\R^2)
\quad\text{and}\quad \p_yv_0=\p_xw_0,
\end{aligned}\eeq
there exists a constant $\e_0>0$ such that for any $\e\in(0,\e_0)$, \eqref{WTB 2}-\eqref{initial data} has a unique solution $(v,w,\z)$ over a time interval $[0,T_0/\e]$ for some $T_0>0$. Moreover, there holds
\beq\label{energy estimate for case 1}\begin{aligned}
E_s(t)\eqdefa&\bigl\|\bigl(J_\e v,\,J_\e^{\f12}\p_x v,\,\p_y v\bigr)\bigr\|_{H^s}+
\bigl\|\bigl(J_\e^{\f12} w,\,\p_x w,\,J_\e^{-\f12}\p_y w\bigr)\bigr\|_{H^s}\\
&
+\bigl\|\bigl(J_\e^{\f12} w,\,\p_x w,\,J_\e^{-\f12}\p_y w\bigr)\bigr\|_{H^s}\lesssim E_s(0),\quad\forall t\in[0,T_0/\e].
\end{aligned}\eeq
\item {(\bf Long time existence for case 2)} For $a=f=g= -\frac{1}{6},b=d=e=\frac{1}{2},c=-\frac{1}{2}$, if $(v_0,w_0,\z_0)$ satisfies 
\beq\label{initial 2}
(J_\e^{\f12}v_0,\,J_\e^{\f12}w_0,\,J_\e^{\f12}\z_0)\in H^{s+1}(\R^2)\quad\text{and}
\quad\p_yv_0=\p_xw_0,
\eeq
there exists a constant $\e_0>0$ such that for any $\e\in(0,\e_0)$, \eqref{WTB 2}-\eqref{initial data} has a unique solution $(v,w,\z)$ over a time interval $[0,T_0/\e]$ for some $T_0>0$. Moreover, there holds
\beq\label{energy estimate for case 2}
E_s(t)\eqdefa\|J_\e^{\f12}v\|_{H^{s+1}}+\|J_\e^{\f12}w\|_{H^{s+1}}+\|J_\e^{\f12}\z\|_{H^{s+1}}\lesssim E_s(0),\quad\forall t\in[0,T_0/\e].
\eeq
\end{enumerate}	
\end{theorem}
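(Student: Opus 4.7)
The plan is to prove both cases by a quasilinear energy method combined with a continuation argument: establish an a priori bound of the form $E_s(t)\le 2E_s(0)$ on a time window of length $T_0/\e$, pair it with a short-time existence result obtained by Friedrichs regularization of \eqref{WTB 2} (which preserves the curl-free condition \eqref{curl free} because $b=e$ and $a=f$), and then extend the local solution up to $T_0/\e$. Since the linearization has dispersion only in $x$ and none in $y$, the central task is to rewrite the system as a skew-symmetric principal operator with respect to a weighted $L^2$ inner product, in the spirit of the good-unknowns strategy of \cite{Alin}. The weights that appear in $E_s$ are precisely those that achieve this symmetrization.

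First I would carry out the linear symmetrization. For case~1 the linear part of \eqref{WTB 2} reads $J_\e v_t+\z_x=0$, $J_\e w_t+\z_y=0$, $J_\e\z_t+J_\e v_x+w_y=0$, so the $x$-coupling between $v$ and $\z$ is naturally balanced by one extra power of $J_\e$ on $v$, whereas the $y$-coupling between $w$ and $\z$ forces the weight $J_\e^{-1/2}$ on $\p_y w$ and $\p_y\z$; this accounts for the asymmetric norm in \eqref{energy estimate for case 1}. In case~2 the dispersion operator $\cA(\xi)\sim|\xi|$ is quasi-isotropic, and the symmetrization is obtained by composing $J_\e^{1/2}$ with a zero-order Fourier multiplier built from $\cK_\e^{1/2}$, which matches the uniform weight in \eqref{energy estimate for case 2}. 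In both cases the curl-free identity $v_y=w_x$ is what forces the weights on $w$ and $\z$ to coincide.

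Next I would run the nonlinear energy estimate. Applying $\p^\al$ up to the regularity required by $E_s$, pairing each equation in \eqref{WTB 2} with the weighted derivative of its own unknown, and using self-adjointness of the multipliers $J_\e^{\pm 1/2}$ and $\cK_\e^{1/2}$, the principal couplings cancel by the symmetrization above. The nonlinear terms split into three pieces: (i) the transport part $\e(v\p_x+w\p_y)$, whose commutators with $\p^\al$ are bounded via Kato--Ponce by $\e\|(v,w)\|_{W^{1,\infty}}\|\p^\al(v,w,\z)\|_{L^2}$; (ii) the symmetric potential-type terms $\tfrac{\e}{2}\z\z_x$, $\tfrac{\e}{2}\z\z_y$, $\tfrac{\e}{2}\z v_x$, $\tfrac{\e}{2}\z w_y$, which pair into a perfect time derivative plus an $L^\infty$-controlled remainder; and (iii) commutators between $\p^\al$ and the good-unknown multipliers $J_\e^{\pm 1/2}$, uniformly bounded on $H^s$ because their symbols in $\xi_1$ are smooth. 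For $s>3$ Sobolev embedding controls the $L^\infty$ norms by $E_s$, so the energy inequality reduces to $\tfrac{d}{dt}E_s^2\lesssim\e E_s^3$, and a standard continuity argument yields the lifespan $T_0/\e$.

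The main obstacle is the $y$-derivative loss in the third equation of \eqref{WTB 2}: the term $w_y$ carries no dispersive smoothing in $y$, so a naive choice of energy would lose regularity. The remedy is to combine the weighted norm on $\p_y\z$ with the curl-free constraint $v_y=w_x$ so as to trade a $y$-derivative on $v$ (which is missing from $E_s$ in case~1) for an $x$-derivative on $w$ (which is present). Making this trade commute cleanly through all commutators $[\p^\al, J_\e^{\pm 1/2}]$, and verifying that every resulting remainder is $O(\e)$ rather than $O(1)$, is the delicate accounting that unlocks the $1/\e$ time scale. Once this is set up in case~1, case~2 follows by the same scheme with the simpler, quasi-isotropic functional calculus associated with $\cA(\xi)\sim|\xi|$.
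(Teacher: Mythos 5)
Your plan stops at a \emph{linear} symmetrization (constant-coefficient Fourier weights $J_\e^{\pm1/2}$, $\cK_\e^{1/2}$ distributed over the components) and then treats every nonlinear term perturbatively. This is exactly the point where the argument breaks: the commutators you list in (iii), $[\p^\al,J_\e^{\pm1/2}]$, vanish identically (both are Fourier multipliers), and the dangerous objects are instead the variable-coefficient commutators $[P(D_x),f]\p_y g$ and, above all, the quasilinear term produced when $\div\vv V$ is expressed through the new unknowns: after the linear symmetrization one is left with a contribution of the form $\f{\e^2}{6}A(D)\bigl(\z A^{-2}(D)\p_x^4 p\bigr)$ (see \eqref{system for p theta}), which is only $O(\sqrt\e)$ when estimated as a source term, because $\e^2 A^{-1}(D)\p_x^4$ costs a factor $\e^{-3/2}$ on the $\e$-dependent frequency range $|\xi_1|\sim\e^{-1/2}$. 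A Gronwall inequality of the type $\f{d}{dt}E_s^2\lesssim\e E_s^3$ therefore does not follow from your scheme; what follows is $\f{d}{dt}E_s^2\lesssim\sqrt\e\,E_s^3$, giving a lifespan $O(\e^{-1/2})$, not $O(\e^{-1})$. Likewise your step (ii) — pairing the terms $\f\e2\z\z_x$, $\f\e2\z v_x$, $\f\e2\z w_y$, $\f\e2\z\z_y$ into a perfect time derivative — fails here because the components carry \emph{different} anisotropic weights ($J_\e$ on $v$, $J_\e^{1/2}$ on $w,\z$, and the $J_\e^{-1/2}\p_y$ weights at top order), so the would-be cancellation leaves weighted commutators $[J_\e^{\pm1/2},\z]\p_x^2$- and $\p_y$-type remainders whose $y$-derivative content is not controlled by $E_s$, which is precisely the derivative-loss problem.

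The missing idea is the genuinely \emph{nonlinear} change of unknowns: the paper first passes to $(p,\theta)=(v_x+\cK_\e^{-1}w_y,\ \cK_\e^{-1}\cA(D)\z)$ and then to Alinhac good unknowns $(\wt p,\wt\theta)$ defined by variable-coefficient, nonlocal corrections (\eqref{wtp in case 1}--\eqref{wttheta in case 1} for case 1, \eqref{wtp in case 2}--\eqref{wttheta in case 2} for case 2), chosen so that the $O(\sqrt\e)$ term is absorbed into a symmetric quasilinear operator $\bigl(1+\f\e2\z-\f{\e^2}{6}\cdots\bigr)A(D)$ acting with opposite signs in the two equations; only then does the cubic cancellation occur and the $1/\e$ time scale become reachable, with the energy split into a lower-order part in $(v,w,\z)$ and a top-order part in $(\wt p,\wt\theta)$. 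Note also that your final claim that case 2 is the easier one is backwards: although $\Lam_2(\xi)\sim|\xi|$ is quasi-isotropic, the good unknown there requires the nonlocal operator $\gamma_\e(\z,D_x)=(1+\f\e2\z Y_\e^{-1})(2+\f\e2\z Y_\e^{-1})^{-1}(\z\cdot)$, constructed by a von Neumann series and shown to be self-adjoint on $L^2$ together with a family of commutator estimates (Lemmas \ref{lem for operator gamma}--\ref{lem for commutator gamma a}); none of this is available in your framework. Finally, the curl-free trade you invoke ($v_y\leftrightarrow w_x$) is indeed used in the paper, but it enters at the level of defining $p$ and reconstructing $\vv V=-\cA^{-2}(D)\cK_\e\na p$, not as a substitute for the good-unknown symmetrization.
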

\begin{remark}
\begin{enumerate}
\item In other scenarios outlined in \eqref{case a}, the structures of operators $\cA_\e(D)$ and $\cK_\e(D_x)$ (as shown in \eqref{operator 1}) closely resemble those in \eqref{operator 2} or \eqref{operator 3}. Consequently,  similar  assertion of long time existence also holds for \eqref{WTB 2}-\eqref{initial data}. The main idea of the proof is similar to Case 1 and Case 2.
\item It is worth noticing that case 1 is precluded given the explicit parameter expressions $(a, b, c, d, e, f, g)$ detailed in \cite{LS}. Nonetheless, it remains plausible under the constraint specified in \eqref{constraint}. Within the contexts of cases in \eqref{case a}, case 1 represents a straightforward and common scenario of system \eqref{WTB 2}, facilitating a clearer outline for proving long time existence.
\item Coming back to \eqref{WTB case a}, the theorem also holds with $H^s(\R^2)$-norms being replaced by $H^s_\e(\R^2)$-norms.
\end{enumerate}
\end{remark}

\subsection{Main idea of the proof}
In this subsection, we outline the main idea and the strategy of the proof of our main result. 

$\bullet$ {\bf The difficulty:} The principal difficulty in the proof stems from the loss of $y$-derivatives induced by dispersion only  in the $x$-direction. More precisely, in view of  \eqref{eigen value a} and \eqref{operator 1}, it becomes clear that the eigenvalues $\lambda_{\pm}(\xi)$ and the operator symbol $\mathcal{A}_\e(\xi)$ exhibit anisotropy concerning $\xi_1$ and $\xi_2$. Consequently, during the process of the proof, the appearance of commutators of the form $[P(D_x), f]\partial_y g$ arises, leading to the loss of $y$-derivatives. Here, $P(D_x)$ represents a Fourier multiplier involving $\partial_x$. To overcome the principal difficulty, our approach is to symmetrize the system by introducing appropriate good unknowns.

$\bullet$ {\bf Idea of the proof.} 

{\it 1). Symmetrization of the linear part.}

 Since $b=e\geq 0$, $a=f=g\leq 0$, we introduce the first good unknowns as follows
\beq\label{new unknowns case a}
p \eqdefa v_x+ \cK_\e^{-1} w_y,\quad \theta \eqdefa\cK_\e^{-1}\cJ_\e\Lam(D)\z=\cK_\e^{-1}\cA(D)\z,
\eeq
where $\cA(\xi)=\cJ_\e(\xi_1)\Lambda(\xi)$ and $\cK_\e=\cK_\e(\xi_1)$ are defined in \eqref{eigen value a} and \eqref{operator 1} respectively.
Due to \eqref{eigen value a}, \eqref{operator 1} and the condition $v_y=w_x$, $(v,w,\z)$ can be also rewritten in terms of $(p,\theta)$ as
\beq\label{v w zeta in terms of p theta case a}
\vv V\eqdefa(v,w)^T = - \cK_\e\cA^{-2}(D) \na p, \quad \z=\cK_\e\cA^{-1}(D)\theta.
\eeq

With $(p,\theta)$ and the curl-free condition $v_y=w_x$, we deduce from system \eqref{WTB 2} that
\beq\label{WTB 3}\left\{\begin{aligned}
&(1-b\e\p_x^2)p_t-(1+c\e\p_x^2)\cK_\e^{-1}\cA(D)\theta=-\e\cK_\e^{-1}(\vv V\cdot\na\cK_\e p)
+\f\e2\cK^{-1}_\e(\z\cK_\e\cA(D)\theta)+O(\e),\\
&(1-d\e\p_x^2)\theta_t+(1+c\e\p_x^2)\cK_\e^{-1}\cA(D)p=-\e\cK_\e^{-1}(\vv V\cdot\na\cK_\e\theta)-\f\e2\cK^{-1}_\e\cA(D)\bigl(\z\cK_\e p\bigr)\\
&\qquad\qquad\qquad
+\f\e2\cK_\e^{-1}\cA(D)\bigl[\z\cA^{-2}(D)\cK_\e(1-\cK_\e)\p_x^2p\bigr]+O(\e),
\end{aligned}\right.
\eeq
where $O(\e)$ represents the lower order quadratic terms of order $O(\e)$. 
\begin{remark}
Due to the properties of $\cK_\e(\xi_1)$ and $\cA(\xi)$ in \eqref{operator 2} and \eqref{operator 3}, the "undesirable" term $\f\e2\cK_\e^{-1}\cA(D)\bigl[\z\cA^{-2}(D)\cK_\e(1-\cK_\e)\p_x^2p\bigr]$ is of order $O(\e^{\f12})$ if treated as a lower order term. By neglecting this "undesirable" term, the principal component of \eqref{WTB 3} turns out to be symmetric. Subsequently, employing the standard hyperbolic energy method leads to the local existence theory on a timescale of order $O(\e^{-\f12})$ with $\bigl((1-b\e\p_x^2)^{\f12}\cK_\e p,(1-d\e\p_x^2)^{\f12}\cK_\e\theta\bigr)\in H^s(\R^2)$ for $s>1$. To improve the time scale up  to $O(1/\e)$, term $\f\e2\cK_\e^{-1}\cA(D)\bigl[\z\cA^{-2}(D)\cK_\e(1-\cK_\e)\p_x^2p\bigr]$ should be treated as a part of quasilinear terms.
\end{remark}

{\it 2). Symmetrization of \eqref{WTB 3} via good unknowns.}

In order to symmetrize both the linear and nonlinear terms of \eqref{WTB 3}, we employ a nonlinear transformation by introducing suitable unknowns. Due to the characteristics of $\mathcal{K}_\e(\xi_1)$ and $\mathcal{A}(\xi)$ as described in \eqref{operator 2} and \eqref{operator 3}, we have two options for $\mathcal{K}_\e(\xi_1)$ and $\mathcal{A}(\xi)$. To elucidate the core concept of the proof effectively, we focus solely on two specific scenarios outlined in \eqref{special case}. Within these scenarios, it holds true that $\mathcal{J}_\e(\xi_1) = \mathcal{K}_\e(\xi_1)$.

{\bf Case 1. $a=f=g=0, b=d=e=\f13,c=-\f13$.} In this case, we denote by
\beq\label{operator for case 1}
J_\e(\xi_1)\eqdefa\cK_\e(\xi_1)=1+\f\e3\xi_1^2, \quad A(\xi)\eqdefa\cA(\xi)=\bigl(J_\e(\xi_1)\xi_1^2+\xi_2^2\bigr)^{\f12}.
\eeq

With notations $J_\e(\xi_1)$ and $A(\xi)$, introducing the good unknowns
\beq\label{def of wtptheta case 1}\begin{aligned}
	\widetilde{p} \eqdefa & \,p +\f\e2 J_\e^{-1} (\zeta  J_\e p)+ \e  J_\e^{-1}(\vv V\cdot\nabla A^{-1}(D)  J_\e \theta) - \f{\e^2}{6} \frac{1+\f\e2 \z}{2+\f\e2 \z}\z  A^{-2}(D) \p_x^4 p, \\
	\widetilde{\theta} \eqdefa&\, \theta  + \f\e2 J_\e^{-1}A^{-1}(D)(\z \cdot A(D)J_\e \theta)-\e J_\e^{-1} A^{-1}(D) (\vv V\cdot \nabla J_\e p),
\end{aligned}\eeq
we obtain a symmetric system in $(\wt p,\wt\theta)$ as follows:
\begin{equation}\label{equation for tilde p and theta a}
	\left\{ \begin{aligned}
		&J_\e\wt p_t - \left(1+\f{\e}{2}\z -\f{\e^2}{6} \frac{1+\f\e2\z}{2+\f\e2\z} \z A^{-2}(D) \p_x^4  \right)A(D)\wt\theta= - \e\vv V\cdot\na\wt p+O(\e), \\
		&J_\e\wt\theta_t  + \left(1+\f{\e}{2}\z -\f{\e^2}{6} \frac{1+\f\e2\z}{2+\f\e2\z} \z A^{-2}(D) \p_x^4  \right) A(D)\wt p =-\e\vv V\cdot\na\wt\theta+O(\e).
	\end{aligned} \right.
\end{equation}
Here $(\wt{p},\wt{\theta})$ are called the \textit{good unknowns} in the sense of Alinhac (see \cite{ABZ}, \cite{AM}, \cite{Alin}).

By virtue of \eqref{new unknowns case a} and \eqref{def of wtptheta case 1}, one could check that for $s>3$
\beno\begin{aligned}
& \|J_\e^{\f12}p\|_{H^s}+\|J_\e^{\f12}\theta\|_{H^s}\sim\|(J_\e^{\f12}v_x,v_y)\|_{H^s}+\|(w_x,J^{-\f12}_\e w_y)\|_{H^s}
+\|(\z_x,J^{-\f12}_\e \z_y)\|_{H^s},\\
&\text{and}\quad \|J_\e^{\f12}(\wt p-p)\|_{H^s}+\|J_\e^{\f12}(\wt\theta-\theta)\|_{\dH^s}=O(\e),
\end{aligned}\eeno
provided that $\sqrt\e(\|J^{\f12}_\e\z\|_{H^s}+\|\vv V\|_{H^s})\leq1$ over the lifespan of  solutions to \eqref{WTB 2}.

{\bf Case 2. $a=f=g=-\f16, b=d=e=\f12,c=-\f12$.} In this case, we denote by
\beq\label{operator for case 2}
J_\e(\xi_1)\eqdefa1+\f\e2\xi_1^2,\quad K_\e(\xi_1)\eqdefa\cK_\e(\xi_1)=\f{1+\f\e2\xi_1^2}{1+\f\e6\xi_1^2}, \quad B(\xi)\eqdefa\cA(\xi)=\bigl(K_\e(\xi_1)\xi_1^2+\xi_2^2\bigr)^{\f12}.
\eeq

Using the  notations $J_\e(\xi_1)$, $K_\e(\xi_1)$ and $B(\xi)$, and introducing  the good unknowns
\beq\label{def of wtptheta case 2}\begin{aligned}
	\wt{p} \eqdefa \,&p + \e J_\e^{-1} \bigl(\vv V\cdot \nabla B^{-1}(D)K_\e \theta + \f12 \z \cdot K_\e p  \bigr) - \frac{\e^2}{6} \gamma_\e(\z,D_x) Y_\e^{-2} B^{-2}(D)  \p_x^4 p,\\
	\wt\theta \eqdefa \,& \theta - \e J_\e^{-1} B^{-1}(D) \bigl( \vv V\cdot\na K_\e p - \f12 \z \cdot B(D)K_\e\theta  \bigr),
\end{aligned}\eeq
we obtain a symmetric system for $(\wt p,\wt\theta)$ as follows:
\beq\label{equations for wt p theta 2 a}
\left\{\begin{aligned}
&J_\e\wt p_t-\Bigl(Y_\e B(D)+\f\e2\z B(D)
-\frac{\e^2}{6}\gamma_\e(\z,D_x) Y_\e^{-1} B^{-1}(D)\p_x^4\Bigr)\wt\theta=-\e\vv V\cdot\nabla\wt p+O(\e),\\
&J_\e\wt\theta_t+\Bigl(Y_\e B(D)+\f\e2\z B(D)
-\frac{\e^2}{6}\gamma_\e(\z,D_x) Y_\e^{-1} B^{-1}(D)\p_x^4\Bigr)\wt p=-\e\vv V\cdot\nabla\wt\theta+O(\e).
\end{aligned}\right.\eeq
Here $Y_\e=Y_\e(D_x)=1-\f\e6\p_x^2$ and $\gamma_\e(\z, D_x)$ is a linear operator defined by
\beno
\gamma_\e(\z,D_x) \eqdefa( 1+ \f\e2\z Y_\e^{-1}) ( 2+ \f\e2\z Y_{\e}^{-1})^{-1} (\z\cdot).
\eeno
In particular, $\gamma_\e(\z, D_x)$ is self-adjoint on $L^2(\R^2)$.
One could refer to subsection 4.2 for the detailed definition and properties of $\gamma_\e(\z,D_x)$. 

By virtue of \eqref{new unknowns case a} and \eqref{def of wtptheta case 1}, one could check that for $s>3$
\beno\begin{aligned}
& \|J_\e^{\f12}p\|_{H^s}+\|J_\e^{\f12}\theta\|_{H^s}\sim\|J_\e^{\f12}\na\vv V\|_{H^s}+\|J_\e^{\f12}\na\z\|_{H^s},\\
&\text{and}\quad \|J_\e^{\f12}(\wt p-p)\|_{H^s}+\|J_\e^{\f12}(\wt\theta-\theta)\|_{\dH^s}=O(\e),
\end{aligned}\eeno
provided that $\sqrt\e(\|J^{\f12}_\e\z\|_{H^{s+1}}+\|\vv V\|_{H^{s+1}})\leq\f{1}{C_s}$ over the lifespan of  solutions to \eqref{WTB 2}.

Hence, in order to establish a uniform energy estimate on \eqref{WTB 2} for both Case 1 and Case 2, we divide the estimate into two parts: i). the lower order energy estimate in term of $(v,w,\z)$; ii). the highest order energy estimate in term of $(\wt p,\wt\theta)$. For a comprehensive understanding of the proof, detailed explanations are provided in Section 3 and Section 4.

\begin{remark} 
\begin{itemize}
\item[(1).]It is worth to emphasize that $J_\e^{\f12}\wt p\in H^s(\R^2)$ and $J_\e^{\f12}\wt\theta\in \dH^s(\R^2)$, whereas $J_\e^{\f12}\wt\theta\notin H^s(\R^2)$. Indeed, in the construction of $\wt\theta$ as stated in \eqref{def of wtptheta case 1} and \eqref{def of wtptheta case 2}, there exist correction terms in the form of $A^{-1}(D)(fg)$ and $B^{-1}(D)(fg)$. Notably, product estimates like $\|A^{-1}(D)(fg)\|_{L^2}$ and $\|B^{-1}(D)(fg)\|_{L^2}$ look closely to $\||D|^{-1}(fg)\|_{L^2}=\|fg\|_{\dH^{-1}}$. This similarity arises because $\dH^{-1}(\R^2)$ is a critical space in which the product estimate encounters limitations.
\item[(2).] For other cases in \eqref{case a}, the creation of good unknowns $(\wt p,\wt\theta)$ mimics the approach taken in Case 1 and Case 2, and the symmetric system for $(\wt p,\wt\theta)$ can  similarly be deduced from \eqref{WTB 3}.
\end{itemize}
\end{remark}

{\it 3). Anisotropic operators.} The structure of the linearization of system \eqref{WTB 2} is anisotropic due to the dispersion occurring solely in $x$-direction. This characteristic is obvious when examining the eigenvalues provided
 in \eqref{eigen value a}. In looking for the symmetric formulation for system \eqref{WTB 2}, numerous terms emerge that entail commutators and product estimates involving anisotropic operators such as $J_\e(D_x)$, $K_\e(D_x)$, $A(D)$ and $B(D)$. 

 As anisotropic operators, the commutator estimates involving $[J_\e(D_x),f]g$, $[J_\e^{-1}(D_x),f]g$ and $[K_\e(D_x),f]g$ lead to a reduction of one order of regularity for $g$ in the $x$-direction. Similarly, the estimate for $J_\e^{-1}(D_x)(fg)$ bears semblance to the Leibniz rule applied to $J_\e^{-1}(D_x)$.  On the other hand, commutator estimates incorporating $A(D)$, $B(D)$, $A^{-1}(D)$ and $B^{-1}(D)$ result in a reduction of one order of regularity for functions in specific spaces. For a comprehensive understanding of these properties, readers can consult the detailed discussion provided in Section 2.

Due to the properties of such anisotropic operators, it is imperative to approach nonlinear terms and energy estimates with meticulous care. 

{\it 4). The operator $\gamma_\e(\z,D_x)$ for Case 2.} Searching good unknowns for Case 2  poses a significantly greater challenge compared to Case 1 and is far from straightforward. The construction of nonlinear and nonlocal anisotropic operator $\gamma_\e(\z,D_x)$ is crucial. This operator is designed by using  von Neumann theorem within specialized spaces and must also be self-adjoint in the space  $L^2(\R^2)$. With $\gamma_\e(\z,D_x)$ available, we can construct the good unknowns $(\wt p,\wt\theta)$ that satisfy a symmetric system. Throughout this process, numerous terms arise involving operations with operators such as $\gamma_\e(\z,D_x)$,  $J_\e(D_x)$, $J^{-1}_\e(D_x)$, $B(D)$ and $B^{-1}(D)$ and others. Therefore, a thorough examination of the properties of $\gamma_\e(\z,D_x)$ is crucial in the proof.  For further insights, readers are directed to Subsection 4.2.

\setcounter{equation}{0}
\section{Preliminary}
\subsection{Definitions and notations}

The notation $f\sim g$ means that there exists a constant $C$ such that $\f{1}{C}f\leq g\leq Cf$.  Notations $f\lesssim g$ and $g\gtrsim f$ mean that there exists a constant $C$ such that $f\leq Cg$. 

If  $A, B$ are two operators, $[A,B]=AB-BA$ denotes their commutator.

For any $s\in\R$, $H^s(\R^2)$ and $\dH^s(\R^2)$ denote the classical  $L^2$ based nonhomogeneous and homogenous Sobolev spaces with the norm $\|\cdot\|_{H^s}$ and $\|\cdot\|_{\dH^s}$ respectively.
The notation $\|\cdot\|_{L^p}$ stands for  the $L^p(\R^2)$ norm for $1\leq p \leq \infty$.

The Fourier transform of a tempered distribution $u\in\mathcal{S}'(\R^2)$ is denoted by $\widehat{u}$, which is defined by
\beno
\widehat{u}(\xi)\eqdefa\mathcal{F}(u)(\xi)=\int_{\R^2}e^{-ix\cdot\xi}u(x)dx.
\eeno
We use $\mathcal{F}^{-1}(f)$ to denote the inverse Fourier  transform of $f(\xi)$.

If $f$ and $u$ are two functions defined on $\R^2$, the  Fourier multiplier  $f(D)u$  is defined in term of Fourier transform, i.e.,
\beno
\widehat{f(D)u}(\xi)=f(\xi)\widehat{u}(\xi),
\eeno
where $D=(D_x,D_y)$ with $D_x=\f{1}{i}\p_x$ and $D_y=\f{1}{i}\p_y$.

We shall use notations
\beno
\langle\xi\rangle=\bigl(1+|\xi|^2\bigr)^{\f12},\quad\langle D\rangle=\bigl(1+|D|^2\bigr)^{\f12}.
\eeno

The $L^2(\R^2)$ inner product is denoted by $(f|g)_{L^2}\eqdefa\int_{\R^2}f(x)\overline{g}(x)dx$. And for any $s\in\R$, we shall also use the following notations
\beno
(f|g)_{H^s}\eqdefa(\langle D\rangle^sf\,|\,\langle D\rangle^s g)_{L^2},\quad\forall f,g\in H^s(\R^2),
\eeno
\beno
(f|g)_{\dH^s}\eqdefa(|D|^sf\,|\,|D|^s g)_{L^2},\quad\forall f,g\in\dH^s(\R^2).
\eeno

\subsection{Technical lemmas}
In this subsection, we  present several technical lemmas involving  product and commutator estimates. Firstly, we recall the classical tame product and commutator estimates in Sobolev spaces: 
\begin{enumerate}
\item[(1).]  if $s\geq0$, one has for any $f,g\in H^s\cap L^\infty(\R^2)$,
\beq\label{tame}
	\norm{f\cdot g}{H^s} \lesssim \norm{f}{H^s} \norm{g}{L^\infty}+\norm{f}{L^\infty}\norm{g}{H^s} ;
\eeq
\item[(2).] if $t_0>1$, $s\geq t_0+1$, one has for any $f\in H^s(\R^2),\, g\in H^{s-1}(\R^2)$,
\beq\label{commutator D}
\|[|D|^s,\, f]g\|_{L^2}\lesssim\|\na f\|_{H^{t_0}}\|g\|_{\dH^{s-1}}+\|f\|_{\dH^s}\|g\|_{H^{t_0}}\lesssim\|f\|_{H^s}\|g\|_{H^{s-1}}.
\eeq
\end{enumerate}
One can also verify \eqref{tame} and \eqref{commutator D} via  Fourier analysis. We omit details here.

Our first technical lemma concerns  the product and commutator estimates involving the following operators: 
\beno
J_\e=J_\e(D_x)=1-\sigma_1\e\p_x^2,\quad K_\e=K_\e(D_x)=(1-\sigma_2\e\p_x^2)(1-\sigma_3\e\p_x^2)^{-1},
\eeno
with $\sigma_1,\,\sigma_2,\sigma_3>0, \sigma_2\neq\sigma_3.$ We shall always assume that $\e\in(0,1)$ throughout the whole article.

\begin{lemma}\label{product estimates}
Assume that $s>3$. Then for any smooth enough functions $f,g$, there hold
\begin{enumerate} 
\item[1).] (Product estimates on $J_\e^{-1}$)  
	\beq\label{product estimate 1}
	\begin{aligned}
	&\|J_\e^{-\frac{1}{2}}(J_\e^{\f12}f\cdot J_\e^{\f12} g)\|_{H^r}\lesssim\|f\|_{H^s}\cdot\| g\|_{H^r},\quad r\in[0,s],\\
	&\|J_\e^{-\frac{3}{2}}(J_\e^{\f32}f\cdot J_\e^{-\f12} g)\|_{H^s}
	+\|J_\e^{-1}(J_\e^{\f12}f\cdot J_\e g)\|_{H^s}\lesssim\|f\|_{H^s}\cdot\| g\|_{H^s};
	\end{aligned}\eeq
\item[2).] (Commutator estimate on $J_\e^{-1}$) 
\beq\label{commutator 1a}
\Bigl\|J_\e^{-\frac{1}{2}}\Bigl([J_\e^{-1},f] J_\e^{\f32} g\Bigr)\Bigr\|_{H^s}\lesssim\sqrt\e\|f_x\|_{H^s}\cdot\| g\|_{H^s};
\eeq
\item[3).] (Commutator estimate on $|D|^s$) 
\beq\label{commutator D1}
\|J_\e^{-\f12}([|D|^s,\, f]J_\e^{\f12}g)\|_{L^2}\lesssim\|J^{\f12}_\e f\|_{H^s}\|g\|_{H^{s-1}};
\eeq
\item[4).] (Commutator estimate on $K_\e$) 
\beq\label{commutator K}
\|[K_\e, f]\p_x g\|_{H^s}\lesssim\|f_x\|_{H^s}\|g\|_{H^s}.
\eeq
\end{enumerate}
\end{lemma}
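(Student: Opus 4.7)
The proof of each of the four statements proceeds by direct Fourier analysis, using two elementary structural facts about the weights $J_\e(\xi_1)=1+\sigma_1\e\xi_1^2$ and $K_\e(\xi_1)$. First, since $t\mapsto\sqrt{1+t^2}$ is $1$-Lipschitz, one has the sub-additivity
\begin{equation*}
J_\e^{1/2}(\xi_1)\le J_\e^{1/2}(\eta_1)+\sqrt{\sigma_1\e}\,|\xi_1-\eta_1|\le J_\e^{1/2}(\eta_1)+J_\e^{1/2}(\xi_1-\eta_1),
\end{equation*}
and symmetrically with $\eta_1$ and $\xi_1-\eta_1$ interchanged. Second, the elementary symbol bounds $\sigma_1\e|\xi_1|/J_\e(\xi_1)\le\frac{1}{2}\sqrt{\sigma_1\e}$ and $\sigma_1\e\xi_1^2/J_\e(\xi_1)\le 1$ exhibit any combination $\sigma_1\e\partial_x^k J_\e^{-m}$ with $m\ge 1$ as a bounded Fourier multiplier carrying either a $\sqrt\e$ factor when $k$ is odd or a uniform bound when $k$ is even; this is the source of the $\sqrt\e$ gain in (2). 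Finally, since $s>3$, Sobolev embedding $H^s\hookrightarrow W^{2,\infty}(\R^2)$ yields $\|J_\e^{1/2}g\|_{L^\infty}\le\|g\|_{L^\infty}+\sqrt{\sigma_1\e}\|g_x\|_{L^\infty}\lesssim\|g\|_{H^s}$, uniformly in $\e\in(0,1)$.

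For item (1), I pass to Fourier: the bilinear symbol of $J_\e^{-1/2}(J_\e^{1/2}f\cdot J_\e^{1/2}g)$ is $J_\e^{-1/2}(\xi_1)J_\e^{1/2}(\eta_1)J_\e^{1/2}(\xi_1-\eta_1)$. Using sub-additivity to transfer the weight onto the $f$-factor, this kernel is bounded pointwise by $J_\e^{1/2}(\eta_1)+J_\e(\eta_1)$. Young's convolution inequality $L^1_\eta\ast L^2_\eta\to L^2_\xi$, together with the bound $\|(1+J_\e(\eta_1))\hat f\|_{L^1_\eta}\lesssim\|f\|_{H^s}$ (which follows from Cauchy--Schwarz against the weight $\langle\eta\rangle^{-s}$ for $s>3$), then gives the $r=0$ case. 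A standard Bony decomposition in the $x_1$-variable extends this to $r\in[0,s]$: in the low-high and high-low regimes sub-additivity concentrates the weight on one factor, and in the high-high regime the exterior $J_\e^{-1/2}$ cancels one power of $J_\e^{1/2}$. The second and third products in (1) follow by the same template after relabelling the weights.

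For item (2), the identity $[J_\e^{-1},f]=J_\e^{-1}[f,J_\e]J_\e^{-1}$ with $[f,J_\e]=\sigma_1\e(f_{xx}+2f_x\partial_x)$, substitution $h=J_\e^{3/2}g$, and the Leibniz identity $f_x\partial_x(J_\e^{1/2}g)=\partial_x(f_x J_\e^{1/2}g)-f_{xx}J_\e^{1/2}g$ together yield
\begin{equation*}
J_\e^{-1/2}\bigl([J_\e^{-1},f]J_\e^{3/2}g\bigr)=2\sigma_1\e\,J_\e^{-3/2}\partial_x\bigl(f_x J_\e^{1/2}g\bigr)-\sigma_1\e\,J_\e^{-3/2}\bigl(f_{xx}J_\e^{1/2}g\bigr).
\end{equation*}
The symbol bound $\sigma_1\e|\xi_1|/J_\e^{3/2}(\xi_1)\le C\sqrt\e$ factors the first term as $\sqrt\e$ times a bounded $H^s$-operator applied to $f_x\cdot J_\e^{1/2}g$, after which an $x_1$-Bony decomposition controls the product uniformly in $\e$ thanks to the cancellation of the exterior $J_\e^{-3/2}$ with the interior $J_\e^{1/2}$ in the high-high regime; the second term is handled analogously after writing $\sigma_1\e f_{xx}=\sqrt{\sigma_1\e}\,(\sqrt{\sigma_1\e}\partial_x)f_x$ and using $\sigma_1\e\xi_1^2/J_\e^{3/2}(\xi_1)\lesssim 1$. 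Item (4) is simpler: from $K_\e'(t)=2\e t(\sigma_2-\sigma_3)/(1+\sigma_3\e t^2)^2$ one computes $|tK_\e'(t)|\le C$ uniformly in $\e$, whence by the mean-value theorem $|K_\e(\xi_1)-K_\e(\xi_1-\eta_1)|\cdot|\xi_1-\eta_1|\lesssim|\eta_1|$, reducing the commutator to a standard tame product of $\partial_x f$ and $g$ via \eqref{tame}.

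Item (3) is the most delicate because it couples the isotropic multiplier $|D|^s$ with the anisotropic weight $J_\e^{1/2}(D_x)$. Since both are Fourier multipliers they commute pairwise, so the Fourier symbol of $J_\e^{-1/2}([|D|^s,f]J_\e^{1/2}g)$ is $J_\e^{-1/2}(\xi_1)J_\e^{1/2}(\xi_1-\eta_1)(|\xi|^s-|\xi-\eta|^s)$. Sub-additivity yields $J_\e^{-1/2}(\xi_1)J_\e^{1/2}(\xi_1-\eta_1)\le 1+J_\e^{1/2}(\eta_1)$, and the classical mean-value bound $||\xi|^s-|\xi-\eta|^s|\lesssim|\eta|(|\xi|^{s-1}+|\xi-\eta|^{s-1})$ reduces the estimate to two bilinear pieces, each controlled by a Bony split $|\eta|\le|\xi-\eta|$ versus the reverse combined with Sobolev embedding for the low-frequency factor. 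The resulting bound is $\|J_\e^{1/2}f\|_{H^s}\|g\|_{H^{s-1}}$ as stated. The principal obstacle throughout is the non-multiplicativity of $J_\e^{1/2}$: since it is only sub-additive in Fourier, the weight cannot be factored between the two arguments of a product by a naive Leibniz rule, and one must either invoke an $x_1$-Bony decomposition to concentrate the weight on one factor in each frequency regime, or combine the symbol identities with the Sobolev embedding above, in order to preserve uniformity in $\e\in(0,1)$.
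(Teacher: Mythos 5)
Your overall strategy is the same as the paper's: Fourier--side kernel bounds for the anisotropic weights, Young's inequality with the $L^1$ factor controlled through $H^{s-2}$ (this is where $s>3$ enters), and your items (3) and (4) are sound and essentially identical to the paper's treatment (the paper proves (4) from the bound $|K_\e(\xi_1)-K_\e(\eta_1)|\lesssim\sqrt\e\,|\xi_1-\eta_1|(1+\sigma_3\e\eta_1^2)^{-\f12}$, which is equivalent to your $|tK_\e'(t)|\lesssim1$ argument). The genuine gap is in how you dispatch the balanced-frequency interactions in items (1) and (2). The claim that ``in the high-high regime the exterior $J_\e^{-1/2}$ cancels one power of $J_\e^{1/2}$'' fails precisely when the two input frequencies are large and comparable but the output frequency $\xi_1$ is small: there $J_\e^{-1/2}(\xi_1)\approx 1$ and cancels nothing, while the interior weights contribute a full $J_\e$. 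What saves this regime --- and what the paper's kernel bound $q_1(\xi_1,\eta_1)\lesssim 1+\sigma_1\e\min\{|\eta_1|^2,|\xi_1-\eta_1|^2\}$ encodes --- is that the two interior weights are then comparable, so the entire $J_\e$ can be attached to whichever factor is put in $L^1$ via $H^{s-2}$, i.e.\ the factor \emph{not} carrying the Sobolev weight; your sub-additivity inequality does give this transfer, but it must replace the ``cancellation''. Note also that for intermediate $r\in(0,s)$ the high-low regime requires shifting $s-r$ derivatives between the factors; the paper avoids this bookkeeping by proving only $r=0$ and $r=s$ and interpolating.

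Item (2) contains a sharper version of the same problem. Once you extract $\sigma_1\e\,\p_xJ_\e^{-3/2}$ as ``$\sqrt\e$ times a bounded $H^s$-operator'', the estimate you would still need, $\|f_x\cdot J_\e^{1/2}g\|_{H^s}\lesssim\|f_x\|_{H^s}\|g\|_{H^s}$, is false uniformly in $\e$: take $f_x$ smooth of low frequency and $\widehat g$ concentrated at $|\xi_1|\sim N$ with $\sqrt\e\,N\gg1$, so that the left-hand side is of size $\sqrt\e\,N\,\|f_x\|_{L^\infty}\|g\|_{H^s}$. Moreover, at that point there is no exterior $J_\e^{-3/2}$ left to produce the cancellation you invoke. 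The exterior weight must be kept coupled to the bilinear splitting: when the output frequency is high it supplies the decay, and when the inputs are balanced with low output the interior $J_\e^{1/2}$ must be moved onto $f_x$, at the cost of weights absorbed by the $H^{s-2}$-controlled $L^1$ factor. This is exactly the paper's estimate $q_2(\xi_1,\eta_1)\lesssim\sqrt\e\,|\xi_1-\eta_1|\bigl(1+\sigma_1\e\min\{|\eta_1|^2,|\xi_1-\eta_1|^2\}\bigr)$, which yields $\sqrt\e\bigl(\|J_\e f_x\|_{H^{s-2}}\|g\|_{H^s}+\|f_x\|_{H^s}\|J_\e g\|_{H^{s-2}}\bigr)$. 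Your argument is repairable along these lines --- the needed transfer is a consequence of the sub-additivity you state at the outset --- but as written the key bilinear step in (2), and the high-high discussion in (1), do not close.
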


\begin{proof}
1). Firstly, it is easy to check that
\beno\begin{aligned}
\mathcal{F}\Bigl[J_\e^{-\f12}(J_\e^{\f12}f\cdot J_\e^{\f12} g)\Bigr](\xi)&=\f{1}{4\pi^2}\int_{\R^2}\underbrace{(1+\sigma_1\e\xi_1^2)^{-\f12}(1+\sigma_1\e\eta_1^2)^{\f12}\bigl(1+\sigma_1\e|\xi_1-\eta_1|^2\bigr)^{\f12}}_{q_1(\xi_1,\eta_1)}\hat{f}(\xi-\eta)\hat{g}(\eta)d\eta.
\end{aligned}\eeno

For $q_1(\xi_1,\eta_1)$, there holds
\beno
q_1(\xi_1,\eta_1)\lesssim
\left\{\begin{aligned}
&\bigl(1+\sigma_1\e|\xi_1-\eta_1|^2\bigr)^{\f12},\quad\text{for}\quad |\xi_1-\eta_1|<\f14|\eta_1|,\\
&(1+\sigma_1\e\eta_1^2)^{\f12},\quad\text{for}\quad |\eta_1|<\f14|\xi_1-\eta_1|,\\
&(1+\sigma_1\e\eta_1^2)\sim(1+\sigma_1\e|\xi_1-\eta_1|^2),\quad\text{for}\quad \f14|\eta_1|\leq|\xi_1-\eta_1|\leq4|\eta_1|,
\end{aligned}\right.
\eeno
which implies that
\beno
q_1(\xi_1,\eta_1)\lesssim 1+\sigma_1\e\min\{|\eta_1|^2,|\xi_1-\eta_1|^2\},
\eeno
\beno
\text{and}\quad\langle\xi\rangle^s q_1(\xi_1,\eta_1)\lesssim
\langle\eta\rangle^s\bigl(1+\sigma_1\e|\xi_1-\eta_1|^2\bigr)
+\langle\xi-\eta\rangle^s(1+\sigma_1\e\eta_1^2).
\eeno

Then we obtain
\beno\begin{aligned}
\|J_\e^{-\frac{1}{2}} ( J_\e^{\frac{1}{2}} f\cdot  J_\e^{\frac{1}{2}}g)\|_{H^s}\lesssim&\Bigl\|\int_{\R^2}\bigl(1+\sigma_1\e|\xi_1-\eta_1|^2\bigr)|\hat{f}(\xi-\eta)|\cdot \langle\eta\rangle^s|\hat{g}(\eta)|d\eta\Bigr\|_{L^2}\\
&+\Bigl\|\int_{\R^2}\langle\xi-\eta\rangle^s|\hat{f}(\xi-\eta)|\cdot (1+\sigma_1\e\eta_1^2)|\hat{g}(\eta)|d\eta\Bigr\|_{L^2},
\end{aligned}\eeno
which yields to
\beno\begin{aligned}
\|J_\e^{-\frac{1}{2}} ( J_\e^{\frac{1}{2}} f\cdot  J_\e^{\frac{1}{2}}g)\|_{H^s}&\lesssim\|\widehat{J_\e f}(\xi)\|_{L^1}\cdot\|\langle\eta\rangle^s\hat{g}(\eta)\|_{L^2}+\|\langle\eta\rangle^s\hat{f}(\eta)\|_{L^2}\cdot\|\widehat{J_\e g}(\xi)\|_{L^1}\\
&\lesssim \|\widehat{J_\e f}(\xi)\|_{L^1}\cdot\|g\|_{H^s}+\|f\|_{H^s}\cdot\|\widehat{J_\e g}(\xi)\|_{L^1}.
\end{aligned}\eeno

Since for $t_0>1$,
\beno
\|\widehat{J_\e f}(\xi)\|_{L^1}\leq\|(1+|\xi|^2)^{-\f{t_0}{2}}\|_{L^2}\cdot
\|(1+|\xi|^2)^{\f{t_0}{2}}\widehat{J_\e f}(\xi)\|_{L^2}\lesssim\|J_\e f\|_{H^{t_0}},
\eeno
we obtain 
\beq\label{Product 1}
\|J_\e^{-\frac{1}{2}} ( J_\e^{\frac{1}{2}} f\cdot  J_\e^{\frac{1}{2}}g)\|_{H^s}\lesssim\|J_\e f\|_{H^{t_0}}\cdot\|g\|_{H^s}+\|f\|_{H^s}\cdot\|J_\e g\|_{H^{t_0}}.
\eeq
By taking $t_0=s-2>1$, we obtain the first inequality of  \eqref{product estimate 1} for $r=s$.

Moreover, since 
\beno
q_1(\xi_1,\eta_1)\lesssim1+\sigma_1\e|\xi_1-\eta_1|^2,
\eeno
it is easy to check that
\beq\label{Product 2}
\|J_\e^{-\frac{1}{2}} ( J_\e^{\frac{1}{2}} f\cdot  J_\e^{\frac{1}{2}}g)\|_{L^2}\lesssim\|J_\e f\|_{H^{t_0}}\cdot\|g\|_{L^2}\lesssim\|f\|_{H^s}\cdot\|g\|_{L^2}.
\eeq
This is the first inequality of  \eqref{product estimate 1} for $r=0$. 

Due to \eqref{Product 1} and \eqref{Product 2}, we obtain the first inequality of  \eqref{product estimate 1} for any $r\in[0,s]$ by standard interpolation.


Similar derivation gives rise to the second inequality of \eqref{product estimate 1}.

\smallskip

2). To prove \eqref{commutator 1a}, we first have
\beno\begin{aligned}
&\quad\mathcal{F}\Bigl[J_\e^{-\frac{1}{2}}\Bigl([J_\e^{-1},f] J_\e^{\f32} g\Bigr)\Bigr](\xi)\\
&=\f{1}{4\pi^2}\int_{\R^2}\underbrace{(1+\sigma_1\e\xi_1^2)^{-\f12}\bigl[(1+\sigma_1\e\xi_1^2)^{-1}(1+\sigma_1\e\eta_1^2)-1\bigr](1+\sigma_1\e\eta_1^2)^{\f12}}_{q_2(\xi_1,\eta_1)}\hat{f}(\xi-\eta)\hat{g}(\eta)d\eta.
\end{aligned}\eeno

For $q_2(\xi_1,\eta_1)$, it is easy to check that
\beno
q_2(\xi_1,\eta_1)\lesssim
\sqrt\e|\xi_1-\eta_1|\bigl(1+\sigma_1\e\min\{|\eta_1|^2,|\xi_1-\eta_1|^2\}\bigr),
\eeno
which gives rise to 
\beno
\Bigl\|J_\e^{-\frac{1}{2}}\Bigl([J_\e^{-1},f] J_\e^{\f32} g\Bigr)\Bigr\|_{H^s}
\lesssim\sqrt\e\|J_\e f_x\|_{H^{t_0}}\|g\|_{H^s}+\sqrt\e\|f_x\|_{H^s}\|J_\e g\|_{H^{t_0}}.
\eeno
Then we obtain \eqref{commutator 1a} by taking $t_0=s-2>1$.

\smallskip

3). Following similar arguments as 1) and 2), we arrive at
 \beno
\bigl\|J_\e^{-\f12}\bigl([|D|^s,f] J_\e^{\f12} g\bigr)\bigr\|_{L^2}
\lesssim\|J^{\f12}_\e\na f\|_{H^{t_0}}\|g\|_{\dH^{s-1}}+\|J^{\f12}_\e f\|_{\dH^s}\|g\|_{H^{t_0}}\lesssim\|J^{\f12}_\e f\|_{H^s}\|g\|_{H^{s-1}},
\eeno
where we took $t_0=s-2>1$ in the last inequality. This is exactly \eqref{commutator D1}.

\smallskip

4). For $K_\e(D_x)$, it is easy to get 
\beno
|K_\e(\xi_1)-K_\e(\eta_1)|=\f{|\sigma_2-\sigma_3|\cdot\e|\xi_1+\eta_1|\cdot|\xi_1-\eta_1|}{(1+\sigma_3\e\xi_1^2)(1+\sigma_3\e\eta_1^2)}
\lesssim|\xi_1-\eta_1|\cdot\sqrt\e(1+\sigma_3\e\eta_1^2)^{-\f12}.
\eeno
Then following similar arguments as in previous steps, we obtain \eqref{commutator K}. Thus, the proof of the lemma is complete.
\end{proof}

The second technical lemma is about the commutator and product estimates involving the operators $A(D)$ and $B(D)$, where
\beno
A(\xi)\eqdefa\bigl(J_\e(\xi_1)\xi_1^2+\xi_2^2\bigr)^{\f12},\quad B(\xi)\eqdefa\bigl(K_\e(\xi_1)\xi_1^2+\xi_2^2\bigr)^{\f12}.
\eeno

\begin{lemma}\label{commutator lem}
Assume that $s>3$. Then for any smooth enough functions $f,g$, there hold
\begin{itemize}
\item[1).] (Estimates on $A(D)$) 
\begin{align}
&\|J_\e^{-\f12}\bigl([A(D),f]g\bigr)\|_{H^s}
\lesssim\bigl(\|f\|_{H^s}+\|J^{-\f12}_\e A(D) f\|_{H^s}\bigr)\|g\|_{H^s},
\label{commutator A2}\\
&\|J_\e^{-\f12}A(D)(fg)\|_{H^s}
\lesssim\bigl(\|f\|_{H^s}+\|J^{-\f12}_\e A(D) f\|_{H^s}\bigr)
\bigl(\|g\|_{H^s}+\|J^{-\f12}_\e A(D) g\|_{H^s}\bigr),\label{product A2}\\
&
\|J_\e^{-\f12}\bigl([A(D),f]g\bigr)\|_{L^2}\lesssim\|f\|_{H^s}\|g\|_{L^2};\label{commutator A3}
\end{align}
\item[2).]  (Estimates on $A^{-1}(D)$)
\begin{align}
&\|J_\e^{-\f12}\bigl([A^{-1}(D),f]A(D)g\bigr)\|_{\dH^s}\lesssim\|f\|_{H^s}\cdot\| g\|_{H^{s-1}},\label{commutator A1}\\
&\|J_\e^{-\f12}A^{-1}(D)\bigl(fA(D)J_\e^{\f12}g\bigr)\|_{\dH^s}\lesssim\|f\|_{H^s}\cdot\|g\|_{H^s};\label{product A1}
\end{align}
\item[3).](Estimates on $B(D)$)
\begin{align}
&\|J_\e^{-\f12}\bigl([B(D),f]g\bigr)\|_{H^s}
\lesssim\|J^{-\f12}_\e\na f\|_{H^s}\|J^{-\f12}_\e g\|_{H^s},\label{commutator B1}\\
&\|[B(D),f]g\|_{H^{r}}\lesssim\|f\|_{H^s}\| g\|_{H^{r}},\quad r\in [0,s-1],\label{commutator B3a}\\
&\|J^{\f12}_\e([B(D),f]g)\|_{H^r}\lesssim\|J_\e^{\f12}f\|_{H^s}\|J^{\f12}_\e g\|_{H^r},\quad r\in[0,s-1];\label{commutator B3}
\end{align} 
\item[4).](Estimates on $B^{-1}(D)$)
\begin{align}
&\|J_\e^{-\f12}([B^{-1}(D),f]B(D)g)\|_{\dH^s}
\lesssim  \| J_\e^{-\f12} \na f \|_{H^{s-1}} \| J_\e^{-\f12} g \|_{H^{s-1}} ,\label{commutator B2}\\
&\|J_\e^{-\f12}B^{-1}(D)\bigl(fB(D)J_\e^{\f12}g\bigr)\|_{\dH^s}\lesssim\|f\|_{H^s}\cdot\|g\|_{H^s}.\label{product B2}
\end{align}
\end{itemize}
\end{lemma}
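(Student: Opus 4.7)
The plan is to prove all four parts via Fourier analysis, parallel to the proof of Lemma 2.1. For each commutator or product estimate, I would write out the Fourier transform of the expression, extract the relevant symbol $m(\xi,\eta)$, and estimate $\|\langle\xi\rangle^s m(\xi,\eta)\hat f(\xi-\eta)\hat g(\eta)\|$ after splitting the integration into the three frequency regions $|\xi-\eta|\le\tfrac14|\eta|$, $|\eta|\le\tfrac14|\xi-\eta|$ and $\tfrac14|\eta|\le|\xi-\eta|\le 4|\eta|$, and then applying Young's inequality together with Cauchy--Schwarz, as in the previous lemma. The key pointwise facts I would install at the start are: $A(\xi)\sim J_\e^{1/2}(\xi_1)|\xi_1|+|\xi_2|$, $B(\xi)\sim|\xi|$ (because $K_\e(\xi_1)\sim 1$ uniformly in $\e$), the Lipschitz-type bounds $|A(\xi)-A(\eta)|\lesssim J_\e^{1/2}(\xi_1-\eta_1)|\xi_1-\eta_1|+|\xi_2-\eta_2|$ and $|B(\xi)-B(\eta)|\lesssim|\xi-\eta|$ (obtained either from the gradient or from $A(\xi)^2-A(\eta)^2$ divided by $A(\xi)+A(\eta)$), and the subadditivity $A(\xi)\lesssim A(\eta)+A(\xi-\eta)$. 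These reduce every symbol in Parts 1--4 to an expression in which all the $\e$-dependence is packaged through factors of $J_\e$.

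For Part 1, estimate \eqref{commutator A2} follows by writing the symbol $A(\xi)-A(\eta)$, using the Lipschitz bound above so that one derivative falls on $f$, and then checking in each frequency region that $\langle\xi\rangle^s J_\e^{-1/2}(\xi_1)[J_\e^{1/2}(\xi_1-\eta_1)|\xi_1-\eta_1|+|\xi_2-\eta_2|]\lesssim \langle\xi-\eta\rangle^s J_\e^{-1/2}(\xi_1-\eta_1)A(\xi-\eta)+\langle\eta\rangle^s$-terms, controlling the former by $\|J_\e^{-1/2}A(D)f\|_{H^s}$ via an $L^1$-$L^2$ split using a $t_0=s-2$ term. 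Estimate \eqref{product A2} uses subadditivity instead, so both factors contribute their mixed norm. Estimate \eqref{commutator A3} is the low-regularity version of \eqref{commutator A2}: the bound $|A(\xi)-A(\eta)|\lesssim J_\e^{1/2}(\xi_1-\eta_1)|\xi_1-\eta_1|+|\xi_2-\eta_2|$ directly gives a gain of one derivative on $f$, so Sobolev embedding with $s>3$ on $f$ and $L^2$ on $g$ closes.

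Part 2 is the hardest. For \eqref{commutator A1}, the symbol is $(A(\eta)-A(\xi))/A(\xi)\cdot A(\eta)$, which one rewrites as a paraproduct-like decomposition: on $|\xi-\eta|\le\tfrac14|\eta|$, I have $A(\xi)\sim A(\eta)$ and the Lipschitz bound gives $|A(\eta)-A(\xi)|/A(\xi)\lesssim[J_\e^{1/2}(\xi_1-\eta_1)|\xi_1-\eta_1|+|\xi_2-\eta_2|]/A(\eta)$, so $A(\eta)$ cancels and one derivative falls on $f$; on $|\eta|\le\tfrac14|\xi-\eta|$, $A(\xi)\sim A(\xi-\eta)$ and the quotient is bounded by $1+A(\eta)/A(\xi-\eta)$, transferring regularity to $g$; the diagonal region is symmetric. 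The $\dot H^s$ norm on the left is natural because $A^{-1}$ kills low frequencies of the product $f A(D)g$, and the homogeneity $|\xi|^s$ on the outside is exactly what cancels $A^{-1}(\xi)$ against $A(\xi)$ in the diagonal balance. Estimate \eqref{product A1} is proved similarly: the factor $J_\e^{1/2}$ on $g$ is arranged so that the combination $J_\e^{-1/2}A^{-1}(D)[\,\cdot A(D)J_\e^{1/2}g]$ behaves like a tame product at the $H^s$ level after the same three-region split. The main obstacle throughout this part is that the $\dot H^s$ norm on the output forbids any genuine loss at low frequency, so every bound must display explicit cancellation, not merely boundedness, of the inverse operator.

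For Parts 3 and 4, since $K_\e\sim 1$ one has $B(\xi)\sim|\xi|$, so $B(D)$ is a mild perturbation of $|D|$ and the estimates follow the same scheme as Parts 1--2 but with the simplification that no anisotropic $J_\e$ factor enters the $\xi$-direction; the only subtlety is that the pointwise estimate $|B(\xi)-B(\eta)|\lesssim|\xi-\eta|$ contains a factor $\sqrt\e|\xi_1-\eta_1|^2/(1+\e\eta_1^2)^{1/2}$ coming from $K_\e(\xi_1)-K_\e(\eta_1)$, which is precisely what produces the $J_\e^{-1/2}\nabla f$ in \eqref{commutator B1} and the absence of a $J_\e$ weight in \eqref{commutator B3a}. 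The weighted versions \eqref{commutator B3} and \eqref{product B2} are obtained by comparing $J_\e(\xi_1)$ with $J_\e(\eta_1)J_\e(\xi_1-\eta_1)$ region by region, exactly as in the proof of \eqref{product estimate 1}. Finally, \eqref{commutator B2} is the $B$-analogue of \eqref{commutator A1} and is actually easier because $B(\xi)^{-1}\sim|\xi|^{-1}$ is isotropic, so the diagonal cancellation in the $\dot H^s$ estimate is cleaner. Throughout, the constants depend only on $s$ and $\sigma_1,\sigma_2,\sigma_3$, not on $\e\in(0,1)$.
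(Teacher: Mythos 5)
Your overall machinery (Fourier symbols, the three-region frequency split, Young's inequality with a $t_0=s-2$ splitting, and reducing product estimates to commutator estimates plus Lemma \ref{product estimates}) is the same as the paper's, but the key pointwise bound you install at the start for $A$ is false, and parts 1) and 2) of your plan lean on it. The inequality $|A(\xi)-A(\eta)|\lesssim J_\e^{\f12}(\xi_1-\eta_1)\,|\xi_1-\eta_1|+|\xi_2-\eta_2|$ fails: taking $\xi_2=\eta_2=0$ one has $A(\xi_1,0)=|\xi_1|(1+\sigma_1\e\xi_1^2)^{\f12}$, so for $\xi_1=\eta_1+1$ and $|\eta_1|$ large the left side is of size $1+\sqrt\e\,|\eta_1|$ while your right side stays $O(1)$. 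What the quotient $\bigl(A(\xi)^2-A(\eta)^2\bigr)/\bigl(A(\xi)+A(\eta)\bigr)$ actually yields is $|A(\xi)-A(\eta)|\lesssim(1+\sqrt\e\,|\xi_1+\eta_1|)\,|\xi_1-\eta_1|+|\xi_2-\eta_2|$, i.e.\ the $J_\e^{\f12}$ factor sits on $\max\{|\eta_1|,|\xi_1-\eta_1|\}$, not on the difference frequency; this is the content of the paper's estimate \eqref{L0}.

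The discrepancy is exactly in the delicate region $|\xi_1-\eta_1|\leq\f14|\eta_1|$ (low-frequency $f$ against $g$ carrying high $\xi_1$-frequency), where the extra factor $1+\sqrt\e\,|\eta_1|$ must be absorbed by the outer weight $J_\e^{-\f12}(\xi_1)$ using $|\xi_1|\sim|\eta_1|$. Your argument for \eqref{commutator A3} never invokes this weight: with your bound you would in effect be proving the unweighted estimate $\|[A(D),f]g\|_{L^2}\lesssim\|f\|_{H^s}\|g\|_{L^2}$, which is false uniformly in $\e$ (test it with $\widehat f$ supported near the origin and $\widehat g$ near $(N,0)$ with $\sqrt\e N\gg1$; the commutator symbol there is of size $1+\sqrt\e N$ times one derivative of $f$). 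The same false premise is fed into your region check for \eqref{commutator A2} and into the near-diagonal case of your treatment of \eqref{commutator A1}. The repair is precisely the paper's: keep the $\max$, use $J_\e^{-\f12}(\xi_1)J_\e^{\f12}(\eta_1)\sim1$ in that region, and your three-region/Young scheme with $t_0=s-2$ then closes. The rest of your plan — subadditivity for \eqref{product A2} (the paper instead reduces it to \eqref{commutator A2} plus \eqref{product estimate 1}), $B(\xi)\sim|\xi|$ with region-by-region comparison of $J_\e$ weights for parts 3) and 4), and the identity $[B^{-1}(D),f]B(D)g=-B^{-1}(D)\bigl([B(D),f]g\bigr)$ for \eqref{commutator B2} — is sound and matches the paper. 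One small correction: the $J_\e^{-\f12}\na f$ on the right of \eqref{commutator B1} comes from distributing the outer $J_\e^{-\f12}$ through \eqref{product estimate 1}, not from the $K_\e(\xi_1)-K_\e(\eta_1)$ factor as you suggest, though this does not affect the validity of that part.
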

\begin{proof} 
1). For \eqref{commutator A2}, we have
\beno
\mathcal{F}\Bigl(J_\e^{-\f12}\bigl([A(D),f]g\Bigr)(\xi)
=\f{1}{4\pi^2}\int_{\R^2}\underbrace{(1+\sigma_1\e\xi_1^2)^{-\f12}\bigl(A(\xi)-A(\eta)\bigr)}_{q_3(\xi,\eta)}\hat{f}(\xi-\eta)\hat{g}(\eta)d\eta.
\eeno
Since $A(\xi) = \Bigl((1+\sigma_1\e\xi_1^2)\xi_1^2+\xi_2^2\Bigr)^{\f12}\sim(1+ \sqrt\e |\xi_1|)|\xi_1|+|\xi_2|$, we have
\beno\begin{aligned}
|A(\xi)-A(\eta)|&=\f{|A(\xi)^2-A(\eta)^2|}{A(\xi)+A(\eta)}
\lesssim(1+\sqrt\e|\xi_1+\eta_1|)\cdot|\xi_1-\eta_1|+|\xi_2-\eta_2|\\
&\lesssim(1+\sqrt\e\max\{|\xi_1-\eta_1|,|\eta_1|\})\cdot|\xi_1-\eta_1|+|\xi_2-\eta_2|
\end{aligned}\eeno
which implies
\beq\label{L0}
|q_3(\xi,\eta)|\lesssim
\left\{\begin{aligned}
&|\xi_1-\eta_1|+(1+\sigma_1\e\eta_1^2)^{-\f12}|\xi_2-\eta_2|,\quad\text{for}\quad|\xi_1-\eta_1|\leq\f14|\eta_1|,\\
&|\xi_1-\eta_1|+(1+\sigma_1\e|\xi_1-\eta_1|^2)^{-\f12}|\xi_2-\eta_2|,\quad\text{for}\quad|\xi_1-\eta_1|\geq4|\eta_1|,\\
&(1+\sigma_1\e|\eta_1|^2)^{\f12}\cdot|\xi_1-\eta_1|+|\xi_2-\eta_2|,\quad\text{for}\quad \f14|\eta_1|\leq|\xi_1-\eta_1|\leq4|\eta_1|.
\end{aligned}\right.\eeq
Then we have
\beno
\langle\xi\rangle^s|q_3(\xi,\eta)|\lesssim
A(\xi-\eta)\cdot\langle\eta\rangle^s+(1+\sigma_1\e|\xi_1-\eta_1|^2)^{-\f{1}{2}}A(\xi-\eta)\langle\xi-\eta\rangle^s
\cdot(1+\sigma_1\e\eta_1^2),
\eeno
which gives rise to
\beno\begin{aligned}
\|J_\e^{-\f12}\bigl([A(D),f]g\bigr)\|_{H^s}&\lesssim\|A(D)f\|_{H^{t_0}}\|g\|_{H^s}+\|J^{-\f12}_\e A(D)f\|_{H^s}\|J_\e g\|_{H^{t_0}}\\
&\lesssim\bigl(\|f\|_{H^s}+\|J^{-\f12}_\e A(D)f\|_{H^s}\bigr)\|g\|_{H^s},
\end{aligned}\eeno
where we took $t_0=s-2>1$ in the last inequality. This is exactly \eqref{commutator A2}.

Consequently, since 
\beno
J_\e^{-\f12}A(D)(fg)=J_\e^{-\f12}\bigl([A(D),f]g\bigr)+J_\e^{-\f12}\bigl(fA(D)g\bigr),
\eeno
using \eqref{product estimate 1} and \eqref{commutator A2}, we obtain
\beno\begin{aligned}
\|J_\e^{-\f12}A(D)(fg)\|_{H^s}&\lesssim\bigl(\|f\|_{H^s}+\|J_\e^{-\f12}A(D)f\|_{H^s}\bigr)\|g\|_{H^s}+\|J_\e^{-\f12}f\|_{H^s}\|J_\e^{-\f12}A(D)g\|_{H^s}\\
&\lesssim\bigl(\|f\|_{H^s}+\|J_\e^{-\f12}A(D)f\|_{H^s}\bigr)\bigl(\|g\|_{H^s}
+\|J_\e^{-\f12}A(D)g\|_{H^s}\bigr).
\end{aligned}\eeno
Then we obtain the estimate \eqref{product A2}. 

Moreover, due to \eqref{L0}, we have $|q_3(\xi,\eta)|\lesssim A(\xi-\eta)$ and
\beno
\|J^{-\f12}_\e\bigl([A(D),f]g\bigr)\|_{L^2}\lesssim\|A(D)f\|_{H^{t_0}}\|g\|_{L^2}
\lesssim\|f\|_{H^s}\|g\|_{L^2},
\quad\forall\,f\in H^{s}(\R^2),\, g\in L^2(\R^2),
\eeno
where we took $t_0=s-2>1$ in the last inequality.
This is \eqref{commutator A3}.

\medskip
2). For \eqref{commutator A1}, we firstly have
\beno
\mathcal{F}\Bigl(J_\e^{-\f12}\bigl([A^{-1}(D),f]A(D)g\Bigr)(\xi)
=\f{1}{4\pi^2}\int_{\R^2}\underbrace{(1+\sigma_1\e\xi_1^2)^{-\f12}A^{-1}(\xi)\bigl(A(\eta)-A(\xi)\bigr)}_{q_4(\xi,\eta)}\hat{f}(\xi-\eta)\hat{g}(\eta)d\eta.
\eeno
Since
\beno
|q_4(\xi,\eta)|=A^{-1}(\xi)|q_3(\xi,\eta)|\lesssim |\xi|^{-1}|q_3(\xi,\eta)|
\eeno
using \eqref{L0}, we have for $s>3$
\beno\begin{aligned}
|\xi|^s|q_4(\xi,\eta)|&\lesssim(|\eta|^{s-1}+|\xi-\eta|^{s-1})|q_3(\xi,\eta)|\\
&\lesssim
(1+\sigma_1\e|\xi_1-\eta_1|^2)^{\f12}|\xi-\eta|\cdot|\eta|^{s-1}+|\xi-\eta|^s\cdot(1+\sigma_1\e\eta_1^2)^{\f12},
\end{aligned}\eeno
which yields to
\beno
\|J_\e^{-\f12}\bigl([A^{-1}(D),f]A(D)g\bigr)\|_{\dH^s}
\lesssim\|J_\e^{\f12}\na f\|_{H^{t_0}}\cdot\|g\|_{\dH^{s-1}}
+\|f\|_{\dH^s}\cdot\|J_\e^{\f12} g\|_{H^{t_0}}\lesssim\|f\|_{H^s}\cdot\|g\|_{H^{s-1}},
\eeno
where we took $t_0=s-2>1$  in the last inequality. Thus, we obtain \eqref{commutator A1}.

Consequently, since
\beno
J_\e^{-\f12}A^{-1}(D)\bigl(fA(D)J_\e^{\f12}g\bigr)=J_\e^{-\f12}\bigl(fJ_\e^{\f12}g\bigr)+J_\e^{-\f12}\bigl([A^{-1}(D),f]A(D)J_\e^{\f12}g\bigr)
\eeno
using \eqref{product estimate 1} and \eqref{commutator A1}, we get that
\beno
\|J_\e^{-\f12}A^{-1}(D)\bigl(fA(D)J_\e^{\f12}g\bigr)\|_{\dH^s}
\lesssim\|J_\e^{-\f12}f\|_{H^s}\|g\|_{H^s}+\|f\|_{H^s}\cdot\|J_\e^{\f12} g\|_{H^{s-1}}
\lesssim\|f\|_{H^s}\cdot\|g\|_{H^s}.
\eeno
This is exactly \eqref{product A1}.

\medskip

3). For $B(D)$, it is easy to check that
\beno
|B(\xi)-B(\eta)|=\f{|B(\xi)^2-B(\eta)^2|}{B(\xi)+B(\eta)}\lesssim|\xi-\eta|.
\eeno
which along with the fact $s>3$ and \eqref{product estimate 1} give rises to 
\beno
\|J^{-\f12}_\e\bigl([B(D),f]g\bigr)\|_{H^s}
\lesssim\|J^{-\f12}_\e\na f\|_{H^s}\|J^{-\f12}_\e g\|_{H^s}
\eeno
This is the desired estimate \eqref{commutator B1}.


\medskip

4). Since $B(D) \sim |\xi|$ and $[B^{-1}(D),f] B(D) g = -B^{-1}(D) ( [B(D),f] g)$, we have
\begin{align*}
	\| J_\e^{-\f12} \bigl( [B^{-1}(D),f] B(D) g \bigr)  \|_{\dot{H}^s} \sim \|  J_\e^{-\f12} \bigl( [B(D),f] g  \bigr)  \|_{\dot{H}^{s-1}},  
\end{align*}
which along with \eqref{commutator B1} yields to \eqref{commutator B2}.

The proofs of \eqref{product B2}, \eqref{commutator B3a} and \eqref{commutator B3}
 are similar to the previous proofs. We omit details here.
\end{proof}

The last technical lemma is about the estimate involving the composition function.

\begin{lemma}\label{composition lemma}
Assume that $s>1$, $F(\cdot)\in C^\infty(\R)$, $F(0)=0$, $F'\in L^\infty(\R)$. Then for any $u\in H^s(\R^2)$,  we have $F\bigl(u(\cdot)\bigr)\in H^s(\R^2)$ and
\beq\label{composition}
\|F(u(\cdot))\|_{H^s}\leq C(\|F'\|_{L^\infty},\|u\|_{L^\infty})\|u\|_{H^s},
\eeq
where $C(\lambda_1,\lambda_2)>0$ is a nondecreasing constant dependent on $\lambda_1$ and $\lambda_2$. 
\end{lemma}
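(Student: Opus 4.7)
The plan is to reduce the statement to a product estimate and then to control the remaining factor via a Littlewood--Paley telescoping. Using $F(0)=0$, I write
$$F(u) = \Bigl(\int_0^1 F'(\tau u)\, d\tau\Bigr)\,u =: G(u)\cdot u,$$
so that $\|G(u)\|_{L^\infty}\leq \|F'\|_{L^\infty}$. The tame product estimate \eqref{tame}, together with the embedding $H^s(\R^2)\hookrightarrow L^\infty(\R^2)$ valid for $s>1$, yields
$$\|F(u)\|_{H^s}\lesssim \|F'\|_{L^\infty}\|u\|_{H^s} + \|G(u)\|_{H^s}\|u\|_{L^\infty}.$$
The first term already has the desired form, so the task reduces to estimating $\|G(u)\|_{H^s}$ in terms of $\|u\|_{L^\infty}$ and $\|u\|_{H^s}$, where $G$ is again smooth with $G(0)=F'(0)$.

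To that end, I apply a dyadic Littlewood--Paley decomposition $\mathrm{Id}=\sum_{j\geq -1}\Delta_j$ with the low-frequency cut-off $S_j=\sum_{k<j}\Delta_k$, and telescope
$$G(u) - G(0) = \bigl(G(S_0 u)-G(0)\bigr) + \sum_{j\geq 0}\bigl[G(S_{j+1}u) - G(S_j u)\bigr] = \bigl(G(S_0 u)-G(0)\bigr) + \sum_{j\geq 0} m_j\,\Delta_j u,$$
with $m_j(x):= \int_0^1 G'\bigl((S_j u + t\,\Delta_j u)(x)\bigr)\, dt$. Since $|S_j u|,|\Delta_j u|\lesssim \|u\|_{L^\infty}$ uniformly in $j$, we have $\|m_j\|_{L^\infty}\leq C(\|u\|_{L^\infty})$, the constant involving $\sup_{|\lambda|\lesssim \|u\|_{L^\infty}}|G'(\lambda)|$ (finite by smoothness of $F$). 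Square-summing $2^{2js}\|m_j\Delta_j u\|_{L^2}^2$ and handling the low-frequency piece $G(S_0 u)-G(0) = \int_0^1 G'(\tau S_0 u)\,d\tau\cdot S_0 u$ via Bernstein's inequality delivers $\|G(u)\|_{H^s}\leq C(\|u\|_{L^\infty})\|u\|_{H^s}$. Combining with the reduction above yields \eqref{composition}.

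The main technical point is the paraproduct sum. The multiplier $m_j$ is not itself frequency-localized, so the $H^s$-summation is not entirely orthogonal; however the spectrum of $m_j\,\Delta_j u$ is controlled by the low-frequency nature of $S_j u$ and the localization of $\Delta_j u$ at scale $2^j$, so a standard almost-orthogonality argument reduces the sum to
$$\sum_{j\geq 0} 2^{2js}\|m_j\|_{L^\infty}^2\|\Delta_j u\|_{L^2}^2 \,\lesssim\, C(\|u\|_{L^\infty})^2\|u\|_{H^s}^2,$$
which is the classical Meyer--Moser estimate underlying all such composition results. This is the only delicate step; once it is in place, the proposed constant depends only on $\|F'\|_{L^\infty}$ and on $\|u\|_{L^\infty}$ (through the smoothness of $F$ evaluated on a bounded interval), as claimed.
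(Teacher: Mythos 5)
Your route is the classical Meyer telescoping argument, i.e.\ essentially a re-derivation of the very composition theorem (Theorem 2.61 of \cite{BCD}) that the paper simply cites, combined with the trivial $L^2$ bound $\|F(u)\|_{L^2}\lesssim\|F'\|_{L^\infty}\|u\|_{L^2}$. As written, however, there are two concrete problems. First, the reduction $F(u)=G(u)\,u$ followed by the tame estimate \eqref{tame} is not legitimate, because $G(u)=\int_0^1F'(\tau u)\,d\tau$ tends to $F'(0)$ at spatial infinity; when $F'(0)\neq0$ (e.g.\ $F(z)=\sin z$) one has $G(u)\notin L^2(\R^2)$, so $\|G(u)\|_{H^s}=\infty$ and the claimed bound $\|G(u)\|_{H^s}\leq C(\|u\|_{L^\infty})\|u\|_{H^s}$ is false. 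The fix is standard but must be made: split $F(u)=F'(0)\,u+\bigl(G(u)-G(0)\bigr)u$ and estimate only $G(u)-G(0)$, which does vanish at infinity. Second, and more seriously, the ``standard almost-orthogonality argument'' you invoke to sum $\sum_{j}m_j\,\Delta_j u$ in $H^s$ cannot rest on $\|m_j\|_{L^\infty}$ alone: since $m_j$ is not frequency-localized, an $m_j$ oscillating at frequency $\gg 2^j$ would transport $\Delta_j u$ to much higher frequencies, and the inequality $\bigl\|\sum_j m_j\Delta_j u\bigr\|_{H^s}^2\lesssim\sum_j 2^{2js}\|m_j\|_{L^\infty}^2\|\Delta_j u\|_{L^2}^2$ fails in general. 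The genuine content of Meyer's proof is the symbol-type bounds $\|\partial^\alpha m_j\|_{L^\infty}\leq C_\alpha(\|u\|_{L^\infty})\,2^{j|\alpha|}$ for $|\alpha|$ up to roughly $[s]+1$ (obtained from Bernstein applied to $S_ju$, $\Delta_j u$ and the smoothness of $F$ on the range of $u$), together with a summation lemma for series $\sum_j u_j$ whose terms satisfy such derivative estimates; only then does the $\ell^2$ sum over $2^{js}\|\Delta_j u\|_{L^2}$ appear. Without this, the crucial step of your proposal is exactly the statement being proved, so the argument is circular rather than complete. (A similar remark applies to the low-frequency piece: $G(S_0u)-G(0)$ is not spectrally localized, so Bernstein must be applied to $S_0u$ inside chain-rule estimates of $\nabla^k G(S_0u)$, not to the composed function itself.)

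By contrast, the paper disposes of the lemma in one line: it quotes the composition theorem in homogeneous Besov spaces from \cite{BCD} on $\dB^{s}_{2,2}\cap\dB^{1/2}_{2,2}\cap L^\infty\supset H^s(\R^2)$ and adds the elementary $L^2$ estimate. If you wish to keep a self-contained proof, you must either carry out the derivative bounds on $m_j$ and the corresponding summation lemma in full, or restructure along the paper's lines and cite the Besov composition theorem directly; in either case the constant will depend on higher derivatives of $F$ on the interval $[-\|u\|_{L^\infty},\|u\|_{L^\infty}]$ (as your parenthetical concedes), which is consistent with how the lemma is actually used in the paper.
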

Lemma \ref{composition lemma} is a consequence of Theorem 2.61 in \cite{BCD}, since $H^s(\R^2)\subset\dB^{\f12}_{2,2}(\R^2)\cap\dB^s_{2,2}(\R^2)\cap L^\infty(\R^2)$ and 
$
\|F(u)\|_{L^2}=\|F(u)-F(0)\|_{L^2}\lesssim\|F'\|_{L^\infty}\|u\|_{L^2}.
$

\setcounter{equation}{0}
\section{Proof of Theorem \ref{main theorem} for case $a=f=g=0, b=d=e=\f13,c=-\f13$}
\subsection{The reduction of (\ref{WTB 2}) for case 1}
For case $a=f=g=0,b=d=e=\f13,c=-\f13$, system \eqref{WTB 2} reads 
\beq\label{WTB case 3}
\left\{\begin{aligned}
	&(1-\f\e3  \p_x^2)v_t+\z_x+\e vv_x+\e wv_y+\f\e2\z \z_x=0,\quad t>0,\, (x,y)\in\R^2,\\
	&(1-\f\e3 \e \p_x^2)w_t+\z_y+\e v w_x+\e w w_y+ \f\e2\z \z_y=0,\\
	&(1-\f{\e}{3} \p_x^2)\z_t+(1-\f\e3 \p_x^2)v_x+w_y+\e  v \z_x+\e w\z_y+ \f\e2 \z v_x + \f\e2\z w_y=0,
\end{aligned}\right.\eeq

Recall that the eigenvalues of the linearized system of \eqref{WTB case 3} is $\lambda_{1,\pm}(\xi)=\pm i\Lambda_1(\xi)$ with 
\beq\label{eigen value 1}
\Lam_1(\xi) = \left( \xi_1^2\cdot\frac{1}{1+\f\e3 \xi_1^2}+{\xi_2^2} \cdot \frac{1}{(1+\f\e3 \xi_1^2)^2} \right)^{\f12}=(1+\f\e3\xi_1^2)^{-1}\Bigl((1+\f\e3\xi_1^2)\xi_1^2+\xi_2^2\Bigr)^{\f12}.
\eeq
In this section, we shall use the notations $J_\e=J_\e(D_x)=1-\f{\e}{3}\p_x^2$, $A(D)=J_\e\Lambda_1(D)$ and 
\beq\label{def of A}
A(\xi)=\Bigl((1+\f\e3\xi_1^2)\xi_1^2+\xi_2^2\Bigr)^{\f12}
\sim(1+\sqrt\e|\xi_1|)|\xi_1|+|\xi_2|.
\eeq

To symmetrize the linear part of system \eqref{WTB case 3}, we define
\beq\label{new unknows}
p\eqdefa v_x +J_\e^{-1}w_y,\quad\theta\eqdefa \Lambda_1(D)\z.
\eeq
Due to the condition $v_y=w_x$,  we have
\beq\label{v w in term of p}
v=-A^{-2}(D)J_\e\p_x p,\quad w=-A^{-2}(D)J_\e\p_yp,\quad\z=A^{-1}(D)J_\e\theta.
\eeq
Then we get the following equivalent relation between $(v,w,\z)$ and $(p,\theta)$.

\begin{lemma}\label{lem for energy functional 1}
Assume that $(v,w,\z)$ are smooth enough functions satisfying $v_y=w_x$ and $(p,\theta)$ are defined in \eqref{new unknows}. Then there holds for any $s\geq 0$ 
\beq\label{equivalent functional 1}
\|J_\e^{\f12}p\|_{\dH^s}+\|J_\e^{\f12}\theta\|_{\dH^s}
\sim\|(J_\e^{\f12}v_x,v_y)\|_{\dH^s}+\|(w_x,J^{-\f12}_\e w_y)\|_{\dH^s}
+\|(\z_x,J^{-\f12}_\e \z_y)\|_{\dH^s}.
\eeq
\end{lemma}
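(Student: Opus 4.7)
The plan is to work entirely in Fourier variables, since $p$ and $\theta$ are defined as linear Fourier multipliers of $(v,w,\z)$, and conversely the curl-free constraint $\xi_2\hat v = \xi_1\hat w$ yields the explicit inversion formulae \eqref{v w in term of p} at the symbol level: $\hat v = -i\xi_1 J_\e A^{-2}\hat p$, $\hat w = -i\xi_2 J_\e A^{-2}\hat p$, $\hat\z = J_\e A^{-1}\hat\theta$. Both directions of the equivalence therefore reduce to pointwise comparisons of weighted symbols, all of which follow from the identity $A^2(\xi) = J_\e(\xi_1)\xi_1^2 + \xi_2^2$.

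For the direction LHS $\lesssim$ RHS, the bound for $p$ is immediate: from $J_\e^{1/2}p = J_\e^{1/2}v_x + J_\e^{-1/2}w_y$ the triangle inequality in $\dH^s$ absorbs $\|J_\e^{1/2}p\|_{\dH^s}$ into two terms already present on the RHS. For $\theta$, I write $J_\e^{1/2}\theta = J_\e^{-1/2}A(D)\z$ and use the exact factorization $J_\e^{-1}A^2 = \xi_1^2 + J_\e^{-1}\xi_2^2$; Plancherel then gives $\|J_\e^{1/2}\theta\|_{\dH^s}^2 = \|\z_x\|_{\dH^s}^2 + \|J_\e^{-1/2}\z_y\|_{\dH^s}^2$, matching terms on the RHS.

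For the reverse direction, I substitute the inversion formulae into each RHS term, which turns it into a Fourier multiplier acting on $p$ or $\theta$; the desired bound by $\|J_\e^{1/2}p\|_{\dH^s}$ or $\|J_\e^{1/2}\theta\|_{\dH^s}$ then reduces to a pointwise symbol inequality. The checks are routine: $J_\e^3\xi_1^4 A^{-4} \leq J_\e$ (for $J_\e^{1/2}v_x$), $J_\e^2\xi_1^2\xi_2^2 A^{-4} \leq J_\e$ (for $v_y$), $J_\e\xi_2^4 A^{-4} \leq J_\e$ (for $J_\e^{-1/2}w_y$), $J_\e^2\xi_1^2 A^{-2} \leq J_\e$ (for $\z_x$), and $J_\e\xi_2^2 A^{-2} \leq J_\e$ (for $J_\e^{-1/2}\z_y$). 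Each of these is immediate from $A^2 \geq J_\e\xi_1^2$, $A^2 \geq \xi_2^2$, and $A^4 \geq 2 J_\e\xi_1^2\xi_2^2$. The remaining term $\|w_x\|_{\dH^s}$ equals $\|v_y\|_{\dH^s}$ by the curl-free condition, so no new estimate is required.

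There is no real analytic obstacle; the whole proof is Plancherel combined with the identity for $A^2$. The only point demanding care is the consistent use of the curl-free condition $v_y = w_x$ to derive and justify the inversion formulae, without which the scalar $p = v_x + J_\e^{-1}w_y$ would not determine $v$ and $w$ separately, and the reverse inequality would fail.
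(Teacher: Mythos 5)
Your proposal is correct and follows essentially the same route as the paper: the easy direction by the triangle inequality (your Plancherel identity for $\theta$ is even slightly sharper than needed), and the reverse direction by the curl-free inversion formulae \eqref{v w in term of p} reduced to the pointwise symbol bounds coming from $A^2(\xi)=J_\e(\xi_1)\xi_1^2+\xi_2^2$, exactly as in \eqref{eigen value 1 a}. No gaps.
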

\begin{proof}
Thanks to \eqref{new unknows} and \eqref{eigen value 1}, we have
\beno\begin{aligned}
&\|J_\e^{\f12}p\|_{\dH^s}\lesssim\|J_\e^{\f12}v_x\|_{\dH^s}+\|J_\e^{-\f12}w_y\|_{\dH^s}\quad\text{and}\quad\|J_\e^{\f12}\theta\|_{\dH^s}\lesssim\|\z_x\|_{\dH^s}+\|J_\e^{-\f12}\z_y\|_{\dH^s}
\end{aligned}\eeno

On the other side, due to \eqref{eigen value 1}, one has
\beq\label{eigen value 1 a}
A^2(\xi)\sim (1+\f\e3\xi_1^2)|\xi_1|^2+|\xi_2|^2\quad\text{and}\quad
(1+\f\e3\xi_1^2)^{\f12}|\xi_1|\cdot|\xi_2|\lesssim A^2(\xi),
\eeq
which along with \eqref{v w in term of p} implies
\beno\begin{aligned}
&\|J_\e^{\f12}v_x\|_{\dH^s}+\|v_y\|_{\dH^s}+\|w_x\|_{\dH^s}+\|J_\e^{-\f12}w_y\|_{\dH^s}\lesssim\|J_\e^{\f12}p\|_{\dH^s}\\
\text{and}\quad&\|\z_x\|_{\dH^s}+\|J_\e^{-\f12}\z_y\|_{\dH^s}\lesssim\|J_\e^{\f12}\theta\|_{\dH^s}.
\end{aligned}\eeno
Then we obtain \eqref{equivalent functional 1}. The lemma is proved.
\end{proof}

Now, we derive the evolution equations for $p$ and $\theta$.  Firstly, for simplicity, denoting by
\beno
\vv V=(v,w)^T,
\eeno
 we rewrite \eqref{v w in term of p} to
\beno
\vv V=-A^{-2}(D)J_\e\na p.
\eeno

\textbf{ Evolution equation of $p$.} Thanks to the first two equations of \eqref{WTB case 3}, we get
\beno\begin{aligned}
	J_\e p_t-A(D)\theta&=-\e \p_x( vv_x+wv_y+\f12  \z \z_x)-\e J_\e^{-1}\p_y( vw_x+ww_y+\f12 \z \z_y)\\
	&=-\e\bigl[\underbrace{vv_{xx}+wv_{xy}+\f12\z\z_{xx}+J_\e^{-1}(vw_{xy}+ww_{yy}+\f12\z\z_{yy})}_{I_1}\bigr]-\e N_{p,1}
\end{aligned}\eeno
where 
\beq\label{def of N p 1}
N_{p,1}\eqdefa v_xv_x+w_xv_y+\f12\z_x\z_x+J_\e^{-1}(v_yw_x+w_yw_y+\f12\z_y\z_y).
\eeq

For $I_1$, using the condition $v_y=w_x$ and the relation $(1-\f{\e}{3}\p_x^2)\p_x^2+\p_y^2=-A^2(D)$, we have
\beno\begin{aligned}
I_1&=vv_{xx}+ww_{xx}+\f12\z\z_{xx}+J_\e^{-1}(vv_{yy}+ww_{yy}+\f12\z\z_{yy})\\
&=-J_\e^{-1}\Bigl(vA^2(D)v+wA^2(D)w
+\f12\z A^2(D)\z\Bigr)\\
&\qquad-[J_\e^{-1},v]J_\e v_{xx}-[J_\e^{-1},w]J_\e w_{xx}
-[J_\e^{-1},\z]J_\e\z_{xx},
\end{aligned}\eeno
which along with \eqref{v w in term of p} implies
\beno
I_1=J_\e^{-1}\bigl(v\p_xJ_\e p+w\p_yJ_\e p-\f12\z A(D)J_\e\theta\bigr)+N_{p,2}
\eeno
where 
\beq\label{def of N p 2}
N_{p,2}\eqdefa-[J_\e^{-1},v]J_\e v_{xx}-[J_\e^{-1},w]J_\e w_{xx}
-[J_\e^{-1},\z]J_\e\z_{xx}.
\eeq

Then using notation $\vv V=(v,w)^T$, we obtain the evolution equation of $p$ as follows
\beq\label{equation for p}
J_\e p_t-A(D)\theta=-\e J_\e^{-1}\bigl(\vv V\cdot\na J_\e p\bigr)+\f\e2J_\e^{-1}\bigl(\z A(D)J_\e\theta\bigr)+\e N_p,
\eeq
where $N_p\eqdefa -(N_{p,1}+N_{p,2})$ with $N_{p,1}$ and $N_{p,2}$ being defined in \eqref{def of N p 1} and  \eqref{def of N p 2}.

\bigskip

\noindent\textbf{ Evolution equation of $\theta$.} By virtue of \eqref{new unknows}, we deduce from the third equation in  \eqref{WTB case 3} that 
\beno
	J_\e\theta_t+A(D)p=-\e \Lambda_1(D)\Bigl(\vv V\cdot\na\z+ \f12  \z\div\vv V  \Bigr).
\eeno

Due to \eqref{v w in term of p} and the relation $(1-\f{\e}{3}\p_x^2)\p_x^2+\p_y^2=-A^2(D)$, we have 
\beno
\div\vv V=-\bigl(\p_x^2+\p_y^2\bigr)A^{-2}(D)J_\e p=J_\e p-\f{\e}{3} A^{-2}(D)J_\e\p_x^4 p,
\eeno
which yields to
\beno
\Lambda_1(D)\bigl(\z\div\vv V \bigr)=\Lambda_1(D)\bigl(\z J_\e p\bigr)
-\f\e3 A(D)\bigl(\z A^{-2}(D)\p_x^4 p\bigr)+\f\e3\Lambda_1(D)\bigl([J_\e,\z]A^{-2}(D)\p_x^4 p\bigr).
\eeno

Then using \eqref{v w in term of p} again, we obtain the evolution equation for $\theta$ as follows:
\beq\label{equation for theta}\begin{aligned}
J_\e\theta_t+A(D)p=&-\e \Lambda_1(D)\bigl(\vv V\cdot\na A^{-1}(D)J_\e\theta\bigr)-\f\e2\Lambda_1(D)\bigl(\z J_\e p\bigr)\\
&\quad+\f{\e^2}{6} A(D)\bigl(\z A^{-2}(D)\p_x^4 p\bigr)+\e N_{\theta},
\end{aligned}\eeq
where
\beq\label{def of N theta}
N_{\theta}\eqdefa-\f\e6\Lambda_1(D)\bigl([J_\e,\z]A^{-2}(D)\p_x^4 p\bigr).
\eeq

Combining \eqref{equation for p} and \eqref{equation for theta}, we deduce the evolution system of $(p,\theta)$:
\begin{equation}\label{system for p theta}
	\left\{ \begin{aligned}
		J_\e p_t - A(D) \theta =\, & - \varepsilon J_\e^{-1} (\vv V\cdot \nabla J_\e p )  + \f\e2 J_\e^{-1} (\z A(D)J_\e\theta) +\e N_p, \\
		J_\e \theta_t+ A(D)p =\, & - \e \Lam_1(D) (\vv V\cdot \nabla A^{-1}(D)  J_\e\theta) - \f\e2 \Lam_1(D) (\z J_\e p) \\
		& +\f{\e^2}{6} A(D) (\z A^{-2}(D) \p_x^4 p)+\e N_\theta.
	\end{aligned} \right.
\end{equation}

\begin{remark}
	The following Lemma \ref{lem for nonlinear term 1} shows that the nonlinear terms $\e N_p$ and $\e N_\theta$ are lower order terms of order $O(\e)$. The quadratic term $\f{\e^2}{6} A(D) (\z A^{-2}(D) \p_x^4 p)$ is of order $ O(\e^{\f12})$ if we treat it as the lower order term. Then the local existence theory  on the time scale $1/\e^{\f12}$ for system \eqref{WTB case 3} follows from the classical hyperbolic energy method. To enlarge the time scale up to $O(1/\e)$, the term $\f{\e^2}{6} A(D) (\z A^{-2}(D) \p_x^4 p)$ should be treated as  part of quasilinear terms.
\end{remark}

\begin{lemma}\label{lem for nonlinear term 1}
Let $s>3$, $(\vv V,\z)=(v,w,\z)$ be a smooth enough solution to \eqref{WTB case 3} and $(p,\theta)$ be defined in \eqref{new unknows}. There holds
\beq\label{estimate for N p theta}
\|J^{-\f12}_\e N_p\|_{H^s}+\|J^{-\f12}_\e N_\theta\|_{H^s}\lesssim
\|J^{\f12}_\e p\|_{H^s}^2+\|J^{\f12}_\e\theta\|_{H^s}^2.
\eeq
\end{lemma}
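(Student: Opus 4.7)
The plan is to bound $N_{p,1}$, $N_{p,2}$ and $N_\theta$ separately, producing in each case an estimate that is quadratic in $\|J_\e^{\f12}p\|_{H^s}$ and $\|J_\e^{\f12}\theta\|_{H^s}$. Throughout, Lemma~\ref{lem for energy functional 1} will be used to convert bounds on $\|v_x\|_{H^s}$, $\|w_x\|_{H^s}$, $\|\z_x\|_{H^s}$ and their $J_\e^{\pm\f12}$-weighted $y$-derivative counterparts into the required $(p,\theta)$ norms, while the curl-free identity $v_y=w_x$ collapses the mixed quadratic $w_xv_y$ appearing in $N_{p,1}$ into a pure square.

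For $N_{p,1}$, I would apply the product estimate \eqref{product estimate 1} of Lemma~\ref{product estimates} termwise. A model bound is
\[
\|J_\e^{-\f12}(v_x v_x)\|_{H^s}\lesssim \|J_\e^{-\f12}v_x\|_{H^s}^2\le \|v_x\|_{H^s}^2 \lesssim \|J_\e^{\f12}p\|_{H^s}^2,
\]
and similarly for $w_x v_y=w_x^2$, $\z_x^2$. The three contributions carrying an extra outer $J_\e^{-1}$ reduce to the same template after noting $J_\e^{-\f32}\le J_\e^{-\f12}$ as a positive Fourier multiplier.

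For $N_{p,2}$ I would apply the commutator estimate \eqref{commutator 1a} with $f\in\{v,w,\z\}$ and, for instance, $g=J_\e^{-\f12}v_{xx}$, which yields
\[
\|J_\e^{-\f12}([J_\e^{-1},v]J_\e v_{xx})\|_{H^s} \lesssim \sqrt\e\,\|v_x\|_{H^s}\,\|J_\e^{-\f12}v_{xx}\|_{H^s}.
\]
The extra $\sqrt\e$ is absorbed by the elementary symbol bound $\xi_1^2(1+\f\e3\xi_1^2)^{-\f12}\lesssim \e^{-\f12}|\xi_1|$, giving $\|J_\e^{-\f12}v_{xx}\|_{H^s}\lesssim \e^{-\f12}\|v_x\|_{H^s}$. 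The product collapses to $\|v_x\|_{H^s}^2\lesssim\|J_\e^{\f12}p\|_{H^s}^2$, and the $w$ and $\z$ contributions are identical.

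The main obstacle is $N_\theta$, where the four extra $x$-derivatives acting on $p$ must be tracked with care. I would first expand $[J_\e,\z]g = -\f\e3(2\z_x g_x+\z_{xx}g)$ at $g=A^{-2}(D)\p_x^4 p$, producing a $\f{\e^2}{18}$ prefactor in front of $\Lambda_1(D)\bigl(2\z_x\,\p_x A^{-2}(D)\p_x^4 p + \z_{xx}A^{-2}(D)\p_x^4 p\bigr)$. The key observation is that this $\e^2$ factor exactly compensates the derivative loss: using $A^2(\xi)\ge J_\e(\xi_1)\xi_1^2$, the pointwise symbol bounds $\e^2\xi_1^4/A^2(\xi)\lesssim \e$ and $\e^2\xi_1^5/A^2(\xi)\lesssim \e|\xi_1|$ yield
\[
\|\e^2 A^{-2}(D)\p_x^4 p\|_{H^s}\lesssim \e\,\|p\|_{H^s},\qquad \|\e^2\p_x A^{-2}(D)\p_x^4 p\|_{H^s}\lesssim \sqrt\e\,\|J_\e^{\f12}p\|_{H^s}.
\]
Since $J_\e^{-\f12}\Lambda_1(D)=J_\e^{-\f32}A(D)\le J_\e^{-\f12}A(D)$ as positive multipliers, I would then apply \eqref{product A2} of Lemma~\ref{commutator lem} with the $\z$-factor, whose contribution $\|\z\|_{H^s}+\|J_\e^{-\f12}A(D)\z\|_{H^s}=\|\z\|_{H^s}+\|J_\e^{\f12}\theta\|_{H^s}$ is controlled by the RHS of the lemma. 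This produces a bound of the form $\sqrt\e\,\|J_\e^{\f12}p\|_{H^s}\,\|J_\e^{\f12}\theta\|_{H^s}$, and Cauchy--Schwarz gives the claim. The most delicate aspect is the bookkeeping of $\e$ powers against the derivative losses produced by $A(D)$ and $\Lambda_1(D)$, ensuring that every loss is absorbed by the $\e^2$ prefactor arising from $[J_\e,\z]$.
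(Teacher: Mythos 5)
Your bounds for $N_{p,1}$ and $N_{p,2}$ are correct and essentially the paper's own argument (termwise use of \eqref{product estimate 1}, then \eqref{commutator 1a} combined with $\sqrt\e\,\|J_\e^{-\f12}\p_x^2 f\|_{H^s}\lesssim\|f_x\|_{H^s}$). The gap is in the $N_\theta$ step, and it is twofold. First, by replacing $J_\e^{-\f12}\Lambda_1(D)=J_\e^{-\f32}A(D)$ with the larger multiplier $J_\e^{-\f12}A(D)$ you throw away exactly the smoothing that makes this term tractable: the paper keeps the factor $J_\e^{-1}$ in $\Lambda_1(D)=J_\e^{-1}A(D)$, splits $J_\e^{-\f12}\Lambda_1(D)\lesssim J_\e^{-1}|\p_x|+J_\e^{-\f32}|\p_y|$, and pays for the outer $x$-derivative with only $\e^{\f12}$ through $|\xi_1|J_\e^{-1}(\xi_1)\lesssim\e^{-\f12}J_\e^{-\f12}(\xi_1)$, handling the $\p_y$-part by Leibniz while retaining the $J_\e^{-\f32}$ weight. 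After your discard, the outer derivative must be charged to the factors of the product, which is what \eqref{product A2} does --- but that estimate requires $\|J_\e^{-\f12}A(D)\cdot\|_{H^s}$ of \emph{both} factors. The $\z$-factor here is $\z_x$ or $\z_{xx}$ (not $\z$, as your formula $\|\z\|_{H^s}+\|J_\e^{-\f12}A(D)\z\|_{H^s}=\|\z\|_{H^s}+\|J_\e^{\f12}\theta\|_{H^s}$ suggests), so \eqref{product A2} produces $\|\z_{xx}\|_{H^s}$ and $\|J_\e^{-\f12}A(D)\z_{xx}\|_{H^s}$, i.e.\ two and three derivatives of $\z$, which are not controlled by $\|J_\e^{\f12}\theta\|_{H^s}\sim\|(\z_x,J_\e^{-\f12}\z_y)\|_{H^s}$; likewise $\|J_\e^{-\f12}A(D)\bigl(\e^2\p_xA^{-2}(D)\p_x^4p\bigr)\|_{H^s}$ is only $\lesssim\|J_\e p\|_{H^s}$, which is beyond the controlled quantity $\|J_\e^{\f12}p\|_{H^s}$. (Note also that $\|\z\|_{H^s}$ itself is not controlled by the right-hand side of the lemma, which contains only derivatives of $\z$; in the correct argument no underived $\z$ appears.)

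The underlying issue is the $\e$-bookkeeping: the budget $\e^2$ is exactly exhausted, with $\e^{\f12}$ for the outer derivative (via $J_\e^{-1}\p_x\lesssim\e^{-\f12}J_\e^{-\f12}$), $\e^{\f12}$ to trade $\z_{xx}$ for $\z_x$ (via $\sqrt\e\,|\xi_1|\lesssim J_\e^{\f12}(\xi_1)$), and $\e$ (resp.\ $\e^{\f32}$) to tame $A^{-2}(D)\p_x^4$ (resp.\ $A^{-2}(D)\p_x^5$) acting on $p$. Allocating the entire $\e^2$ to the $p$-factor, as you do, leaves the extra derivative on $\z$ and the outer $A(D)$ unpaid, and the advertised leftover in the final bound $\sqrt\e\,\|J_\e^{\f12}p\|_{H^s}\|J_\e^{\f12}\theta\|_{H^s}$ is spurious. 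The repair is the paper's route: keep $\Lambda_1(D)=J_\e^{-1}A(D)$ intact, treat the $\p_x$- and $\p_y$-parts of the outer operator separately (absorbing $\e^{-\f12}$ in the first, using Leibniz and the retained negative powers of $J_\e$ in the second), and conclude with the $J_\e$-product estimates \eqref{product estimate 1}, which gives $\|J_\e^{-\f12}N_\theta\|_{H^s}\lesssim\|(\z_x,J_\e^{-\f12}\z_y)\|_{H^s}\|J_\e^{\f12}p\|_{H^s}$ with no power of $\e$ to spare; the lemma then follows from \eqref{equivalent functional 1} as in your plan.
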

\begin{proof} We derive the bounds of $N_p$ and $N_\theta$ one by one.

{\bf Step 1. Bound of $\|J^{-\f12}_\e N_p\|_{H^s}$.} Firstly, thanks to \eqref{def of N p 1} and $J_\e=1-\f\e3\p_x^2$, we have 
\beq\label{L1}
\|J^{-\f12}_\e N_{p,1}\|_{H^s}\lesssim\|J^{-\f12}_\e\bigl(v_xv_x+w_xv_y+\f12\z_x\z_x\bigr)\|_{H^s}+\|J_\e^{-\f12}(v_yw_x+w_yw_y+\f12\z_y\z_y)\|_{H^s}
\eeq

Using \eqref{product estimate 1}, we have
\beno
\|J_\e^{-\f12}(w_yw_y)\|_{H^s}\lesssim\Bigl\|J_\e^{-\f12}\Bigl(J_\e^{\f12}\bigl(J_\e^{-\f12}w_y\bigr)\cdot J_\e^{\f12}\bigl(J_\e^{-\f12}w_y\bigr)\Bigr)\Bigr\|_{H^s}\lesssim\|J_\e^{-\f12}w_y\|_{H^s}^2.
\eeno
Similar estimates hold for the other terms in the right hand side (r.h.s) of \eqref{L1}. Then we obtain
\beq\label{L2}\begin{aligned}
\|J^{-\f12}_\e N_{p,1}\|_{H^s}&\lesssim\|J_\e^{-\f12}\na v\|_{H^s}^2+\|J_\e^{-\f12}\na w\|_{H^s}^2+\|J_\e^{-\f12}\na\z\|_{H^s}^2\\
&\lesssim\|\na v\|_{H^s}^2++\|(w_x,J_\e^{-\f12}w_y)\|_{H^s}^2+\|(\z_x,J_\e^{-\f12}\z_y)\|_{H^s}^2.
\end{aligned}\eeq

For $N_{p,2}$ in \eqref{def of N p 2}, we have
\beq\label{L2a}
\|J^{-\f12}_\e N_{p,2}\|_{H^s}\lesssim\|J^{-\f12}_\e([J_\e^{-1},v]J_\e v_{xx})\|_{H^s}+\|J^{-\f12}_\e([J_\e^{-1},w]J_\e w_{xx})\|_{H^s}
+\|J^{-\f12}_\e([J_\e^{-1},\z]J_\e\z_{xx})\|_{H^s}.
\eeq
Due to \eqref{commutator 1a}, we get
\beno
\|J^{-\f12}_\e([J_\e^{-1},v]J_\e v_{xx})\|_{H^s}\lesssim\sqrt\e\|v_x\|_{H^s}\|J^{-\f12}_\e v_{xx}\|_{H^s}\lesssim\|v_x\|_{H^s}^2.
\eeno
Similar estimates hold for the last two terms in \eqref{L2a}. Then we obtain
\beq\label{L3}
\|J^{-\f12}_\e N_{p,2}\|_{H^s}\lesssim\|v_x\|_{H^s}^2+\|w_x\|_{H^s}^2+\|\z_x\|_{H^s}^2.
\eeq

Thanks to \eqref{L2} and \eqref{L3}, we obtain 
\beq\label{L3a}
\|J^{-\f12}_\e N_p\|_{H^s}\lesssim\|\na v\|_{H^s}^2++\|(w_x,J_\e^{-\f12}w_y)\|_{H^s}^2+\|(\z_x,J_\e^{-\f12}\z_y)\|_{H^s}^2.
\eeq 

\medskip

{\bf Step 2. Bound of $\|J^{-\f12}_\e N_\theta\|_{H^s}$.} Recalling the expression of $N_\theta$ in \eqref{def of N theta}, we have
\beno
N_{\theta}=-\f\e6\Lambda_1(D)\bigl([J_\e,\z]A^{-2}(D)\p_x^4 p\bigr)
=\f{\e^2}{18}\Lambda_1(D)\Bigl(\p_x^2\z\cdot A^{-2}(D)\p_x^4 p+2\p_x\z\cdot \p_xA^{-2}(D)\p_x^4 p\Bigr).
\eeno
Using the fact that (see \eqref{eigen value 1} )
\beno
\Lambda_1(\xi)\sim(1+\f\e3\xi_1^2)^{-\f12}|\xi_1|+(1+\f\e3\xi_1^2)^{-1}|\xi_2|,
\eeno
we get
\beq\label{L4}\begin{aligned}
\|J^{-\f12}_\e N_\theta\|_{H^s}&\lesssim\e^2\|J_\e^{-1}\p_x\bigl(\p_x^2\z\cdot A^{-2}(D)\p_x^4 p\bigr)\|_{H^s}+\e^2\|J_\e^{-1}\p_x\bigl(\p_x\z\cdot \p_xA^{-2}(D)\p_x^4 p\bigr)\|_{H^s}\\
&\qquad+\e^2\|J_\e^{-\f32}\p_y\bigl(\p_x^2\z\cdot A^{-2}(D)\p_x^4 p\bigr)\|_{H^s}+\e^2\|J_\e^{-\f32}\p_y\bigl(\p_x\z\cdot\p_x A^{-2}(D)\p_x^4 p\bigr)\|_{H^s}\\
&\eqdefa I_{21}+I_{22}+I_{23}+I_{24}.
\end{aligned}\eeq

For term $I_{22}$, using \eqref{product estimate 1} and \eqref{def of A}, we have
\beno\begin{aligned}
I_{22}
&\lesssim\e^{\f32}\|J_\e^{-\f12}\bigl[J_\e^{\f12}(J_\e^{-\f12}\p_x\z)\cdot J_\e^{\f12}(J_\e^{-\f12}A^{-2}(D)\p_x^5 p)\bigr]\|_{H^s}\\
&\lesssim\e^{\f32}\|J_\e^{-\f12}\p_x\z\|_{H^s}\cdot\|J_\e^{-\f12}A^{-2}(D)\p_x^5 p\|_{H^s}\lesssim\|\z_x\|_{H^s}\|p\|_{H^s}.
\end{aligned}\eeno
The same estimate holds for term $I_{21}$.

For term $I_{23}$, we have
\beno
I_{23}\lesssim\e^2\|J_\e^{-\f32}\bigl(\p_x^2\p_y\z\cdot A^{-2}(D)\p_x^4 p)\bigr)\|_{H^s}+\e^2\|J_\e^{-\f12}\bigl(\p_x^2\z\cdot \p_yA^{-2}(D)\p_x^4 p)\bigr)\|_{H^s}\eqdefa I_{231}+I_{232}.
\eeno
Using \eqref{product estimate 1} and \eqref{def of A}, we get
\beno\begin{aligned}
I_{231}&=\e^2\|J_\e^{-\f32}\bigl[J^{\f32}_\e (J^{-\f32}_\e\p_x^2\p_y\z)\cdot J^{-\f12}_\e (J^{\f12}_\e A^{-2}(D)\p_x^4 p)\bigr]\|_{H^s}\\
&\lesssim\e^2\|J^{-\f32}_\e\p_x^2\z_y\|_{H^s}\|J^{\f12}_\e A^{-2}(D)\p_x^4 p\|_{H^s}\lesssim\|J^{-\f12}_\e\z_y\|_{H^s}\|J^{\f12}_\e p\|_{H^s}.
\end{aligned}\eeno
Similarly, we obtain $I_{232}\lesssim\|\z_x\|_{H^s}\|J^{\f12}_\e p\|_{H^s}$. Then we obtain
\beno
I_{23}\lesssim\|(\z_x,J^{-\f12}_\e\z_y)\|_{H^s}\|J^{\f12}_\e p\|_{H^s}.
\eeno

For term $I_{24}$, we have
\beno
I_{24}\lesssim\e^2\|J_\e^{-1}\bigl(\p_x\p_y\z\cdot A^{-2}(D)\p_x^5 p\bigr)\|_{H^s}+\e^2\|J_\e^{-1}\bigl(\p_x\z\cdot \p_yA^{-2}(D)\p_x^5 p)\bigr)\|_{H^s},
\eeno
which along with \eqref{product estimate 1} and \eqref{def of A} shows that
\beno
|I_{24}|\lesssim \|(\z_x,J^{-\f12}_\e\z_y)\|_{H^s}\|J^{\f12}_\e p\|_{H^s}.
\eeno

Combining the estimates for $I_{21}$, $I_{22}$, $I_{23}$ and $I_{24}$, we deduced from \eqref{L4} that 
\beq\label{L4a}
\|J^{-\f12}_\e N_\theta\|_{H^s}\lesssim\|(\z_x,J^{-\f12}_\e\z_y)\|_{H^s}\|J^{\f12}_\e p\|_{H^s}\lesssim\|(\z_x,J^{-\f12}_\e\z_y)\|_{H^s}^2+\|J^{\f12}_\e p\|_{H^s}^2.
\eeq

Thanks to \eqref{equivalent functional 1}, we derive \eqref{estimate for N p theta} from \eqref{L3a} and \eqref{L4a}.
The lemma is proved.
\end{proof}

\subsection{Symmetrization of system \eqref{system for p theta}.}

In order to improve the existence time scale, we shall symmetrize system \eqref{system for p theta} (for both {\it linear} and {\it nonlinear} parts) by introducing good unknowns via suitable nonlinear transformations.

\subsubsection{Good unknowns.}
Firstly, we introduce the good unknowns $(\wt{p},\wt\theta)$ as follows: 
\begin{align}
	\widetilde{p} \eqdefa & \,p +\f\e2 J_\e^{-1} (\zeta  J_\e p)+ \e  J_\e^{-1}(\vv V\cdot\nabla A^{-1}(D)  J_\e \theta) - \f{\e^2}{6} \frac{1+\f\e2 \z}{2+\f\e2 \z}\z  A^{-2}(D) \p_x^4 p, \label{wtp in case 1}\\
	\widetilde{\theta} \eqdefa&\, \theta  + \f\e2 J_\e^{-1}A^{-1}(D)(\z \cdot A(D)J_\e \theta)-\e J_\e^{-1} A^{-1}(D) (\vv V\cdot \nabla J_\e p).\label{wttheta in case 1}
\end{align}

\begin{lemma}\label{functional 2}
Let $s>3$ and $(\vv V,\z)=(v,w,\z)$ be a smooth enough solution of \eqref{WTB case 3} over some time interval $[0,T^*]$ satisfying
\beq\label{ansatz 2}
\sup_{t\in[0,T^*]}\sqrt\e(\|J^{\f12}_\e\z\|_{H^s}+\|\vv V\|_{H^s})\leq1.
\eeq 
 Then there hold for any $t\in[0,T^*]$
\beq\label{equivalent functional 2}\begin{aligned}
&\|J^{\f12}_\e(\wt{p}-p)\|_{H^s}\lesssim\e\|J^{\f12}_\e\z\|_{H^s}\|J^{\f12}_\e p\|_{H^s}+\e\|\vv V\|_{H^s}\|J^{\f12}_\e \theta\|_{H^s},\\
&\|J^{\f12}_\e(\wt\theta-\theta)\|_{\dH^s}
\lesssim\e\|\z\|_{H^s}\|J_\e^{\f12}\theta\|_{H^s}+\e\|\vv V\|_{H^s}\|J_\e^{\f12}p\|_{H^s}.
\end{aligned}\eeq
Moreover, there holds
\beq\label{estimate for wt theta}
\|J^{\f12}_\e\wt p\|_{H^s}+\|J^{\f12}_\e\na\wt\theta\|_{H^{s-1}}\lesssim\|J_\e^{\f12} p\|_{H^s}+\|J_\e^{\f12} \theta\|_{H^s}.
\eeq
\end{lemma}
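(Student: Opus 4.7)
\medskip
\noindent\textbf{Proof plan.} The plan is to bound the three correction terms in \eqref{wtp in case 1} and the two correction terms in \eqref{wttheta in case 1} individually, applying the product estimates of Lemma 2.1 and the anisotropic estimates of Lemma 2.2, then using Lemma \ref{lem for energy functional 1} to translate between norms of $(v,w,\z)$ and of $(p,\theta)$. The inequality \eqref{estimate for wt theta} will then follow by a smallness/absorption argument from \eqref{equivalent functional 2} under the ansatz \eqref{ansatz 2}.

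First I would treat $\wt p-p$ term by term. For the linear-in-$\z$ piece $\tfrac{\e}{2}J_\e^{-1}(\z J_\e p)$, I factor $J_\e^{1/2}\cdot J_\e^{-1} = J_\e^{-1/2}$ and apply the first product estimate of \eqref{product estimate 1} with $J_\e^{1/2}f=\z$ and $J_\e^{1/2}g=J_\e p$, producing $\e\|J_\e^{-1/2}\z\|_{H^s}\|J_\e^{1/2}p\|_{H^s}\lesssim \e\|J_\e^{1/2}\z\|_{H^s}\|J_\e^{1/2}p\|_{H^s}$. For the convective piece $\e J_\e^{-1}(\vv V\cdot\nabla A^{-1}(D)J_\e\theta)$, I again use \eqref{product estimate 1} after noting that the Fourier multiplier $J_\e^{1/2}\nabla A^{-1}(D)J_\e^{-1/2}$ has symbol bounded by $1$ (because $A(\xi)\geq J_\e^{1/2}(\xi_1)|\xi_1|+|\xi_2|$), so $\|J_\e^{1/2}\nabla A^{-1}(D)\theta\|_{H^s}\lesssim\|J_\e^{1/2}\theta\|_{H^s}$, yielding the bound $\e\|\vv V\|_{H^s}\|J_\e^{1/2}\theta\|_{H^s}$. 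The quartic correction term $\tfrac{\e^2}{6}\tfrac{1+\e\z/2}{2+\e\z/2}\z\cdot A^{-2}(D)\p_x^4 p$ is the most delicate; I will exploit the fact that the symbol $\e\xi_1^4/A^2(\xi)\le\e\xi_1^2/J_\e(\xi_1)\le 3$ is bounded, so that $\e A^{-2}(D)\p_x^4$ is a bounded Fourier multiplier on $H^s$, controlling $\|\e A^{-2}(D)\p_x^4 p\|_{H^s}\lesssim\|p\|_{H^s}\lesssim\|J_\e^{1/2}p\|_{H^s}$. Writing $\tfrac{1+\e\z/2}{2+\e\z/2}=\tfrac12+F(\e\z)$ with $F$ smooth and $F(0)=0$ and invoking the composition Lemma \ref{composition lemma} together with the ansatz \eqref{ansatz 2} (which under $s>3$ forces $\sqrt\e\|\z\|_{L^\infty}\ll 1$), the rational factor lies in $H^s$ with $H^s$-norm $\lesssim\|\z\|_{H^s}$. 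A tame product and a commutator $[J_\e^{1/2},\cdot]$ absorbed via Lemma 2.1 then give a bound by $\e\|J_\e^{1/2}\z\|_{H^s}\|J_\e^{1/2}p\|_{H^s}$.

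For $\wt\theta-\theta$ I argue analogously, but now using the $A^{-1}(D)$ estimates \eqref{commutator A1}--\eqref{product A1}, since the two correction terms carry an $A^{-1}(D)$ outside. The piece $\tfrac{\e}{2}J_\e^{-1}A^{-1}(D)(\z\cdot A(D)J_\e\theta)$ falls directly under \eqref{product A1} with $f=\z$ and $g=J_\e^{1/2}\theta$, producing $\e\|\z\|_{H^s}\|J_\e^{1/2}\theta\|_{H^s}$ in $\dH^s$. The convective piece $\e J_\e^{-1}A^{-1}(D)(\vv V\cdot\nabla J_\e p)$ is handled by writing $\nabla J_\e p= J_\e^{1/2}(\nabla J_\e^{1/2}p)$, invoking \eqref{product A1} with $f=\vv V$ and $g$ involving $\nabla J_\e^{1/2}p$, and then pulling out $\nabla$ at the cost of reducing one derivative in $\dH^s$, yielding the target bound $\e\|\vv V\|_{H^s}\|J_\e^{1/2}p\|_{H^s}$. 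Throughout this step, the fact that the corrections only live in $\dH^s$ (and not $H^s$) reflects the critical nature of $A^{-1}(D)$ applied to products, as pointed out in the remark after \eqref{def of wtptheta case 2}.

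Finally, for \eqref{estimate for wt theta}, I combine the two previous estimates with Lemma \ref{lem for energy functional 1}: under the ansatz \eqref{ansatz 2}, the right-hand sides of \eqref{equivalent functional 2} are bounded by $\sqrt\e\,(\|J_\e^{1/2}p\|_{H^s}+\|J_\e^{1/2}\theta\|_{H^s})$, so triangle inequalities give $\|J_\e^{1/2}\wt p\|_{H^s}\lesssim\|J_\e^{1/2}p\|_{H^s}+\|J_\e^{1/2}\theta\|_{H^s}$ and similarly $\|J_\e^{1/2}\nabla\wt\theta\|_{H^{s-1}}\lesssim\|J_\e^{1/2}\nabla\theta\|_{H^{s-1}}+(\text{small})\lesssim\|J_\e^{1/2}\theta\|_{H^s}+\|J_\e^{1/2}p\|_{H^s}$. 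I expect the main obstacle to be the third term in \eqref{wtp in case 1}: one must simultaneously handle the anisotropic cancellation $\e A^{-2}(D)\p_x^4=O(1)$, apply the composition lemma to the nonlinear rational function of $\z$, and commute $J_\e^{1/2}$ through a product where neither factor is naturally placed in $J_\e^{-1/2}H^s$; the careful balancing of these three effects is what makes this correction term both necessary for symmetrization in Section 3.2 and technically the most demanding piece of the estimate.
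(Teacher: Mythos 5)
Your treatment of \eqref{equivalent functional 2} follows essentially the same route as the paper: the same term-by-term decomposition of \eqref{wtp in case 1} and \eqref{wttheta in case 1}, the first product estimate of \eqref{product estimate 1} for the terms $\f\e2 J_\e^{-1}(\z J_\e p)$ and $\e J_\e^{-1}(\vv V\cdot\na A^{-1}(D)J_\e\theta)$, the boundedness of the multipliers $\na A^{-1}(D)$ and $\e A^{-2}(D)\p_x^4$, the composition Lemma \ref{composition lemma} for the rational function of $\z$ (the paper also needs $\|\p_x\bigl(\tfrac{(1+\f\e2\z)\z}{2+\f\e2\z}\bigr)\|_{H^s}\lesssim\|\z_x\|_{H^s}$ to handle $J_\e^{\f12}$ of that factor, which your ``tame product plus $J_\e^{1/2}$-commutator'' step amounts to), and \eqref{product A1} for both corrections in $\wt\theta$. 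A cosmetic slip: $A(\xi)\geq J_\e^{1/2}(\xi_1)|\xi_1|+|\xi_2|$ is false as an inequality (it only holds up to a constant); what you actually use, and what suffices, is $A(\xi)\geq|\xi|$.

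There is, however, a genuine gap in your derivation of \eqref{estimate for wt theta}. For $\wt p$ the triangle inequality works, since \eqref{equivalent functional 2} controls $\wt p-p$ in the full $H^s$ norm. But for $\wt\theta$ you only have $\|J^{\f12}_\e(\wt\theta-\theta)\|_{\dH^s}$, and the homogeneous seminorm $\dH^s$ does \emph{not} control $\|J^{\f12}_\e\na(\wt\theta-\theta)\|_{H^{s-1}}$: at low frequencies $|\xi|\langle\xi\rangle^{s-1}\sim|\xi|$ is much larger than $|\xi|^s$, so the ``(small)'' term in your final triangle inequality is not covered by the estimates you have proved. One needs an additional low-frequency bound, e.g.\ on $\|J^{\f12}_\e\na\wt\theta\|_{L^2}$, which is exactly what the paper supplies in its Step 3: apply $J_\e^{\f12}\na$ directly to \eqref{wttheta in case 1}, use that $\na A^{-1}(D)$ is bounded on $L^2$ (since $A(\xi)\geq|\xi|$) to reduce the corrections to $\e\|\z\,A(D)J_\e\theta\|_{L^2}$ and $\e\|\vv V\cdot\na J_\e p\|_{L^2}$, and then use $L^\infty$ bounds on $\z,\vv V$ (Sobolev embedding, $s>3$) together with the ansatz \eqref{ansatz 2} to absorb the factor $\e\|J_\e\cdot\|$ into $\|J_\e^{\f12}\theta\|_{H^s}+\|J_\e^{\f12}p\|_{H^s}$. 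This extra $L^2$-level estimate is easy with the tools you already invoke, but it is a needed step, not a consequence of \eqref{equivalent functional 2} alone; without it the passage from the $\dH^s$ bound to \eqref{estimate for wt theta} does not close.
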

\begin{proof} We divide the proof into three steps.

{\bf Step 1. Bound of $\|J^{\f12}_\e(\wt{p}-p)\|_{H^s}$.}
Thanks to \eqref{wtp in case 1}, we obtain by using \eqref{product estimate 1} and \eqref{def of A} that
\beno\begin{aligned}
\|J^{-\f12}_\e(\z J_\e p)\|_{H^s}&\lesssim\|J^{-\f12}_\e\z\|_{H^s}\|J^{\f12}_\e p\|_{H^s}\lesssim\|\z\|_{H^s}\|J^{\f12}_\e p\|_{H^s},\\
\|J_\e^{-\f12}(\vv V\cdot\nabla A^{-1}(D)  J_\e \theta)\|_{H^s}
&\lesssim\|J^{-\f12}_\e\vv V\|_{H^s}\|J^{-\f12}_\e \nabla A^{-1}(D)J_\e\theta\|_{H^s}\lesssim\|\vv V\|_{H^s}\|J^{\f12}_\e \theta\|_{H^s}.
\end{aligned}\eeno

For the last term in \eqref{wtp in case 1}, using \eqref{def of A} and the fact that $\|J^{\f12}_\e f\|_{H^s}\sim\|f\|_{H^s}+\sqrt\e\|\p_x f\|_{H^s}$, we get
\beno\begin{aligned}
\e\Bigl\|J^{\f12}_\e\Bigl(\frac{1+\f\e2 \z}{2+\f\e2 \z}\z  A^{-2}(D) \p_x^4 p\Bigr)\Bigr\|_{H^s}
&\lesssim\Bigl\|J^{\f12}_\e\Bigl(\frac{(1+\f\e2 \z)\z}{2+\f\e2 \z}\Bigr)\Bigr\|_{H^s}\cdot\e\|J^{\f12}_\e A^{-2}(D) \p_x^4 p\|_{H^s}\\
&
\lesssim\Bigl\|J^{\f12}_\e\Bigl(\frac{(1+\f\e2 \z)\z}{2+\f\e2 \z}\Bigr)\Bigr\|_{H^s}\cdot\|J^{\f12}_\e p\|_{H^s}.
\end{aligned}\eeno

Using Lemma \ref{composition lemma} with $F(z)=\f{(1+\f{z}{2})z}{2+\f{z}{2}}$, we obtain 
\beq\label{L8}
\Bigl\|\frac{(1+\f\e2 \z)\z}{2+\f\e2 \z}\Bigr\|_{H^s}\leq C(\e\|\z\|_{L^\infty})\|\z\|_{H^s}\lesssim\|\z\|_{H^s},
\eeq
where we used the fact that $\|\z\|_{L^\infty}\lesssim\|\z\|_{H^s}$ and the ansatz \eqref{ansatz 2} in the last inequality.
Since $\frac{1+\f\e2 \z}{2+\f\e2 \z}\z=\f\z2+\frac{\f\e4|\z|^2}{2+\f\e2 \z}$ and
\beno
\p_x(\frac{(1+\f\e2 \z)\z}{2+\f\e2 \z})=\f12\p_x\z+\frac{\f\e2\z}{2+\f\e2\z}\cdot\p_x\z-\frac{\f{\e^2}{8}|\z|^2}{(2+\f\e2\z)^2}\cdot\p_x\z,
\eeno
 we get by using the product estimates \eqref{tame}, Lemma \ref{composition lemma} and \eqref{ansatz 2} that 
\beq\begin{aligned}\label{L7a}
\Bigl\|\p_x(\frac{(1+\f\e2 \z)\z}{2+\f\e2 \z})\Bigr\|_{H^s}
&\lesssim\|\z_x\|_{H^s}+\Bigl\|\frac{\f\e2\z}{2+\f\e2\z}\Bigr\|_{H^s}\cdot\|\z_x\|_{H^s}
+\Bigl\|\frac{(\e\z)^2}{(2+\f\e2\z)^2}\Bigr\|_{H^s}\cdot\|\z_x\|_{H^s}\\
&\lesssim C(\e\|\z\|_{H^s})(1+\e\|\z\|_{H^s})\|\z_x\|_{H^s}\lesssim\|\z_x\|_{H^s},
\end{aligned}\eeq
which along with \eqref{L8} implies 
\beq\label{L7}
\Bigl\|J^{\f12}_\e\Bigl(\frac{(1+\f\e2 \z)\z}{2+\f\e2 \z}\Bigr)\Bigr\|_{H^s}
\lesssim\|J^{\f12}_\e\z\|_{H^s}.
\eeq
Then we obtain
\beno
\e\Bigl\|J^{\f12}_\e\Bigl(\frac{1+\f\e2 \z}{2+\f\e2 \z}\z  A^{-2}(D) \p_x^4 p\Bigr)\Bigr\|_{H^s}\lesssim\|J^{\f12}_\e\z\|_{H^s}\|J^{\f12}_\e p\|_{H^s}.
\eeno

Combining the above estimates, we finally obtain 
\beq\label{L5}
\|J^{\f12}_\e(\wt{p}-p)\|_{H^s}\lesssim\e\bigl(\|J^{\f12}_\e\z\|_{H^s}\|J^{\f12}_\e p\|_{H^s}+\|\vv V\|_{H^s}\|J^{\f12}_\e \theta\|_{H^s}\bigr).
\eeq
This is the first inequality of \eqref{equivalent functional 2}.

\medskip

{\bf Step 2. Bound of $\|J^{\f12}_\e(\wt\theta-\theta)\|_{\dH^s}$.} Due to \eqref{wttheta in case 1},  we have
\beno
\|J^{\f12}_\e(\wt\theta-\theta)\|_{\dH^s}
\lesssim\e\|J_\e^{-\f12}A^{-1}(D)(\z \cdot A(D)J_\e \theta)\|_{\dH^s}+\e\|J_\e^{-\f12} A^{-1}(D) (\vv V\cdot A(D)A^{-1}(D)\nabla J_\e p)\|_{\dH^s},
\eeno
which along with \eqref{product A1} and \eqref{def of A} implies 
\beq\label{L6}\begin{aligned}
\|J^{\f12}_\e(\wt\theta-\theta)\|_{\dH^s}
&\lesssim\e\|\z\|_{H^s}\|J_\e^{\f12}\theta\|_{H^s}+\e\|\vv V\|_{H^s}\cdot \|A^{-1}(D)\nabla J^{\f12}_\e p)\|_{H^s}\\
&\lesssim\e\|\z\|_{H^s}\|J_\e^{\f12}\theta\|_{H^s}+\e\|\vv V\|_{H^s}\|J_\e^{\f12}p\|_{H^s}
\end{aligned}\eeq
This is the second inequality of \eqref{equivalent functional 2}. 

\medskip

{\bf Step 3. Proof of \eqref{estimate for wt theta}.} Using \eqref{wttheta in case 1} and the fact that $A(\xi)\geq |\xi|$, we get
\beno\begin{aligned}
\|J^{\f12}_\e\na\wt\theta\|_{L^2}&\lesssim\|J^{\f12}_\e\na\theta\|_{L^2}
+\e\|J_\e^{-\f12}\na A^{-1}(D)(\z \cdot A(D)J_\e \theta)\|_{L^2}+\e\|J_\e^{-\f12}\na A^{-1}(D) (\vv V\cdot \nabla J_\e p)\|_{L^2}\\
&\lesssim\|J^{\f12}_\e\na\theta\|_{L^2}
+\e\|\z \cdot A(D)J_\e \theta\|_{L^2}+\e\|\vv V\cdot \nabla J_\e p\|_{L^2}\\
&\lesssim\|J^{\f12}_\e\na\theta\|_{L^2}
+\e\|\z\|_{L^\infty} \cdot \|A(D)J_\e \theta\|_{L^2}+\e\|\vv V\|_{L^\infty}\cdot\| \nabla J_\e p\|_{L^2}
\end{aligned}\eeno
from which, the ansztz \eqref{ansatz 2} and the fact that $s>3$, we deduce that
\beq\label{L6a}
\|J^{\f12}_\e\na\wt\theta\|_{L^2}
\lesssim\|J^{\f12}_\e\na\theta\|_{L^2}
+\e\|\z\|_{H^s} \cdot \|J_\e^{\f12} \theta\|_{H^s}+\e\|\vv V\|_{H^s}\cdot\|J_\e^{\f12} p\|_{H^s}\lesssim\|J_\e^{\f12} \theta\|_{H^s}+\|J_\e^{\f12} p\|_{H^s}.
\eeq
 
 Combining \eqref{L6a} and \eqref{equivalent functional 2}, we obtain \eqref{estimate for wt theta}. The proof is completed.
\end{proof}

\subsubsection{Symmetrization of \eqref{system for p theta}.} Now, we derive the evolution equations for $\wt p$ and $\wt\theta$. With $(\widetilde{p},\widetilde{\theta})$, we deduce from \eqref{system for p theta} that
\beq\label{E 1}
		J_\e p_t = A(D) \wt\theta + \e N_p ,
\eeq
and
\beq\label{E 1a}\begin{aligned}	
J_\e \theta_t&=-A(D)\wt p+\f{\e^2}{6}A(D)\Bigl(\frac{\z}{2+\f\e2 \z} A^{-2}(D) \p_x^4 p\Bigr)+\e N_\theta.
	\end{aligned}
\eeq

{\bf Evolution equation for $\wt p$.} By virtue of \eqref{wtp in case 1}, one has
\beq\label{E 2}
\begin{aligned}
J_\e \wt{p}_t=&\underbrace{(1+\f\e2 \zeta)J_\e p_t+\e\vv V\cdot\nabla A^{-1}(D)  J_\e \theta_t - \f{\e^2}{6} J_\e\Bigl(\frac{1+\f\e2 \z}{2+\f\e2 \z} \z  A^{-2}(D) \p_x^4 p_t\Bigr)}_{I_2}+\e N_{\wt p,1},
\end{aligned}\eeq
where 
\beq\label{def of N wtp 1}
N_{\wt p,1}\eqdefa \f12 \zeta_t  J_\e p+\vv V_t\cdot\nabla A^{-1}(D)  J_\e \theta - \f{\e}{6} J_\e\Bigl(\p_t\bigl(\frac{1+\f\e2 \z}{2+\f\e2 \z} \z \bigr)A^{-2}(D) \p_x^4 p\Bigr).
\eeq

For $I_2$, using \eqref{E 1} and \eqref{E 1a}, we have
\beno
I_2=(1+\f\e2\z)A(D)\wt\theta-\e\vv V\cdot\nabla\wt p
- \f{\e^2}{6} J_\e\Bigl(\frac{1+\f\e2 \z}{2+\f\e2 \z} \z  A^{-1}(D)J_\e^{-1} \p_x^4\wt\theta\Bigr)+\e N_{\wt p,2},
\eeno
where 
\beq\label{def of N wtp 2}\begin{aligned}
N_{\wt p,2}&\eqdefa\f{\e^2}{6}\vv V\cdot\na\Bigl(\frac{\z}{2+\f\e2 \z} A^{-2}(D) \p_x^4 p\Bigr)+(1+\f\e2\z) N_p+\e\vv V\cdot\na A^{-1}(D)N_\theta\\
&\qquad- \f{\e^2}{6} J_\e\Bigl(\frac{1+\f\e2 \z}{2+\f\e2 \z} \z  A^{-2}(D)J_\e^{-1} \p_x^4N_p\Bigr).
\end{aligned}\eeq

Then we deduce from \eqref{E 2} that
\beq\label{eq for wt p}\begin{aligned}
J_\e \wt{p}_t-\Bigl(1+\f\e2\z-\f{\e^2}{6}\frac{1+\f\e2 \z}{2+\f\e2 \z} \z  A^{-2}(D) \p_x^4\Bigr)A(D)\wt\theta= - \e\vv V\cdot \na\wt p+\e N_{\wt{p}},
\end{aligned}\eeq
where $N_{\wt{p}}=N_{\wt p,1}+N_{\wt p,2}+N_{\wt p,3}$ and
\beq\label{def of N wtp 3}
N_{\wt p,3}=- \f{\e}{6} \left[ J_\e,\frac{1+\f\e2 \z}{2+\f\e2 \z} \z  \right] A^{-1}(D)J_\e^{-1}\p_x^4  \wt\theta=\f{\e^2}{18} \left[\p_x^2,\frac{1+\f\e2 \z}{2+\f\e2 \z} \z  \right] A^{-1}(D)J_\e^{-1}\p_x^4  \wt\theta.
\eeq

\medskip

{\bf Evolution equation for $\wt\theta$.}
Thanks to \eqref{wttheta in case 1}, one has 
\beno\begin{aligned}
J_\e \wt\theta_t=&J_\e\theta_t+ \f\e2 A^{-1}(D)(\z \cdot A(D)J_\e \theta_t)-\e  A^{-1}(D) (\vv V\cdot \nabla J_\e p_t)\\
&+\e\Bigl\{\f12 A^{-1}(D)(\z_t \cdot A(D)J_\e \theta)- A^{-1}(D) (\vv V_t\cdot \nabla J_\e p)\Bigr\}
\end{aligned}\eeno
which gives rise to 
\beq\label{E 3}
J_\e \wt\theta_t=\underbrace{(1+\f\e2\z)J_\e\theta_t-\e\vv V\cdot\na A^{-1}(D)J_\e p_t}_{I_3}+\e N_{\wt\theta,1},
\eeq
where
\beq\label{def of N wttheta 1}\begin{aligned}
N_{\wt\theta,1}=&\f12 A^{-1}(D)(\z_t \cdot A(D)J_\e \theta)- A^{-1}(D) (\vv V_t\cdot \nabla J_\e p)\\
&+\f12[A^{-1}(D),\z]A(D)J_\e \theta_t
-[A^{-1}(D),\vv V]\cdot\na J_\e p_t.
\end{aligned}\eeq

For $I_3$, using \eqref{E 1} and \eqref{E 1a}, we have
\beno
I_3
=-(1+\f\e2\z)A(D)\Bigl(\wt{p}-\f{\e^2}{6}\frac{\z}{2+\f\e2 \z} A^{-2}(D) \p_x^4\wt p  \Bigr)-\e\vv V\cdot\na\wt\theta+\e N_{\wt\theta,2},
\eeno
where 
\beq\label{def of N wttheta 2}
N_{\wt\theta,2}\eqdefa{\color{red}-}\f{\e}{6}(1+\f\e2\z)A(D)\Bigl(\frac{\z}{2+\f\e2 \z} A^{-2}(D) \p_x^4 (\wt p-p) \Bigr)+(1+\f\e2\z)N_\theta+\e\vv V\cdot\na A^{-1}(D)N_p.
\eeq

Then we deduce from \eqref{E 3} that
\beq\label{eq for wt theta}
J_\e \wt\theta_t+\Bigl(1+\f\e2\z-\f{\e^2}{6}\frac{1+\f\e2 \z}{2+\f\e2 \z} \z  A^{-2}(D) \p_x^4\Bigr)A(D)\wt p=-\e\vv V\cdot\na\wt\theta+\e N_{\wt\theta},
\eeq
where $N_{\wt\theta}=N_{\wt\theta,1}+N_{\wt\theta,2}+N_{\wt\theta,3}$ and
\beq\label{def of N wttheta 3}
N_{\wt\theta,3}=\f\e6(1+\f\e2\z)\cdot\Bigl[A(D),\frac{\z}{2+\f\e2 \z}\Bigr]A^{-2}(D) \p_x^4\wt p .
\eeq

\medskip

Combining \eqref{eq for wt p} and \eqref{eq for wt theta}, we obtain the symmetric evolution system for $(\wt{p}, \wt{\theta})$  as follows
\begin{equation}\label{equation for tilde p and theta}
	\left\{ \begin{aligned}
		&J_\e\wt p_t - \left(1+\f{\e}{2}\z -\f{\e^2}{6} \frac{1+\f\e2\z}{2+\f\e2\z} \z A^{-2}(D) \p_x^4  \right)A(D)\wt\theta= - \e\vv V\cdot\na\wt p+\e N_{\wt p}, \\
		&J_\e\wt\theta_t  + \left(1+\f{\e}{2}\z -\f{\e^2}{6} \frac{1+\f\e2\z}{2+\f\e2\z} \z A^{-2}(D) \p_x^4  \right) A(D)\wt p =-\e\vv V\cdot\na\wt\theta+\e N_{\wt\theta}.
	\end{aligned} \right.
\end{equation}

\begin{remark}
We remark that \eqref{equation for tilde p and theta} is a symmetric quasilinear system for $(\wt p,\wt\theta)$ with $N_{\wt p}$ and $N_{\wt\theta}$ being the nonlinear and lower order terms.
\end{remark}

\begin{lemma}\label{lem for nonlinear term 2}
Under the assumptions of Lemma \ref{functional 2} and 
\beq\label{ansatz 1}
\sup_{t\in[0,T^*]}\e(\|J^{\f12}_\e p\|_{H^s}+\|J^{\f12}_\e\theta\|_{H^s})\leq 1,
\eeq
there holds for any $t\in(0,T^*]$:
\beq\label{estimate for wt N p theta}
\|J^{-\f12}_\e N_{\wt{p}}\|_{\dH^s}+\|J^{-\f12}_\e N_{\wt\theta}\|_{\dH^s}\lesssim
\|\z\|_{H^s}^2+\|\vv V\|_{H^s}^2+\|J_\e^{\f12} p\|_{H^s}^2+\|J_\e^{\f12} \theta\|_{H^s}^2.
\eeq 
\end{lemma}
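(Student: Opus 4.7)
The estimate is proved by splitting $N_{\wt p}=N_{\wt p,1}+N_{\wt p,2}+N_{\wt p,3}$ and analogously $N_{\wt\theta}$, and treating the six summands one at a time. Throughout, we use the equivalences~\eqref{equivalent functional 1}, \eqref{equivalent functional 2} and~\eqref{estimate for wt theta} to freely interchange $(\vv V,\z)$-, $(p,\theta)$- and $(\wt p,\wt\theta)$-norms, and we rely on the ansatz~\eqref{ansatz 2}--\eqref{ansatz 1} to absorb any cubic remainder into the quadratic right-hand side of~\eqref{estimate for wt N p theta}. In particular, composite factors like $\frac{1+\f\e2\z}{2+\f\e2\z}\z$ are controlled in $J_\e^{\f12}H^s$ exactly as in~\eqref{L7} via Lemma~\ref{composition lemma}.

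\paragraph{Step 1 -- Eliminating time derivatives in $N_{\wt p,1}$ and $N_{\wt\theta,1}$.} These summands carry $\z_t,\vv V_t,p_t,\theta_t$, which we replace by spatial expressions using~\eqref{WTB case 3} (giving $J_\e\z_t=-J_\e v_x-w_y+O(\e)$ and $J_\e\vv V_t=-\na\z+O(\e)$) and~\eqref{E 1}--\eqref{E 1a} (giving $J_\e p_t,J_\e\theta_t$ in terms of $A(D)\wt\theta,A(D)\wt p$ plus the $N_p,N_\theta$ remainders). Every term then becomes a product of two spatial expressions, estimated via~\eqref{product estimate 1} for $J_\e$-products and~\eqref{product A1}, \eqref{commutator A1} for the non-local factors. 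The two commutator pieces of $N_{\wt\theta,1}$, namely $[A^{-1}(D),\z]A(D)J_\e\theta_t$ and $[A^{-1}(D),\vv V]\cdot\na J_\e p_t$, fit exactly the structure of~\eqref{commutator A1} once $p_t,\theta_t$ are substituted and the remainders from Lemma~\ref{lem for nonlinear term 1} are pulled out.

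\paragraph{Step 2 -- The product summands $N_{\wt p,2}$, $N_{\wt\theta,2}$ and the commutator summands $N_{\wt p,3}$, $N_{\wt\theta,3}$.} The terms $(1+\f\e2\z)N_p$, $(1+\f\e2\z)N_\theta$, $\vv V\cdot\na A^{-1}(D)N_{p,\theta}$, and the $J_\e$-commutator pieces in $N_{\wt p,2}$ are handled by combining~\eqref{product estimate 1}, \eqref{product A1}, and Lemma~\ref{lem for nonlinear term 1}; the extra $\e$-factors convert cubic bounds into quadratic ones. For $N_{\wt p,3}$ we expand
\[
\Bigl[\p_x^2,\tfrac{1+\f\e2\z}{2+\f\e2\z}\z\Bigr]g
 = 2\,\p_x\bigl(\tfrac{1+\f\e2\z}{2+\f\e2\z}\z\bigr)\,\p_x g + \p_x^2\bigl(\tfrac{1+\f\e2\z}{2+\f\e2\z}\z\bigr)\,g,
\]
with $g=A^{-1}(D)J_\e^{-1}\p_x^4\wt\theta$, and use~\eqref{tame} together with the $H^s$-control of $F(\z)=\frac{1+\f\e2\z}{2+\f\e2\z}\z$ given by Lemma~\ref{composition lemma} and~\eqref{L7}. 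The remaining factor is pointwise dominated in Fourier by $|\xi_1|^2 J_\e^{-1}\wt\theta \lesssim J_\e^{\f12}\na\wt\theta$, which by~\eqref{estimate for wt theta} is bounded by $\|J_\e^{\f12}p\|_{H^s}+\|J_\e^{\f12}\theta\|_{H^s}$. For $N_{\wt\theta,3}$, the commutator $[A(D),\z/(2+\f\e2\z)]$ is estimated by~\eqref{commutator A2}, and the factor $\f\e6 J_\e^{-\f12}A^{-2}(D)\p_x^4\wt p$ is bounded by $\|J_\e^{\f12}\wt p\|_{H^s}$.

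\paragraph{Main obstacle.} The delicate point is the anisotropic bookkeeping. Each summand couples $J_\e$-multipliers (tame in $x$, invisible in $y$), the non-local operator $A(D)\sim J_\e^{\f12}|\p_x|+|\p_y|$, and high powers of $\p_x$; one must carefully balance $\e^{-1}$-growth from $J_\e^{-1}\p_x^4$ against the decay from $A^{-2}(D)$, so that no unbalanced $\e$-power appears and no $y$-derivative is lost. In particular, substituting $p_t,\theta_t$ via~\eqref{E 1}--\eqref{E 1a} into the $A^{-1}(D)$-commutators of $N_{\wt\theta,1}$ produces terms of the form $\e[A^{-1}(D),f]A(D)g$ that must be reduced via~\eqref{commutator A1} rather than the naively available~\eqref{commutator A3} — this is precisely where the one-derivative gain of~\eqref{commutator A1} is essential.
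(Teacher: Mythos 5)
Your plan is correct and follows essentially the same route as the paper: term-by-term estimation of $N_{\wt p,1},N_{\wt p,2},N_{\wt p,3}$ and $N_{\wt\theta,1},N_{\wt\theta,2},N_{\wt\theta,3}$ using the anisotropic product/commutator estimates of Lemmas \ref{product estimates} and \ref{commutator lem}, Lemma \ref{composition lemma} for $\frac{1+\f\e2\z}{2+\f\e2\z}\z$, Lemma \ref{lem for nonlinear term 1} for $N_p,N_\theta$, and \eqref{equivalent functional 1}, \eqref{equivalent functional 2}, \eqref{estimate for wt theta} together with the ansatz to close; your only deviation --- substituting $\z_t,\vv V_t,p_t,\theta_t$ via \eqref{WTB case 3} and \eqref{E 1}--\eqref{E 1a} before estimating, instead of the paper's separate bounds \eqref{L16} and \eqref{L19} --- is a harmless reordering. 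One caution: the first term of $N_{\wt\theta,2}$, which your Step 2 does not list, is the one place where ``freely interchanging'' $(\wt p,\wt\theta)$- and $(p,\theta)$-norms is not enough --- there $A(D)$ acting on $\frac{\z}{2+\f\e2\z}A^{-2}(D)\p_x^4(\wt p-p)$ costs a factor $\e^{-\f12}$, and one must invoke the full strength of \eqref{equivalent functional 2}, i.e. $\|J_\e^{\f12}(\wt p-p)\|_{H^s}=O(\e)$, as in \eqref{L11}, to recover a quadratic bound with no negative power of $\e$.
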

\begin{proof}
We divide the proof into several steps.

{\bf Step 1. Estimate of $N_{\wt{p}}$.} We derive the bounds of $N_{\wt{p},1}$, $N_{\wt{p},2}$ and $N_{\wt{p},3}$ one by one.

\smallskip

{\it Step 1.1. Estimate of $N_{\wt{p},1}$.} For the first two terms of $N_{\wt{p},1}$ in \eqref{def of N wtp 1}, using \eqref{Product 1} and the fact that $A(\xi)\geq|\xi|$, we  get
\beno
\|J^{-\f12}_\e(\zeta_t  J_\e p)\|_{\dH^s}\lesssim\|J^{-\f12}_\e\zeta_t\|_{H^s}\|J^{\f12}_\e p\|_{H^s},
\eeno
\beno
\|J^{-\f12}_\e(\vv V_t\cdot\nabla A^{-1}(D)  J_\e \theta)\|_{\dH^s}\lesssim\|J^{-\f12}_\e\vv V_t\|_{H^s}\|J^{-\f12}_\e \nabla A^{-1}(D)J_\e\theta\|_{H^s}
\lesssim
\|J^{-\f12}_\e\vv V_t\|_{H^s}\|J^{\f12}_\e\theta\|_{H^s}.
\eeno

For the last term of $N_{\wt{p},1}$, we have
\beno\begin{aligned}
\e\|J^{\f12}_\e\Bigl(\p_t\bigl(\frac{1+\f\e2 \z}{2+\f\e2 \z} \z \bigr)A^{-2}(D) \p_x^4 p\Bigr)\|_{\dH^s}
&\lesssim\|J^{\f12}_\e\p_t\bigl(\frac{1+\f\e2 \z}{2+\f\e2 \z} \z \bigr)\|_{H^s}\cdot\e\|J^{\f12}_\e A^{-2}(D) \p_x^4 p\|_{H^s}\\
&\lesssim\|J^{\f12}_\e\p_t\bigl(\frac{1+\f\e2 \z}{2+\f\e2 \z} \z \bigr)\|_{H^s}
\|J^{\f12}_\e p\|_{H^s}.
\end{aligned}\eeno
Similarly as the derivation of \eqref{L7}, we get by using the ansatz $\sqrt\e\|J^{\f12}_\e\z\|_{H^s}\leq 1$ that 
\beno
\|J^{\f12}_\e\p_t\bigl(\frac{1+\f\e2 \z}{2+\f\e2 \z} \z \bigr)\|_{H^s}
\lesssim\|J^{\f12}_\e\z_t\|_{H^s},
\eeno
which yields to
\beno
\e\|J^{\f12}_\e\Bigl(\p_t\bigl(\frac{1+\f\e2 \z}{2+\f\e2 \z} \z \bigr)A^{-2}(D) \p_x^4 p\Bigr)\|_{\dH^s}\lesssim\|J^{\f12}_\e\z_t\|_{H^s}\|J^{\f12}_\e p\|_{H^s}.
\eeno

Then we obtain
\beq\label{estimate for N wtp 1}
\|J^{-\f12}_\e N_{\wt{p},1}\|_{\dH^s}\lesssim
\|J^{\f12}_\e\z_t\|_{H^s}\|J^{\f12}_\e p\|_{H^s}+\|J^{-\f12}_\e\vv V_t\|_{H^s}\|J^{\f12}_\e\theta\|_{H^s}.
\eeq

\smallskip

{\it Step 1.2. Estimate of $N_{\wt{p},2}$.} We estimate the terms of $N_{\wt{p},2}$
in \eqref{def of N wtp 2} one by one.

{\bf Estimate of the first term.}  For the first term, we get by using \eqref{product estimate 1} that
\beq\label{L9}\begin{aligned}
\e^2\Bigl\|J^{-\f12}_\e&\Bigl(\vv V\cdot\na\bigl(\frac{\z}{2+\f\e2 \z} A^{-2}(D) \p_x^4 p\bigr)\Bigr)\Bigr\|_{\dH^s}
\lesssim\e^2\|J^{-\f12}_\e\vv V\|_{H^s}\cdot\bigl\|J^{-\f12}_\e\bigl(\frac{\z}{2+\f\e2 \z} A^{-2}(D) \p_x^4 p\bigr)\bigr\|_{H^{s+1}}\\
&\lesssim\sqrt\e\|\vv V\|_{H^s}\cdot\bigl\|J^{-\f12}_\e\bigl(\frac{\z}{2+\f\e2 \z}\bigr)\bigr\|_{H^{s+1}}\cdot\e^{\f32}\|J^{-\f12}_\e A^{-2}(D) \p_x^4 p\|_{H^{s+1}}.
\end{aligned}\eeq

Since $\|f\|_{H^{s+1}}\sim\|f\|_{L^2}+\|f\|_{\dH^{s+1}}$, $A^2(\xi)=J_\e(\xi_1)\xi_1^2+\xi_2^2\geq|\xi|^2$ and $J_\e(\xi_1)=1+\f\e3\xi_1^2$, we have
\beq\label{L10c}
\e^{\f32}\|J^{-\f12}_\e A^{-2}(D) \p_x^4 p\|_{H^{s+1}}\lesssim\|J^{\f12}_\e p\|_{H^s}.
\eeq

For term $\bigl\|J^{-\f12}_\e\bigl(\frac{\z}{2+\f\e2 \z}\bigr)\bigr\|_{H^{s+1}}$, we first get
\beq\label{L10}
\bigl\|J^{-\f12}_\e\bigl(\frac{\z}{2+\f\e2 \z}\bigr)\bigr\|_{H^{s+1}}
\lesssim\bigl\|\frac{\z}{2+\f\e2 \z}\bigr\|_{H^s}
+\bigl\|J^{-\f12}_\e\p_x\bigl(\frac{\z}{2+\f\e2 \z}\bigr)\bigr\|_{H^s}
+\bigl\|J^{-\f12}_\e\p_y\bigl(\frac{\z}{2+\f\e2 \z}\bigr)\bigr\|_{H^s}.
\eeq
Similar derivations as \eqref{L8} and \eqref{L7a} give rise to
\beq\label{L10a}
\Bigl\|\frac{\z}{2+\f\e2 \z}\Bigr\|_{H^s}\lesssim\|\z\|_{H^s}\quad\text{and}
\quad\Bigl\|\p_x\Bigl(\frac{\z}{2+\f\e2 \z}\Bigr)\Bigr\|_{H^s}\lesssim\|\z_x\|_{H^s}.
\eeq
Noticing that
\beno
\p_y\bigl(\frac{\z}{2+\f\e2 \z}\bigr)
=\bigl(\f12-\frac{\f\e4\z}{2+\f\e2 \z}\bigr)\p_y\z-\frac{\f\e2\z}{(2+\f\e2 \z)^2}\cdot\p_y\z,
\eeno
we obtain by using \eqref{product estimate 1} and \eqref{ansatz 2} that
\beno\begin{aligned}
\Bigl\|J^{-\f12}_\e\p_y\bigl(\frac{\z}{2+\f\e2 \z}\bigr) \Bigr\|_{H^s}
&\lesssim\Bigl(1+\Bigl\|\frac{\e\z}{2+\f\e2 \z}\Bigr\|_{H^s}+\Bigl\|\frac{\e\z}{(2+\f\e2 \z)^2}\Bigr\|_{H^s}\Bigr)\|J^{-\f12}_\e\p_y\z\|_{H^s}\\
&\lesssim C(\e\|\z\|_{H^s})(1+\e\|\z\|_{H^s})\|J^{-\f12}_\e\p_y\z\|_{H^s}
\lesssim\|J^{-\f12}_\e\p_y\z\|_{H^s},
\end{aligned}\eeno
which together with \eqref{L10a} yields 
\beq\label{L10b}
\Bigl\|J^{-\f12}_\e A(D)\Bigl(\frac{\z}{2+\f\e2 \z}\Bigr)\Bigr\|_{H^s}\sim\Bigl\|\p_x\Bigl(\frac{\z}{2+\f\e2 \z}\Bigr)\Bigr\|_{H^s}+\Bigl\|J^{-\f12}_\e\p_y\bigl(\frac{\z}{2+\f\e2 \z}\bigr) \Bigr\|_{H^s}
\lesssim\|(\z_x,J^{-\f12}_\e\z_y)\|_{H^s}.
\eeq

Due to \eqref{L10}, \eqref{L10a} and \eqref{L10b}, we get
\beno
\bigl\|J^{-\f12}_\e\bigl(\frac{\z}{2+\f\e2 \z}\bigr)\bigr\|_{H^{s+1}}
\lesssim\|\z\|_{H^s}+\|(\z_x,J^{-\f12}_\e\z_y)\|_{H^s},
\eeno
from which, \eqref{L10c} and  \eqref{L9}, we deduce that
\beq\label{L9a}\begin{aligned}
\e^2\Bigl\|J^{-\f12}_\e\Bigl(\vv V\cdot\na\bigl(\frac{\z}{2+\f\e2 \z} A^{-2}(D) \p_x^4 p\bigr)\Bigr)\Bigr\|_{\dH^s}&\lesssim\sqrt\e\|\vv V\|_{H^s}\cdot\bigl(\|\z\|_{H^s}+\|(\z_x,J^{-\f12}_\e\z_y)\|_{H^s}\bigr)\|J^{\f12}_\e p\|_{H^s}\\
&
\lesssim\bigl(\|\z\|_{H^s}+\|(\z_x,J^{-\f12}_\e\z_y)\|_{H^s}\bigr)\|J^{\f12}_\e p\|_{H^s}.
\end{aligned}\eeq

{\bf Estimates of the remaining terms.} For  the  second and third terms, using \eqref{product estimate 1} and \eqref{ansatz 2}, we have
\beq\label{L9b}
\|J^{-\f12}_\e[(1+\f\e2\z) N_p]\|_{\dH^s}
\lesssim\|J^{-\f12}_\e N_p\|_{H^s}+\e
\|J^{-\f12}_\e\z\|_{H^s}\|J^{-\f12}_\e N_p\|_{H^s}
\lesssim\|J^{-\f12}_\e N_p\|_{H^s},
\eeq
\beq\label{L9c}
\e\|J^{-\f12}_\e[\vv V\cdot\na A^{-1}(D)N_\theta]\|_{\dH^s}
\lesssim\e
\|J^{-\f12}_\e\vv V\|_{H^s}\|J^{-\f12}_\e\na A^{-1}(D)N_\theta\|_{H^s}\lesssim\|J^{-\f12}_\e N_\theta\|_{H^s}.
\eeq

For  the  last term of $N_{\wt{p},2}$, by using \eqref{product estimate 1} and \eqref{ansatz 2}, we get
\beno\begin{aligned}
&\e^2\Bigl\|J^{\f12}_\e\Bigl(\frac{1+\f\e2 \z}{2+\f\e2 \z} \z  A^{-2}(D)J_\e^{-1} \p_x^4N_p\Bigr)\Bigr\|_{\dH^s}
\lesssim\e\Bigl\|J^{\f12}_\e\Bigl(\frac{(1+\f\e2 \z)\z}{2+\f\e2 \z} \Bigr)\Bigr\|_{H^s}\cdot\e\|A^{-2}(D)\p_x^4J^{-\f12}_\e N_p\|_{H^s},
\end{aligned}\eeno
which along with \eqref{L7} and \eqref{def of A} implies 
\beq\label{L9d}
\e^2\Bigl\|J^{\f12}_\e\Bigl(\frac{1+\f\e2 \z}{2+\f\e2 \z} \z  A^{-2}(D)J_\e^{-1} \p_x^4N_p\Bigr)\Bigr\|_{\dH^s}\lesssim\e\|J^{\f12}_\e\z\|_{H^s}\|J^{-\f12}_\e N_p\|_{H^s}\lesssim\|J^{-\f12}_\e N_p\|_{H^s}.
\eeq

{\bf Estimate of $N_{\wt{p},2}$.} Combining \eqref{L9a}, \eqref{L9b}, \eqref{L9c} and \eqref{L9d}, we obtain
\beq\label{estimate for N wtp 2}
\|J^{-\f12}_\e N_{\wt{p},2}\|_{\dH^s}\lesssim\bigl(\|\z\|_{H^s}+\|(\z_x,J^{-\f12}_\e\z_y)\|_{H^s}\bigr)\|J^{\f12}_\e p\|_{H^s}+\|J^{-\f12}_\e N_p\|_{H^s}
+\|J^{-\f12}_\e N_\theta\|_{H^s}.
\eeq

\smallskip

{\it Step 1.3. Estimate of $N_{\wt{p},3}$.} Thanks to \eqref{def of N wtp 3}, we have
\beno
N_{\wt p,3}=\f{\e^2}{9}\p_x\Bigl(\frac{1+\f\e2 \z}{2+\f\e2 \z} \z\Bigr)\cdot
\p_xJ_\e^{-1}A^{-1}(D)\p_x^4 \wt\theta+\f{\e^2}{18}\p_x^2\Bigl(\frac{1+\f\e2 \z}{2+\f\e2 \z} \z\Bigr)\cdot J_\e^{-1}A^{-1}(D)\p_x^4 \wt\theta.
\eeno
By using \eqref{product estimate 1} and \eqref{def of A}, we get 
\beno\begin{aligned}
\|J^{-\f12}_\e N_{\wt{p},3}\|_{\dH^s}\lesssim&
\Bigl\|J^{-\f12}_\e\p_x\Bigl(\frac{1+\f\e2 \z}{2+\f\e2 \z} \z\Bigr)\Bigr\|_{H^s}\cdot\e^2
\|J^{-\f32}_\e\p_xA^{-1}(D)\p_x^4 \wt\theta\|_{H^s}\\
&+\e^{\f12}\Bigl\|J^{-\f12}_\e\p_x^2\Bigl(\frac{1+\f\e2 \z}{2+\f\e2 \z} \z\Bigr)\Bigr\|_{H^s}\cdot\e^{\f32}
\|J^{-\f32}_\e A^{-1}(D)\p_x^4 \wt\theta\|_{H^s}\\
\lesssim&\Bigl\|\p_x\Bigl(\frac{1+\f\e2 \z}{2+\f\e2 \z} \z\Bigr)\Bigr\|_{H^s}
\cdot{\|\na\wt\theta\|_{H^{s-1}}}
\end{aligned}\eeno
which along with \eqref{estimate for wt theta}, \eqref{L7a} and the ansatz \eqref{ansatz 2} implies 
\beq\label{estimate for N wtp 3}
\|J^{-\f12}_\e N_{\wt{p},3}\|_{\dH^s}\lesssim\|\z_x\|_{H^s}\cdot\bigl(\|J_\e^{\f12} p\|_{H^s}
+\|J_\e^{\f12} \theta\|_{H^s}\bigr).
\eeq

\smallskip

{\it Step 1.4. Estimate of $N_{\wt{p}}$.} Thanks to \eqref{estimate for N wtp 1}, \eqref{estimate for N wtp 2} and \eqref{estimate for N wtp 3}, we obtain
\beq\label{estimate for N wt p}\begin{aligned}
\|J^{-\f12}_\e N_{\wt{p}}\|_{\dH^s}&\lesssim
\bigl(\|J^{\f12}_\e\z_t\|_{H^s}+\|J^{-\f12}_\e\vv V_t\|_{H^s}+\|\z\|_{H^s}+\|(\z_x,J^{-\f12}_\e\z_y)\|_{H^s}\bigr)\cdot\bigl(\|J_\e^{\f12} p\|_{H^s}+\|J_\e^{\f12} \theta\|_{H^s}\bigr)\\
&\qquad+\|J^{-\f12}_\e N_p\|_{H^s}+\|J^{-\f12}_\e N_\theta\|_{H^s}.
\end{aligned}\eeq

\medskip

{\bf Step 2. Estimate of $N_{\wt\theta}$.} We derive the bounds of $N_{\wt\theta,1}$, $N_{\wt\theta,2}$ and $N_{\wt\theta,3}$ one by one.

\smallskip

{\it Step 2.1. Estimate of $N_{\wt\theta,1}$.} For the first two terms of  $N_{\wt\theta,1}$ in \eqref{def of N wttheta 1}, by using \eqref{product A1}, we have
\beno\begin{aligned}
&\|J^{-\f12}_\e A^{-1}(D)(\z_t \cdot A(D)J_\e \theta)\|_{\dH^s}
+\|J^{-\f12}_\e  A^{-1}(D) (\vv V_t\cdot \nabla J_\e p)\|_{\dH^s}\\
&\lesssim\|\z_t\|_{H^s}\|J^{\f12}_\e\theta\|_{H^s}+\|\vv V_t\|_{H^s}\|J^{\f12}_\e\na A^{-1}(D) p\|_{H^s}\\
&\lesssim\|\z_t\|_{H^s}\|J^{\f12}_\e\theta\|_{H^s}+\|\vv V_t\|_{H^s}\|J^{\f12}_\e p\|_{H^s}.
\end{aligned}\eeno

While for the last two terms of $N_{\wt\theta,1}$, we get by using \eqref{commutator A1} that
\beno\begin{aligned}
&\|J^{-\f12}_\e\bigl([A^{-1}(D),\z]A(D)J_\e \theta_t\bigr)\|_{\dH^s}
+\|J^{-\f12}_\e\bigl([A^{-1}(D),\vv V]\cdot\na J_\e p_t\bigr)\|_{\dH^s}\\
&\lesssim\|\z\|_{H^s}\|J_\e\theta_t\|_{H^{s-1}}+\|\vv V\|_{H^s}\|J_\e p_t\|_{H^{s-1}}.
\end{aligned}\eeno

Then we obtain 
\beq\label{estimate for N wttheta 1}
\|J^{-\f12}_\e N_{\wt\theta,1}\|_{\dH^s}\lesssim
\|\z_t\|_{H^s}\|J^{\f12}_\e\theta\|_{H^s}+\|\vv V_t\|_{H^s}\|J^{\f12}_\e p\|_{H^s}
+\|\z\|_{H^s}\|J_\e\theta_t\|_{H^{s-1}}+\|\vv V\|_{H^s}\|J_\e p_t\|_{H^{s-1}}.
\eeq

\smallskip

{\it Step 2.2. Estimate of $N_{\wt\theta,2}$.} For the first term of $N_{\wt\theta,2}$ in \eqref{def of N wttheta 2}, by using \eqref{product estimate 1} and \eqref{product A2}, we have
\beno\begin{aligned}
&\quad\e\Bigl\|J^{-\f12}_\e\Bigl((1+\f\e2\z)A(D)\bigl(\frac{\z}{2+\f\e2 \z} A^{-2}(D) \p_x^4 (\wt p-p) \bigr)\Bigr)\Bigr\|_{\dH^s}\\
&\lesssim(1+\e\|\z\|_{H^s})
\cdot\bigl(\|\frac{\z}{2+\f\e2 \z}\|_{H^s}+\bigl\|J^{-\f12}_\e A(D)\bigl(\frac{\z}{2+\f\e2 \z}\bigr)\bigr\|_{H^s}\bigr)\\
&\qquad\times\e\bigl(\|A^{-2}(D) \p_x^4 (\wt p-p)\|_{H^s}+\|J^{-\f12}_\e A^{-1}(D) \p_x^4 (\wt p-p)\|_{H^s}\bigr)\\
&\lesssim\bigl(\|\z\|_{H^s}+\|(\z_x,J^{-\f12}_\e\z_y)\|_{H^s}\bigr)
\cdot\e^{-\f12}\|J^{\f12}_\e (\wt p-p)\|_{H^s},
\end{aligned}\eeno
where we used \eqref{ansatz 2}, \eqref{L10a} and \eqref{L10b} in the last inequality. Thanks to \eqref{equivalent functional 2} and \eqref{ansatz 2}, we get
\beq\label{L11}\begin{aligned}
&\quad\e\Bigl\|J^{-\f12}_\e\Bigl((1+\f\e2\z)A(D)\bigl(\frac{\z}{2+\f\e2 \z} A^{-2}(D) \p_x^4 (\wt p-p) \bigr)\Bigr)\Bigr\|_{\dH^s}\\
&\lesssim\bigl(\|\z\|_{H^s}+\|(\z_x,J^{-\f12}_\e\z_y)\|_{H^s}\bigr)\cdot\sqrt\e
\bigl(\|J^{\f12}_\e\z\|_{H^s}\|J^{\f12}_\e p\|_{H^s}+\|\vv V\|_{H^s}\|J^{\f12}_\e \theta\|_{H^s}\bigr)\\
&\lesssim\bigl(\|\z\|_{H^s}+\|(\z_x,J^{-\f12}_\e\z_y)\|_{H^s}\bigr)\cdot
\bigl(\|J^{\f12}_\e p\|_{H^s}+\|J^{\f12}_\e \theta\|_{H^s}\bigr).
\end{aligned}\eeq

 For the last two terms of $N_{\wt\theta,2}$, by using \eqref{product estimate 1} and \eqref{ansatz 2}, we get 
\beno\begin{aligned}
&\|J^{-\f12}_\e\bigl((1+\f\e2\z)N_\theta\bigr)\|_{\dH^s}+\e\|J^{-\f12}_\e\bigl(\vv V\cdot\na A^{-1}(D)N_p\bigr)\|_{\dH^s}\\
&\lesssim(1+\e\|\z\|_{H^s})\|J^{-\f12}_\e N_\theta\|_{H^s}
+\e\|\vv V\|_{H^s}\|J^{-\f12}_\e\na A^{-1}(D)N_p\|_{H^s}\\
&\lesssim\|J^{-\f12}_\e N_\theta\|_{H^s}+\|J^{-\f12}_\e N_p\|_{H^s},
\end{aligned}\eeno
which along with \eqref{L11} implies
\beq\label{estimate for N wttheta 2}
\|J^{-\f12}_\e N_{\wt\theta,2}\|_{\dH^s}\lesssim\bigl(\|\z\|_{H^s}+\|(\z_x,J^{-\f12}_\e\z_y)\|_{H^s}\bigr)\cdot\bigl(\|J^{\f12}_\e p\|_{H^s}+\|J^{\f12}_\e \theta\|_{H^s}\bigr)+\|J^{-\f12}_\e N_p\|_{H^s}+\|J^{-\f12}_\e N_\theta\|_{H^s}.
\eeq

\smallskip

{\it Step 2.3. Estimate of $N_{\wt\theta,3}$.} By using \eqref{product estimate 1} and \eqref{commutator A2}, we deduce from \eqref{def of N wttheta 3} that
\beno\begin{aligned}
\|J^{-\f12}_\e N_{\wt\theta,3}\|_{\dH^s}
&\lesssim(1+\e\|J^{-\f12}\z\|_{H^s})\cdot\e\Bigl\|J^{-\f12}_\e\Bigl(\Bigl[A(D),\frac{\z}{2+\f\e2 \z}\Bigr]A^{-2}(D) \p_x^4\wt p \Bigr)\Bigr\|_{H^s}\\
&\lesssim(1+\e\|\z\|_{H^s})\cdot\Bigl(\Bigl\|\frac{\z}{2+\f\e2 \z}\Bigr\|_{H^s}+\Bigl\|J^{-\f12}_\e A(D)\bigl(\frac{\z}{2+\f\e2 \z}\bigr)\Bigr\|_{H^s}\Bigr)\cdot\e\|A^{-2}(D) \p_x^4\wt p\|_{H^s},
\end{aligned}\eeno
which together with \eqref{ansatz 2}, \eqref{L10a} and \eqref{L10b} implies
\beno
\|J^{-\f12}_\e N_{\wt\theta,3}\|_{\dH^s}\lesssim\bigl(\|\z\|_{H^s}+\|(\z_x,J^{-\f12}_\e\z_y\|_{H^s}\bigr)\|\wt p\|_{H^s}.
\eeno

Thanks to \eqref{estimate for wt theta} and \eqref{ansatz 2}, we obtain 
\beq\label{estimate for N wttheta 3}
\|J^{-\f12}_\e N_{\wt\theta,3}\|_{\dH^s}\lesssim\bigl(\|\z\|_{H^s}+\|(\z_x,J^{-\f12}_\e\z_y\|_{H^s}\bigr)\cdot\bigl(\|J^{\f12}_\e p\|_{H^s}+\|J^{\f12}_\e \theta\|_{H^s}\bigr).
\eeq

\smallskip

{\it Step 2.4. Estimate of $N_{\wt\theta}$.} Combining estimates \eqref{estimate for N wttheta 1}, \eqref{estimate for N wttheta 2} and \eqref{estimate for N wttheta 3}, we get 
\beq\label{estimate for N wt theta}\begin{aligned}
\|J^{-\f12}_\e N_{\wt\theta}\|_{\dH^s}&\lesssim\bigl(\|J^{\f12}_\e\z_t\|_{H^s}+\|\vv V_t\|_{H^s}+\|\z\|_{H^s}+\|(\z_x,J^{-\f12}_\e\z_y)\|_{H^s}\bigr)
\cdot\bigl(\|J_\e^{\f12} p\|_{H^s}+\|J_\e^{\f12} \theta\|_{H^s}\bigr)\\
&\qquad
+\|\z\|_{H^s}\|J_\e\theta_t\|_{H^{s-1}}+\|\vv V\|_{H^s}\|J_\e p_t\|_{H^{s-1}}
+\|J^{-\f12}_\e N_p\|_{H^s}+\|J^{-\f12}_\e N_\theta\|_{H^s}.
\end{aligned}\eeq

\medskip

{\bf Step 3. Estimates of $(\z_t,\vv V_t)$ and $(p_t,\theta_t)$.} We derive the bounds of $\|J^{\f12}_\e\z_t\|_{H^s}$, $\|\vv V_t\|_{H^s}$, $\|J_\e p_t\|_{H^{s-1}}$
and $\|J_\e\theta_t\|_{H^{s-1}}$ one by one.

{\it Step 3.1. Estimates of $\|J^{\f12}_\e\z_t\|_{H^s}$ and $\|\vv V_t\|_{H^s}$}. Going back to system \eqref{WTB case 3}, we have
\beno\begin{aligned}
&J^{\f12}_\e\z_t=-J^{\f12}_\e v_x-J^{-\f12}_\e w_y-\e J^{-\f12}_\e(\vv V\cdot\na \z+ \f12 \z \div\vv V),\\
&J^{\f12}_\e\vv V_t=-J^{-\f12}_\e\na\z-\e J^{-\f12}_\e(\vv V\cdot\na\vv V+\f12\z\na\z),
\end{aligned}\eeno
where we used the notations $J_\e=1-\f\e3\p_x^2$ and $\vv V=(v,w)^T$. Thanks to \eqref{product estimate 1}, we obtain
\beno\begin{aligned}
&\|J^{\f12}_\e\z_t\|_{H^s}\lesssim\|J^{\f12}_\e v_x\|_{H^s}+\|J^{-\f12}_\e w_y\|_{H^s}+\e\|J^{-\f12}_\e\vv V\|_{H^s}\|J^{-\f12}_\e\na\z\|_{H^s}+\e\|J^{-\f12}_\e\z\|_{H^s}\|J^{-\f12}_\e\div\vv V\|_{H^s},\\
&\|J^{\f12}_\e\vv V_t\|_{H^s}\lesssim\|J^{-\f12}_\e\na\z\|_{H^s}
+\e\|J^{-\f12}_\e\vv V\|_{H^s}\|J^{-\f12}_\e\na\vv V\|_{H^s}+\e\|J^{-\f12}_\e\z\|_{H^s}\|J^{-\f12}_\e\na\z\|_{H^s},
\end{aligned}\eeno
which along with \eqref{ansatz 2}, \eqref{equivalent functional 1} and the fact that $\|J^{-\f12}_\e\na f\|_{H^s}\lesssim\|(\p_xf,J^{-\f12}_\e\p_yf)\|_{H^s}$ shows 
\beq\label{L16}\begin{aligned}
\|J^{\f12}_\e\z_t\|_{H^s}+\|J^{\f12}_\e\vv V_t\|_{H^s}
&\lesssim\|J^{\f12}_\e v_x\|_{H^s}+\|J^{-\f12}_\e\na\vv V\|_{H^s}+\|J^{-\f12}_\e\na\z\|_{H^s}\\
&\lesssim\|J^{\f12}_\e p\|_{H^s}+\|J^{\f12}_\e\theta\|_{H^s}.
\end{aligned}\eeq

\smallskip

{\it Step 3.2. Estimates of $\|J_\e p_t\|_{H^{s-1}}$ and $\|J_\e\theta_t\|_{H^{s-1}}$.} Firstly, we deduce from \eqref{E 1} that
\beno
\|J_\e p_t\|_{H^{s-1}}\leq\|A(D)\wt\theta\|_{H^{s-1}}+\e\|N_p\|_{H^{s-1}}
\lesssim\|J^{\f12}_\e\na\wt\theta\|_{H^{s-1}}+\e\|J^{-\f12}_\e N_p\|_{H^s},
\eeno
which along with \eqref{estimate for wt theta} implies
\beq\label{L17}
\|J_\e p_t\|_{H^{s-1}}\lesssim\|J_\e^{\f12} p\|_{H^s}+\|J_\e^{\f12} \theta\|_{H^s}+\e\|J^{-\f12}_\e N_p\|_{H^s}.
\eeq

Due to \eqref{E 1a}, we have
\beno\begin{aligned}
\|J_\e\theta_t\|_{H^{s-1}}&\leq\|A(D)\wt p\|_{H^{s-1}}+\f{\e^2}{6}\Bigl\|A(D)\Bigl(\frac{\z}{2+\f\e2 \z} A^{-2}(D) \p_x^4 p\Bigr)\Bigr\|_{H^{s-1}}+\e\|N_\theta\|_{H^{s-1}}\\
&\lesssim\|J^{\f12}_\e\wt p\|_{H^s}+\e^2\Bigl\|J^{\f12}_\e\Bigl(\frac{\z}{2+\f\e2 \z} A^{-2}(D) \p_x^4 p\Bigr)\Bigr\|_{H^s}+\e\|J^{-\f12}_\e N_\theta\|_{H^s}.
\end{aligned}\eeno
By virtue of \eqref{def of A}, \eqref{L10a} and \eqref{ansatz 2}, we obtain
\beno\begin{aligned}
\e^2\Bigl\|J^{\f12}_\e\Bigl(\frac{\z}{2+\f\e2 \z} A^{-2}(D) \p_x^4 p\Bigr)\Bigr\|_{H^s}&\lesssim\e\Bigl\|J^{\f12}_\e\Bigl(\frac{\z}{2+\f\e2 \z}\Bigr)\Bigr\|_{H^s}\cdot\e\| J^{\f12}_\e A^{-2}(D) \p_x^4 p\|_{H^s}\\
&\lesssim\e\|J^{\f12}_\e\z\|_{H^s}\cdot\|J^{\f12}_\e p\|_{H^s}\lesssim\|J^{\f12}_\e p\|_{H^s}.
\end{aligned}\eeno
Then  we get by using \eqref{estimate for wt theta} that
\beq\label{L18}
\|J_\e\theta_t\|_{H^{s-1}}\lesssim\|J^{\f12}_\e p\|_{H^s}+\|J^{\f12}_\e\theta\|_{H^s}+\e\|J^{-\f12}_\e N_\theta\|_{H^s}.
\eeq

Thanks to  Lemma \ref{lem for nonlinear term 1},
we deduce from \eqref{L17} and \eqref{L18} that
\beno
\|J_\e p_t\|_{H^{s-1}}+\|J_\e\theta_t\|_{H^{s-1}}
\lesssim\|J_\e^{\f12} p\|_{H^s}+\|J_\e^{\f12} \theta\|_{H^s}+
\e\|J_\e^{\f12} p\|_{H^s}^2+\e\|J_\e^{\f12} \theta\|_{H^s}^2,
\eeno
which along with \eqref{ansatz 1} yields to 
\beq\label{L19}
\|J_\e p_t\|_{H^{s-1}}+\|J_\e\theta_t\|_{H^{s-1}}
\lesssim\|J_\e^{\f12} p\|_{H^s}+\|J_\e^{\f12} \theta\|_{H^s}.
\eeq

\medskip

{\bf Step 4. Estimates of $N_{\wt p}$ and $N_{\wt\theta}$.} Plugging the estimates \eqref{L16}, \eqref{L19} into \eqref{estimate for N wt p} and \eqref{estimate for N wt theta}, and using \eqref{estimate for N p theta} and \eqref{equivalent functional 1}, we finally obtain
\beno
\|J^{-\f12}_\e N_{\wt p}\|_{\dH^s}+\|J^{-\f12}_\e N_{\wt\theta}\|_{\dH^s}
\lesssim\bigl(\|\z\|_{H^s}+\|\vv V\|_{H^s}+\|J_\e^{\f12} p\|_{H^s}+\|J_\e^{\f12} \theta\|_{H^s}\bigr)\bigl(\|J_\e^{\f12} p\|_{H^s}+\|J_\e^{\f12} \theta\|_{H^s}\bigr),
\eeno
which yields to \eqref{estimate for wt N p theta}. The proof is complete.
\end{proof}

\subsection{The proof of Theorem \ref{main theorem} for case 1}		
	
The proof of Theorem \ref{main theorem} relies on the continuity argument and the {\it a priori} energy estimate.	

\subsubsection{ Ansatz for continuity arguments.} Let  $(\vv V,\z)$ be a smooth enough solution of \eqref{WTB case 3} over time interval $[0,T_0/\e]$. Here $T_0>0$ will be determined later on. 

Our first ansatz for continuity arguments is
\beq\label{ansatz a}
\sup_{t\in[0,T_0/\e]}\sqrt\e(\|J^{\f12}_\e\z\|_{H^s}+\|\vv V\|_{H^s})+\sup_{t\in[0,T_0/\e]}\e(\|J^{\f12}_\e p\|_{H^s}+\|J^{\f12}_\e\theta\|_{H^s})\leq1,
\eeq
where $(p,\theta)$ are defined in \eqref{new unknows}. 

Let us define the energy functional associated to \eqref{WTB case 3} as follows:
\beq\label{energy functional for zeta V}
E_{s}(t)\eqdefa E_{s,l}(t)+E_{s,h}(t),
\eeq
and
\beq\label{lower order functional}\begin{aligned}
E_{s,l}(t)&\eqdefa\|J_\e v\|_{H^s}^2+\|J^{\f12}_\e w\|_{H^s}^2+\|J^{\f12}_\e\z\|_{H^s}^2\\
&\sim\|v\|_{H^s}^2+\e^2\|\p_x^2v\|_{H^s}^2+\|(w,\z)\|_{H^s}^2+\e\|(\p_xw,\p_x\z)\|_{H^s}^2.
\end{aligned}\eeq
\beq\label{highest order functional}\begin{aligned}
E_{s,h}(t)&\eqdefa\|(J^{\f12}_\e v_x, v_y)\|_{\dH^s}^2+\|(w_x, J^{-\f12}_\e w_y)\|_{\dH^s}^2+\|(\z_x, J^{-\f12}_\e\z_y)\|_{\dH^s}^2,\\
&\sim\|J^{\f12}p\|_{\dH^s}^2+\|J^{\f12}\theta\|_{\dH^s}^2,
\end{aligned}\eeq
where we used \eqref{equivalent functional 1} in the last formula.
We remark that $E_{s,l}(t)$ and $E_{s,h}(t)$ are lower and highest order energy functionals of \eqref{WTB case 3} respectively.

Our second ansatz is about the energy which reads
\beq\label{ansatz b}
\sup_{t\in[0,T_0/\e]}E_s(t)\leq 2C_0 E_s(0).
\eeq
Here the constant $C_0>1$ will also be determined later on. 

The standard continuity argument shows that: since for sufficiently small $\e>0$,
\beno
E_s(0)\leq C_0 E_s(0)\quad\text{and}\quad
\sqrt\e(\|J^{\f12}_\e\z_0\|_{H^s}+\|(v_0,w_0)\|_{H^s})+\e(\|J^{\f12}_\e p_0\|_{H^s}+\|J^{\f12}_\e\theta_0\|_{H^s})\leq\f12,
\eeno
the following {\it a priori} energy estimates \eqref{total energy estimate} and the classical mollification  method show that the solution of \eqref{WTB case 3} exists in a short time interval $[0,T^*)$ and the ansatz \eqref{ansatz a}, \eqref{ansatz b} also hold over $[0,T^*)$. Here $T^*>0$ is the lifespan of the solution to \eqref{WTB case 3} with \eqref{ansatz a} and \eqref{ansatz b} being correct. Without loss of generality, we assume that $T^*>T_0/\e$.

To close the continuity argument, we need to verify that: there exists sufficiently small $\e_0>0$, such that for any $\e\in(0,\e_0)$, we improve the ansatz \eqref{ansatz a} and \eqref{ansatz b} as follows:
\beq\label{improve ansatz a}
\sup_{t\in[0,T_0/\e]}\sqrt\e(\|J^{\f12}_\e\z\|_{H^s}+\|(v,w)\|_{H^s})+\sup_{t\in[0,T_0/\e]}\e(\|J^{\f12}_\e p\|_{H^s}+\|J^{\f12}_\e\theta\|_{H^s})\leq\f12,
\eeq
\beq\label{improve ansatz b}
\sup_{t\in[0,T_0/\e]}E_s(t)\leq C_0 E_s(0).
\eeq

Indeed, we take
\beno
C_0=2C_1,\quad T_0=\f{1}{4\sqrt{C_1}C_2E_s^{\f12}(0)},
\eeno
where $C_1>1$ and $C_2>1$ are constants stated in the following Proposition \ref{prop for a priori estimate}.
Then  we deduce from \eqref{total energy estimate} that
\beno
\max_{t\in[0,T_0/\e]} E_s(t)\leq 2C_1 E_s(0)=C_0 E_s(0).
\eeno 
This is \eqref{improve ansatz b}. 

Moreover, due to \eqref{equivalent functional 1}, \eqref{lower order functional} and \eqref{highest order functional}, there exists $C_3>0$ such that
\beno
\sqrt\e(\|J^{\f12}_\e\z\|_{H^s}+\|(v,w)\|_{H^s})+\e(\|J^{\f12}_\e p\|_{H^s}+\|J^{\f12}_\e\theta\|_{H^s})\leq C_3\sqrt{\e E_s(t)}\leq C_3\sqrt{\e C_0E_s(0)}.
\eeno
Taking $\e_0=\f{1}{4C_3^2C_0E_s(0)}$, we obtain \eqref{improve ansatz a} for any $\e\in(0,\e_0)$. 

Thus, the continuity argument is closed and Theorem \ref{main theorem} for case 1 is proved.

\medskip

It remains to verify Proposition \ref{prop for a priori estimate} in the following subsection.

\subsubsection{The {\it a priori} energy estimates.} We shall derive the  {\it a  priori} energy estimates of solutions to \eqref{WTB case 3}. The main result is stated in the following proposition.

\begin{proposition}\label{prop for a priori estimate}
Let $s>3$ and $(\vv V,\z)=(v,w,\z)$ be a smooth enough solution of \eqref{WTB case 3} satisying ansatz \eqref{ansatz a} and \eqref{ansatz b} for some $T_0>0$. There holds
\beq\label{total energy estimate}
E_s(t)\leq C_1E_s(0)+C_2\e t\max_{\tau\in[0,t]} E_s(\tau)^{\f32},\quad\forall\, t\in[0,T_0/\e],
\eeq
where $C_1,C_2>1$ are universal constants.
\end{proposition}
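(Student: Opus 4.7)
The plan is to split $E_s(t) = E_{s,l}(t) + E_{s,h}(t)$ and estimate the two pieces by different arguments: the lower-order part $E_{s,l}$ directly from \eqref{WTB case 3}, and the highest-order part $E_{s,h}$ from the symmetrized good-unknown system \eqref{equation for tilde p and theta} transferred back to $(p,\theta)$ via Lemma \ref{functional 2} and \eqref{equivalent functional 1}. Each step will produce a differential inequality of the form $\tfrac{d}{dt}(\,\cdot\,) \lesssim \e\, E_s^{3/2}$, which upon integration on $[0,t]\subset[0,T_0/\e]$ yields \eqref{total energy estimate}.

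For $E_{s,l}$, I would apply $\langle D\rangle^s$ to each line of \eqref{WTB case 3} and pair in $L^2(\R^2)$ with $\langle D\rangle^s J_\e v$, $\langle D\rangle^s w$, $\langle D\rangle^s\z$, respectively. The time derivatives reassemble into $\tfrac{1}{2}\tfrac{d}{dt}E_{s,l}$, and the linear dispersive cross terms cancel after one integration by parts, since $\bigl(\z_x\,\big|\,J_\e v\bigr)+\bigl(J_\e v_x\,\big|\,\z\bigr)=0$ and $\bigl(w_y\,\big|\,\z\bigr)+\bigl(\z_y\,\big|\,w\bigr)=0$ (using $f=g=0$ in Case~1 so that the weights on $w_y$ and $\z_y$ coincide). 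The $\e$-order cubic residuals, together with the $\bigl[\langle D\rangle^s,f\bigr]g$-commutators, are controlled by the tame product estimate \eqref{tame} and its $J_\e$-weighted refinements in Lemma \ref{product estimates}, contributing $O(\e\, E_s^{3/2})$.

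For $E_{s,h}$, I pass to $(\wt p,\wt\theta)$ and introduce
\[ \wt E_{s,h}(t) \eqdefa \|J_\e^{1/2}|D|^s\wt p\|_{L^2}^2+\|J_\e^{1/2}|D|^s\wt\theta\|_{L^2}^2. \]
Lemma \ref{functional 2} combined with \eqref{equivalent functional 1} shows that $\bigl|\wt E_{s,h}-E_{s,h}\bigr|\lesssim\e\, E_s^{3/2}$, so it suffices to estimate $\wt E_{s,h}$. Apply $|D|^s$ to \eqref{equation for tilde p and theta} and pair respectively with $|D|^s\wt p$ and $|D|^s\wt\theta$ (using that $J_\e$ is a self-adjoint Fourier multiplier commuting with $|D|^s$). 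Writing $\mathcal{L}\eqdefa \mathcal{M}(\z,D_x)A(D)$ with $\mathcal{M}(\z,D_x)\eqdefa 1+\tfrac{\e}{2}\z-\tfrac{\e^2}{6}\tfrac{1+\e\z/2}{2+\e\z/2}\z\,A^{-2}(D)\p_x^4$, the sum of the two principal cross terms collapses into $\bigl((\mathcal{L}-\mathcal{L}^*)|D|^s\wt\theta\,\big|\,|D|^s\wt p\bigr)$ plus a commutator of type $\bigl[|D|^s,\mathcal{L}\bigr]$. Because $\z$, $A(D)$ and $A^{-2}(D)\p_x^4$ are each self-adjoint on $L^2$, the skew part reduces to $\bigl[A(D),\tfrac{\e}{2}\z\bigr]-\tfrac{\e^2}{6}\bigl[A^{-2}(D)\p_x^4,q(\z)\bigr]A(D)$ with $q(\z) \eqdefa (1+\e\z/2)(2+\e\z/2)^{-1}\z$; both pieces lose one derivative and are of size $O(\e)$ by \eqref{commutator A2}, \eqref{commutator A3} and \eqref{L7}. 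The transport term $-\e\vv V\cdot\nabla \wt p$ yields, after pairing and integration by parts, $\tfrac{\e}{2}\bigl(\div\vv V\,|D|^s\wt p\,\big|\,|D|^s\wt p\bigr)$ modulo $\bigl[|D|^s,\vv V\cdot\na\bigr]$-commutators, hence $O(\e\|\vv V\|_{H^s}\wt E_{s,h})=O(\e\,E_s^{3/2})$. The remainders $\e N_{\wt p}$, $\e N_{\wt\theta}$ are absorbed by Lemma \ref{lem for nonlinear term 2}.

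The principal obstacle lies in the quasilinear operator $\mathcal{L}$: establishing that it is self-adjoint on $L^2$ up to a commutator of size $\e\,E_s$ requires careful tracking of where the anisotropic $J_\e^{1/2}$-weight lands in each commutator, since the Fourier multiplier $A^{-2}(D)\p_x^4$ is degenerate in $\xi_2$ and does not commute with $\z$-multiplication, while the rational nonlinearity $q(\z)$ must be estimated through Lemma \ref{composition lemma} under the ansatz $\sqrt\e\|J_\e^{1/2}\z\|_{H^s}\leq 1$ in order to retain $\e$-smallness. Once these commutator bounds are in hand, summing the two estimates and integrating over $[0,t]$ produces \eqref{total energy estimate}, with $C_1$ absorbing the equivalence constants of Lemma \ref{lem for energy functional 1} and Lemma \ref{functional 2}, and $C_2$ collecting the universal constants from the product, commutator and composition lemmas of Section 2.
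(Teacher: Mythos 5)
Your proposal follows essentially the same route as the paper's proof: a lower-order estimate performed directly on \eqref{WTB case 3} with cancellation of the linear cross terms, a highest-order estimate on the good-unknown system \eqref{equation for tilde p and theta} in the homogeneous norm $\|J_\e^{1/2}\cdot\|_{\dH^s}$ where the symmetric principal part cancels up to $\z$-dependent commutators, the transport term handled through $[|D|^s,\vv V]$ plus integration by parts, the remainders $\e N_{\wt p},\e N_{\wt\theta}$ controlled by Lemma \ref{lem for nonlinear term 2}, and the two pieces recombined via the comparison of $\wt E_{s,h}$ with $E_{s,h}$ and $E_{s,l}$ before integrating in time. The only differences are cosmetic: your explicit formula for the skew part of $\mathcal{L}$ differs from the paper's detailed $\cS_1$, $\cS_2$ computation (with the auxiliary quantities $\Theta=A^{-1}(D)\p_x^2\wt\theta$, $P=A^{-1}(D)\p_x^2\wt p$) only by a harmless rearrangement of commutators of the same size, and the paper absorbs the $O(\e E_s^{3/2})$ equivalence error by propagating the combination $2c_5E_{s,l}+\wt E_{s,h}$ rather than your direct difference bound, both devices being justified under the ansatz \eqref{ansatz a}--\eqref{ansatz b} for $\e$ small.
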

\begin{proof}
We divide the proof of \eqref{total energy estimate} into two parts:  the {\it lower order} energy estimate and the {\it  highest order} energy estimate. The former is derived from the original system \eqref{WTB case 3} in terms of $(v,w,\z)$, and the latter  is derived from the system \eqref{equation for tilde p and theta} in terms of $(\wt p,\wt\theta)$.

\smallskip

{\bf Step 1. The lower order energy estimates.} Thanks to the definition of $E_{s,l}(t)$ and system \eqref{WTB case 3}, one calculates that 
\beno\begin{aligned}
\f12\f{d}{dt}E_{s,l}(t)&=\bigl(J_\e v_t\,|\,J_\e v\bigr)_{H^s}
+\bigl(J_\e w_t\,|\,w\bigr)_{H^s}+\bigl(J_\e\z_t\,|\,\z\bigr)_{H^s}\\
&=-\e\bigl(vv_x+ wv_y+\f12\z \z_x\,|\,J_\e v\bigr)_{H^s}
-\e\bigl(v w_x+w w_y+ \f12\z \z_y\,|\,w\bigr)_{H^s}\\
&\qquad
-\e\bigl(v \z_x+w\z_y+ \f12 \z v_x + \f12\z w_y\,|\,\z\bigr)_{H^s}\\
&\eqdefa-\e II_1-\e II_2-\e II_3.
\end{aligned}\eeno

For $II_1$, using the curl-free condition $v_y=w_x$, we get
\beno\begin{aligned}
|II_1|&\lesssim\|J_\e v\|_{H^s}\cdot\|vv_x+ ww_x+\f12\z \z_x\|_{H^s}\\
&
\lesssim\|J_\e v\|_{H^s}\bigl(\|v\|_{H^s}\|v_x\|_{H^s}+\|w\|_{H^s}\|w_x\|_{H^s}+\|\z\|_{H^s}\|\z_x\|_{H^s}\bigr),
\end{aligned}\eeno
which along with \eqref{energy functional for zeta V}, \eqref{lower order functional} and \eqref{highest order functional} implies 
\beno
|II_1|\lesssim  E_s^{\f32}(t).
\eeno

For $II_2$, using \eqref{product estimate 1}, we have
\beno\begin{aligned}
|II_2|&\lesssim\|J^{\f12}_\e w\|_{H^s}\cdot\|J^{-\f12}_\e(v w_x+w w_y+ \f12\z \z_y)\|_{H^s}\\
&
\lesssim\|J^{\f12}_\e w\|_{H^s}\bigl(\|v\|_{H^s}\|J^{-\f12}_\e w_x\|_{H^s}+\|w\|_{H^s}\|J^{-\f12}_\e w_y\|_{H^s}+\|\z\|_{H^s}\|J^{-\f12}_\e\z_y\|_{H^s}\bigr),
\end{aligned}\eeno
which yields to 
\beno
|II_2|\lesssim  E_s^{\f32}(t).
\eeno
The same estimate holds for $II_3$. 

Then there exists constant $c_1>0$ such that
\beq\label{lower order energy estimate}
\f12\f{d}{dt}E_{s,l}(t)\leq c_1\e  E_s^{\f32}(t),\quad\forall\, t\in(0,T_0/\e].
\eeq

\smallskip

{\bf Step 2. The highest order energy estimates.} In this step, we derive the highest order energy estimates for \eqref{WTB case 3} via $(\wt p,\wt\theta)$ that satisfies \eqref{equation for tilde p and theta}. 

{\it Step 2.1. Energy functional for \eqref{equation for tilde p and theta}.} Let us define the highest order energy functional associated with \eqref{equation for tilde p and theta} as 
\beno
\wt{E}_{s,h}(t)\eqdefa\|J^{\f12}_\e\wt p\|_{\dH^s}^2+\|J^{\f12}_\e\wt\theta\|_{\dH^s}^2.
\eeno

Due to \eqref{equivalent functional 2}, there exists $c_2>0$ such that
\beno
\wt{E}_{s,h}(t)\geq\|J^{\f12}_\e p\|_{\dH^s}^2+\|J^{\f12}_\e \theta\|_{\dH^s}^2- c_2\e^2\bigl(\|J^{\f12}_\e\z\|_{H^s}^2+\|(v,w)\|_{H^s}^2\bigr)\bigl(\|J^{\f12}_\e p\|_{H^s}^2+\|J^{\f12}_\e\theta\|_{H^s}^2\bigr),
\eeno
which along with the ansatz \eqref{ansatz a} yields to
\beno
\wt{E}_{s,h}(t)\geq\|J^{\f12}_\e p\|_{\dH^s}^2+\|J^{\f12}_\e \theta\|_{\dH^s}^2-
c_2\e\bigl(\|J^{\f12}_\e p\|_{H^s}^2+\|J^{\f12}_\e\theta\|_{H^s}^2\bigr).
\eeno 
Since $\|f\|_{H^s}^2\sim\|f\|_{\dH^s}^2+\|f\|_{L^2}^2$, there exists $c_3>0$ such that
\beno
\wt{E}_{s,h}(t)\geq(1-c_3\e)(\|J^{\f12}_\e p\|_{\dH^s}^2+\|J^{\f12}_\e \theta\|_{\dH^s}^2)-
c_3\e\bigl(\|J^{\f12}_\e p\|_{L^2}^2+\|J^{\f12}_\e\theta\|_{L^2}^2\bigr).
\eeno 
Taking $\e_1=\f{1}{2c_3}$, we get for any $\e\in(0,\e_1]$,
\beno
\wt{E}_{s,h}(t)\geq\f12\bigl(\|J^{\f12}_\e p\|_{\dH^s}^2+\|J^{\f12}_\e \theta\|_{\dH^s}^2\bigr)-\f12\bigl(\|J^{\f12}_\e p\|_{L^2}^2+\|J^{\f12}_\e\theta\|_{L^2}^2\bigr)
\eeno

Thanks to \eqref{equivalent functional 1}, we have 
\beno
\|J^{\f12}_\e p\|_{\dH^s}^2+\|J^{\f12}_\e \theta\|_{\dH^s}^2\sim E_{s,h}(t)
\quad\text{and}\quad\|J^{\f12}_\e p\|_{L^2}^2+\|J^{\f12}_\e\theta\|_{L^2}^2\lesssim E_{s,l}(t).
\eeno
Then there exist $c_4,c_5>0$ such that 
\beno
\wt{E}_{s,h}(t)\geq c_4E_{s,h}(t)-c_5 E_{s,l}(t),\quad\forall\, t\in[0,T_0/\e].
\eeno

On the other hand, it is easy to check that there exists $c_6>0$ such that
\beno
\wt{E}_{s,h}(t)\leq c_6E_{s,h}(t)+c_6 E_{s,l}(t)=c_6 E_s(t),\quad\forall\, t\in[0,T_0/\e]
\eeno

Thus, we obtain
\beq\label{equivalence for highest functional}
c_4E_{s,h}(t)-c_5 E_{s,l}(t)\leq \wt{E}_{s,h}(t)\leq c_6E_{s}(t),\quad\forall\, t\in[0,T_0/\e].
\eeq

\smallskip

{\it Step 2.2. Energy estimate for $(\wt p,\wt\theta)$.} Thanks to the definition of $\wt{E}_{s,h}(t)$ and the system \eqref{equation for tilde p and theta}, one calculates that
\beq\label{F 0}
\f12\f{d}{dt}\wt{E}_{s,h}(t)=\bigl(J_\e\wt p_t\,|\,\wt p\bigr)_{\dH^s}+
\bigl(J_\e\wt\theta_t\,|\,\wt\theta\bigr)_{\dH^s}=\mathcal{S}+\mathcal{T} +\mathcal{N},
\eeq
where 
\beno\begin{aligned}
\mathcal{S} &\eqdefa\Bigl( \bigl(1+\f{\e}{2}\z -\f{\e^2}{6} \frac{1+\f\e2\z}{2+\f\e2\z} \z A^{-2}(D) \p_x^4  \bigr)A(D)\wt{\theta} \,\Big|\,  \wt{p} \Bigr)_{\dH^s}-\Bigl( \bigl(1+\f{\e}{2}\z -\f{\e^2}{6} \frac{1+\f\e2\z}{2+\f\e2\z} \z A^{-2}(D) \p_x^4  \bigr) A(D)\wt{p}\,\Big|\,  \wt{\theta} \Bigr)_{\dH^s}, \\
\mathcal{T} & \eqdefa -\e \bigl(\vv V\cdot \na\wt{p} \,|\, \wt{p} \bigr)_{\dH^s}-\e \bigl(\vv V\cdot \na \wt{\theta} \,|\, \wt{\theta} \bigr)_{\dH^s}, \quad
\mathcal{N}  \eqdefa \e\bigl(N_{\wt{p}} \,|\, \wt{p} \bigr)_{\dH^s} + \e\bigl( N_{\wt\theta} \,|\, \wt{\theta} \bigr)_{\dH^s}. 
\end{aligned}\eeno

{\it Step 2.2.1. Estimate of $\cS$.} For $\cS$, a direct caculation gives rise to 
\beno\begin{aligned}
\cS&=\f\e2\bigl\{\bigl(|D|^s(\z A(D)\wt\theta)\,|\,|D|^s\wt p\bigr)_{L^2}
-\bigl(|D|^s(\z A(D)\wt p)\,|\,|D|^s\wt\theta\bigr)_{L^2}\bigr\}\\
&\qquad+\f{\e^2}{6}\bigl\{-\bigl(|D|^s(\wt\z A^{-1}(D) \p_x^4 \wt{\theta})\,\big|\, |D|^s\wt p\bigr)_{L^2}+\bigl(|D|^s(\wt\z A^{-1}(D) \p_x^4 \wt{p})\,\big|\, |D|^s\wt\theta\bigr)_{L^2}\bigr\}\\
&\eqdefa\f\e2\cS_1+\f{\e^2}{6}\cS_2,
\end{aligned}\eeno
where we denoted $\wt\z\eqdefa\frac{1+\f\e2\z}{2+\f\e2\z} \z$ for simplicity.

{\bf i).} For $\cS_1$, we have
\beno\begin{aligned}
\cS_1&=\bigl([|D|^s,\z] A(D)\wt\theta\,|\,|D|^s\wt p\bigr)_{L^2}
+\bigl([A(D),\z]|D|^s\wt p\,|\,|D|^s\wt\theta\bigr)_{L^2}
-\bigl([|D|^s,\z]A(D)\wt p\,|\,|D|^s\wt\theta\bigr)_{L^2}.
\end{aligned}\eeno

Thanks to \eqref{commutator A3}, we have 
\beno\begin{aligned}
|\bigl([A(D),\z]|D|^s\wt p\,|\,|D|^s\wt\theta\bigr)_{L^2}|
&\lesssim\|J^{-\f12}_\e\bigl([A(D),\z]|D|^s\wt p\bigr)\|_{L^2}\|J^{\f12}_\e|D|^s\wt\theta\|_{L^2}
\lesssim\|\z\|_{H^s}\||D|^s\wt p\|_{L^2}\|J^{\f12}_\e\wt\theta\|_{\dH^s}\\
&\lesssim\|\z\|_{H^s}\|\wt p\|_{\dH^s}\|J^{\f12}_\e\wt\theta\|_{\dH^s}
\end{aligned}\eeno

Due to \eqref{commutator D}, we have
\beno\begin{aligned}
|\bigl([|D|^s,\z] A(D)\wt\theta\,|\,|D|^s\wt p\bigr)_{L^2}|
&\lesssim\|[|D|^s,\z] A(D)\wt\theta\|_{L^2}\||D|^s\wt p\|_{L^2}
\lesssim\|\z\|_{H^s}\|A(D)\wt\theta\|_{H^{s-1}}\|\wt p\|_{\dH^s}\\
&\lesssim\|\z\|_{H^s}\|\wt p\|_{\dH^s}\|J^{\f12}_\e\na\wt\theta\|_{H^{s-1}}.
\end{aligned}\eeno
Similarly, we have
\beno
|\bigl([|D|^s,\z] A(D)\wt p\,|\,|D|^s\wt\theta\bigr)_{L^2}|
\lesssim\|\z\|_{H^s}\|J^{\f12}_\e\wt p\|_{H^s}\|\wt\theta\|_{\dH^s}.
\eeno

Then we obtain
\beq\label{F 1}
|\cS_1|\lesssim\|\z\|_{H^s}\|J^{\f12}_\e\wt p\|_{H^s}\|J^{\f12}_\e\na\wt\theta\|_{H^{s-1}}.
\eeq

{\bf ii).} For $\cS_2$, we firstly get that
\beno
&\bigl(|D|^s(\wt\z A^{-1}(D) \p_x^4 \wt{\theta})\,\big|\, |D|^s\wt p\bigr)_{L^2}
=\bigl(|D|^s(\wt\z A^{-1}(D) \p_x^2 \wt{\theta})\,\big|\, |D|^s\p_x^2\wt p\bigr)_{L^2}+\cQ_1(\wt\theta,\wt p),
\eeno
where
\beno
\cQ_1(\wt\theta,\wt p)\eqdefa2\bigl(|D|^s(\p_x\wt\z A^{-1}(D) \p_x^2 \wt{\theta})\,\big|\, |D|^s\p_x\wt p\bigr)_{L^2}+\bigl(|D|^s(\p_x^2\wt\z A^{-1}(D) \p_x^4 \wt{\theta})\,\big|\, |D|^s\wt p\bigr)_{L^2}.
\eeno

Due to \eqref{tame} and \eqref{product estimate 1}, it is easy to check that
\beno\begin{aligned}
\e|\cQ_1(\wt\theta,\wt p)|&\lesssim\sqrt\e\|\p_x\wt\z A^{-1}(D) \p_x^2 \wt{\theta}\|_{\dH^s}\cdot\sqrt\e\|\p_x\wt p\|_{\dH^s}+\e\|J^{-\f12}_\e(\p_x^2\wt\z A^{-1}(D) \p_x^2 \wt{\theta})\|_{\dH^s}\cdot\|J^{\f12}_\e\wt p\|_{\dH^s}\\
&\lesssim\|\p_x\wt\z\|_{H^s}\cdot\sqrt\e\|A^{-1}(D) \p_x^2 \wt{\theta})\|_{H^s}\cdot\|J^{\f12}_\e\wt p\|_{\dH^s}\\
&\qquad+\sqrt\e\|J^{-\f12}_\e\p_x^2\wt\z\|_{H^s}\cdot\sqrt\e\|J^{-\f12}_\e A^{-1}(D) \p_x^2 \wt{\theta})\|_{H^s}\cdot\|J^{\f12}_\e\wt p\|_{\dH^s},\\
&\lesssim\|\p_x\wt\z\|_{H^s}\cdot\sqrt\e\|\p_x\wt\theta\|_{H^s}\cdot\|J^{\f12}_\e\wt p\|_{\dH^s},
\end{aligned}\eeno
where we used the fact that $A(\xi)\sim|\xi|+\sqrt\e\xi_1^2$ in the last inequality. Due to the ansatz \eqref{ansatz a} and the notation $\wt\z\eqdefa\frac{1+\f\e2\z}{2+\f\e2\z} \z$, we obtain by using \eqref{L7a} that
\beq\label{F 2}
\e|\cQ_1(\wt\theta,\wt p)|\lesssim\|\z_x\|_{H^s}\|J^{\f12}_\e\wt p\|_{\dH^s}\|J^{\f12}_\e\na\wt\theta\|_{H^{s-1}}.
\eeq

Similarly, we get
\beno
\bigl(|D|^s(\wt\z A^{-1}(D) \p_x^4 \wt p)\,\big|\, |D|^s\wt\theta\bigr)_{L^2}
=\bigl(|D|^s(\wt\z A^{-1}(D) \p_x^2 \wt p)\,\big|\, |D|^s\p_x^2\wt\theta\bigr)_{L^2}+\cQ_1(\wt p,\wt\theta)
\eeno
and 
\beq\label{F 3}
\e|\cQ_1(\wt p,\wt\theta)|\lesssim\|\z_x\|_{H^s}\|J^{\f12}_\e\wt\theta\|_{\dH^s}\|J^{\f12}_\e\na\wt p\|_{H^{s-1}}.
\eeq

Therefore, denoting by $\Theta=A^{-1}(D) \p_x^2 \wt{\theta}$ and $P=A^{-1}(D) \p_x^2 \wt{p}$ for simplicity, we get
\beno\begin{aligned}
\cS_2&=-\bigl(|D|^s(\wt\z \Theta)\,\big|\, |D|^sA(D) P\bigr)_{L^2}+\bigl(|D|^s(\wt\z P)\,\big|\, |D|^sA(D)\Theta\bigr)_{L^2}-\cQ_1(\wt\theta,\wt p)+\cQ_1(\wt p,\wt\theta)\\
&=-\bigl(A(D)([|D|^s,\wt\z]\Theta)\,\big|\, |D|^s P\bigr)_{L^2}+\bigl(A(D)([|D|^s,\wt\z] P)\,\big|\, |D|^s\Theta\bigr)_{L^2}\\
&\qquad-\bigl([A(D),\wt\z\bigr] |D|^s \Theta\,\big|\, |D|^s P\bigr)_{L^2}-\cQ_1(\wt\theta,\wt p)+\cQ_1(\wt p,\wt\theta).
\end{aligned}\eeno

Thanks to the fact that $A(\xi)\sim(1+\sqrt\e|\xi_1|)|\xi_1|+|\xi_2|$, we get
\beq\label{F 4}\begin{aligned}
|\bigl(A(D)([|D|^s,\wt\z]\Theta)\,\big|\, |D|^s P\bigr)_{L^2}|
&\lesssim\|J^{-\f12}_\e A(D)([|D|^s,\wt\z]\Theta)\|_{L^2}\|J^{\f12}_\e P\|_{\dH^s}\\
&\lesssim\bigl(\|\p_x([|D|^s,\wt\z]\Theta)\|_{L^2}+\|J^{-\f12}_\e\p_y([|D|^s,\wt\z]\Theta)\|_{L^2}\bigr)\|J^{\f12}_\e P\|_{\dH^s}.
\end{aligned}\eeq
By virtue of Leibniz formula and \eqref{commutator D}, we have
\beq\label{F 5}
\|\p_x([|D|^s,\wt\z]\Theta)\|_{L^2}
\lesssim\|\p_x\wt\z\|_{H^s}\|\Theta\|_{H^{s-1}}+\|\wt\z\|_{H^s}\|\p_x\Theta\|_{H^{s-1}}.
\eeq
At the meantime, we get
\beno\begin{aligned}
\|J^{-\f12}_\e\p_y([|D|^s,\wt\z]\Theta)\|_{L^2}
&\lesssim\|J^{-\f12}_\e|D|^s(\p_y\wt\z\Theta)\|_{L^2}+\|J^{-\f12}_\e(\p_y\wt\z|D|^s\Theta)\|_{L^2}+\|J^{-\f12}_\e([|D|^s,\wt\z]\p_y\Theta)\|_{L^2}\\
&\lesssim\|J^{-\f12}_\e(\p_y\wt\z\Theta)\|_{\dH^s}+\|\p_y\wt\z|D|^s\Theta\|_{L^2}
+\|J^{-\f12}_\e([|D|^s,\wt\z]\p_y\Theta)\|_{L^2}\\
&\lesssim\|J^{-\f12}_\e\p_y\wt\z\|_{H^s}\|J^{-\f12}_\e\Theta\|_{H^s}
+\|\p_y\wt\z\|_{L^\infty}\||D|^s\Theta\|_{L^2}
+\|J^{\f12}_\e\wt\z\|_{H^s}\|J^{-\f12}_\e\p_y\Theta\|_{H^{s-1}},
\end{aligned}\eeno
where we used \eqref{product estimate 1} and \eqref{commutator D1} in the last inequality. Then we obtain
\beq\label{F 6}
\|J^{-\f12}_\e\p_y([|D|^s,\wt\z]\Theta)\|_{L^2}
\lesssim\bigl(\|J^{-\f12}_\e\p_y\wt\z\|_{H^s}+\|J^{\f12}_\e\wt\z\|_{H^s}\bigr)
\bigl(\|\Theta\|_{H^s}+\|J^{-\f12}_\e\p_y\Theta\|_{H^{s-1}}\bigr).
\eeq

Thanks to \eqref{F 4}, \eqref{F 5} and \eqref{F 6}, we get
\beq\label{F 7}\begin{aligned}
&|\bigl(A(D)([|D|^s,\wt\z]\Theta)\,\big|\, |D|^s P\bigr)_{L^2}|\\
&
\lesssim\bigl(\|(\p_x\wt\z,J^{-\f12}_\e\p_y\wt\z)\|_{H^s}+\|J^{\f12}_\e\wt\z\|_{H^s}\bigr)
\bigl(\|\Theta\|_{H^s}+\|J^{-\f12}_\e\p_y\Theta\|_{H^{s-1}}\bigr)\|J^{\f12}_\e P\|_{\dH^s}.
\end{aligned}\eeq

Similarly, we have
\beq\label{F 8}\begin{aligned}
&|\bigl(A(D)([|D|^s,\wt\z] P)\,\big|\, |D|^s\Theta\bigr)_{L^2}|\\
&
\lesssim\bigl(\|(\p_x\wt\z,J^{-\f12}_\e\p_y\wt\z)\|_{H^s}+\|J^{\f12}_\e\wt\z\|_{H^s}\bigr)
\bigl(\|P\|_{H^s}+\|J^{-\f12}_\e\p_y P\|_{H^{s-1}}\bigr)\|J^{\f12}_\e\Theta\|_{\dH^s}.
\end{aligned}\eeq

For term $\bigl([A(D),\wt\z\bigr] |D|^s \Theta\,\big|\, |D|^s P\bigr)_{L^2}$, using \eqref{commutator A3}, we get
{\beno\begin{aligned}
|\bigl([A(D),\wt\z\bigr] |D|^s \Theta\,\big|\, |D|^s P\bigr)_{L^2}|
&\lesssim\|J^{-\f12}_\e\bigl([A(D),\wt\z\bigr] |D|^s \Theta\bigr)\|_{L^2}\|J^{\f12}_\e|D|^s P\|_{L^2}\\
&\lesssim\|\wt\z\|_{H^s}\||D|^s \Theta\|_{L^2}\|J^{\f12}_\e|D|^s P\|_{L^2},
\end{aligned}
\eeno}
which yields to
\beq\label{F 9}
|\bigl([A(D),\wt\z\bigr] |D|^s \Theta\,\big|\, |D|^s P\bigr)_{L^2}|
\lesssim\|\wt\z\|_{H^s}\|\Theta\|_{\dH^s}\|J^{\f12}_\e P\|_{\dH^s}.
\eeq

Combining \eqref{F 7}, \eqref{F 8} and \eqref{F 9}, we arrive at
{\beq\label{F 10}\begin{aligned}
|\cS_2|&\lesssim\bigl(\|(\p_x\wt\z,J^{-\f12}_\e\p_y\wt\z)\|_{H^s}+\|J^{\f12}_\e\wt\z\|_{H^s}\bigr)\cdot
\bigl(\|J^{\f12}_\e P\|_{H^s}+\|J^{-\f12}_\e\p_y P\|_{H^{s-1}}\bigr)\cdot\\
&\qquad\cdot\bigl(\|J^{\f12}_\e\Theta\|_{H^s}+\|J^{-\f12}_\e\p_y\Theta\|_{H^{s-1}}\bigr)
+|\cQ_1(\wt\theta,\wt p)|+|\cQ_1(\wt p,\wt\theta)|.
\end{aligned}\eeq}

Due to the ansatz \eqref{ansatz a} and the notation $\wt\z\eqdefa\frac{1+\f\e2\z}{2+\f\e2\z} \z$, similar arguments as \eqref{L10b} yields to
\beq\label{F 11}
\|(\p_x\wt\z,J^{-\f12}_\e\p_y\wt\z)\|_{H^s}+\|J^{\f12}_\e\wt\z\|_{H^s}
\lesssim\|(\z_x,J^{-\f12}_\e\z_y)\|_{H^s}+\|J^{\f12}_\e\z\|_{H^s}.
\eeq

Since $\Theta=A^{-1}(D) \p_x^2 \wt{\theta}$, we have
\beq\label{F 12}
\sqrt\e\bigl(\|J^{\f12}_\e\Theta\|_{H^s}+\|J^{-\f12}_\e\p_y\Theta\|_{H^{s-1}}\bigr)
\lesssim\sqrt\e\|\p_x\wt\theta\|_{H^s}
\lesssim\|J^{\f12}_\e\na\wt\theta\|_{H^{s-1}}.
\eeq
Similarly, we have
\beq\label{F 13}
\sqrt\e\bigl(\|J^{\f12}_\e P\|_{H^s}+\|J^{-\f12}_\e\p_yP\|_{H^{s-1}}\bigr)
\lesssim\sqrt\e\|\p_x\wt p\|_{H^s}
\lesssim\|J^{\f12}_\e\wt p\|_{H^s}.
\eeq

By virtue of \eqref{F 2}, \eqref{F 3}, \eqref{F 11}, \eqref{F 12} and \eqref{F 13}, we deduce from \eqref{F 10} that
\beq\label{F 14}
\e|\cS_2|\lesssim\bigl(\|(\z_x,J^{-\f12}_\e\z_y)\|_{H^s}+\|J^{\f12}_\e\z\|_{H^s}\bigr)\|J^{\f12}_\e\na\wt\theta\|_{H^{s-1}}\|J^{\f12}_\e\wt p\|_{H^s}.
\eeq

{\bf iii).} Thanks to \eqref{F 1} and \eqref{F 14}, we finally obtain by using \eqref{estimate for wt theta} that
\beno\begin{aligned}
|\cS|&\lesssim\e\bigl(\|(\z_x,J^{-\f12}_\e\z_y)\|_{H^s}+\|J^{\f12}_\e\z\|_{H^s}\bigr)\|J^{\f12}_\e\na\wt\theta\|_{H^{s-1}}\|J^{\f12}_\e\wt p\|_{H^s}\\
&\lesssim\e\bigl(\|(\z_x,J^{-\f12}_\e\z_y)\|_{H^s}+\|J^{\f12}_\e\z\|_{H^s}\bigr)\cdot\bigl(\|J^{\f12}_\e p\|_{H^s}^2+\|J^{\f12}_\e\theta\|_{H^s}^2\bigr),
\end{aligned}\eeno
which along with the definitions of energy functionals (see \eqref{lower order functional} and \eqref{highest order functional}) implies
\beq\label{estimate for S}
|\cS|\lesssim\e E_s(t)^{\f32}.
\eeq

\smallskip

{\it Step 2.2.2. Estimate of $\cT$.} 
 For term $\bigl(\vv V\cdot \na\wt{p} \,|\, \wt{p} \bigr)_{\dH^s}$, we have
\beno\begin{aligned}
\bigl(\vv V\cdot \na\wt{p} \,|\, \wt{p} \bigr)_{\dH^s}
&=\bigl(|D|^s(\vv V\cdot \na\wt{p}) \,|\,|D|^s\wt{p} \bigr)_{L^2}\\
&=\bigl([|D|^s,\vv V]\cdot \na\wt{p}\,|\,|D|^s\wt{p} \bigr)_{L^2}+\bigl(\vv V\cdot \na|D|^s\wt{p} \,|\,|D|^s\wt{p} \bigr)_{L^2}\\
&=\bigl([|D|^s,\vv V]\cdot \na\wt{p}\,|\,|D|^s\wt{p} \bigr)_{L^2}-\f12\bigl(\div\vv V\,|D|^s\wt{p} \,|\,|D|^s\wt{p} \bigr)_{L^2}.
\end{aligned}\eeno
Due to \eqref{commutator D} and $s>3$, we get
\beq\label{T 1}
|\bigl(\vv V\cdot \na\wt{p} \,|\, \wt{p} \bigr)_{\dH^s}|
\lesssim\|[|D|^s,\vv V]\cdot \na\wt{p}\|_{L^2}\|\wt p\|_{\dH^s}
+\|\div\vv V\|_{L^\infty}\|\wt p\|_{\dH^s}^2\lesssim\|\vv V\|_{H^s}\|\na\wt p\|_{H^{s-1}}^2.
\eeq
Similar estimate holds for term $\bigl(\vv V\cdot \na\wt\theta \,|\, \wt\theta\bigr)_{\dH^s}$. Then we obtain
\beno
|\cT|\lesssim\e\|\vv V\|_{H^s}\bigl(\|\na\wt p\|_{H^{s-1}}^2+\|\na\wt\theta\|_{H^{s-1}}^2\bigr),
\eeno
which along with \eqref{estimate for wt theta}, \eqref{lower order functional} and \eqref{highest order functional} gives rise to
\beq\label{estimate for T}
|\cT|\lesssim\e E_s(t)^{\f32}.
\eeq

\smallskip

{\it Step 2.2.3. Estimate of $\cN$.} Thanks to Lemma \ref{lem for nonlinear term 2}, we have
\beno\begin{aligned}
|\cN|&\lesssim\e\|J^{-\f12}_\e N_{\wt{p}}\|_{\dH^s} \|J^{\f12}_\e\wt{p}\|_{\dH^s}
+\e\|J^{-\f12}_\e N_{\wt{\theta}}\|_{\dH^s} \|J^{\f12}_\e\wt\theta\|_{\dH^s}\\
&
\lesssim\e\bigl(\|\z\|_{H^s}^2+\|\vv V\|_{H^s}^2+\|J_\e^{\f12} p\|_{H^s}^2+\|J_\e^{\f12} \theta\|_{H^s}^2\bigr)
\cdot\bigl(\|J^{\f12}_\e\wt{p}\|_{\dH^s}+\|J^{\f12}_\e\wt\theta\|_{\dH^s}\bigr),
\end{aligned}\eeno
which together with \eqref{estimate for wt theta}, \eqref{lower order functional} and \eqref{highest order functional} yields to
\beq\label{estimate for N}
|\cN|\lesssim\e E_s(t)^{\f32}.
\eeq

\smallskip

{\it Step 2.2.4. Energy estimate for $(\wt p,\wt\theta)$.} Combining  estimates  \eqref{estimate for S}, \eqref{estimate for T} and \eqref{estimate for N}, we deduce from \eqref{F 0} that
\beq\label{highest order energy estimate}
\f12\f{d}{dt}\wt{E}_{s,h}(t)\lesssim \e E_s(t)^{\f32}.
\eeq

\medskip

{\bf Step 3. The total energy estimates.} Thanks to \eqref{lower order energy estimate} and \eqref{highest order energy estimate}, we arrive at
\beno
\f12\f{d}{dt}\bigl(2c_5E_{s,l}(t)+\wt{E}_{s,h}(t)\bigr)\lesssim \e E_s(t)^{\f32},
\eeno
which yields to
\beno
2c_5E_{s,l}(t)+\wt{E}_{s,h}(t)\lesssim 2c_5E_{s,l}(0)+\wt{E}_{s,h}(0)+\e t\max_{\tau\in[0,T_0/\e]}E_s(\tau)^{\f32},\quad\forall\, t\in[0,T_0/\e].
\eeno
By virtue of \eqref{equivalence for highest functional}, \eqref{lower order functional} and \eqref{highest order functional}, there exist $C_1,C_2>0$ such that
\beno
E_s(t)\leq C_1 E_s(0)+C_2\e t\max_{\tau\in[0,T_0/\e]}E_s(\tau)^{\f32},\quad\forall\, t\in[0,T_0/\e].
\eeno
This is the desired total energy estimate \eqref{total energy estimate}. The proposition is proved.
\end{proof}

\section{Proof of Theorem \ref{main theorem} for case $a=f=g= -\frac{1}{6},b=d=e=\frac{1}{2},c=-\frac{1}{2}$}
\subsection{ The reduction of \eqref{WTB 2} for case 2.}
For case $a=f=g= -\frac{1}{6},b=d=e=\frac{1}{2},c=-\frac{1}{2}$, system \eqref{WTB 2} reads
\beq\label{WTB case 4}\left\{\begin{aligned}
	&(1-\f\e2  \p_x^2)v_t+(1-\f\e6  \p_x^2)\z_x+\e vv_x+\e wv_y+\f\e2 \z \z_x=0,\quad t>0,\, (x,y)\in\R^2,\\
	&(1-\f\e2  \p_x^2)w_t+(1-\f\e6  \p_x^2)\z_y+\e v w_x+\e w w_y+ \f\e2\z \z_y=0,\\
	&(1-\f{\e}{2} \p_x^2)\z_t+(1-\f{\e}{2} \p_x^2)v_x+(1-\f\e6  \p_x^2)w_y+\e  v \z_x+\e w\z_y+ \f\e2 \z v_x + \f\e2\z w_y=0.
\end{aligned}\right.\eeq

Recall that the eigenvalue of the linearized system of \eqref{WTB case 4} is $\lambda_{2,\pm} = \pm i \Lam_2(\xi)$ with
\beq\label{eigen value 2}
 \Lam_2(\xi) = \left(  \xi_1^2 \cdot \frac{1+\f\e6 \xi_1^2}{1+\f\e2 \xi_1^2} + \xi_2^2\cdot\left(  \frac{1+\f\e6 \xi_1^2}{1+\f\e2 \xi_1^2} \right)^2\right)^{\f12}
 \sim|\xi|.
\eeq

In this section, we shall use the notations
\beq\label{def of J Y K} 
J_\e=J_\e(D_x)=1-\f\e2\p_x^2,\quad Y_\e=Y_\e(D_x)=1-\f\e6\p_x^2,\quad K_\e=J_\e Y_\e^{-1}=1-\f\e2Y_\e^{-1}\p_x^2,
\eeq
and $B(D)=K_\e\Lambda_2(D)$ with
\beq\label{def of B}
B(\xi)=\Bigl(\f{1+\f{\e}{2}\xi_1^2}{1+\f{\e}{6}\xi_1^2}\xi_1^2+\xi_2^2\Bigr)^{\f12}
=\Bigl(K_\e(\xi_1)\xi_1^2+\xi_2^2\Bigr)^{\f12}\sim |\xi|.
\eeq
We remark that the associated operator $B(D)$ plays similar role as $A(D)$ for case 1.

To symmetrize the linear part of system \eqref{WTB case 4}, we introduce the following new unknowns:
\beq\label{new unknowns 2}
p\eqdefa v_x+K^{-1}_\e w_y,\quad\theta\eqdefa \Lambda_2(D)\z.
\eeq
Due to curl-free condition $v_y=w_x$, we get
\beq\label{v w in terms of p theta 2}
\vv V=-\na B^{-2}(D) K_\e p,\quad\z=K_\e B^{-1}(D)\theta,
\eeq
where we used notation $\vv V=(v,w)^T$ for simplicity.

Due to \eqref{def of B}, \eqref{new unknowns 2}, \eqref{v w in terms of p theta 2}, it is easy to get the following equivalent relation.

\begin{lemma}\label{lem for equivalent 1}
Let $(\vv V,\z)=(v,w,\z)$ be smooth enough functions satisfying $v_y=w_x$, and $(p,\theta)$ be defined in \eqref{new unknowns 2}. There holds for any $s\geq0$,
\beq\label{equivalent 1}
\|J^{\f12}_\e p\|_{H^s}+\|J^{\f12}_\e \theta\|_{H^s}\sim\|J^{\f12}_\e\na v\|_{H^s}+\|J^{\f12}_\e\na w\|_{H^s}+\|J^{\f12}_\e\na\z\|_{H^s}.
\eeq
\end{lemma}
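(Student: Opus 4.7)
The assertion is a symbol-level equivalence, and I would base the proof on two observations specific to Case 2. First, the Fourier multiplier
$$
K_\e(\xi_1)=\frac{1+\tfrac{\e}{2}\xi_1^2}{1+\tfrac{\e}{6}\xi_1^2}
$$
takes values in $[1,3]$ uniformly in $\e\in(0,1)$ and $\xi_1\in\R$, so $K_\e$ and $K_\e^{-1}$ act boundedly on every $H^s(\R^2)$. Consequently $B(\xi)=(K_\e(\xi_1)\xi_1^2+\xi_2^2)^{1/2}\sim|\xi|$, which gives $\|B(D)f\|_{H^s}\sim\|\nabla f\|_{H^s}$ as well as the boundedness of $\nabla B^{-1}(D)$ and $(\nabla\otimes\nabla)B^{-2}(D)$ on $H^s$. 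Second, $J_\e=1-\tfrac{\e}{2}\partial_x^2$ is a multiplier in $x$ only, hence commutes with $K_\e$, $Y_\e$, $B(D)$, $\partial_x$, $\partial_y$, so $J_\e^{1/2}$ can be moved freely through all of the operators appearing below.

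For the ``$\lesssim$'' direction of \eqref{equivalent 1}, I expand the definitions $p=v_x+K_\e^{-1}w_y$ and $\theta=\Lambda_2(D)\z=K_\e^{-1}B(D)\z$. Using commutativity and the boundedness of $K_\e^{-1}$,
$$
\|J_\e^{1/2}p\|_{H^s}\lesssim \|J_\e^{1/2}v_x\|_{H^s}+\|J_\e^{1/2}w_y\|_{H^s},\qquad \|J_\e^{1/2}\theta\|_{H^s}\lesssim \|J_\e^{1/2}B(D)\z\|_{H^s}\sim \|J_\e^{1/2}\nabla\z\|_{H^s},
$$
which gives the bound by $\|J_\e^{1/2}\nabla\vv V\|_{H^s}+\|J_\e^{1/2}\nabla\z\|_{H^s}$.

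For the reverse direction, I use the curl-free assumption $v_y=w_x$ to invert the relation $p\mapsto\vv V$. A direct Fourier computation yields $\widehat{v}=-i\xi_1 K_\e(\xi_1)\widehat{p}/B^2(\xi)$ and $\widehat{w}=-i\xi_2 K_\e(\xi_1)\widehat{p}/B^2(\xi)$, i.e.\ $\vv V=-\nabla B^{-2}(D)K_\e p$ as claimed in \eqref{v w in terms of p theta 2}. Therefore
$$
\nabla\vv V=-(\nabla\otimes\nabla)B^{-2}(D)K_\e p,
$$
whose full symbol $-\xi_i\xi_j K_\e(\xi_1)/B^2(\xi)$ is uniformly bounded; this gives $\|J_\e^{1/2}\nabla\vv V\|_{H^s}\lesssim\|J_\e^{1/2}p\|_{H^s}$. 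Likewise $\nabla\z=\nabla B^{-1}(D)K_\e\theta$ has bounded symbol $i\xi K_\e(\xi_1)/B(\xi)$, yielding $\|J_\e^{1/2}\nabla\z\|_{H^s}\lesssim\|J_\e^{1/2}\theta\|_{H^s}$.

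There is no genuine obstacle here, the lemma being at the level of multiplier bounds. The only point deserving emphasis is that $K_\e$ is \emph{uniformly} bounded in the present case, which is exactly the feature distinguishing Case 2 from Case 1 (where the analogous multiplier grows like $1+\tfrac{\e}{3}\xi_1^2$ and forces the anisotropic norms appearing in Lemma \ref{lem for energy functional 1}). Tracking this, together with the commutativity of $J_\e^{1/2}$ with all other operators, makes the proof immediate.
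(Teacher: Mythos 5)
Your proof is correct and is exactly the argument the paper intends: the paper states this lemma without proof, citing precisely the three relations you use (the definition of $B$, the definitions of $(p,\theta)$, and the inversion $\vv V=-\nabla B^{-2}(D)K_\e p$, $\z=K_\e B^{-1}(D)\theta$ under $v_y=w_x$), and the whole content is indeed the uniform bounds $K_\e(\xi_1)\in[1,3]$, $B(\xi)\sim|\xi|$, plus commutativity of $J_\e^{\f12}$ with all these multipliers. The only cosmetic remark is that deriving $\widehat v=-i\xi_1K_\e\widehat p/B^2$ by writing $\widehat w=(\xi_2/\xi_1)\widehat v$ is formal at $\xi_1=0$; it is cleaner to verify the identity directly from $\xi_2\widehat v=\xi_1\widehat w$ (or via a potential $\vv V=\na\phi$), which requires no division.
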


Now, we are in the position to derive the evolution equations for $(p,\theta)$. 

{\bf Evolution equation of $p$.} Thanks to the first two equations of \eqref{WTB case 4}, we have
\beno\begin{aligned}
J_\e\bigl(p_t-\Lambda_2(D)\theta\bigr)
&=-\e \p_x\bigl(vv_x+wv_y+\f12 \z \z_x\bigr)-\e K^{-1}_\e\p_y\bigl(v w_x+w w_y+ \f12\z \z_y\bigr)\\
&=-\e \bigl\{\underbrace{vv_{xx}+wv_{yx}+\f12 \z \z_{xx}+K^{-1}_\e\bigl(v w_{xy}+w w_{yy}+ \f12\z \z_{yy}\bigr)}_{I_1}\bigr\}-\e N_{p,1},
\end{aligned}\eeno
where
\beq\label{def of Np1 2}
N_{p,1}\eqdefa v_xv_x+w_xv_y+\f12 \z_x \z_x+K^{-1}_\e\bigl(v_y w_x+w_y w_y+ \f12\z_y \z_y\bigr).
\eeq

For $I_1$, using condition $v_y=w_x$ and relation $K_\e\p_x^2+\p_y^2=-B^2(D)$, we get
\beno\begin{aligned}
I_1&=vv_{xx}+ww_{xx}+\f12 \z \z_{xx}+K^{-1}_\e\bigl(v v_{yy}+w w_{yy}+ \f12\z \z_{yy}\bigr)\\
&=-K^{-1}_\e\bigl(vB^2(D)v+wB^2(D)w+\f12\z B^2(D)\z\bigr)+N_{p,2},
\end{aligned}\eeno
where 
\beq\label{def of Np2 2}
N_{p,2}=-[K^{-1}_\e,v]K_\e v_{xx}-[K^{-1}_\e,w]K_\e w_{xx}-\f12[K^{-1}_\e,\z]K_\e \z_{xx}.
\eeq

Using \eqref{v w in terms of p theta 2} and notation $\vv V=(v,w)^T$, we get 
\beno
I_1=K^{-1}_\e(\vv V\cdot\na K_\e p)-\f12K^{-1}_\e(\z B(D)K_\e\theta)+N_{p,2},
\eeno
Then we derive the evolution equation of $p$ as follows:
\beq\label{equation for p 2}
J_\e\bigl(p_t-\Lambda_2(D)\theta\bigr)=-\e K^{-1}_\e(\vv V\cdot\na K_\e p)+\f\e2K^{-1}_\e(\z B(D)K_\e\theta)+\e N_p,
\eeq
where $N_p=-(N_{p,1}+N_{p,2})$ with $N_{p,1}$ and $N_{p,2}$ being defined in \eqref{def of Np1 2} and  \eqref{def of Np2 2}.

\smallskip

{\bf Evolution equation of $\theta$.} Due to the third equation of \eqref{WTB case 4} and \eqref{new unknowns 2}, we get 
\beno
J_\e\bigl(\theta_t+\Lambda_2(D)p\bigr)
=-\e\Lambda_2(D)\bigl(\vv V\cdot\na\z+ \f12 \z\div\vv V\bigr).
\eeno

Thanks to \eqref{def of J Y K} and \eqref{def of B}, we have
\beno
B^2(D)=-\Delta+\f\e3Y_\e^{-1}\p_x^4,\quad K_\e=J_\e Y_\e^{-1},
\eeno
from which and \eqref{v w in terms of p theta 2}, we deduce that
\beno
\div\vv V=-\Delta B^{-2}(D)K_\e p=K_\e p-\f\e3J_\e Y_\e^{-2}B^{-2}(D)\p_x^4 p.
\eeno
 It hints 
\beno\begin{aligned}
\Lambda_2(D)(\z\div\vv V)&=\Lambda_2(D)(\z K_\e p)-\f\e3 J_\e\Lambda_2(D)\bigl(\z Y_\e^{-2}B^{-2}(D)\p_x^4p\bigr)\\
&\qquad-\f{\e^2}{6}\Lambda_2(D)\bigl([\p_x^2,\z]Y_\e^{-2}B^{-2}(D)\p_x^4p\bigr),
\end{aligned}\eeno
where we used the fact that $J_\e=1-\f\e2\p_x^2$ in the last inequality.

Then we obtain by using \eqref{v w in terms of p theta 2} and $J_\e\Lambda_2(D)=Y_\e B(D)$ that
\beq\label{equation for theta 2}\begin{aligned}
J_\e\bigl(\theta_t+\Lambda_2(D)p\bigr)
&=-\e \Lambda_2(D)(\vv V\cdot\na B^{-1}(D)K_\e\theta)-\f\e2 \Lambda_2(D)(\z K_\e p)\\
&\qquad+\f{\e^2}{6}Y_\e B(D)\bigl[\z Y_\e^{-2}B^{-2}(D)\p_x^4p\bigr]+\e N_\theta,
\end{aligned}\eeq
where 
\beq\label{def of N theta 2}
N_\theta=\f{\e^2}{12}\Lambda_2(D)\bigl([\p_x^2,\z]Y_\e^{-2}B^{-2}(D)\p_x^4p\bigr).
\eeq

Combining \eqref{equation for p 2} and \eqref{equation for theta 2}, we derive the evolution system of $(p,\theta)$ as follows:
\beq\label{equation of p and theta in case 2}
	\left\{ \begin{aligned}
J_\e\bigl(p_t-\Lambda_2(D)\theta\bigr)&=-\e K^{-1}_\e(\vv V\cdot\na K_\e p)+\f\e2K^{-1}_\e(\z B(D)K_\e\theta)+\e N_p,\\
	J_\e\bigl(\theta_t+\Lambda_2(D)p\bigr)
&=-\e \Lambda_2(D)(\vv V\cdot\na B^{-1}(D)K_\e\theta)-\f\e2 \Lambda_2(D)(\z K_\e p)\\
&\qquad+\f{\e^2}{6}Y_\e B(D)\bigl(\z Y_\e^{-2}B^{-2}(D)\p_x^4p\bigr)+\e N_\theta.
	\end{aligned} \right.
\eeq
\begin{remark}
The following Lemma \ref{lem for nonlinear term 1 case 2} displays that $\e N_p$ and $\e N_\theta$ are low-order terms of order $O(\e)$. The quadratic term $\f{\e^2}{6}Y_\e B(D)\bigl(\z Y_\e^{-2}B^{-2}(D)\p_x^4p\bigr)$ is of $O(\e^\f12)$ if we treat it as a lower order term. Then the local time existence on time scalar $O(\e^{-\f12})$ for system \eqref{equation of p and theta in case 2} follows from the classical hyperbolic energy method. To enlarge the existence time scale up to $O(1/\e)$, term $\f{\e^2}{6}Y_\e B(D)\bigl(\z Y_\e^{-2}B^{-2}(D)\p_x^4p\bigr)$ should be treated as a part of the quasilinear terms.
\end{remark}

\begin{lemma}\label{lem for nonlinear term 1 case 2} Let $s>3$ and $(v,w,\z)$ be a smooth enough solution of \eqref{WTB case 4}. There holds
\beq\label{estimate for N p theta 2}
\|N_p\|_{H^s}+\|J^{-\f12}_\e N_\theta\|_{H^s}
\lesssim\|\na v\|_{H^s}^2+\|\na w\|_{H^s}^2+\|\na\z\|_{H^s}^2.
\eeq
\end{lemma}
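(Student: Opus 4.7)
The proof follows the same two-step blueprint as Lemma~\ref{lem for nonlinear term 1}, with modifications reflecting the different operator symbols in Case~2. The contributions $N_p$ and $N_\theta$ are estimated separately.

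\textbf{Step 1: bound on $\|N_p\|_{H^s}$.} Decompose $N_p = -(N_{p,1}+N_{p,2})$ as in \eqref{def of Np1 2}--\eqref{def of Np2 2}. The key observation, in contrast with Case~1, is that the operator $K_\e^{-1}(D_x)$ is uniformly bounded on $H^s$ since its symbol $(1+\e\xi_1^2/6)/(1+\e\xi_1^2/2)$ lies in $[0,1]$. Consequently, for the pure product piece $N_{p,1}$ the standard tame estimate $\|fg\|_{H^s}\lesssim\|f\|_{H^s}\|g\|_{H^s}$ (valid since $s>3>1$) applied to each quadratic term immediately produces the required bound. For the commutator piece $N_{p,2}$, I exploit the algebraic identity
$$[K_\e^{-1},f]\,K_\e g_{xx} \;=\; -K_\e^{-1}\bigl([K_\e,f]\,g_{xx}\bigr),$$
and invoke \eqref{commutator K} with $g\mapsto g_x$ to obtain $\|[K_\e^{-1},f]K_\e g_{xx}\|_{H^s}\lesssim\|f_x\|_{H^s}\|g_x\|_{H^s}$. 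Summing the three contributions $(f,g)\in\{(v,v),(w,w),(\z,\z)\}$ yields the desired quadratic bound.

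\textbf{Step 2: bound on $\|J_\e^{-1/2}N_\theta\|_{H^s}$.} Expand $[\p_x^2,\z]U = 2\z_x U_x+\z_{xx}U$ with $U=Y_\e^{-2}B^{-2}(D)\p_x^4 p$, so that $N_\theta$ splits into a ``$\z_x U_x$'' piece and a ``$\z_{xx}U$'' piece. The central analytical input is the factorisation $\Lambda_2=K_\e^{-1}B$ together with the uniform bound $|J_\e^{-1/2}(\xi_1)K_\e^{-1}(\xi_1)|\le 1$, which gives $\|J_\e^{-1/2}\Lambda_2(D)\,\cdot\|_{H^s}\lesssim\|B(D)\,\cdot\|_{H^s}$. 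On each piece I split $B(D)(\z_* U_*)=\z_* B(D)U_*+[B(D),\z_*]U_*$ and apply the $B(D)$-commutator estimates \eqref{commutator B1} and \eqref{commutator B3a} of Lemma~\ref{commutator lem}, together with the Fourier-multiplier bounds (proved by direct symbol analysis using $B^2(\xi)\ge K_\e(\xi_1)\xi_1^2\ge\xi_1^2$ and optimisation in $t=\e\xi_1^2$):
$$\|\e^2 Y_\e^{-2}B^{-2}(D)\p_x^4 p\|_{H^s}\lesssim\e\|p\|_{H^s},\qquad \|\e^2 \p_x Y_\e^{-2}B^{-2}(D)\p_x^4 p\|_{H^s}\lesssim\sqrt\e\,\|p\|_{H^s}.$$
Combining with the equivalence \eqref{equivalent 1} (which converts $\|p\|_{H^s}$ into $\|\na v\|_{H^s}+\|\na w\|_{H^s}+\|\na\z\|_{H^s}$) and Young's inequality $ab\le\frac12(a^2+b^2)$ closes the estimate.

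\textbf{Main obstacle.} The delicate point is the ``$\z_{xx}U$'' contribution, because the second $x$-derivative of $\z$ carries one more derivative than $\|\na\z\|_{H^s}$ permits. The resolution relies on the precise interplay between the $\sqrt\e$-gain in the second multiplier bound above and the $\e^{-1/2}$-operator norm of $J_\e^{-1/2}\p_x$ (arising either from the rewriting $\z_{xx}=\p_x\z_x$, or from \eqref{commutator B1} applied to $[B(D),\z_{xx}]$): the two scalings precisely compensate to yield a net $O(1)$ bound controlled only by first derivatives of $\z$. This is the Case~2 counterpart of the cancellation between $\Lambda_1(D)$ and $A^{-2}(D)\p_x^4$ exploited in the analogous estimate of Lemma~\ref{lem for nonlinear term 1}.
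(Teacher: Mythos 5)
Your Step 1 is fine and is essentially the paper's argument: the paper bounds $N_{p,1}$ by the tame estimate plus the uniform boundedness of $K_\e^{-1}$, and bounds $N_{p,2}$ by the commutator estimate \eqref{commutator K} applied to $K_\e^{-1}$ (your identity $[K_\e^{-1},f]K_\e g_{xx}=-K_\e^{-1}([K_\e,f]g_{xx})$ is just a way of reducing to the same estimate), so nothing to object to there.

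Step 2, however, has a genuine gap, and it is created by your very first reduction. By bounding $J_\e^{-\f12}\Lambda_2(D)=J_\e^{-\f12}K_\e^{-1}B(D)$ brutally by $B(D)$, you give away the anisotropic weight $J_\e^{-\f12}$, and that weight is exactly what makes the $\z_{xx}$-contribution controllable by \emph{first} derivatives of $\z$. After your reduction, the piece $\e^2\|B(D)(\z_{xx}U)\|_{H^s}$ with $U=Y_\e^{-2}B^{-2}(D)\p_x^4p$ can at best be estimated by $\|\z_{xx}\|_{H^s}\cdot\e^2\|B(D)U\|_{H^s}\lesssim\sqrt\e\,\|\z_{xx}\|_{H^s}\|p\|_{H^s}$ (the multiplier $\e^2\xi_1^4Y_\e^{-2}B^{-1}\lesssim\e^2|\xi_1|^3Y_\e^{-2}\lesssim\sqrt\e$ is sharp near $|\xi_1|\sim\e^{-1/2}$), and $\sqrt\e\|\z_{xx}\|_{H^s}$ is only bounded by $\|J_\e^{\f12}\z_x\|_{H^s}$, not by $\|\na\z\|_{H^s}$, so the estimate does not close within the stated right-hand side. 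The two rescues you name cannot supply the missing $\e^{-\f12}$ gain: rewriting $\z_{xx}=\p_x\z_x$ gains nothing unless a $J_\e^{-\f12}$-type weight is available to absorb the extra $\p_x$ at cost $\e^{-\f12}$, and \eqref{commutator B1} carries the $J_\e^{-\f12}$ on the \emph{outside} of the commutator (and puts $J_\e^{-\f12}\na f$ on the rough factor), which is precisely the structure you have already discarded; likewise \eqref{commutator B3a} is only stated for $r\in[0,s-1]$, so invoking it for $[B(D),\z_x]U_x$ at the top regularity $H^s$ would again require $\na\z_x\in H^s$, i.e.\ second derivatives of $\z$. The paper's proof avoids all of this by keeping the outer $J_\e^{-\f12}$: it uses $\Lambda_2(\xi)\sim|\xi|$ to pass to $\e^2\|J_\e^{-\f12}(\p_x^2\z\cdot Y_\e^{-2}B^{-2}(D)\p_x^4p)\|_{H^{s+1}}$ and then applies the anisotropic product estimate \eqref{product estimate 1} so that the weight effectively lands on the $\z$-factor, producing the key quantity $\sqrt\e\,\|J_\e^{-\f12}\p_x^2\z\|_{H^s}\lesssim\|\p_x\z\|_{H^s}$; that transfer of $J_\e^{-\f12}$ onto $\p_x^2\z$ is the heart of the argument, and your proposal needs to retain it (i.e.\ not trade $J_\e^{-\f12}K_\e^{-1}B(D)$ for $B(D)$ at the outset) for the $\z_{xx}U$ piece to be handled.
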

\begin{proof} We derive the bounds of $N_p$ and $N_\theta$ one by one.

{\bf Bound of $\|N_p\|_{H^s}$.} Firstly, using \eqref{tame} and the fact that $K_\e(\xi_1)=\f{1+\f\e2\xi_1^2}{1+\f\e6\xi_1^2}\sim 1$, we derive from \eqref{def of Np1 2} that
\beno\begin{aligned}
\|N_{p,1}\|_{H^s}&\lesssim\|v_xv_x+w_xv_y+\f12 \z_x \z_x\|_{H^s}+\|K^{-1}_\e\bigl(v_y w_x+w_y w_y+ \f12\z_y \z_y\bigr)\|_{H^s}\\
&\lesssim\|\na v\|_{H^s}^2+\|\na w\|_{H^s}^2+\|\na\z\|_{H^s}^2.
\end{aligned}\eeno

For $N_{p,2}$ in \eqref{def of Np2 2}, using \eqref{commutator K} for $K^{-1}_\e$, we have
\beno\begin{aligned}
\|N_{p,2}\|_{H^s}&\lesssim\|[K^{-1}_\e,v]K_\e v_{xx}\|_{H^s}+\|[K^{-1}_\e,w]K_\e w_{xx}\|_{H^s}+\|[K^{-1}_\e,\z]K_\e \z_{xx}\|_{H^s}\\
&\lesssim \|v_x\|_{H^s}^2+\|w_x\|_{H^s}^2+\|\z_x\|_{H^s}^2.
\end{aligned}\eeno

Then we obtain 
\beq\label{bound of N p 2}
\|N_p\|_{H^s}\lesssim\|\na v\|_{H^s}^2+\|\na w\|_{H^s}^2+\|\na\z\|_{H^s}^2.
\eeq

\smallskip

{\bf Bound of $\|J_\e^{-\f12}N_\theta\|_{H^s}$.} Thanks to \eqref{product estimate 1} and the fact that $\Lambda_2(\xi)\sim|\xi|$, we derive from \eqref{def of N theta 2} that
\beno\begin{aligned}
\|J^{-\f12}_\e N_\theta\|_{H^s}&\lesssim\e^2\|J^{-\f12}_\e\bigl(\p_x^2\z\cdot Y_\e^{-2}B^{-2}(D)\p_x^4p\bigr)\|_{H^{s+1}}+\e^2\|J^{-\f12}_\e\bigl(\p_x\z\cdot Y_\e^{-2}B^{-2}(D)\p_x^5p\bigr)\|_{H^{s+1}}\\
&\lesssim\sqrt\e\cdot\sqrt\e\|J^{-\f12}_\e\p_x^2\z\|_{H^s}\cdot\e\|J^{-\f12}_\e Y_\e^{-2}B^{-2}(D)\p_x^4p\|_{H^s}\\
&\qquad+\sqrt\e\cdot\|J^{-\f12}_\e\p_x\z\|_{H^s}\cdot\e^{\f32}\|J^{-\f12}_\e Y_\e^{-2}B^{-2}(D)\p_x^5p\|_{H^s}\\
&\lesssim\sqrt\e\|\p_x\z\|_{H^s}\|p\|_{H^s}.
\end{aligned}\eeno
Due to \eqref{new unknowns 2}, \eqref{v w in terms of p theta 2} and  condition $v_y=w_x$, it is easy to get 
\beno
\|p\|_{H^s}^2\sim\|\na v\|_{H^s}^2+\|\na w\|_{H^s}^2.
\eeno
Then we obtain
\beq\label{bound of N theta 2}
\|J^{-\f12}_\e N_\theta\|_{H^s}\lesssim\|\na v\|_{H^s}^2+\|\na w\|_{H^s}^2+\|\na\z\|_{H^s}^2.
\eeq

\smallskip

Combining \eqref{bound of N p 2} and \eqref{bound of N theta 2}, we obtain \eqref{estimate for N p theta 2}. The lemma is proved.
\end{proof}

\subsection{Symmetrization of  \eqref{equation of p and theta in case 2} with good unknowns.} To get the large time existence on time scale $O(1/\e)$, we
symmetrize system \eqref{equation of p and theta in case 2} by introducing  good unknowns.

Firstly, for any $r\geq0$, we define Hilbert spaces $J_\e^{\f12}H^r(\R^2)$ and $J_\e^{-\f12}H^r(\R^2)$ as follows:
\beq\label{def of Sobolev space}\begin{aligned}
&J_\e^{\f12}H^r(\R^2)=\{f\in H^r(\R^2)\,|\,J_\e^{\f12}f\in H^r(\R^2)\},\\
&J_\e^{-\f12}H^r(\R^2)=\{f\in\cS'(\R^2)\,|\,J_\e^{-\f12}f\in H^r(\R^2)\},
\end{aligned}\eeq
associated with the norms $\|f\|_{J_\e^{\f12}H^r}\eqdefa\|J_\e^{\f12}f\|_{H^r}$ and $\|f\|_{J_\e^{-\f12}H^r}\eqdefa\|J_\e^{-\f12}f\|_{H^r}$ respectively. We also denote by $J_\e^{\f12}L^2(\R^2)=J_\e^{\f12}H^0(\R^2)$ and $J_\e^{-\f12}L^2(\R^2)=J_\e^{-\f12}H^0(\R^2)$.

\subsubsection{Bounded operators.} In this subsection, we assume that $s>3$, $\e\in(0,1)$, $T^*>0$ and $\z\in C^1\bigl((0,T^*];J_\e^{\f12}H^s(\R^2)\bigr)\cap C\bigl([0,T^*];J_\e^{\f12}H^{s+1}(\R^2)\bigr)$. By virtue of \eqref{tame}, \eqref{product estimate 1} and Sobolev embedding theorem, it is easy to check that there exists an universal constant $C_s>1$ such that 
\beq\label{product J}
\|\z f\|_{X}\leq C_s\|J_\e^{\f12}\z\|_{H^{s+1}}\|f\|_{X},\quad\forall\, f\in X,\,\forall t\in[0,T^*],
\eeq
where 
\beq\label{X}
X=J_\e^{\f12}H^s(\R^2),\,J_\e^{\f12}L^2(\R^2),\,H^{s+1}(\R^2),\,L^2(\R^2),\,J_\e^{-\f12}H^{s+1}(\R^2)\,\,\text{or}\,\,J_\e^{-\f12}L^{2}(\R^2).
\eeq

We assume that 
\beq\label{ansatz 1 a}
\sup_{t\in[0,T^*]}\sqrt\e\|J_\e^{\f12}\z(t,\cdot)\|_{H^{s+1}}\leq\f{1}{C_s},
\eeq
Due to \eqref{product J} and \eqref{ansatz 1 a}, it is easy to check that
\beno
\|J_\e^{\f12}\bigl(\f\e4\z Y^{-1}_\e f\bigr)\|_{H^s}
\leq \f\e4 C_s\|J_\e^{\f12}\z\|_{H^{s+1}}\|J_\e^{\f12}Y^{-1}_\e f\|_{H^s}\leq\f{\sqrt\e}4\|J_\e^{\f12}f\|_{H^s},\quad\forall\, f\in J_\e^{\f12}H^s(\R^2).
\eeno
It turns out that $\f\e4\z Y^{-1}_\e$ is a bounded linear operator from $J_\e^{\f12}H^s(\R^2)$ to $J_\e^{\f12}H^s(\R^2)$ with the norm
\beno
\|\f\e4\z Y^{-1}_\e\|\eqdefa\sup_{\|J_\e^{\f12} f\|_{H^s}=1}\|J_\e^{\f12}\bigl(\f\e4\z Y^{-1}_\e f\bigr)\|_{H^s}\leq\f{\sqrt\e}4.
\eeno
Similarly, $\f\e4\z Y^{-1}_\e$ is also a bounded linear operator on the space   $X$ defined in \eqref{X} with 
\beq\label{operator norm 1}
\|\f\e4\z Y^{-1}_\e\|\eqdefa\sup_{\|f\|_{X}=1}\|\f\e4\z Y^{-1}_\e f\|_{X}\leq\f{\sqrt\e}4<\f14.
\eeq
Here and in what follows, we use the notation $\|\cdot\|$ to denote the operator norm without pointing out the space.

By virtue of \eqref{operator norm 1} and von Neumann theorem,  it appears that
$( 2+ \f\e2\z Y_{\e}^{-1} )^{-1}(=\f12( 1+\f\e4\z Y_{\e}^{-1} )^{-1})$ is a bounded linear operator from $J_\e^{\f12}H^s(\R^2)$ to $J_\e^{\f12}H^s(\R^2)$ and
\beq\label{inverse operator}
( 2+ \f\e2\z Y_{\e}^{-1} )^{-1}=\f12\sum_{n=0}^\infty (-1)^n (\f\e4\z Y_{\e}^{-1})^n
\quad\text{with}\quad
\|( 2+ \f\e2 \z Y_{\e}^{-1} )^{-1}\|\leq\f{1}{2(1-\|\f\e4\z Y^{-1}_\e\|)}
\leq1.
\eeq
Moreover, $(2+\f\e2\z Y_{\e}^{-1} )^{-1}$ is a bounded linear operator on each space stated in \eqref{X} and satisfying \eqref{inverse operator}. By interpolation theory, $(2+\f\e2\z Y_{\e}^{-1} )^{-1}$ is also a bounded linear operator on $J_\e^{\f12} H^r(\R^2),\,J_\e^{\f12} H^{r+1}(\R^2)$, $H^{r+1}(\R^2)$ with $r\in[0,s]$ and satisfying \eqref{inverse operator}.

Denoting by 
\beq\label{def of gamma e}\begin{aligned}
&\Gamma_\e(\z, D_x)\eqdefa( 2+ \f\e2\z Y_{\e}^{-1})^{-1}(\z\cdot),\quad
\gamma_\e(\z,D_x) \eqdefa( 1+ \f\e2\z Y_\e^{-1}) ( 2+ \f\e2\z Y_{\e}^{-1})^{-1} (\z\cdot),
\end{aligned}\eeq
it is easy to get
\beq\label{identity}
\gamma_\e(\z,D_x) +\Gamma_\e(\z, D_x)=\z,\quad \gamma_\e(\z,D_x)=( 1+ \f\e2\z Y_\e^{-1})\Gamma_\e(\z,D_x).
\eeq

\begin{lemma}\label{lem for operator gamma}
Let $s>3$, $r\in[0,s]$, $\e\in(0,1)$, $T^*>0$ and $\z\in C^1\bigl((0,T^*];J_\e^{\f12}H^{s}(\R^2)\bigr)\cap C\bigl([0,T^*];J_\e^{\f12}H^{s+1}(\R^2)\bigr)$ satisfy \eqref{ansatz 1 a}. Then for any $t\in[0,T^*]$,

(1). $\Gamma_\e(\z,D_x)$ and $\gamma_\e(\z,D_x)$ are bounded linear operators on spaces $J_\e^{\f12}H^r(\R^2)$, $H^{r+1}(\R^2)$, $L^2(\R^2)$, $J_\e^{-\f12}H^{r+1}(\R^2)$ and $J_\e^{-\f12}L^2(\R^2)$, and the corresponding operator norms satisfy
\beq\label{operator norm 2}
\|\Gamma_\e(\z,D_x)\|\leq C_s\|J_\e^{\f12}\z\|_{H^{s+1}},
\quad\|\gamma_\e(\z,D_x)\|\leq\f32 C_s\|J_\e^{\f12}\z\|_{H^{s+1}};
\eeq

(2). $\Gamma_\e(\z,D_x)$ and $\gamma_\e(\z,D_x)$ are self-adjoint operators on $L^2(\R^2)$, i.e., 
\beq\label{adjoint}\begin{aligned}
&\bigl(\Gamma_\e(\z,D_x) f\,\big|\,g\bigr)_{L^2}=\bigl( f\,\big|\,\Gamma_\e(\z,D_x)g\bigr)_{L^2},\quad\forall f, g\in L^2(\R^2),\\
&\bigl(\gamma_\e(\z,D_x) f\,\big|\,g\bigr)_{L^2}=\bigl( f\,\big|\,\gamma_\e(\z,D_x)g\bigr)_{L^2},\quad\forall f, g\in L^2(\R^2).
\end{aligned}\eeq
\end{lemma}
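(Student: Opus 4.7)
My strategy is to reduce both boundedness and self-adjointness to the pre-established estimate \eqref{operator norm 1} on $\|\tfrac{\e}{4}\z Y_\e^{-1}\|$ and the Neumann-series representation \eqref{inverse operator} of the inverse operator $(2+\tfrac{\e}{2}\z Y_\e^{-1})^{-1}$, combined with the elementary algebraic identity \eqref{identity}.

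For part (1), the boundedness of $\Gamma_\e(\z,D_x)$ on any of the listed spaces $X$ in \eqref{X} follows by composing two bounded maps: multiplication by $\z$, which by \eqref{product J} has operator norm at most $C_s\|J_\e^{\f12}\z\|_{H^{s+1}}$, followed by $(2+\tfrac{\e}{2}\z Y_\e^{-1})^{-1}$, which has operator norm at most $1$ by \eqref{inverse operator}. This already gives the first estimate in \eqref{operator norm 2}. For $\gamma_\e(\z,D_x)$ I use the factorization $\gamma_\e=(1+\tfrac{\e}{2}\z Y_\e^{-1})\Gamma_\e$ from \eqref{identity}, bound the prefactor by $\|1+\tfrac{\e}{2}\z Y_\e^{-1}\|\leq 1+\tfrac{\sqrt\e}{2}\leq \tfrac{3}{2}$ using \eqref{operator norm 1} and \eqref{ansatz 1 a}, and compose. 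The intermediate values $r\in[0,s]$ of the Sobolev index are handled by complex (or real) interpolation between the endpoint cases $r=0$ and $r=s$ already covered by \eqref{product J}.

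For part (2), set $T\eqdefa 2+\tfrac{\e}{2}\z Y_\e^{-1}$, regarded as a bounded, boundedly invertible operator on $L^2(\R^2)$ under the smallness assumption \eqref{ansatz 1 a}. Using that $Y_\e^{-1}$ and multiplication by the real-valued $\z$ are each self-adjoint on $L^2$, a direct computation of the $L^2$ adjoint gives $T^*=2+\tfrac{\e}{2}Y_\e^{-1}\z$. The key algebraic observation is the intertwining identity
\[
T\,\z \;=\; 2\z+\tfrac{\e}{2}\z Y_\e^{-1}\z \;=\; \z\,T^*,
\]
from which $T^{-1}\z=\z (T^*)^{-1}$, i.e.\ $\Gamma_\e(\z,D_x)=\big(\Gamma_\e(\z,D_x)\big)^*$ on $L^2(\R^2)$. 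Self-adjointness of $\gamma_\e(\z,D_x)$ is then immediate from \eqref{identity}, which gives $\gamma_\e=\z-\Gamma_\e$, a difference of two self-adjoint operators on $L^2(\R^2)$.

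The only mildly delicate point I anticipate is justifying that the formal Neumann series defining $(2+\tfrac{\e}{2}\z Y_\e^{-1})^{-1}$ and the resulting operator act on each of the weighted spaces in \eqref{X} and on the interpolated spaces $J_\e^{\f12}H^r$, $H^{r+1}$, $J_\e^{-\f12}H^{r+1}$ for $r\in[0,s]$; this is handled by checking \eqref{product J} (hence $\|\tfrac{\e}{4}\z Y_\e^{-1}\|\leq\tfrac14$) on the two endpoint spaces and interpolating. Everything else is algebraic manipulation using the definitions \eqref{def of gamma e} and the identity \eqref{identity}.
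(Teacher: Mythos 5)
Your proposal is correct. Part (1) follows the paper verbatim: composition of the multiplication bound \eqref{product J} with the Neumann-series bound \eqref{inverse operator}, the factorization $\gamma_\e=(1+\f\e2\z Y_\e^{-1})\Gamma_\e$ with $\|1+\f\e2\z Y_\e^{-1}\|\le\f32$, and interpolation for $r\in[0,s]$. Part (2) takes a genuinely different (and slightly cleaner) route: the paper proves self-adjointness of $\Gamma_\e$ term by term, pairing each summand of the series $\f12\sum_n(-1)^n(\f\e4\z Y_\e^{-1})^n(\z\,\cdot)$ against $g$ and using $\z(Y_\e^{-1}\z)^n=(\z Y_\e^{-1})^n\z$, whereas you encode the same algebra once and for all in the intertwining identity $T(\z\,\cdot)=(\z\,\cdot)T^*$ with $T=2+\f\e2\z Y_\e^{-1}$, and then pass to $\Gamma_\e=T^{-1}(\z\,\cdot)=(\z\,\cdot)(T^*)^{-1}=\Gamma_\e^*$. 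Your argument avoids series manipulation entirely and only needs $T$ to be boundedly invertible on $L^2$ (which \eqref{operator norm 1} supplies), so it is marginally more robust; the paper's version has the advantage that the series is already in hand from \eqref{inverse operator}, so no adjoint-of-inverse step is needed. To make your version airtight you should state explicitly the three ingredients you are using: $\z$ is real-valued so multiplication by $\z$ is self-adjoint and bounded on $L^2$ (by \eqref{product J} with $X=L^2$), $Y_\e^{-1}$ is self-adjoint on $L^2$ as a Fourier multiplier with real symbol $(1+\f\e6\xi_1^2)^{-1}$, and $(T^{-1})^*=(T^*)^{-1}$ for a boundedly invertible operator; with these recorded, the conclusion for $\gamma_\e=\z-\Gamma_\e$ via \eqref{identity} is immediate, exactly as in the paper.
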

\begin{proof}
(1). Thanks to \eqref{product estimate 1} and \eqref{inverse operator}, we get for any $f\in J^{\f12}_\e H^s(\R^2)$, 
\beno
\|J_\e^{\f12}\bigl(\Gamma_\e(\z,D_x)f\bigr)\|_{H^s}\leq\|( 2+ \f\e2\z Y_{\e}^{-1} )^{-1}\|\cdot\|J_\e^{\f12}(\z f)\|_{H^s}\leq C_s\|J_\e^{\f12}\z\|_{H^s}\|J_\e^{\f12}f\|_{H^s},
\eeno
which gives rise to the first inequality of \eqref{operator norm 2}.

Since $\gamma_\e(\z,D_x)=(1+\f\e2\z Y^{-1}_\e)\Gamma_\e(u,D_x)$, using the first inequality of \eqref{operator norm 2} and \eqref{operator norm 1}, we get the second inequality of \eqref{operator norm 2}. Hence, both $\Gamma_\e(\z,D_x)$ and $\gamma_\e(\z,D_x)$ are bounded linear opeators on $J_\e^{\f12}H^s(\R^2)$.

Similar argument also hold on spaces $J_\e^{\f12}L^2(\R^2)$,  $H^{s+1}(\R^2)$, $L^2(\R^2)$, $J_\e^{-\f12}H^{s+1}(\R^2)$ and $J_\e^{-\f12}L^2(\R^2)$. Since $\|J_\e^{-\f12}\z\|_{H^s}\leq\|\z\|_{H^s}\leq\|J_\e^{\f12}\z\|_{H^s}\leq\|J_\e^{\f12}\z\|_{H^{s+1}}$, the inequalities in \eqref{operator norm 2} hold on spaces stated in \eqref{X}. 

Consequently, the standard interpolation theory shows that $\Gamma_\e(\z,D_x)$ and $\gamma_\e(\z,D_x)$ are also bounded on spaces $J_\e^{\f12}H^r(\R^2)$, $H^{r+1}(\R^2)$ and $J_\e^{-\f12}H^{r+1}(\R^2)$ for any $r\in[0,s]$. And the corresponding  operator norms also satisfy \eqref{operator norm 2}. 

\medskip

(2). By virtue of \eqref{def of gamma e} and \eqref{inverse operator}, we have
\beno
\Gamma_\e(\z,D_x)f=\f12\sum_{n=0}^\infty (-1)^n (\f\e4\z Y_{\e}^{-1})^n(\z f).
\eeno
For any $f,g\in L^2(\R^2)$,
\beno\begin{aligned}
\bigl(\Gamma_\e(\z,D_x) f\,\big|\,g\bigr)_{L^2}
&=\f12\sum_{n=0}^\infty(-1)^n\bigl((\f\e4\z Y_{\e}^{-1})^n(\z f)\,\big|\,g\bigr)_{L^2}
=\f12\sum_{n=0}^\infty(-1)^n\bigl(f\,\big|\,\z(\f\e4 Y_{\e}^{-1}(\z\cdot))^n g\bigr)_{L^2}\\
&=\f12\sum_{n=0}^\infty(-1)^n\bigl(f\,\big|\,(\f\e4\z Y_{\e}^{-1})^n(\z g)\bigr)_{L^2}=\bigl( f\,\big|\,\Gamma_\e(\z,D_x)g\bigr)_{L^2},
\end{aligned}\eeno
which shows that $\Gamma_\e(\z,D_x)$ is a self-adjoint operator on $L^2(\R^2)$. Using \eqref{identity}, we see that $\gamma_\e(\z,D_x)$ is also a self-adjoint operator on $L^2(\R^2)$. The proof of the lemma is complete.
\end{proof}

\begin{lemma}\label{lem for commutator gamma}
Let $s>3$, $T^*>0$ and $\z\in C^1\bigl((0,T^*];J_\e^{\f12}H^{s}(\R^2)\bigr)\cap C\bigl([0,T^*];J_\e^{\f12}H^{s+1}(\R^2)\bigr)$ satisfy \eqref{ansatz 1 a}. There hold
\begin{align}
&\bigl\|J_\e^{\f12}\bigl([\p_t,\Gamma_\e(\z,D_x)]f\bigr)\bigr\|_{H^s}\lesssim \|J_\e^{\f12}\z_t\|_{H^s}\|J_\e^{\f12}f\|_{H^s},\quad\forall f\in J_\e^{\f12}H^s(\R^2),\label{commutator gamma 1}\\
&\bigl\|J_\e^{-\f12}\bigl([\p_x,\na\Gamma_\e(\z,D_x)]f\bigr)\bigr\|_{H^s}
\lesssim \e^{-\f12} \|\z\|_{H^{s+1}}\|J_\e^{-\f12}f\|_{H^s},\quad\forall f\in J_\e^{-\f12}H^s(\R^2),\label{commutator gamma 3b}\\
&\bigl\|[\p_x,\na\Gamma_\e(\z,D_x)]f\bigr\|_{H^s}
\lesssim\e^{-\f12}\|J_\e^{\f12}\z\|_{H^{s+1}}\|f\|_{H^s},\quad\forall f\in H^s(\R^2),\label{commutator gamma 3c}\\
&\bigl\|J_\e^{\f12}\bigl([|D|^s,\Gamma_\e(\z,D_x)]f\bigr)\bigr\|_{L^2}\lesssim \|J_\e^{\f12}\z\|_{H^s}\|J_\e^{\f12}f\|_{H^{s-1}},\quad\forall f\in J_\e^{\f12}H^{s-1}(\R^2).\label{commutator gamma 0}
\end{align}
and
\beq\label{commutator gamma X}
\|[\p_x,\Gamma_\e(\z,D_x)]f\|_{X}\lesssim\|J_\e^{\f12}\z\|_{H^{s+1}}\|f\|_{X},\quad\forall f\in X,
\eeq
where $X=J_\e^{\f12}H^r(\R^2),\, H^r(\R^2)$ or  $J_\e^{-\f12}H^r(\R^2)$ for any $r\in[0,s]$.
Moreover, all  estimates also hold for $\gamma_\e(\z,D_x)$.
\end{lemma}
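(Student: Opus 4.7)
The whole argument hinges on a single resolvent identity for the operator $A\eqdefa 2+\frac{\e}{2}\z Y_\e^{-1}$, noting that $\Gamma_\e(\z,D_x)f=A^{-1}(\z f)$ by construction. Since $Y_\e^{-1}$ is a Fourier multiplier in $D_x$ and therefore commutes with $\p_t,\p_x,\p_y$ and $|D|^s$, for any such operator $P$ one has $[P,A]=\frac{\e}{2}[P,\z]Y_\e^{-1}$, and the resolvent identity $[P,A^{-1}]=-A^{-1}[P,A]A^{-1}$ yields the master formula
\[
[P,\Gamma_\e(\z,D_x)]f=A^{-1}\bigl([P,\z]f\bigr)-\frac{\e}{2}A^{-1}\bigl([P,\z]Y_\e^{-1}\Gamma_\e(\z,D_x)f\bigr).
\]
Since $A^{-1}$ is bounded on every weighted space that appears (by \eqref{inverse operator} and Lemma \ref{lem for operator gamma}), and $Y_\e^{-1}$ and $\Gamma_\e(\z,D_x)$ are bounded on the same spaces, the four commutator estimates all reduce to tame product or commutator estimates on $[P,\z]$ paired with $f$ or $\Gamma_\e(\z,D_x)f$.

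For \eqref{commutator gamma 1} the plan is to take $P=\p_t$, so $[P,\z]=\z_t$, and apply the product estimate \eqref{product J} with $\z$ replaced by $\z_t$. For \eqref{commutator gamma X} the choice $P=\p_x$ gives $[P,\z]=\z_x$, and the same product estimate closes the bound on each admissible space $X$ via $\|\z_x\|_X\lesssim\|J_\e^{1/2}\z\|_{H^{s+1}}$. For \eqref{commutator gamma 0} I would take $P=|D|^s$; after rewriting the interior argument as $h=J_\e^{-1/2}(J_\e^{1/2}h)$, the tame commutator estimate \eqref{commutator D1} furnishes the required $L^2$ bound on $[|D|^s,\z]h$ with the weights redistributed so that $J_\e^{1/2}$ lands outside and $J_\e^{1/2}$ on the internal factor, and composing with the boundedness of $A^{-1}$ from $J_\e^{-1/2}L^2$ to $J_\e^{1/2}L^2$ concludes.

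The most delicate estimates are \eqref{commutator gamma 3b} and \eqref{commutator gamma 3c}. Since $\p_x$ commutes with $\nabla$, $[\p_x,\nabla\Gamma_\e(\z,D_x)]=\nabla[\p_x,\Gamma_\e(\z,D_x)]$, so I would apply $\nabla$ to the $P=\p_x$ instance of the master formula and move $\nabla$ past $A^{-1}$ by the companion identity
\[
\nabla A^{-1}g=A^{-1}(\nabla g)-\frac{\e}{2}A^{-1}\bigl((\nabla\z)Y_\e^{-1}A^{-1}g\bigr),
\]
obtained from $[\nabla,A]=\frac{\e}{2}(\nabla\z)Y_\e^{-1}$. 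This yields a sum of terms of the shape $A^{-1}(\nabla\z_x\cdot f)$, $A^{-1}(\z_x\nabla f)$, and $A^{-1}((\nabla\z)Y_\e^{-1}A^{-1}(\z_x f))$. The loss $\e^{-1/2}$ enters when the $\p_x$-component of $\nabla f$ is traded for a weight via the pointwise inequality $\|J_\e^{-1/2}\p_x g\|_{H^s}\lesssim\e^{-1/2}\|g\|_{H^s}$; the two versions \eqref{commutator gamma 3b}--\eqref{commutator gamma 3c} simply correspond to two admissible distributions of the $J_\e^{\pm 1/2}$-weight between $\z$ and $f$ via \eqref{product estimate 1}. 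Finally, all claims for $\gamma_\e(\z,D_x)$ follow from $\gamma_\e(\z,D_x)=\z-\Gamma_\e(\z,D_x)$ in \eqref{identity}, since the commutator of $P$ with multiplication by $\z$ is just $P\z$ (or $[|D|^s,\z]$), whose norm already sits on the right-hand side of every statement.

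The principal obstacle I anticipate is the careful bookkeeping of $\e$-weights: each iteration of the master formula introduces an $\e/2$ prefactor together with a new $A^{-1}$ that must be bounded in a possibly different space, and in the gradient estimates \eqref{commutator gamma 3b}--\eqref{commutator gamma 3c} the balance between $J_\e^{\pm 1/2}$-weights and the $\sqrt{\e}$-gains (inherited from the anisotropy of $Y_\e^{-1}$) is where the real technical work will lie.
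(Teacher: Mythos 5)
Your resolvent-identity strategy, $[P,\Gamma_\e(\z,D_x)]f=A^{-1}([P,\z]f)-\f\e2A^{-1}([P,\z]Y_\e^{-1}\Gamma_\e(\z,D_x)f)$ with $A=2+\f\e2\z Y_\e^{-1}$, is a legitimate and in fact cleaner substitute for the paper's term-by-term Neumann-series bookkeeping, and it does close \eqref{commutator gamma 1} and \eqref{commutator gamma X} (with the small caveat that for \eqref{commutator gamma 1} you cannot literally invoke \eqref{product J} with $\z$ replaced by $\z_t$ — that would cost $\|J_\e^{\f12}\z_t\|_{H^{s+1}}$, which is neither assumed nor asserted; you need the $H^s$-level product bound $\|J_\e^{\f12}(\z_t f)\|_{H^s}\lesssim\|J_\e^{\f12}\z_t\|_{H^s}\|J_\e^{\f12}f\|_{H^s}$, true for $s>3$ by the same Fourier argument as \eqref{product estimate 1}). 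For \eqref{commutator gamma 0} there is a wrong step: $A^{-1}$ is bounded \emph{on} $J_\e^{\f12}L^2(\R^2)$, but it is certainly not bounded \emph{from} $J_\e^{-\f12}L^2$ \emph{to} $J_\e^{\f12}L^2$ (at $\z=0$ it is multiplication by $1/2$, so this would be a gain of a full factor $J_\e$). What you actually need is boundedness of $A^{-1}$ on $J_\e^{\f12}L^2$ together with the $J_\e^{\f12}$-weighted analogue of \eqref{commutator D1}, namely $\|J_\e^{\f12}([|D|^s,\z]g)\|_{L^2}\lesssim\|J_\e^{\f12}\z\|_{H^s}\|J_\e^{\f12}g\|_{H^{s-1}}$; this is true and proved like \eqref{commutator D1}, but it is not the estimate you cite.

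The genuine gap is in \eqref{commutator gamma 3b}--\eqref{commutator gamma 3c}. You read $[\p_x,\na\Gamma_\e(\z,D_x)]$ as $\na\circ[\p_x,\Gamma_\e(\z,D_x)]$ and then distribute $\na$ over the products, which produces the term $A^{-1}(\p_x\z\,\na f)$ (already at leading order one gets $\f12\p_x\z\,\na f$). For that operator the stated bounds are simply false: the piece $\p_x\z\,\p_y f$ cannot be controlled by $\|f\|_{H^s}$ (take $f$ oscillating rapidly in $y$), and your $\e^{-\f12}$-trade $\|J_\e^{-\f12}\p_x g\|_{H^s}\lesssim\e^{-\f12}\|g\|_{H^s}$ addresses only the $\p_x$-component of $\na f$, and even there yields $\|f\|_{H^s}$ rather than the norm $\|J_\e^{-\f12}f\|_{H^s}$ required in \eqref{commutator gamma 3b}. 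As the paper's proof (and the use in Corollary \ref{cor 1}, where $\na([\p_x^2,\Gamma_\e]f)$ is split as $[\p_x^2,\Gamma_\e]\na f+[\p_x^2,\na\Gamma_\e]f$) makes clear, the object estimated in \eqref{commutator gamma 3b}--\eqref{commutator gamma 3c} is the double commutator $[\na,[\p_x,\Gamma_\e(\z,D_x)]]$, in which \emph{every} derivative lands on $\z$ and the worst coefficient is $\na\p_x\z$, whence the single $\e^{-\f12}$ loss. To repair your argument you must commute $\na$ through $A^{-1}$ as well, i.e. apply your master formula a second time with $P=\na$ and subtract the term $[\p_x,\Gamma_\e]\na f$, so that only coefficients of the type $\na\p_x\z$, $\p_x\z$, $\na\z$ remain; this is exactly the bookkeeping the proposal glosses over, and it is where the substance of these two estimates lies.
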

\begin{proof} Thanks to Lemma \ref{lem for operator gamma}, all the commutators appearing in this lemma are well-defined on the corresponding spaces. Let us prove the commutator estimates one by one.

{\bf 1).} Firstly, due to \eqref{inverse operator} and \eqref{def of gamma e}, we have
\beno
[\p_t,\Gamma_\e(\z,D_x)]f=\f12\sum_{n=1}^\infty(-1)^n[\p_t,(\f\e4\z Y_\e^{-1})^n](\z f) + (2+ \f\e2\z Y_{\e}^{-1})^{-1}(\z_t f),
\eeno
and 
\beno\begin{aligned}
&[\p_t,(\f\e4\z Y_\e^{-1})^n]f=\f\e4\z_t Y_\e^{-1}\bigl((\f\e4\z Y_\e^{-1})^{n-1}(\z f)\bigr)+\f\e4\z Y_\e^{-1}\Bigl(\f\e4\z_t Y_\e^{-1}\bigl((\f\e4\z Y_\e^{-1})^{n-2}(\z f)\bigr)\Bigr)\\
&\qquad\qquad\qquad\qquad+\cdots+(\f\e4\z Y_\e^{-1})^{n-1}\bigl(\f\e4\z_tY_\e^{-1}(\z f)\bigr).
\end{aligned}\eeno

Using \eqref{inverse operator} and \eqref{product J},  we get for any $f\in J_\e^{\f12}H^s(\R^2)$,
\beno
\bigl\|J_\e^{\f12}\bigl((2+ \f\e2\z Y_{\e}^{-1})^{-1}(\z_t f)\bigr)\bigr\|_{H^s}
\leq\|(2+ \f\e2\z Y_{\e}^{-1})^{-1}\|\cdot\|J_\e^{\f12}(\z_t f)\|_{H^s}
\leq C_s\|J_\e^{\f12}\z_t\|_{H^s}\|J_\e^{\f12}f\|_{H^s},
\eeno
while using \eqref{operator norm 1} and \eqref{product J}, we get for any $f\in J_\e^{\f12}H^s(\R^2)$,
\beq\label{R 16}\begin{aligned}
\bigl\|J_\e^{\f12}\bigl([\p_t,(\f\e4\z Y_\e^{-1})^n]f\bigr)\bigr\|_{H^s}
&\leq n\|\f\e4\z_t Y_\e^{-1}\|\cdot\|\f\e4\z Y_\e^{-1}\|^{n-1}\|J_\e^{\f12}(\z f)\|_{H^s}\\
&\leq C_s\e\cdot\f{n}{4^{n}}\cdot\|J_\e^{\f12}\z_t\|_{H^s}\|J_\e^{\f12}\z\|_{H^s}\|J_\e^{\f12}f\|_{H^s},\quad n=1,2,\cdots,
\end{aligned}\eeq
where 
\beno
\|\f\e4\z_t Y_\e^{-1}\|\eqdefa\sup_{\|J_\e^{\f12}f\|_{H^s}=1}\f\e4\|J_\e^{\f12}(\z_t Y_\e^{-1}f)\|_{H^s}\leq\f\e4 C_s\|J_\e^{\f12}\z_t\|_{H^s}.
\eeno

Due to ansatz \eqref{ansatz 1 a}, we obtain for any $f\in J_\e^{\f12}H^s(\R^2)$,
\beno
\bigl\|J_\e^{\f12}\bigl([\p_t,\Gamma_\e(\z,D_x)]f\bigr)\bigr\|_{H^s}
\leq C_s\sqrt\e\sum_{n=1}^\infty \f{n}{4^{n}}\cdot\|J_\e^{\f12}\z_t\|_{H^s}\|J_\e^{\f12}f\|_{H^s} + C_s \| J_\e^\f12 \z_t \|_{H^s} \| J_\e^\f12 f  \|_{H^s}.
\eeno
Since $\sum_{n=1}^\infty \f{n}{4^{n}}<+\infty$, we get
\beq\label{R 0}
\bigl\|J_\e^{\f12}\bigl([\p_t,\Gamma_\e(\z,D_x)]f\bigr)\bigr\|_{H^s}
\lesssim \|J_\e^{\f12}\z_t\|_{H^s}\|J_\e^{\f12}f\|_{H^s}.
\eeq

Using \eqref{identity}, we have
\beno
[\p_t,\gamma_\e(\z,D_x)]f=[\p_t,\z]f-[\p_t,\Gamma_\e(\z,D_x)]f,
\eeno
which along with \eqref{R 0} implies that \eqref{commutator gamma 1} also holds for $\gamma_\e(\z,D_x)$.

{\bf 2).} For \eqref{commutator gamma 3b}, using \eqref{inverse operator} and \eqref{def of gamma e}, we have
\beno\begin{aligned}
&[\p_x,\na\Gamma_\e(\z,D_x)]f =\f12\sum_{n=1}^\infty(-1)^n\Bigl([\p_x,\na(\f\e4\z Y_\e^{-1})^n](\z f)+[\p_x,(\f\e4\z Y_\e^{-1})^n](\na\z f)\\
&\qquad\qquad\qquad\qquad+\bigl[\na(\f\e4\z Y_\e^{-1})^n\bigr](\p_x\z f)\Bigr)+(2+ \f\e2\z Y_\e^{-1})^{-1}(\na\p_x\z f),
\end{aligned}\eeno
and 
\beq\label{R 17}\begin{aligned}
\na(\f\e4\z Y_\e^{-1})^n&=\f\e4\na\z Y_\e^{-1}\bigl((\f\e4\z Y_\e^{-1})^{n-1}\bigr)+\f\e4\z Y_\e^{-1}\Bigl(\f\e4\na\z Y_\e^{-1}\bigl((\f\e4\z Y_\e^{-1})^{n-2}\bigr)\Bigr)+\\
&\qquad\cdots+(\f\e4\z Y_\e^{-1})^{n-1}\bigl(\f\e4\na\z Y_\e^{-1}\bigr).
\end{aligned}
\eeq

(i). For $n=1$, it holds
\beno
[\p_x,\na(\f\e4\z Y_\e^{-1})](\z f)=[\p_x,\f\e4\na\z Y_\e^{-1}](\z f)=\f\e4\na\p_x\z Y_\e^{-1}(\z f),
\eeno
which along with \eqref{product estimate 1} and \eqref{ansatz 1 a} shows
\beq\label{R 18}\begin{aligned}
\bigl\|J_\e^{-\f12}\bigl([\p_x,\na(\f\e4\z Y_\e^{-1})](\z f)\bigr)\bigr\|_{H^s}
&\lesssim\e\|J_\e^{-\f12}\na\p_x\z\|_{H^s}\|J_\e^{-\f12}Y_\e^{-1}(\z f)\|_{H^s}\\
&
\lesssim\sqrt\e\|\z\|_{H^{s+1}}\|J_\e^{-\f12}\z\|_{H^s}\|J_\e^{-\f12}f\|_{H^s}\lesssim\|\z\|_{H^s}\|J_\e^{-\f12}f\|_{H^s}.
\end{aligned}\eeq

(ii). For $n\geq 2$, it holds
\beno\begin{aligned}
&\quad\bigl[\p_x,\f\e4\na\z Y_\e^{-1}\bigl((\f\e4\z Y_\e^{-1})^{n-1}\bigr)\bigr](\z f)\\
&=\bigl[\p_x,\f\e4\na\z  Y_\e^{-1}\bigr]\bigl((\f\e4\z Y_\e^{-1})^{n-1}(\z f)\bigr)
+\f\e4\na\z Y_\e^{-1}\bigl(\bigl[\p_x,(\f\e4\z Y_\e^{-1})^{n-1}\bigr](\z f)\bigr)
\end{aligned}\eeno
which along with \eqref{product estimate 1} and \eqref{operator norm 1} implies
\beno\begin{aligned}
&\quad\bigl\|J_\e^{-\f12}\bigl(\bigl[\p_x,\f\e4\na\z Y_\e^{-1}\bigl((\f\e4\z Y_\e^{-1})^{n-1}\bigr)\bigr](\z f)\bigr)\bigr\|_{H^s}\\
&\lesssim\e\|J_\e^{-\f12}\na\p_x\z\|_{H^s}\|J_\e^{-\f12}\bigl((\f\e4\z Y_\e^{-1})^{n-1}(\z f)\bigr)\|_{H^s}+
\e\|J_\e^{-\f12}\na\z\|_{H^s}\|J_\e^{-\f12}\bigl(\bigl[\p_x,(\f\e4\z Y_\e^{-1})^{n-1}\bigr](\z f)\bigr)\|_{H^s}.
\end{aligned}\eeno
Following similar argument as \eqref{R 16}, we get by using \eqref{ansatz 1 a} that
\beno\begin{aligned}
\bigl\|J_\e^{-\f12}\big(\bigl[\p_x,\f\e4\na\z Y_\e^{-1}\bigl((\f\e4\z Y_\e^{-1})^{n-1}\bigr)\bigr](\z f)\bigr)\bigr\|_{H^s}
&\lesssim\f{n}{4^{n-1}}\cdot\e\|\z\|_{H^{s+1}}\|J_\e^{-\f12}\z\|_{H^s}\|J_\e^{-\f12}f\|_{H^s}\\
&\lesssim\f{n}{4^n}\cdot\sqrt\e\|\z\|_{H^s}\|J_\e^{-\f12}f\|_{H^s}.
\end{aligned}\eeno
The same estimate also holds for the remained terms in \eqref{R 17}. Then we obtain
\beq\label{R 19}
\bigl\|J_\e^{-\f12}\bigl([\p_x,\na(\f\e4\z Y_\e^{-1})^n)](\z f)\bigr)\bigr\|_{H^s}\lesssim\f{n^2}{4^n}\cdot\sqrt\e\|\z\|_{H^s}\|J_\e^{-\f12}f\|_{H^s}
\eeq

(iii) For $n\geq 1$, similar argument as \eqref{R 16} gives rise to
\beq\label{R 19 a}
\bigl\|J_\e^{-\f12}\bigl([\p_x,(\f\e4\z Y_\e^{-1})^n)](\z f)\bigr)\bigr\|_{H^s}\lesssim\f{n}{4^n}\cdot\sqrt\e\|\z\|_{H^s}\|J_\e^{-\f12}f\|_{H^s},
\eeq
\beq\label{R 19 b}
\bigl\|J_\e^{-\f12}\bigl(\bigl[\na(\f\e4\z Y_\e^{-1})^n\bigr](\p_x\z f)\bigr)\bigr\|_{H^s}\lesssim\f{1}{4^n}\cdot\e\|\z\|_{H^{s+1}}\|J_\e^{-\f12}f\|_{H^s}
\eeq

(iv).  Using \eqref{inverse operator} and \eqref{product estimate 1}, we have
\beq\label{R 20}\begin{aligned}
&\bigl\|J_\e^{-\f12}\bigl((2+ \f\e2\z Y_{\e}^{-1})^{-1}(\na\p_x\z f)\bigr)\bigr\|_{H^s}
\leq\|(2+ \f\e2\z Y_{\e}^{-1})^{-1}\|\cdot\|J_\e^{-\f12}(\na\p_x\z f)\|_{H^s}\\
&\qquad\lesssim\|\na\p_x\z\|_{H^s}\|J_\e^{-\f12}f\|_{H^s}
\lesssim\e^{-\f12}\|J_\e^{\f12}\z\|_{H^{s+1}}\|J_\e^{-\f12}f\|_{H^s}.
\end{aligned}\eeq

Combining \eqref{R 18}, \eqref{R 19} and \eqref{R 20}, using the fact that $\sum_{n=1}^\infty(\f{n^2}{4^n}+\f{n}{4^n}+\f{1}{4^n})<+\infty$, we deduce
\beno
\bigl\|J_\e^{-\f12}\bigl([\p_x,\na\Gamma_\e(\z,D_x)]f\bigr)\bigr\|_{H^s}
\lesssim \e^{-\f12} \|\z\|_{H^{s+1}}\|J_\e^{-\f12}f\|_{H^s}.
\eeno
This is exactly \eqref{commutator gamma 3b}. Similar arguments give rise to \eqref{commutator gamma 3c}.

{\bf 3).} Estimate \eqref{commutator gamma 0} follows from \eqref{commutator D} and the similar derivation of \eqref{commutator gamma 4} in the following Lemma \ref{lem for commutator gamma a}. 

At the meantime, following similar arguments as in the proof of \eqref{commutator gamma 1}, we could get  \eqref{commutator gamma X} with $X=J_\e^{\f12}H^r(\R^2),\, H^r(\R^2)$ or  $J_\e^{-\f12}H^r(\R^2)$ when $r\in[0,s]$. 
 We omit the details. The lemma is proved.
\end{proof}

\begin{lemma}\label{lem for commutator gamma a}
Under the assumptions of Lemma \ref{lem for commutator gamma}, there hold
\beq\label{commutator gamma 4}
\bigl\|J_\e^{-\f12}\bigl([B(D),\Gamma_\e(\z,D_x)]f\bigr)\bigr\|_{H^s}\lesssim\|J_\e^{\f12}\z\|_{H^{s+1}}\|J_\e^{-\f12}f\|_{H^s},\quad\forall f\in J_\e^{-\f12}H^s(\R^2),
\eeq
\beq\label{commutator gamma 4b}
\bigl\|[B^{-1}(D),\Gamma_\e(\z,D_x)]f\bigr\|_{\dH^s}\lesssim\|\z\|_{H^s}\||D|^{-1}f\|_{H^{s-1}},\quad\forall |D|^{-1}f\in H^{s-1}(\R^2).
\eeq
and for  $r\in[0,s-1]$, there holds
\beq\label{commutator gamma 4a}
\bigl\|J_\e^{\f12}\bigl([B(D),\Gamma_\e(\z,D_x)]f\bigr)\bigr\|_{H^r}\lesssim\|J_\e^{\f12}\z\|_{H^s}\|J_\e^{\f12}f\|_{H^r},\quad\forall f\in J_\e^{\f12}H^r(\R^2),
\eeq
Moreover, estimates \eqref{commutator gamma 4}, \eqref{commutator gamma 4a} and \eqref{commutator gamma 4b} also hold for $\gamma_\e(\z,D_x)$.
\end{lemma}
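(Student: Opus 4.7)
The plan is to mimic the Neumann-series strategy already used in Lemma \ref{lem for commutator gamma}. Expanding $\Gamma_\e(\z,D_x)=\tfrac12\sum_{n=0}^\infty(-1)^n T^n M_\z$ with $T\eqdefa\f\e4\z Y_\e^{-1}$ and $M_\z$ the multiplication by $\z$, and exploiting the crucial fact that $B(D)$ commutes with the Fourier multiplier $Y_\e^{-1}$, I obtain the telescoping identity
\begin{equation*}
[B(D),\Gamma_\e(\z,D_x)]=\tfrac12\sum_{n=0}^\infty(-1)^n\Bigl(\sum_{k=0}^{n-1}T^k[B(D),T]T^{n-1-k}M_\z+T^n[B(D),M_\z]\Bigr),
\end{equation*}
where $[B(D),T]=\f\e4[B(D),\z]Y_\e^{-1}$. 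Each summand thus contains exactly one elementary commutator $[B(D),\z]$, to which the estimates of Lemma \ref{commutator lem} apply.

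For \eqref{commutator gamma 4} I would work in $X=J_\e^{-\f12}H^s(\R^2)$: Lemma \ref{lem for operator gamma} gives $\|T\|_{X\to X}\leq\sqrt{\e}/4$ and $\|M_\z\|_{X\to X}\lesssim\|\z\|_{H^s}$, while estimate \eqref{commutator B1} combined with the trivial bound $\|J_\e^{-\f12}\nabla\z\|_{H^s}\lesssim\|J_\e^{\f12}\z\|_{H^{s+1}}$ yields $\|[B(D),\z]\|_{X\to X}\lesssim\|J_\e^{\f12}\z\|_{H^{s+1}}$. Plugging these into the telescoping identity bounds each term of the inner sum by $(\sqrt\e/4)^{n-1}\cdot\e\|J_\e^{\f12}\z\|_{H^{s+1}}^2$ and the remaining term by $(\sqrt\e/4)^n\|J_\e^{\f12}\z\|_{H^{s+1}}$; summing the resulting convergent geometric series and using the ansatz \eqref{ansatz 1 a} to absorb the extra $\sqrt{\e}\|J_\e^{\f12}\z\|_{H^{s+1}}$ into a constant produces \eqref{commutator gamma 4}. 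Estimate \eqref{commutator gamma 4a} follows identically in $X=J_\e^{\f12}H^r(\R^2)$ with $r\in[0,s-1]$ upon replacing \eqref{commutator B1} by \eqref{commutator B3a} or \eqref{commutator B3}.

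For \eqref{commutator gamma 4b} I would invoke the operator identity $[B^{-1}(D),A]=-B^{-1}(D)[B(D),A]B^{-1}(D)$ with $A=\Gamma_\e(\z,D_x)$ and combine it with $B(\xi)\sim|\xi|$ to obtain
\begin{equation*}
\|[B^{-1}(D),\Gamma_\e(\z,D_x)]f\|_{\dH^s}\sim\|[B(D),\Gamma_\e(\z,D_x)]B^{-1}(D)f\|_{\dH^{s-1}}.
\end{equation*}
Running the same Neumann expansion now in $H^{s-1}(\R^2)$---where the classical tame estimates and \eqref{commutator B3a} give $\|[B(D),\z]\|_{H^{s-1}\to H^{s-1}}\lesssim\|\z\|_{H^s}$ and $\|T\|_{H^{s-1}\to H^{s-1}}\leq\f\e4\|\z\|_{H^s}\leq\sqrt\e/(4C_s)$ under \eqref{ansatz 1 a}---produces $\|[B(D),\Gamma_\e(\z,D_x)]g\|_{H^{s-1}}\lesssim\|\z\|_{H^s}\|g\|_{H^{s-1}}$. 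Applying this with $g=B^{-1}(D)f$ and using $\|B^{-1}(D)f\|_{H^{s-1}}\lesssim\||D|^{-1}f\|_{H^{s-1}}$ completes \eqref{commutator gamma 4b}. The extension to $\gamma_\e$ follows at once from the identity $\gamma_\e(\z,D_x)=\z-\Gamma_\e(\z,D_x)$ in \eqref{identity}, which reduces each commutator $[B(D),\gamma_\e]$ or $[B^{-1}(D),\gamma_\e]$ to the $\Gamma_\e$ case plus an elementary commutator $[B(D),\z]$ or $[B^{-1}(D),\z]$ directly controlled by Lemma \ref{commutator lem}.

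The main obstacle is the bookkeeping of $\e$-powers inside the Neumann series: every $T$-factor carries one copy of $\z$ together with a saving $\e$, and every $[B(D),T]$ generates an additional $\z$-factor, so the general summand holds $n+1$ powers of $\z$ for $n$ copies of $T$. It is therefore essential to distribute the $\e$'s so that exactly one power of $\|J_\e^{\f12}\z\|_{H^{s+1}}$ (respectively $\|\z\|_{H^s}$) remains in the final bound while every surplus $\z$-norm is absorbed by the smallness condition $\sqrt\e\|J_\e^{\f12}\z\|_{H^{s+1}}\leq 1/C_s$ supplied by the ansatz \eqref{ansatz 1 a}; otherwise the estimates would not be uniform in $\e$.
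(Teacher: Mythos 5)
Your proposal is correct and follows essentially the same route as the paper: expand $(2+\f\e2\z Y_\e^{-1})^{-1}$ as a Neumann series, telescope the commutator so that each term contains a single factor $[B(D),\z]$ controlled by \eqref{commutator B1} (resp. \eqref{commutator B3} for the $J_\e^{\f12}H^r$ version), sum using the operator-norm bound \eqref{operator norm 1} and the ansatz \eqref{ansatz 1 a}, and reduce \eqref{commutator gamma 4b} via $[B^{-1}(D),\Gamma_\e]=-B^{-1}(D)[B(D),\Gamma_\e]B^{-1}(D)$ together with the intermediate bound $\|[B(D),\Gamma_\e]g\|_{H^{s-1}}\lesssim\|\z\|_{H^s}\|g\|_{H^{s-1}}$, exactly as in the paper. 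The passage to $\gamma_\e$ via \eqref{identity} is likewise the paper's argument, so no changes are needed.
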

\begin{proof}
(1). Due to \eqref{def of gamma e} and \eqref{inverse operator}, we first have
\beno\begin{aligned}
&[B(D),\Gamma_\e(\z,D_x)]f=[B(D),(2+\f\e2\z Y_\e^{-1})^{-1}](\z f)+(2+\f\e2\z Y_\e^{-1})^{-1}\bigl([B(D),\z]f\bigr)\\
&\quad=\f12\sum_{n=1}^\infty(-1)^n\Bigl([B(D),(\f\e4\z Y_\e^{-1})^n](\z f)+(\f\e4\z Y_\e^{-1})^n\bigl([B(D),\z]f\bigr)\Bigr) + (2+\f\e2\z Y_\e^{-1})^{-1}\bigl([B(D),\z]f\bigr),
\end{aligned}\eeno
and
\beno\begin{aligned}
&[B(D),(\f\e4\z Y_\e^{-1})^n ](\z f)=[B(D),\f\e4\z Y_\e^{-1}](\f\e4\z Y_\e^{-1})^{n-1}(\z f)+\f\e4\z Y_\e^{-1}\bigl([B(D),\f\e4\z Y_\e^{-1}](\f\e4\z Y_\e^{-1})^{n-2}(\z f)\bigr)\\
&\qquad\qquad\qquad
+\cdots+(\f\e4\z Y_\e^{-1})^{n-1}\bigl([B(D),\f\e4\z Y_\e^{-1}](\z f)\bigr).
\end{aligned}\eeno

Thanks to \eqref{commutator B1}, we have for any $f\in J_\e^{-\f12}H^s(\R^2)$,
\beno
\|J_\e^{-\f12}\bigl([B(D),\f\e4\z Y_\e^{-1}]f\bigr)\|_{H^s}
\lesssim\e\|J_\e^{-\f12}\z\|_{H^{s+1}}\|J_\e^{-\f12}Y_\e^{-1}f\|_{H^s}
\lesssim\e\|J_\e^{\f12}\z\|_{H^{s+1}}\|J_\e^{-\f12}f\|_{H^s},
\eeno
which shows that $[B(D),\f\e4\z Y_\e^{-1}]$ is a bounded linear operator on $J_\e^{-\f12}H^s(\R^2)$ and
\beq\label{R 7}
\|[B(D),\f\e4\z Y_\e^{-1}]\|\lesssim\e\|J_\e^{\f12}\z\|_{H^{s+1}}.
\eeq

Using \eqref{product estimate 1}, \eqref{commutator B1}, \eqref{operator norm 1} and \eqref{R 7}, we get for any $f\in J_\e^{-\f12}H^s(\R^2)$,
\beno\begin{aligned}
\bigl\|J_\e^{-\f12}\bigl([B(D),(\f\e4\z Y_\e^{-1})^n ](\z f)\bigr)\bigr\|_{H^s}
&\leq n\|[B(D),\f\e4\z Y_\e^{-1}]\|\cdot\|\f\e4\z Y_\e^{-1}\|^{n-1}\cdot\|J_\e^{-\f12}(\z f)\|_{H^s}\\
&\lesssim \f{n}{4^n}\cdot\e\|J_\e^{\f12}\z\|_{H^{s+1}}\|J_\e^{-\f12}\z\|_{H^s}\|J_\e^{-\f12}f\|_{H^s},
\end{aligned}\eeno
which along with \eqref{commutator B1} and ansatz \eqref{ansatz 1 a} imply
\beno\begin{aligned}
\|J_\e^{-\f12}[B(D),\Gamma_\e(\z,D_x)]f\|_{H^s}
&\lesssim\sum_{n=1}^\infty\f{n}{4^n}\cdot\sqrt\e\|J_\e^{\f12}\z\|_{H^{s+1}}\|J_\e^{-\f12}f\|_{H^s} + \| J_\e^\f12 \z \|_{H^{s+1}} \| J_\e^{-\f12}f \|_{H^s}\\
& \lesssim\|J_\e^{\f12}\z\|_{H^{s+1}}\|J_\e^{-\f12}f\|_{H^s}.
\end{aligned}\eeno
This is \eqref{commutator gamma 4}. 

(2). Similar argument as previous leads to
\beq\label{B D 1}
\|[B(D),\Gamma_\e(\z,D_x)]f\|_{H^{s-1}}\lesssim\|\z\|_{H^s}\|f\|_{H^{s-1}},\quad\forall f\in H^{s-1}(\R^2). 
\eeq
And noticing that $B(\xi)\sim|\xi|$, we have
\beno
\bigl\|[B^{-1}(D),\Gamma_\e(\z,D_x)]f\bigr\|_{\dH^s}\sim
\bigl\|B(D)\bigl([B^{-1}(D),\Gamma_\e(\z,D_x)]f\bigr\|_{\dH^{s-1}}.
\eeno
 Since 
 \beno
B(D)\bigl([B^{-1}(D),\Gamma_\e(\z,D_x)]f\bigr)=-[B(D),\Gamma_\e(\z,D_x)]B^{-1}(D)f,
 \eeno
using \eqref{B D 1}, we have
\beno
\|[B^{-1}(D),\Gamma_\e(\z,D_x)]f\|_{\dH^s}
\lesssim\|\z\|_{H^s}\|B^{-1}(D)f\|_{H^{s-1}}.
\eeno
This implies \eqref{commutator gamma 4b}.

(3). Due to \eqref{commutator B3}, it is easy to check that $[B(D),\f\e4\z Y_\e^{-1}]$ is a bounded linear operator on $J_\e^{\f12}H^r(\R^2)$ and
\beno
\|[B(D),\f\e4\z Y_\e^{-1}]\|=\sup_{\|J_\e^{\f12}f\|_{H^r}=1}\|J_\e^{\f12}([B(D),\f\e4\z ]Y_\e^{-1}f)\|_{H^r}\lesssim\e\|J_\e^{\f12}\z\|_{H^s}.
\eeno
Similar arguments  hold as in  the proof of \eqref{commutator gamma 4}. We obtain \eqref{commutator gamma 4a}.

By virtue of \eqref{identity}, all estimates stated in the lemma also hold for $\gamma_\e(\z,D_x)$.
The lemma is proved.
\end{proof}

\begin{corollary}\label{cor 1}
Under the assumptions of Lemma \ref{lem for commutator gamma} and $r\in[0,s]$, there hold
\beq\label{commutator gamma 5}
\bigl\|J_\e^{-\f12}\bigl([\p_x^2,\Gamma_\e(\z,D_x)]f\bigr)\bigr\|_{H^r}
\lesssim \e^{-\f12} \|J_\e^{\f12}\z\|_{H^{s+1}}\|f\|_{H^r},\quad\forall f\in H^r(\R^2),
\eeq
\beq\label{commutator gamma 6}
\bigl\|J_\e^{-\f12}\bigl([B(D)Y_\e,\Gamma_\e(\z,D_x)]f\bigr)\bigr\|_{H^s}
\lesssim \|J_\e^{\f12}\z\|_{H^{s+1}}\bigl(\|J_\e^\f12 f\|_{H^{s}} + \sqrt{\e}\|\na f\|_{H^s}\bigr),\quad\forall f\in H^{s+1}(\R^2),
\eeq
\beq\label{commutator gamma 6a}
\bigl\|J_\e^{\f12}\bigl([B(D)Y_\e,\Gamma_\e(\z,D_x)]f\bigr)\bigr\|_{L^2}
\lesssim\|J_\e^{\f12}\z\|_{H^{s+1}}\bigl(\|J_\e^{\f12}f\|_{L^2}+\e\|J_\e^{\f12}f\|_{\dH^2}\bigr),\quad\forall f\in J_\e^{\f12}H^2(\R^2).
\eeq
Moreover, \eqref{commutator gamma 5}, \eqref{commutator gamma 6} and \eqref{commutator gamma 6a} also hold for $\gamma_\e(\z,D_x)$.
\end{corollary}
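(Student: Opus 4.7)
The plan is to reduce both corollary estimates to the commutator bounds already established for the first-order operators $\p_x$ and $B(D)$ in Lemmas \ref{lem for commutator gamma} and \ref{lem for commutator gamma a}, using only the Leibniz-type identity $[AB,C]=A[B,C]+[A,C]B$ together with sharp symbol estimates for composites of $J_\e^{\pm 1/2}$, $\p_x$, and $B(D)$. The main obstacle will be the middle term $\f\e6 B(D)[\p_x^2,\Gamma_\e(\z,D_x)]f$ arising in \eqref{commutator gamma 6}: naively composing \eqref{commutator gamma 5} with $B(D)$ would require the regularity index $r=s+1$, one beyond the range of validity of \eqref{commutator gamma 5}.

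For \eqref{commutator gamma 5} I would write
$[\p_x^2,\Gamma_\e(\z,D_x)]f = \p_x\bigl([\p_x,\Gamma_\e(\z,D_x)]f\bigr) + [\p_x,\Gamma_\e(\z,D_x)]\p_x f$
and exploit the elementary symbol bound $|\xi_1|/(1+\f\e2\xi_1^2)^{1/2}\lesssim \e^{-1/2}$, which yields $\|J_\e^{-\f12}\p_x h\|_{H^r}\lesssim \e^{-1/2}\|h\|_{H^r}$. Applying \eqref{commutator gamma X} to the first piece with $X=H^r$ and to the second piece with $X=J_\e^{-\f12}H^r$ then produces \eqref{commutator gamma 5} on the full range $r\in[0,s]$.

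For \eqref{commutator gamma 6} I would expand $Y_\e = 1-\f\e6\p_x^2$ and apply Leibniz twice to obtain
\beno
[B(D)Y_\e,\Gamma_\e(\z,D_x)]f = [B(D),\Gamma_\e(\z,D_x)]f - \f\e6 B(D)[\p_x^2,\Gamma_\e(\z,D_x)]f - \f\e6 [B(D),\Gamma_\e(\z,D_x)]\p_x^2 f.
\eeno
The first and third pieces are controlled by \eqref{commutator gamma 4}; for the third, the symbol bound $\f{\e\xi_1^2}{(1+\f\e2\xi_1^2)^{1/2}}\lesssim \sqrt\e|\xi_1|$ converts $\e\|J_\e^{-\f12}\p_x^2 f\|_{H^s}$ into $\sqrt\e\|\p_x f\|_{H^s}\leq \sqrt\e\|\na f\|_{H^s}$. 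For the troublesome middle piece I would split once more via $[\p_x^2,\Gamma_\e(\z,D_x)] = \p_x[\p_x,\Gamma_\e(\z,D_x)] + [\p_x,\Gamma_\e(\z,D_x)]\p_x$ and exploit the refined symbol estimates $\f{\e|\xi_1|B(\xi)}{(1+\f\e2\xi_1^2)^{1/2}}\lesssim \sqrt\e|\xi|$ and $\f{\e B(\xi)}{(1+\f\e2\xi_1^2)^{1/2}}\lesssim \sqrt\e + \e|\xi_2|$, which transfer the $\e B(D)$ factor across $J_\e^{-\f12}$ at the cost of only one (anisotropic) derivative. Commuting $\p_x$ or $\p_y$ through $[\p_x,\Gamma_\e(\z,D_x)]$ afterwards produces terms whose derivative losses on $\z$ are absorbed into $\|J_\e^\f12\z\|_{H^{s+1}}$ and whose derivative losses on $f$ assemble into either $\|J_\e^\f12 f\|_{H^s}$ or $\sqrt\e\|\na f\|_{H^s}$, exactly matching the right-hand side.

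Estimate \eqref{commutator gamma 6a} follows by running the same three-term decomposition with $J_\e^\f12$ on the left in place of $J_\e^{-\f12}$ and the $L^2$ norm on the outside. The first and third pieces are now handled by \eqref{commutator gamma 4a} at $r=0\in[0,s-1]$ (which is available because $s>3$), producing the $\|J_\e^\f12 f\|_{L^2}$ and $\e\|J_\e^\f12 f\|_{\dH^2}$ contributions on the right. For the middle piece I would use the commutation $\|J_\e^\f12 B(D)h\|_{L^2}\sim \|J_\e^\f12 h\|_{\dH^1}$ (since $B(\xi)\sim|\xi|$ and $J_\e^\f12$, $B(D)$ commute), together with the same refined symbol bounds as above, to reduce everything to quantities controlled by \eqref{commutator gamma X} at low regularity. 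Finally, all statements transfer from $\Gamma_\e$ to $\gamma_\e$ via the identity $\gamma_\e(\z,D_x) = \z - \Gamma_\e(\z,D_x)$ from \eqref{identity}, with the resulting multiplication-by-$\z$ contributions absorbed through the tame product bounds \eqref{product J} and \eqref{product estimate 1}.
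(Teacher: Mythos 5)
Your plan is, up to regrouping, the paper's own proof: for \eqref{commutator gamma 5} the splitting $[\p_x^2,\Gamma_\e]=\p_x[\p_x,\Gamma_\e]+[\p_x,\Gamma_\e]\p_x$ combined with \eqref{commutator gamma X} for $X=H^r$ and $X=J_\e^{-\f12}H^r$ is exactly what the paper does; your three-term expansion of $[B(D)Y_\e,\Gamma_\e]$ is the paper's two-term decomposition $[B(D),\Gamma_\e]Y_\e f-\f{\e}{6}B(D)\bigl([\p_x^2,\Gamma_\e]f\bigr)$ with $Y_\e f$ written out, handled by the same ingredients \eqref{commutator gamma 4}, \eqref{commutator gamma 4a}, \eqref{commutator gamma X}; and your route to \eqref{commutator gamma 6a} (namely \eqref{commutator gamma 4a} at $r=0$ plus $\|J_\e^{\f12}B(D)h\|_{L^2}\lesssim\|J_\e^{\f12}h\|_{\dH^1}$ and low-regularity instances of \eqref{commutator gamma X}) reproduces the paper's intermediate estimate \eqref{commutator gamma 5a}. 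The transfer to $\gamma_\e$ via \eqref{identity} is also fine.

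One step, as literally written, does not close: in the middle term $\f{\e}{6}B(D)\bigl([\p_x^2,\Gamma_\e]f\bigr)$ of \eqref{commutator gamma 6}, your second symbol bound $\e B(\xi)(1+\f\e2\xi_1^2)^{-\f12}\lesssim\sqrt\e+\e|\xi_2|$ discards the $J_\e^{-\f12}$ weight on the transverse part before you commute $\p_y$ through $[\p_x,\Gamma_\e]\p_x$. The contribution in which $\p_y$ lands on $f$ is then estimated by $\e\,\|J_\e^{\f12}\z\|_{H^{s+1}}\|\p_x\p_y f\|_{H^s}$, and $\e\|\p_x\p_y f\|_{H^s}$ is \emph{not} dominated by $\|J_\e^{\f12}f\|_{H^s}+\sqrt\e\|\na f\|_{H^s}$ (test on frequencies $|\xi_1|\sim|\xi_2|\sim R$ with $\sqrt\e R\gg1$: the ratio grows like $\sqrt\e R$). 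The repair is immediate and is precisely how the paper proceeds: keep the anisotropic weight, i.e. bound $\e\|J_\e^{-\f12}B(D)h\|_{H^s}\lesssim\sqrt\e\|h\|_{H^s}+\e\|J_\e^{-\f12}\p_y h\|_{H^s}$, apply \eqref{commutator gamma X} with $X=J_\e^{-\f12}H^s$ so that the extra $\p_x$ on $f$ costs only $\e^{-\f12}$ (giving $\sqrt\e\|J_\e^{\f12}\z\|_{H^{s+1}}\|\p_y f\|_{H^s}$), and invoke \eqref{commutator gamma 3b}--\eqref{commutator gamma 3c} for the pieces where the derivative falls on the coefficient of $\Gamma_\e$ — the paper packages these as $[\p_x^2,\na\Gamma_\e]f$ after bounding $B(D)$ by a gradient. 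With that bookkeeping (which you do carry out correctly in the sub-piece $\p_x[\p_x,\Gamma_\e]f$, where the $\e^{-\f12}$ loss of \eqref{commutator gamma 3c} is cancelled by the available $\sqrt\e$), your argument closes and matches the paper's.
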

\begin{proof}
(1). Since 
\beno
[\p_x^2,\Gamma_\e(\z,D_x)]f
=\p_x\bigl([\p_x,\Gamma_\e(\z,D_x)]f\bigr)+[\p_x,\Gamma_\e(\z,D_x)]\p_xf,
\eeno
by using \eqref{commutator gamma X} with $X=H^r(\R^2)$ and $J^{-\f12}H^r(\R^2)$, we have
\beno\begin{aligned}
\bigl\|J_\e^{-\f12}\bigl([\p_x^2,\Gamma_\e(\z,D_x)]f\bigr)\bigr\|_{H^r}
&\lesssim\e^{-\f12}\|[\p_x,\Gamma_\e(\z,D_x)]f\|_{H^r}
+\bigl\|J_\e^{-\f12}\bigl([\p_x,\Gamma_\e(\z,D_x)]\p_x f\bigr)\bigr\|_{H^r}\\
&\lesssim \e^{-\f12}\|J_\e^{\f12}\z\|_{H^{s+1}}\|f\|_{H^r}+\e^{-\f12} \|J_\e^{\f12}\z\|_{H^{s+1}}\cdot \e^{\f12}\|J_\e^{-\f12}\p_xf\|_{H^r},
\end{aligned}\eeno
which hints \eqref{commutator gamma 5}. 

Consequently, we also have
\beno\begin{aligned}
\bigl\|J_\e^{\f12}\bigl([\p_x^2,\Gamma_\e(\z,D_x)]f\bigr)\bigr\|_{H^1}
&\lesssim\|J_\e^{\f12}([\p_x,\Gamma_\e(\z,D_x)]f)\|_{H^2}
+\bigl\|J_\e^{\f12}\bigl([\p_x,\Gamma_\e(\z,D_x)]\p_x f\bigr)\bigr\|_{H^1}.
\end{aligned}\eeno
Using \eqref{commutator gamma X} for  $X=J_\e^{\f12}H^1(\R^2)$ and  $J_\e^{\f12}H^2(\R^2)$, we get 
\beno
\bigl\|J_\e^{\f12}\bigl([\p_x^2,\Gamma_\e(\z,D_x)]f\bigr)\bigr\|_{H^1}
\lesssim\|J_\e^{\f12}\z\|_{H^{s+1}}\bigl(\|J_\e^{\f12}f\|_{H^2}+\|J_\e^{\f12}\p_xf\|_{H^1}\bigr)
\lesssim\|J_\e^{\f12}\z\|_{H^{s+1}}\|J_\e^{\f12}f\|_{H^2},
\eeno
which implies
\beq\label{commutator gamma 5a}
\bigl\|J_\e^{\f12}\bigl([\p_x^2,\Gamma_\e(\z,D_x)]f\bigr)\bigr\|_{H^1}
\lesssim\|J_\e^{\f12}\z\|_{H^{s+1}}\|J_\e^{\f12}f\|_{H^2},\quad\forall f\in J_\e^{\f12}H^2(\R^2).
\eeq

(2). Since $Y_\e=1-\f\e6\p_x^2$ and 
\beno
[B(D)Y_\e,\Gamma_\e(\z,D_x)]f=[B(D),\Gamma_\e(\z,D_x)]Y_\e f
-\f\e6B(D)\bigl([\p_x^2,\Gamma_\e(\z,D_x)]f\bigr),
\eeno
we get by using $B(\xi)\sim|\xi|$ that
\beq\label{R 21}\begin{aligned}
&\bigl\|J_\e^{-\f12}\bigl([B(D)Y_\e,\Gamma_\e(\z,D_x)]f\bigr)\bigr\|_{H^s}
\lesssim\bigl\|J_\e^{-\f12}\bigl([B(D),\Gamma_\e(\z,D_x)]Y_\e f\bigr)\bigr\|_{H^s}\\
&\qquad\qquad
+\e\bigl\|J_\e^{-\f12}\bigl([\p_x^2,\Gamma_\e(\z,D_x)]\na f\bigr)\bigr\|_{H^s}
+\e\bigl\| J_\e^{-\f12}\bigl([\p_x^2,\na\Gamma_\e(\z,D_x)]f\bigr)\bigr\|_{H^s}.
\end{aligned}\eeq

For the last term of \eqref{R 21}, we have
\beno
[\p_x^2,\na\Gamma_\e(\z,D_x)]f=\p_x([\p_x,\na\Gamma_\e(\z,D_x)]f)+[\p_x,\na\Gamma_\e(\z,D_x)]\p_x f.
\eeno
Thanks to \eqref{commutator gamma 3b} and \eqref{commutator gamma 3c}, we get
\beno\begin{aligned}
\e\bigl\| J_\e^{-\f12}\bigl([\p_x^2,\na\Gamma_\e(\z,D_x)]f\bigr)\bigr\|_{H^s}
&\lesssim\sqrt\e\|[\p_x,\na\Gamma_\e(\z,D_x)]f\|_{H^s}+\e\bigl\| J_\e^{-\f12}\bigl([\p_x,\na\Gamma_\e(\z,D_x)]\p_xf\bigr)\bigr\|_{H^s}\\
&\lesssim \|J_\e^{\f12}\z\|_{H^{s+1}}\|f\|_{H^s}+\sqrt\e\|\z\|_{H^{s+1}}\|J_\e^{-\f12}\p_x f\|_{H^s},
\end{aligned}\eeno
which gives rise to
\beq\label{R 22}
\e\bigl\| J_\e^{-\f12}\bigl([\p_x^2,\na\Gamma_\e(\z,D_x)]f\bigr)\bigr\|_{H^s}
\lesssim\|J_\e^{\f12}\z\|_{H^{s+1}}(\|f\|_{H^s}+\sqrt\e \| \nabla f \|_{H^s} ).
\eeq

Using \eqref{commutator gamma 4}, \eqref{commutator gamma 5} and \eqref{R 22}, we obtain 
\beno\begin{aligned}
&\quad\bigl\|J_\e^{-\f12}\bigl([B(D)Y_\e,\Gamma_\e(\z,D_x)]f\bigr)\bigr\|_{H^s}\\
&\lesssim\|J_\e^{\f12}\z\|_{H^{s+1}}\|J_\e^{-\f12} Y_\e f\|_{H^s}
+\sqrt\e\|J_\e^{\f12}\z\|_{H^{s+1}}\|\na f\|_{H^s}
+\|J_\e^{\f12}\z\|_{H^{s+1}}(\|f\|_{H^s}+\sqrt\e \|\nabla f \|_{H^s} )\\
&\lesssim \|J_\e^{\f12}\z\|_{H^{s+1}}\bigl(\|J_\e^\f12 f\|_{H^{s}} + \sqrt{\e}\|\na f\|_{H^s}\bigr).
\end{aligned}\eeno
This is exactly \eqref{commutator gamma 6}. 

(3). Similar proof as \eqref{commutator gamma 6}, using \eqref{commutator gamma 4a}
with $r=0$ and \eqref{commutator gamma 5a}, we have
\beno\begin{aligned}
\bigl\|J_\e^{\f12}\bigl([B(D)Y_\e,\Gamma_\e(\z,D_x)]f\bigr)\bigr\|_{L^2}
&\lesssim\bigl\|J_\e^{\f12}\bigl([B(D),\Gamma_\e(\z,D_x)]Y_\e f\bigr)\bigr\|_{L^2}
+\e\bigl\|J_\e^{\f12}\bigl([\p_x^2,\Gamma_\e(\z,D_x)]f\bigr)\bigr\|_{\dH^1}\\
&\lesssim\|J_\e^{\f12}\z\|_{H^{s+1}}\|J_\e^{\f12} Y_\e f\|_{L^2}
+\e\|J_\e^{\f12}\z\|_{H^{s+1}}\|J_\e^{\f12}f\|_{H^2}\\
&\lesssim\|J_\e^{\f12}\z\|_{H^{s+1}}\bigl(\|J_\e^{\f12}f\|_{L^2}+\e\|J_\e^{\f12}f\|_{\dH^2}\bigr).
\end{aligned}\eeno
This is exactly \eqref{commutator gamma 6a}.
The corollary is proved.
\end{proof}

\subsubsection{Good unknowns.} To symmetrize system \eqref{equation of p and theta in case 2}, we introduce good unknowns as follows:
\begin{align}
	\wt{p} \eqdefa \,&p + \e J_\e^{-1} \bigl(\vv V\cdot \nabla B^{-1}(D)K_\e \theta + \f12 \z \cdot K_\e p  \bigr) - \frac{\e^2}{6} \gamma_\e(\z,D_x) Y_\e^{-2} B^{-2}(D)  \p_x^4 p ,\label{wtp in case 2} \\
	\wt\theta \eqdefa \,& \theta - \e J_\e^{-1} B^{-1}(D) \bigl( \vv V\cdot\na K_\e p - \f12 \z \cdot B(D)K_\e\theta  \bigr).\label{wttheta in case 2}
\end{align}

\begin{lemma}\label{lem for equivalent 2}
Let $s>3$ and $(\vv V,\z)=(v,w,\z)$ be a smooth enough solution of \eqref{WTB case 4} over some time interval $[0,T^*]$ satisfying 
\beq\label{ansatz 2 a}
\sup_{t\in[0,T^*]}\sqrt\e\bigl(\|J_\e^{\f12}\z\|_{H^{s+1}}+\|J_\e^{\f12}\vv V\|_{H^{s+1}}\bigr)\leq\f{1}{C_s}.
\eeq
Then there hold for any $t\in[0,T^*]$,
\beq\label{equivalent 2}\begin{aligned}
&\|J^{\f12}_\e(\wt{p}-p)\|_{H^s}\lesssim \e\|J^{\f12}_\e\z\|_{H^{s+1}}\cdot\|p\|_{H^s}+\e\|\vv V\|_{H^s}\|\theta\|_{H^s},\\
&\|J^{\f12}_\e(\wt\theta-\theta)\|_{\dH^s}
\lesssim\e\|\z\|_{H^s}\|\theta\|_{H^s}+\e\|\vv V\|_{H^s}\|p\|_{H^s}.
\end{aligned}\eeq
Moreover, there holds for any $t\in[0,T^*]$
\beq\label{estimate for wt theta 2}
\|J^{\f12}_\e\wt{p}\|_{H^s}+\|J^{\f12}_\e\na\wt\theta\|_{H^{s-1}}\lesssim\|J_\e^{\f12} p\|_{H^s}+\|J_\e^{\f12} \theta\|_{H^s}.
\eeq
\end{lemma}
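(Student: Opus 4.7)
The plan is to bound each of the three correction terms in \eqref{wtp in case 2} and the two in \eqref{wttheta in case 2} separately, using the anisotropic product estimates \eqref{product estimate 1}, \eqref{product B2}, and the operator bounds for $\gamma_\e(\z,D_x)$ from Lemma \ref{lem for operator gamma}. The core observation is that the symbols of $J_\e^{-\f12}K_\e = J_\e^{\f12}Y_\e^{-1}$, $B^{-1}(D)\na$ and $\e^2 J_\e^{\f12}Y_\e^{-2}B^{-2}(D)\p_x^4$ are bounded uniformly in $\e$ (the last one being $O(\e)$), so applying such composite operators does not generate additional factors of $\e^{-1/2}$ that could ruin the $O(\e)$ gain.

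For the first inequality of \eqref{equivalent 2}, we apply $J_\e^{\f12}$ to $\wt p - p$. The term $\e J_\e^{-\f12}(\vv V\cdot\na B^{-1}(D)K_\e\theta)$ is written as $J_\e^{-\f12}\bigl(J_\e^{\f12}(J_\e^{-\f12}\vv V)\cdot J_\e^{\f12}(J_\e^{-\f12}\na B^{-1}(D)K_\e\theta)\bigr)$ and estimated by \eqref{product estimate 1}, yielding $\e\|\vv V\|_{H^s}\|\theta\|_{H^s}$; the term $\f\e2 J_\e^{-\f12}(\z K_\e p)$ is treated similarly, producing $\e\|\z\|_{H^s}\|p\|_{H^s}$. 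The last term uses the boundedness of $\gamma_\e(\z,D_x)$ on $J_\e^{\f12}H^s(\R^2)$ from \eqref{operator norm 2} combined with the symbol bound $\e^2 J_\e^{\f12}(\xi_1)Y_\e^{-2}(\xi_1)\xi_1^4/B^2(\xi)\lesssim \e$, giving $\e\|J_\e^{\f12}\z\|_{H^{s+1}}\|p\|_{H^s}$.

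For the second inequality of \eqref{equivalent 2}, the term $\f\e2 J_\e^{-\f12}B^{-1}(D)(\z B(D)K_\e\theta)$ is bounded directly by \eqref{product B2} after writing $K_\e\theta = J_\e^{\f12}(J_\e^{\f12}Y_\e^{-1}\theta)$, which gives $\e\|\z\|_{H^s}\|\theta\|_{H^s}$. The other term $\e J_\e^{-\f12}B^{-1}(D)(\vv V\cdot\na K_\e p)$ is the main obstacle, since $B^{-1}(D)\na$ is only bounded (not smoothing) on $\dH^s$. To handle it, we rewrite $\vv V\cdot\na K_\e p = \dive(\vv V K_\e p) - (\dive\vv V) K_\e p$, so that the first piece becomes $J_\e^{-\f12}B^{-1}(D)\dive(\vv V K_\e p)$, in which $B^{-1}(D)\dive$ is a bounded Fourier multiplier and the product $\|J_\e^{-\f12}(\vv V K_\e p)\|_{H^s}\lesssim\|\vv V\|_{H^s}\|p\|_{H^s}$ follows from \eqref{product estimate 1}; the second piece uses $\|B^{-1}(D)f\|_{\dH^s}\lesssim\|f\|_{H^{s-1}}$ together with the tame product \eqref{tame} to get $\|(\dive\vv V)K_\e p\|_{H^{s-1}}\lesssim\|\vv V\|_{H^s}\|p\|_{H^s}$.

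Finally, \eqref{estimate for wt theta 2} is obtained by writing $\wt p = p + (\wt p - p)$ and $\wt\theta = \theta + (\wt\theta - \theta)$ and invoking \eqref{equivalent 2}; the factor of $\e$ there, combined with the ansatz \eqref{ansatz 2 a} which gives $\sqrt\e\bigl(\|J_\e^{\f12}\z\|_{H^{s+1}} + \|J_\e^{\f12}\vv V\|_{H^{s+1}}\bigr)\lesssim 1$, ensures $\|J_\e^{\f12}(\wt p - p)\|_{H^s}\lesssim\|J_\e^{\f12}p\|_{H^s}+\|J_\e^{\f12}\theta\|_{H^s}$ and likewise in $\dH^s$ for $\wt\theta - \theta$. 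For the mixed-regularity quantity $\|J_\e^{\f12}\na\wt\theta\|_{H^{s-1}}\sim\|J_\e^{\f12}\wt\theta\|_{\dH^1}+\|J_\e^{\f12}\wt\theta\|_{\dH^s}$, the $\dH^s$ piece follows from the bound just established, while the $\dH^1$ piece is handled by repeating the argument for $\wt\theta - \theta$ in $\dH^1$ norm (which is even easier, since $B^{-1}(D)\sim|D|^{-1}$ gains a full derivative when landing in $\dH^1$).
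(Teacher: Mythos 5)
Your argument is correct and follows essentially the same route as the paper: term-by-term bounds on the correction terms via \eqref{product estimate 1}, \eqref{product B2} and the operator bound \eqref{operator norm 2} for $\gamma_\e(\z,D_x)$, then the ansatz \eqref{ansatz 2 a} to absorb the $O(\e)$ differences into $\|J_\e^{\f12}p\|_{H^s}+\|J_\e^{\f12}\theta\|_{H^s}$, together with a direct low-frequency estimate of $\|J_\e^{\f12}\na\wt\theta\|_{L^2}$ (your $\dH^1$ piece) exactly as in the paper's Step 3. The only deviation is your divergence decomposition of $\vv V\cdot\na K_\e p$, which is a harmless but unnecessary detour: the paper applies \eqref{product B2} to this term directly (and in fact, since $s>3$ and $B(\xi)\sim|\xi|$, the crude bound $\|J_\e^{-\f12}B^{-1}(D)(\vv V\cdot\na K_\e p)\|_{\dH^s}\lesssim\|\vv V\cdot\na K_\e p\|_{H^{s-1}}\lesssim\|\vv V\|_{H^s}\|p\|_{H^s}$ already suffices), so the term you flag as the main obstacle is not actually problematic.
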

\begin{proof}
We divide the proof into three steps.

{\bf Step 1. Bound of $\|J^{\f12}_\e(\wt{p}-p)\|_{H^s}$.} Due to \eqref{wtp in case 2}, we obtain by using \eqref{product estimate 1} and the facts that $B(\xi)\sim |\xi|$ and $K_\e(\xi_1)\sim 1$ that 
\beno
\|J^{-\f12}_\e\left(\vv V\cdot \nabla B^{-1}(D)K_\e \theta \right)\|_{H^s}
\lesssim\|J^{-\f12}_\e\vv V\|_{H^s}\|J^{-\f12}_\e \nabla B^{-1}(D)K_\e \theta\|_{H^s}\lesssim\|\vv V\|_{H^s}\|\theta\|_{H^s},
\eeno
\beno
\|J^{-\f12}_\e\left(\z \cdot K_\e p  \right)\|_{H^s}\lesssim\|J^{-\f12}_\e\z\|_{H^s}\|J^{-\f12}_\e K_\e p\|_{H^s}\lesssim\|\z\|_{H^s}\|p\|_{H^s}.
\eeno

For the last term in \eqref{wtp in case 2}, noticing that $B(\xi)\sim |\xi|$ and $Y_\e(\xi_1)\sim J_\e(\xi_1)=1+\f\e2\xi_1^2$, we obtain by using \eqref{operator norm 2} that
\beno\begin{aligned}
\e\|J^{\f12}_\e\gamma_\e(D,\z) Y_\e^{-2} B^{-2}(D)  \p_x^4 p\|_{H^s}
&\lesssim\|\gamma_\e(D,\z)\|\cdot\e\|J^{\f12}_\e Y_\e^{-2} B^{-2}(D)  \p_x^4 p\|_{H^s}\\
&\lesssim\|J^{\f12}_\e\z\|_{H^{s+1}}\cdot\|p\|_{H^s}
\end{aligned}\eeno

Then we get
\beno
\|J^{\f12}_\e(\wt{p}-p)\|_{H^s}\lesssim\e\|J^{\f12}_\e\z\|_{H^{s+1}}\cdot\|p\|_{H^s}+\e\|\vv V\|_{H^s}\|\theta\|_{H^s}.
\eeno
This is  the first inequality of \eqref{equivalent 2}.

\medskip

{\bf Step 2. Bound of $\|J^{\f12}_\e(\wt\theta-\theta)\|_{\dH^s}$.} Thanks to \eqref{wttheta in case 2} and \eqref{product B2}, we have
\beno\begin{aligned}
&\|J_\e^{-\f12} B^{-1}(D)(\vv V\cdot\na K_\e p)\|_{\dH^s}
\lesssim\|\vv V\|_{H^s}\cdot\|J_\e^{-\f12} B^{-1}(D)\na K_\e p\|_{H^s}
\lesssim\|\vv V\|_{H^s}\|p\|_{H^s},\\
&\|J_\e^{-\f12} B^{-1}(D)(\z \cdot B(D)K_\e\theta)\|_{\dH^s}
\lesssim\|\z\|_{H^s}\cdot\|J_\e^{-\f12}K_\e\theta\|_{H^s}
\lesssim\|\z\|_{H^s}\|\theta\|_{H^s},
\end{aligned}\eeno
which give rise to
\beno
\|J^{\f12}_\e(\wt\theta-\theta)\|_{\dH^s}\lesssim\e\|\z\|_{H^s}\|\theta\|_{H^s}+
\e\|\vv V\|_{H^s}\|p\|_{H^s}
\eeno
This is the second part of \eqref{equivalent 2}.

\medskip

{\bf Step 3. Bound of $\|J_\e^{\f12}\na\wt\theta\|_{H^{s-1}}$.} Using the fact that $B(\xi)\sim|\xi|$, it is easy to get 
\beno\begin{aligned}
\|J_\e^{\f12}\na\wt\theta\|_{L^2}&\lesssim\|J_\e^{\f12}\na\theta\|_{L^2}+\e\|J_\e^{-\f12}\na B^{-1}(D)(\z \cdot B(D)K_\e\theta)\|_{L^2}
+\e\|J_\e^{-\f12}\na B^{-1}(D)(\vv V\cdot \nabla K_\e p)\|_{L^2}\\
&\lesssim\|J_\e^{\f12}\na\theta\|_{L^2}+\e\|\z\|_{L^\infty}\|\na\theta\|_{L^2}+\e\|\vv V\|_{L^\infty}\|\na p\|_{L^2}\\
&\lesssim\|\theta\|_{H^s}+\e\|\z\|_{H^s}\|\theta\|_{H^s}+\e\|\vv V\|_{H^s}\|p\|_{H^s},
\end{aligned}\eeno
from which and the ansatz \eqref{ansatz 2 a}, we deduce that
\beq\label{R 3}
\|J_\e^{\f12}\na\wt\theta\|_{L^2}\lesssim\|p\|_{H^s}+\|\theta\|_{H^s}.
\eeq

Combining \eqref{equivalent 2} and \eqref{R 3}, we obtain \eqref{estimate for wt theta 2} by using \eqref{ansatz 2 a}. It completes the proof of lemma. 
\end{proof}

\subsubsection{Symmetrization of system \eqref{equation of p and theta in case 2}.}
Now, we derive the evolution system for $(\wt{p}, \wt{\theta})$. With $(\wt p,\wt\theta)$, we deduce from \eqref{equation of p and theta in case 2} that
\beq\label{R 1}
J_\e p_t=Y_\e B(D)\wt\theta+\e N_p,
\eeq
and 
\beq\label{R 2}\begin{aligned}
J_\e\theta_t&=-Y_\e B(D)\wt p+\f{\e^2}{6}Y_\e B(D)\bigl(\Gamma_\e(\z,D_x)Y_\e^{-2}B^{-2}(D)\p_x^4p\bigr)+\e N_\theta,
\end{aligned}\eeq
where we used \eqref{def of J Y K} and the fact that $\Gamma_\e(\z, D_x)=\z-\gamma_\e(\z,D_x) $ (see \eqref{identity}) in  \eqref{R 2}. 

{\bf  Evolution equation for $\wt p$. } By the definition of $\wt p$ in \eqref{wtp in case 2}, we have
\beq\label{R 4}
J_\e\wt p_t=\underbrace{J_\e p_t+ \f\e2 \z \cdot K_\e p_t+\e\vv V\cdot \nabla B^{-1}(D)K_\e \theta_t - \frac{\e^2}{6}J_\e\bigl(\gamma_\e(\z,D_x) Y_\e^{-2} B^{-2}(D)\p_x^4 p_t\bigr)}_{I_2}+\e N_{\wt p,1},
\eeq
where 
\beq\label{def of N wtp 1 a}
N_{\wt p,1}\eqdefa\f12 \z_t \cdot K_\e p+\vv V_t\cdot\na B^{-1}(D)K_\e\theta
-\frac{\e^2}{6}J_\e\bigl([\p_t,\gamma_\e(\z,D_x)]Y_\e^{-2} B^{-2}(D)\p_x^4 p\bigr).
\eeq

For $I_2$, using \eqref{R 1}, \eqref{R 2} and the fact $K_\e=J_\e Y^{-1}_\e$, we get 
\beno\begin{aligned}
I_2&=J_\e p_t+ \f\e2 \z \cdot K_\e p_t+\e\vv V\cdot \nabla B^{-1}(D)K_\e \theta_t
- \frac{\e^2}{6}\gamma_\e(\z,D_x) Y_\e^{-2} B^{-2}(D)\p_x^4J_\e p_t\\
&\qquad +\frac{\e^3}{12}[\p_x^2,\gamma_\e(\z,D_x)] Y_\e^{-2} B^{-2}(D)\p_x^4 p_t\\
&=Y_\e B(D)\wt\theta+ \f\e2\z \cdot B(D)\wt\theta
-\e\vv V\cdot\nabla\wt p- \frac{\e^2}{6}\gamma_\e(\z,D_x) Y_\e^{-1} B^{-1}(D)\p_x^4\wt\theta+\e N_{\wt p,2}+\e N_{\wt p,3} ,
\end{aligned}\eeno
where
\beq\label{def of N wtp 2 a}
N_{\wt p,2}\eqdefa\frac{\e^2}{12}[\p_x^2,\gamma_\e(\z,D_x)] Y_\e^{-2} B^{-2}(D)\p_x^4 p_t+\f{\e^2}{6}\vv V\cdot \nabla\bigl(\Gamma_\e(\z,D_x)Y_\e^{-2}B^{-2}(D)\p_x^4 p\bigr),
\eeq
\beq\label{def of N wtp 3 a}
N_{\wt p,3}\eqdefa N_p+\f{\e}{2} \z \cdot Y^{-1}_\e N_p+\e\vv V\cdot\na B^{-1}(D)Y_\e^{-1}N_\theta- \frac{\e^2}{6}\gamma_\e(\z,D_x)Y_\e^{-2} B^{-2}(D)\p_x^4  N_p.
\eeq

Then we deduce from \eqref{R 4} that
\beq\label{eq for wt p 2}\begin{aligned}
&J_\e\wt p_t-\Bigl(Y_\e B(D)+\f\e2\z B(D)
-\frac{\e^2}{6}\gamma_\e(\z,D_x) Y_\e^{-1} B^{-1}(D)\p_x^4\Bigr)\wt\theta=-\e\vv V\cdot\nabla\wt p+\e N_{\wt p},
\end{aligned}\eeq
where $N_{\wt p}=N_{\wt p,1}+N_{\wt p,2}+N_{\wt p,3}$ and $N_{\wt p,1}$, $N_{\wt p,2}$, $N_{\wt p,3}$ are defined in \eqref{def of N wtp 1 a}, \eqref{def of N wtp 2 a} and \eqref{def of N wtp 3 a}.

\smallskip

{\bf  Evolution equation for $\wt\theta$. } By the definition of $\wt\theta$ in \eqref{wttheta in case 2}, we have
\beno\begin{aligned}
J_\e\wt\theta_t=&J_\e\theta_t  + \f\e2B^{-1}(D) \bigl(\z B(D)K_\e\theta_t \bigr) - \e B^{-1}(D) \bigl( \vv V\cdot\na K_\e p_t\bigr)\\
&-\e B^{-1}(D) \bigl( \vv V_t\cdot\na K_\e p\bigr) +\f\e2B^{-1}(D) \bigl(\z_t \cdot B(D)K_\e\theta\bigr)
\end{aligned}\eeno
which yields to
\beq\label{R 5}
J_\e\wt\theta_t=\underbrace{J_\e\theta_t  + \f\e2\z  K_\e\theta_t - \e \vv V\cdot\na K_\e B^{-1}(D) p_t}_{I_3}+\e N_{\wt\theta,1},
\eeq
where 
\beq\label{def of N wttheta 1 a}\begin{aligned}
N_{\wt\theta,1}\eqdefa&-B^{-1}(D) \bigl( \vv V_t\cdot\na K_\e p\bigr) +\f12B^{-1}(D) \bigl( \z_t \cdot B(D)K_\e\theta\bigr)\\
&+\f12[B^{-1}(D),\z]B(D)K_\e\theta_t-[B^{-1}(D),\vv V]\cdot\na K_\e p_t.
\end{aligned}\eeq

For $I_3$, using \eqref{R 1}, \eqref{R 2} and the fact $K_\e=J_\e Y^{-1}_\e$, we get
\beno\begin{aligned}
I_3&=-Y_\e B(D)\wt p-\f\e2\z B(D)\wt p+\f{\e^2}{6}Y_\e B(D)\bigl(\Gamma_\e(\z,D_x)Y_\e^{-2}B^{-2}(D)\p_x^4 p\bigr)\\
&\qquad+\f{\e^3}{12}\z B(D)\bigl(\Gamma_\e(\z,D_x)Y_\e^{-2}B^{-2}(D)\p_x^4 p\bigr)
- \e \vv V\cdot\na\wt\theta+\e N_{\wt\theta,2}\\
\end{aligned}\eeno
where
\beq\label{def of N wttheta 2 a}
N_{\wt\theta,2}\eqdefa N_\theta+\f\e2\z Y^{-1}_\e N_\theta
-\e\vv V\cdot\na B^{-1}(D)Y^{-1}_\e N_p.
\eeq

Since $Y_\e=1-\f\e6\p_x^2$ and $\gamma_\e(\z,D_x)=(1+\f\e2\z Y^{-1}_\e)\Gamma_\e(\z,D_x)$, we have
\beno\begin{aligned}
&Y_\e B(D)\bigl(\Gamma_\e(\z,D_x)f\bigr)
+\f\e2 \z B(D)\bigl(\Gamma_\e(\z,D_x)f\bigr)\\
=&(1+\f\e2\z Y^{-1}_\e)\bigl(\Gamma_\e(\z,D_x)Y_\e B(D)f\bigr)+(1+\f\e2\z Y^{-1}_\e)\bigl([Y_\e B(D),\Gamma_\e(\z,D_x)]f\bigr)\\
=&\gamma_\e(\z,D_x)Y_\e B(D)f+(1+\f\e2\z Y^{-1}_\e)\bigl([Y_\e B(D),\Gamma_\e(\z,D_x)]f\bigr).
\end{aligned}\eeno
Then we get
\beno\begin{aligned}
I_3&=-Y_\e B(D)\wt p-\f\e2\z B(D)\wt p+\f{\e^2}{6}\gamma_\e(\z,D_x)\bigl(Y_\e^{-1}B^{-1}(D)\p_x^4\wt p\bigr)- \e \vv V\cdot\na\wt\theta+\e N_{\wt\theta,2}+\e N_{\wt\theta,3},
\end{aligned}\eeno
where
\beq\label{def of N wttheta 3 a}\begin{aligned}
N_{\wt\theta,3}&\eqdefa -\f{\e}{6}\gamma_\e(\z,D_x)\bigl(Y_\e^{-1}B^{-1}(D)\p_x^4(\wt p-p)\bigr)\\
&\qquad+\f\e6(1+\f\e2\z Y^{-1}_\e)\bigl([Y_\e B(D),\Gamma_\e(\z,D_x)]Y_\e^{-2}B^{-2}(D)\p_x^4 p\bigr).
\end{aligned}\eeq

Hence, we deduce from \eqref{R 5} that
\beq\label{eq for wt theta 2}
J_\e\wt\theta_t+\Bigl(Y_\e B(D)+\f\e2\z B(D)
-\frac{\e^2}{6}\gamma_\e(\z,D_x) Y_\e^{-1} B^{-1}(D)\p_x^4\Bigr)\wt p=-\e\vv V\cdot\nabla\wt\theta+\e N_{\wt\theta},
\eeq
where $N_{\wt\theta}=N_{\wt\theta,1}+N_{\wt\theta,2}+N_{\wt\theta,3}$ and $N_{\wt\theta,1}$, $N_{\wt\theta,2}$, $N_{\wt\theta,3}$ are defined in \eqref{def of N wttheta 1 a}, \eqref{def of N wttheta 2 a} and \eqref{def of N wttheta 3 a}.

Therefore, combining \eqref{eq for wt p 2} and \eqref{eq for wt theta 2}, we obtain a symmetrized evolution system for $(\wt p,\wt\theta)$ as follows:
\beq\label{equations for wt p theta 2}
\left\{\begin{aligned}
&J_\e\wt p_t-\Bigl(Y_\e B(D)+\f\e2\z B(D)
-\frac{\e^2}{6}\gamma_\e(\z,D_x) Y_\e^{-1} B^{-1}(D)\p_x^4\Bigr)\wt\theta=-\e\vv V\cdot\nabla\wt p+\e N_{\wt p},\\
&J_\e\wt\theta_t+\Bigl(Y_\e B(D)+\f\e2\z B(D)
-\frac{\e^2}{6}\gamma_\e(\z,D_x) Y_\e^{-1} B^{-1}(D)\p_x^4\Bigr)\wt p=-\e\vv V\cdot\nabla\wt\theta+\e N_{\wt\theta}.
\end{aligned}\right.\eeq

\begin{lemma}\label{lem for nonlinear term 2 case 2}
Under the assumptions of Lemma \ref{lem for equivalent 2}, there holds
\beq\label{estimate for N wt p theta 2}
\|J_\e^{-\f12}N_{\wt p}\|_{\dH^s}+\|J_\e^{-\f12}N_{\wt\theta}\|_{\dH^s}
\lesssim\|J_\e^{\f12}p\|_{H^s}^2+\|J_\e^{\f12}\theta\|_{H^s}^2.
\eeq
\end{lemma}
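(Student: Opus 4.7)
The plan is to dominate $\|J_\e^{-1/2}N_{\wt p}\|_{\dH^s}$ and $\|J_\e^{-1/2}N_{\wt\theta}\|_{\dH^s}$ by examining term by term the expressions \eqref{def of N wtp 1 a}--\eqref{def of N wtp 3 a} for $N_{\wt p}=N_{\wt p,1}+N_{\wt p,2}+N_{\wt p,3}$ and \eqref{def of N wttheta 1 a}--\eqref{def of N wttheta 3 a} for $N_{\wt\theta}=N_{\wt\theta,1}+N_{\wt\theta,2}+N_{\wt\theta,3}$, and applying the toolbox of product/commutator estimates (Lemmas \ref{product estimates}, \ref{commutator lem}) together with the operator-theoretic bounds for $\gamma_\e(\z,D_x)$ and $\Gamma_\e(\z,D_x)$ (Lemmas \ref{lem for operator gamma}--\ref{lem for commutator gamma a}, Corollary \ref{cor 1}). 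Since each subterm carries at least one factor $\e$, every estimate only needs to produce a $\|J_\e^{\f12}\z\|_{H^{s+1}}$- or $\|\vv V\|_{H^s}$-factor together with $O(\e^{-1/2})$ loss in order to swallow the prefactors using the ansatz \eqref{ansatz 2 a} and close with $\|J_\e^{\f12}p\|_{H^s}^2+\|J_\e^{\f12}\theta\|_{H^s}^2$.

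A preliminary step will be to control the time derivatives: from the original system \eqref{WTB case 4} one gets $\|J_\e^{\f12}\z_t\|_{H^s}+\|J_\e^{\f12}\vv V_t\|_{H^s}\lesssim \|J_\e^{\f12}p\|_{H^s}+\|J_\e^{\f12}\theta\|_{H^s}$ (using Lemma \ref{lem for equivalent 1} and tame estimates on the $O(\e)$ nonlinearities), while from the symmetric system \eqref{R 1}--\eqref{R 2} and Lemma \ref{lem for nonlinear term 1 case 2} one obtains $\|J_\e p_t\|_{H^{s-1}}+\|J_\e\theta_t\|_{H^{s-1}}\lesssim\|J_\e^{\f12}p\|_{H^s}+\|J_\e^{\f12}\theta\|_{H^s}$; the quadratic term $\f{\e^2}{6}\gamma_\e(\z,D_x)Y_\e^{-2}B^{-2}(D)\p_x^4p$ on the right of \eqref{R 2} contributes only $O(\e\|J_\e^{\f12}\z\|_{H^{s+1}}\|J_\e^{\f12}p\|_{H^s})$ by \eqref{operator norm 2}.

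With these reductions, the individual terms are handled as follows. For $N_{\wt p,1}$, the first two terms are straightforward applications of \eqref{product estimate 1} on $J_\e^{-1/2}$-product and the boundedness of $B^{-1}(D)\na K_\e$; the third term is dealt with via the commutator bound \eqref{commutator gamma 1} for $[\p_t,\gamma_\e]$, since $Y_\e^{-2}B^{-2}(D)\p_x^4$ is an $O(\e^{-1})$ bounded map from $J_\e^{\f12}H^s$ to itself. For $N_{\wt p,2}$, use \eqref{commutator gamma 5} on the $[\p_x^2,\gamma_\e]$ term (which costs $\e^{-1/2}$ absorbed by the prefactor $\e^2$ and the $\e^{-1}$ from the $B^{-2}\p_x^4$ block), and a plain product estimate on the convective correction. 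The pieces of $N_{\wt p,3}$ involving $N_p$ and $N_\theta$ are immediate from Lemma \ref{lem for nonlinear term 1 case 2} combined with product bounds and the operator norm \eqref{operator norm 2}. $N_{\wt\theta,1}$ reduces, via \eqref{product B2} and the commutator bounds \eqref{commutator B3a}--\eqref{commutator B3} transferred to $B^{-1}(D)$ (since $B\sim|\xi|$), to the estimates on $\vv V_t,\z_t,p_t,\theta_t$ already obtained. $N_{\wt\theta,2}$ uses Lemma \ref{lem for nonlinear term 1 case 2} again. Finally, $N_{\wt\theta,3}$ is the delicate piece: its first summand uses the $\wt p-p$ bound of \eqref{equivalent 2}, and its second summand is exactly where Corollary \ref{cor 1}, in particular the commutator estimate \eqref{commutator gamma 6} for $[Y_\e B(D),\Gamma_\e(\z,D_x)]$, is tailor-made; with the input $f=Y_\e^{-2}B^{-2}(D)\p_x^4 p$ one checks $\|J_\e^{\f12}f\|_{H^s}+\sqrt{\e}\|\na f\|_{H^s}\lesssim \e^{-1}\|J_\e^{\f12}p\|_{H^s}$, so the $\f\e6$ prefactor closes the estimate.

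The main obstacle I anticipate is precisely the $\gamma_\e$-weighted high-order terms, namely those in $N_{\wt p,1}$, $N_{\wt p,2}$, $N_{\wt p,3}$ and $N_{\wt\theta,3}$ containing $Y_\e^{-2}B^{-2}(D)\p_x^4$. Each application costs an inverse power of $\e$, so the budget is tight and one cannot afford to lose $\e$-powers elsewhere; in particular the commutator estimates \eqref{commutator gamma 5}, \eqref{commutator gamma 6} need to be applied to the correct space (mixing $H^s$, $J_\e^{\pm 1/2}H^s$ norms) to avoid a fatal $\e^{-1}$. The bookkeeping on which operator lives on which side of $\gamma_\e$ and $\Gamma_\e$, and how to move $J_\e^{\pm 1/2}$ through $\gamma_\e(\z,D_x)$ using \eqref{operator norm 2}, will be the heart of the verification, after which summing all bounds and invoking \eqref{ansatz 2 a} to absorb prefactors $\sqrt\e\|J_\e^{\f12}\z\|_{H^{s+1}}+\sqrt\e\|\vv V\|_{H^{s+1}}\leq 1/C_s$ yields the claimed inequality \eqref{estimate for N wt p theta 2}.
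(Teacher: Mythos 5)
Your plan is essentially the paper's own proof: the same term-by-term decomposition of $N_{\wt p}=N_{\wt p,1}+N_{\wt p,2}+N_{\wt p,3}$ and $N_{\wt\theta}=N_{\wt\theta,1}+N_{\wt\theta,2}+N_{\wt\theta,3}$, handled with the same tools — \eqref{product estimate 1}, \eqref{product B2}, \eqref{commutator B2}, the operator bounds \eqref{operator norm 2}, the commutator estimates \eqref{commutator gamma 1}, \eqref{commutator gamma 5}, \eqref{commutator gamma 6}, the difference bound \eqref{equivalent 2}, Lemma \ref{lem for nonlinear term 1 case 2}, and auxiliary bounds on $(\z_t,\vv V_t,p_t,\theta_t)$ absorbed via the ansatz \eqref{ansatz 2 a}. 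The only caveat is your claimed preliminary bound $\|J_\e p_t\|_{H^{s-1}}+\|J_\e\theta_t\|_{H^{s-1}}\lesssim\|J_\e^{\f12}p\|_{H^s}+\|J_\e^{\f12}\theta\|_{H^s}$, which overshoots what the available control on $\wt p,\wt\theta$ gives (an extra factor $J_\e^{\f12}$); the argument only needs, and the paper only proves, the unweighted version $\|p_t\|_{H^{s-1}}+\|\theta_t\|_{H^{s-1}}\lesssim\|J_\e^{\f12}p\|_{H^s}+\|J_\e^{\f12}\theta\|_{H^s}$ as in \eqref{R 8}, since $p_t,\theta_t$ only enter through the bounded operators $K_\e$, $B^{-1}(D)\na$ and the $\e^2$-weighted $\gamma_\e$-term.
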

\begin{proof}
We divide the proof into four steps.

{\bf Step 1. Estimate of $N_{\wt p}$.} We estimate $N_{\wt p,1}$, $N_{\wt p,2}$ and $N_{\wt p,3}$ one by one.

{\it Step 1.1. Estimate of $N_{\wt p,1}$.} For the first two terms of $N_{\wt p,1}$ in \eqref{def of N wtp 1 a}, using \eqref{product estimate 1} and the facts that $B(\xi)\sim|\xi|$ and $K_\e(\xi_1)\sim 1$, we get
\beno
\|J_\e^{-\f12}(\z_t \cdot K_\e p)\|_{\dH^s}\lesssim\|J_\e^{-\f12}\z_t \|_{H^s}
\|J_\e^{-\f12}K_\e p\|_{H^s}\lesssim\|J_\e^{-\f12}\z_t \|_{H^s}\|p\|_{H^s},
\eeno
\beno
\|J_\e^{-\f12}(\vv V_t\cdot\na B^{-1}(D)K_\e\theta)\|_{\dH^s}\lesssim\|J_\e^{-\f12}\vv V_t \|_{H^s}
\|J_\e^{-\f12}\na B^{-1}(D)K_\e\theta\|_{H^s}\lesssim\|J_\e^{-\f12}\vv V_t \|_{H^s}\|\theta\|_{H^s}.
\eeno

For the last term of $N_{\wt p,1}$, using \eqref{commutator gamma 1}, we have
\beno\begin{aligned}
\e^2\bigl\|J_\e^{\f12}\bigl([\p_t,\gamma_\e(\z,D_x)]Y_\e^{-2} B^{-2}(D)\p_x^4 p\bigr)\bigr\|_{H^s}&\lesssim  \|J_\e^{\f12}\z_t \|_{H^s}\cdot\e^2\|J_\e^{\f12}Y_\e^{-2} B^{-2}(D)\p_x^4 p\|_{H^s}\\
&\lesssim\e\|J_\e^{\f12}\z_t \|_{H^s}\|p\|_{H^s}.
\end{aligned}\eeno

Then we obtain 
\beq\label{estimate for N wtp 1 a}
\|J_\e^{-\f12}N_{\wt p,1}\|_{\dH^s}\lesssim\|J_\e^{\f12}\z_t \|_{H^s}\|p\|_{H^s}
+\|J_\e^{-\f12}\vv V_t \|_{H^s}\|\theta\|_{H^s}.
\eeq

\smallskip

{\it Step 1.2. Estimate of $N_{\wt p,2}$.} For the first term of $N_{\wt p,2}$ in \eqref{def of N wtp 2 a}, using \eqref{commutator gamma 5} with $r=s$, one gets
\beno\begin{aligned}
\e^2\bigl\|J_\e^{-\f12}\bigl([\p_x^2,\gamma_\e(\z,D_x)] Y_\e^{-2} B^{-2}(D)\p_x^4 p_t\bigr)\bigr\|_{\dH^s}
&\lesssim \|J_\e^{\f12}\z\|_{H^{s+1}}\cdot\e^\f32\|Y_\e^{-2} B^{-2}(D)\p_x^4 p_t\|_{H^s}\\
&\lesssim\|J_\e^{\f12}\z\|_{H^{s+1}}\|p_t\|_{H^{s-1}},
\end{aligned}\eeno
where one used the fact that $\|f\|_{H^s}\sim\|f\|_{L^2}+\|f\|_{\dH^s}$ in the last inequality.

For the second term of $N_{\wt p,2}$ in \eqref{def of N wtp 2 a}, using \eqref{product estimate 1} and \eqref{operator norm 2}, one has
\beno\begin{aligned}
&\quad\e^2\bigl\|J_\e^{-\f12}\bigl[\vv V\cdot \nabla\bigl(\Gamma_\e(\z,D_x)Y_\e^{-2}B^{-2}(D)\p_x^4 p\bigr)\big]\bigr\|_{\dH^s}\\
&\lesssim\e^2\|J_\e^{-\f12}\vv V\|_{H^s}\bigl\|J_\e^{-\f12}\bigl(\Gamma_\e(\z,D_x)Y_\e^{-2}B^{-2}(D)\p_x^4 p\bigr)\bigr\|_{H^{s+1}}\\
&\lesssim\sqrt\e\|\vv V\|_{H^s}\cdot\|\Gamma_\e(\z,D_x)\|\cdot\e^{\f32}\|J_\e^{-\f12}Y_\e^{-2}B^{-2}(D)\p_x^4 p\|_{H^{s+1}}\\
&\lesssim\sqrt\e\|\vv V\|_{H^s}\cdot\|J_\e^{\f12}\z\|_{H^{s+1}}\cdot\| p\|_{H^s},
\end{aligned}\eeno
which together with the ansatz \eqref{ansatz 2 a} implies
\beno
\e^2\bigl\|J_\e^{-\f12}\bigl[\vv V\cdot \nabla\bigl(\Gamma_\e(\z,D_x)Y_\e^{-2}B^{-2}(D)\p_x^4 p\bigr)\big]\bigr\|_{\dH^s}\lesssim\|J_\e^{\f12}\z\|_{H^{s+1}}\cdot\| p\|_{H^s}.
\eeno

Thus, we obtain
\beq\label{estimate for N wtp 2 a}
\|J_\e^{-\f12}N_{\wt p,2}\|_{\dH^s}\lesssim\|J_\e^{\f12}\z\|_{H^{s+1}}\|p_t\|_{H^{s-1}}+\|J_\e^{\f12}\z\|_{H^{s+1}}\| p\|_{H^s}.
\eeq

\smallskip

{\it Step 1.3. Estimate of $N_{\wt p,3}$.} Using \eqref{product estimate 1}, \eqref{operator norm 1} and \eqref{operator norm 2}, we deduce from \eqref{def of N wtp 3 a} that
\beno\begin{aligned}
\|J_\e^{-\f12}N_{\wt p,3}\|_{\dH^s}&\lesssim\|J_\e^{-\f12} N_p\|_{\dH^s}+\|\f\e2\z Y^{-1}_\e\|\cdot\|J_\e^{-\f12} N_p\|_{H^s}
+\e\|J_\e^{-\f12}\vv V\|_{H^s}\cdot\|J_\e^{-\f12}\na B^{-1}(D)Y_\e^{-1}N_\theta\|_{H^s}\\
&\quad+ \e^2\|\gamma_\e(\z,D_x)\|\cdot\|J_\e^{-\f12}Y_\e^{-2} B^{-2}(D)\p_x^4  N_p\|_{H^s}\\
&\lesssim\|J_\e^{-\f12} N_p\|_{H^s}+\e\|\vv V\|_{H^s}\|J_\e^{-\f12}N_\theta\|_{H^s}+\e\|J_\e^{\f12}\z\|_{H^{s+1}}\|J_\e^{-\f12} N_p\|_{H^s},
\end{aligned}\eeno
which along with the ansatz \eqref{ansatz 2 a} implies
\beq\label{estimate for N wtp 3 a}
\|J_\e^{-\f12}N_{\wt p,3}\|_{\dH^s}\lesssim
\|J_\e^{-\f12}N_p\|_{H^s}+\|J_\e^{-\f12}N_\theta\|_{H^s}.
\eeq

\smallskip

{\it Step 1.4. Estimate of $N_{\wt p}$.} Combining \eqref{estimate for N wtp 1 a},
\eqref{estimate for N wtp 2 a} and \eqref{estimate for N wtp 3 a}, we obtain 
\beq\label{estimate for N wtp a}\begin{aligned}
\|J_\e^{-\f12}N_{\wt p}\|_{\dH^s}\lesssim&
\|J_\e^{\f12}\z_t \|_{H^s}\|p\|_{H^s}
+\|J_\e^{-\f12}\vv V_t \|_{H^s}\|\theta\|_{H^s}+\|J_\e^{\f12}\z\|_{H^{s+1}}\|p_t\|_{H^{s-1}}
\\
&+\|J_\e^{\f12}\z\|_{H^{s+1}}\|p\|_{H^s}+\|J_\e^{-\f12}N_p\|_{H^s}+\|J_\e^{-\f12}N_\theta\|_{H^s}.
\end{aligned}\eeq

\medskip

{\bf Step 2. Estimate of $N_{\wt\theta}$.} We estimate $N_{\wt\theta,1}$, $N_{\wt\theta,2}$ and $N_{\wt\theta,3}$ one by one.

{\it Step 2.1. Estimate of $N_{\wt\theta,1}$.} For the first two terms of  $N_{\wt\theta,1}$ in \eqref{def of N wttheta 1 a}, using \eqref{product B2}, we get
\beno\begin{aligned}
&\|J^{-\f12}_\e B^{-1}(D) \bigl( \vv V_t\cdot\na K_\e p\bigr)\|_{\dH^s}
\lesssim\|\vv V_t\|_{H^s}\|J^{-\f12}_\e B^{-1}(D)\na K_\e p\|_{H^s}
\lesssim\|\vv V_t\|_{H^s}\|p\|_{H^s},\\
&\|J^{-\f12}_\e B^{-1}(D) \bigl( \z_t \cdot B(D)K_\e\theta\bigr)\|_{\dH^s}
\lesssim\|\z_t\|_{H^s}\|J^{-\f12}_\e K_\e\theta\|_{H^s}
\lesssim\|\z_t\|_{H^s}\|\theta\|_{H^s}.
\end{aligned}\eeno

While for the last two terms of $N_{\wt\theta,1}$ in \eqref{def of N wttheta 1 a}, using \eqref{commutator B2}, we have
\beno\begin{aligned}
&\|J^{-\f12}_\e([B^{-1}(D),\z]B(D)K_\e\theta_t)\|_{\dH^s}
\lesssim\|\na \z\|_{H^{s-1}}\|K_\e\theta_t\|_{H^{s-1}}
\lesssim\|\z\|_{H^s}\|\theta_t\|_{H^{s-1}},\\
&\|J^{-\f12}_\e([B^{-1}(D),\vv V]\cdot\na K_\e p_t)\|_{\dH^s}\lesssim\|\na\vv V\|_{H^{s-1}}\|B^{-1}(D)\na K_\e p_t\|_{H^{s-1}}\lesssim\|\vv V\|_{H^s}\|p_t\|_{H^{s-1}}.
\end{aligned}\eeno

Then we obtain 
\beq\label{estimate for N wttheta 1 a}
\|J_\e^{-\f12}N_{\wt\theta,1}\|_{\dH^s}\lesssim
\|\vv V_t\|_{H^s}\|p\|_{H^s}+\|\z_t\|_{H^s}\|\theta\|_{H^s}+\|\z\|_{H^s}\|\theta_t\|_{H^{s-1}}+\|\vv V\|_{H^s}\|p_t\|_{H^{s-1}}.
\eeq

\smallskip

{\it Step 2.2. Estimate of $N_{\wt\theta,2}$.} Using \eqref{product estimate 1}, we deduce from \eqref{def of N wttheta 2 a} that
\beno\begin{aligned}
&\|J_\e^{-\f12}N_{\wt\theta,2}\|_{\dH^s}\lesssim\|J_\e^{-\f12}N_\theta\|_{\dH^s}+\e\|J_\e^{-\f12}(\z Y^{-1}_\e N_\theta)\|_{\dH^s}+
\e\|J_\e^{-\f12}(\vv V\cdot\na B^{-1}(D)Y^{-1}_\e N_p)\|_{\dH^s}\\
&\lesssim\|J_\e^{-\f12}N_\theta\|_{\dH^s}+\e\|J_\e^{-\f12}\z\|_{H^s}\|J_\e^{-\f12}Y^{-1}_\e N_\theta\|_{H^s}+\e\|J_\e^{-\f12}\vv V\|_{H^s}\|J_\e^{-\f12}\na B^{-1}(D)Y^{-1}_\e N_p\|_{H^s}\\
&\lesssim(1+\e\|\z\|_{H^s})\|J_\e^{-\f12}N_\theta\|_{H^s}
+\e\|\vv V\|_{H^s}\|J_\e^{-\f12}N_p\|_{H^s}. 
\end{aligned}\eeno
Thanks to the ansatz \eqref{ansatz 2 a}, we get
\beq\label{estimate for N wttheta 2 a}
\|J_\e^{-\f12}N_{\wt\theta,2}\|_{\dH^s}\lesssim\|J_\e^{-\f12}N_p\|_{H^s}+\|J_\e^{-\f12}N_\theta\|_{H^s}.
\eeq

\smallskip

{\it Step 2.3. Estimate of $N_{\wt\theta,3}$.}  For the first term of $N_{\wt\theta,3}$ in \eqref{def of N wttheta 3 a},  using \eqref{operator norm 2}, we have
\beno\begin{aligned}
\e\|J_\e^{-\f12}\bigl(\gamma_\e(\z,D_x)\bigl(Y_\e^{-1}B^{-1}(D)\p_x^4(\wt p-p)\bigr)\bigr)\|_{H^s}
&\lesssim\|\gamma_\e(\z,D_x)\|\cdot\e\bigl\|J_\e^{-\f12}Y_\e^{-1}B^{-1}(D)\p_x^4(\wt p-p)\bigr\|_{H^s}\\
&\lesssim\|J_\e^{\f12}\z\|_{H^{s+1}}\cdot\e^{-\f12}\|\wt p-p\|_{H^s}.
\end{aligned}\eeno
Using \eqref{equivalent 2} and  \eqref{ansatz 2 a}, we obtain
\beno\begin{aligned}
&\quad\e\|J_\e^{-\f12}\bigl(\gamma_\e(\z,D_x)\bigl(Y_\e^{-1}B^{-1}(D)\p_x^4(\wt p-p)\bigr)\bigr)\|_{H^s}\\
&\lesssim\sqrt\e\|J_\e^{\f12}\z\|_{H^{s+1}}(\|J^{\f12}_\e\z\|_{H^{s+1}}+\|\vv V\|_{H^s})(\|J_\e^{\f12} p\|_{H^s}+\|J_\e^{\f12} \theta\|_{H^s})\\
&\lesssim\|J_\e^{\f12}\z\|_{H^{s+1}}(\|J_\e^{\f12} p\|_{H^s}+\|J_\e^{\f12}\theta\|_{H^s}).
\end{aligned}\eeno

For the second term of $N_{\wt\theta,3}$ in \eqref{def of N wttheta 3 a},
using \eqref{operator norm 1} and \eqref{commutator gamma 6}, we deduce that
\beno\begin{aligned}
&\quad\e\Bigl\|J_\e^{-\f12}\Bigl((1+\f\e2\z Y^{-1}_\e)\bigl([Y_\e B(D),\Gamma_\e(\z,D_x)]Y_\e^{-2}B^{-2}(D)\p_x^4 p\bigr)\Bigr)\Bigr\|_{\dH^s}\\
&\leq\e(1+\|\f\e2\z Y^{-1}_\e\|)\cdot\|J_\e^{-\f12}\bigl([Y_\e B(D),\Gamma_\e(\z,D_x)]Y_\e^{-2}B^{-2}(D)\p_x^4 p\bigr)\|_{H^s}\\
&\lesssim\|J_\e^{\f12}\z\|_{H^{s+1}}\cdot \left(  \e\|J_\e^\f12Y_\e^{-2}B^{-2}(D)\p_x^4 p\|_{H^{s}} + \e^\f32 \|Y_\e^{-2}B^{-2}(D)\p_x^4 p\|_{H^{s+1}} \right)\\
&\lesssim\|J_\e^{\f12}\z\|_{H^{s+1}}\cdot\| p\|_{H^s}.
\end{aligned}\eeno
 
 Thus, we get
\beq\label{estimate for N wttheta 3 a}
\|J_\e^{-\f12}N_{\wt\theta,3}\|_{\dH^s}
\lesssim\|J_\e^{\f12}\z\|_{H^{s+1}}\bigl(\|J_\e^{\f12}p\|_{H^s}+\|J_\e^{\f12}\theta\|_{H^s}\bigr).
\eeq

\smallskip

{\it Step 2.4. Estimate of $N_{\wt\theta}$.} Combining \eqref{estimate for N wttheta 1 a}, \eqref{estimate for N wttheta 2 a} and \eqref{estimate for N wttheta 3 a}, we obtain 
\beq\label{estimate for N wttheta a}\begin{aligned}
\|J_\e^{-\f12}N_{\wt\theta}\|_{\dH^s}\lesssim&\|\vv V_t\|_{H^s}\|p\|_{H^s}+\|\z_t\|_{H^s}\|\theta\|_{H^s}+\|\z\|_{H^s}\|\theta_t\|_{H^{s-1}}+\|\vv V\|_{H^s}\|p_t\|_{H^{s-1}}\\
&\quad+
\|J_\e^{\f12}\z\|_{H^{s+1}}\bigl(\|J_\e^{\f12}p\|_{H^s}+\|J_\e^{\f12}\theta\|_{H^s}\bigr)
+\|J_\e^{-\f12}N_p\|_{H^s}+\|J_\e^{-\f12}N_\theta\|_{H^s}.
\end{aligned}\eeq

\medskip

{\bf Step 3. Estimates of $(\z_t,\vv V_t)$ and $(p_t,\theta_t)$.} We estimate $(\z_t,\vv V_t)$ and $(p_t,\theta_t)$ one by one.

{\it Step 3.1. Estimates of $(\z_t,\vv V_t)$.} Firstly, due to \eqref{WTB case 4}, we have
\beno\begin{aligned}
\vv V_t&=-K^{-1}_\e\na \z-\e J^{-1}_\e\bigl((\vv V\cdot\na)\vv V\bigr)-\f\e2J^{-1}_\e\bigl(\z\na\z\bigr),\\
&\z_t=-v_x-K^{-1}_\e w_y-\e J^{-1}_\e\bigl(\vv V\cdot\na \z+\f12\z\div\vv V\bigr)
\end{aligned}\eeno
which along with \eqref{product estimate 1} gives rise to
\beno\begin{aligned}
\|\vv V_t\|_{H^s}&\lesssim\|\na \z\|_{H^s}+\e\|\vv V\|_{H^s}\|\na\vv V\|_{H^s}
+\e\|\z\|_{H^s}\|\na\z\|_{H^s},\\
\|J^{\f12}_\e\z_t\|_{H^s}&\lesssim\|J^{\f12}_\e\na\vv V\|_{H^s}+\e\|J^{-\f12}_\e(\vv V\cdot\na \z)\|_{H^s}+\e\|J^{-\f12}_\e(\z\div\vv V)\|_{H^s}\\
&\lesssim\|J^{\f12}_\e\na\vv V\|_{H^s}+\e\|J^{-\f12}_\e\vv V\|_{H^s}\|J^{-\f12}_\e\na\z\|_{H^s}+\e\|J^{-\f12}_\e\z\|_{H^s}\|J^{-\f12}_\e\na\vv V\|_{H^s}.
\end{aligned}\eeno
Using the ansatz \eqref{ansatz 2 a}, we get
\beq\label{R 7a}
\|\vv V_t\|_{H^s}+\|J^{\f12}_\e\z_t\|_{H^s}\lesssim\|J^{\f12}_\e\na\vv V\|_{H^s}
+\|\na \z\|_{H^s}.
\eeq

\smallskip

{\it Step 3.2. Estimates of $(p_t,\theta_t)$.} Thanks to \eqref{R 1} and \eqref{R 2}, we have
\beno\begin{aligned}
&\|p_t\|_{H^{s-1}}+\|\theta_t\|_{H^{s-1}}\lesssim\|J^{-1}_\e Y_\e B(D)\wt\theta\|_{H^{s-1}}
+\|J^{-1}_\e Y_\e B(D)\wt p\|_{H^{s-1}}
\\
&\qquad\qquad+\e^2\bigl\|J^{-1}_\e Y_\e B(D)\bigl(\Gamma_\e(\z,D_x)Y_\e^{-2}B^{-2}(D)\p_x^4 p\bigr)\bigr\|_{H^{s-1}}+\e \|J^{-1}_\e N_p\|_{H^{s-1}}+\e\|J^{-1}_\e N_\theta\|_{H^{s-1}}\\
&\lesssim\|\na\wt\theta\|_{H^{s-1}}+\|\wt p\|_{H^s}+\e^2\|\Gamma_\e(\z,D_x)Y_\e^{-2}B^{-2}(D)\p_x^4 p\|_{H^s}+\e \|J^{-\f12}_\e N_p\|_{H^s}+\e\|J^{-\f12}_\e N_\theta\|_{H^s}\\
&\lesssim\|\na\wt\theta\|_{H^{s-1}}+\|\wt p\|_{H^s}+\e^2\|\Gamma_\e(\z,D_x)\|\cdot\|Y_\e^{-2}B^{-2}(D)\p_x^4\wt p\|_{H^s}+\e \|J^{-\f12}_\e N_p\|_{H^s}+\e\|J^{-\f12}_\e N_\theta\|_{H^s},
\end{aligned}\eeno
which along with \eqref{operator norm 2} and  \eqref{estimate for wt theta 2} implies
\beno\begin{aligned}
\|p_t\|_{H^{s-1}}+\|\theta_t\|_{H^{s-1}}&\lesssim\|\na\wt\theta\|_{H^{s-1}}+\|\wt p\|_{H^s}+\e\|J_\e^{\f12}\z\|_{H^{s+1}}\|\wt p\|_{H^s}+\e \|J^{-\f12}_\e N_p\|_{H^s}+\e\|J^{-\f12}_\e N_\theta\|_{H^s}\\
&\lesssim\|J_\e^{\f12}p\|_{H^s}+\|J_\e^{\f12}\theta\|_{H^s}+\e \|J^{-\f12}_\e N_p\|_{H^s}+\e\|J_\e^{-\f12}N_\theta\|_{H^s}.
\end{aligned}\eeno

Using \eqref{estimate for N p theta 2}, \eqref{equivalent 1} and the ansatz \eqref{ansatz 2 a}, we obtain
\beq\label{R 8}
\|p_t\|_{H^{s-1}}+\|\theta_t\|_{H^{s-1}}\lesssim\|J_\e^{\f12}\na\vv V\|_{H^s}+\|J_\e^{\f12}\na\z\|_{H^s}.
\eeq

\medskip

{\bf Step 4. Estimates of $N_{\wt p}$ and $N_{\wt\theta}$.} Combining \eqref{estimate for N wtp a} and \eqref{estimate for N wttheta a}, using \eqref{R 7a}, \eqref{R 8}, \eqref{estimate for N p theta 2} and \eqref{equivalent 1}, we deduce that
\beno
\|J_\e^{-\f12}N_{\wt p}\|_{\dH^s}+\|J_\e^{-\f12}N_{\wt\theta}\|_{\dH^s}
\lesssim\|J_\e^{\f12}p\|_{H^s}^2+\|J_\e^{\f12}\theta\|_{H^s}^2.
\eeno
This is exactly \eqref{estimate for N wt p theta 2}. The lemma is proved.
\end{proof}

\subsection{The proof of Theorem \ref{main theorem} for case 2}		
The proof is similar to that of case 1. It relies on the continuity argument and the {\it a priori} energy estimate.	We only sketch the proof of {\it a priori} energy estimate. 

Firstly, we define the energy functional $E_s(t)$ as
\beq\label{energy functional for case 2}
E_s(t)\eqdefa\|J_\e^{\f12}\z\|_{H^{s+1}}^2+\|J_\e^{\f12}\vv V\|_{H^{s+1}}^2.
\eeq

We assume that
\beq\label{ansatz a 2}
\sup_{t\in[0,T_0/\e]}E_s(t)\leq 2C_0 E_s(0).
\eeq
Here the constants $C_0>1$ and $T_0>0$ are taken as
\beno
C_0=2C_1,\quad T_0=\f{1}{4\sqrt{C_1}C_2E^{\f12}_s(0)},
\eeno
where $C_1,C_2>1$ are the universal constants appearing in the following Proposition \ref{prop for a priori estimate case 2} which is concerning the {\it a priori} energy estimates.

Taking $\e$ sufficiently small, there holds
\beq\label{ansatz b 2}
\sup_{t\in[0,T_0/\e]}\sqrt\e\bigl(\|J_\e^{\f12}\z\|_{H^{s+1}}+\|J_\e^{\f12}\vv V\|_{H^{s+1}}\bigr)\leq\f{1}{C_s}.
\eeq

Under the ansatz \eqref{ansatz a 2} and \eqref{ansatz b 2}, we derive the {\it a priori} energy estimates stated in the following proposition.

\begin{proposition}\label{prop for a priori estimate case 2}
Let $s>3$ and $(\vv V,\z)=(v,w,\z)$ be a smooth enough solution of \eqref{WTB case 4}  satisfying the ansatz \eqref{ansatz a 2} and \eqref{ansatz b 2} for some $T_0>0$. There holds
\beq\label{total energy estimate a}
E_s(t)\leq C_1E_s(0)+C_2\e t\max_{\tau\in[0,t]} E_s(\tau)^{\f32},\quad\forall\, t\in[0,T_0/\e],
\eeq
where $C_1,C_2>1$ are universal constants.
\end{proposition}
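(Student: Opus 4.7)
The plan is to mirror the proof of Proposition~\ref{prop for a priori estimate} for Case~1, splitting $E_s(t)$ into a lower order and a highest order part. I will set
\[E_{s,l}(t)\eqdef\|J_\e^{\f12}\vv V\|_{H^s}^2+\|J_\e^{\f12}\z\|_{H^s}^2,\qquad E_{s,h}(t)\eqdef\|J_\e^{\f12}\na\vv V\|_{H^s}^2+\|J_\e^{\f12}\na\z\|_{H^s}^2,\]
so that $E_s(t)\sim E_{s,l}(t)+E_{s,h}(t)$, and by Lemma~\ref{lem for equivalent 1} also $E_{s,h}(t)\sim\|J_\e^{\f12}p\|_{\dH^s}^2+\|J_\e^{\f12}\theta\|_{\dH^s}^2$. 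For the lower order piece I test the three equations of~\eqref{WTB case 4} against $(J_\e v,J_\e w,J_\e\z)$ in $H^s$; thanks to the curl-free condition $v_y=w_x$ (which converts $\e wv_y$ into $\e ww_x$) and the symmetry of the quadratic nonlinearities, the standard tame product estimate~\eqref{tame} together with Lemma~\ref{product estimates} gives $\f12\f{d}{dt}E_{s,l}(t)\lesssim \e E_s^{\f32}(t)$, with no loss of $y$-derivative.

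For the highest order part I use the symmetrized system~\eqref{equations for wt p theta 2} for the good unknowns $(\wt p,\wt\theta)$ and define $\wt E_{s,h}(t)\eqdef \|J_\e^{\f12}\wt p\|_{\dH^s}^2+\|J_\e^{\f12}\wt\theta\|_{\dH^s}^2$. Lemma~\ref{lem for equivalent 2} together with the ansatz~\eqref{ansatz b 2} shows $|\wt E_{s,h}-E_{s,h}|\lesssim \e E_s^{2}$, so controlling $\wt E_{s,h}$ controls $E_{s,h}$ up to errors that are absorbable in the final Gr\"onwall step. Differentiating $\wt E_{s,h}$ and using~\eqref{equations for wt p theta 2} yields $\f12\f{d}{dt}\wt E_{s,h}=\cS+\cT+\cN$, where $\cS$ collects the inner products of $(\wt p,\wt\theta)$ against the quasilinear operator
\[\cM_\e\eqdef Y_\e B(D)+\f\e2\z B(D)-\f{\e^2}{6}\gamma_\e(\z,D_x)Y_\e^{-1}B^{-1}(D)\p_x^4,\]
$\cT$ is the transport contribution produced by $-\e\vv V\cdot\na$, and $\cN$ is the nonlinear forcing coming from $\e(N_{\wt p},N_{\wt\theta})$.

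The core work is the bound for $\cS$. The skew-symmetric principal part $Y_\e B(D)$ cancels exactly. The semi-classical piece $\f\e2\z B(D)$ leaves two commutators, $[|D|^s,\z]B(D)$ and $[B(D),\z]|D|^s$, which I will handle with~\eqref{commutator D} and~\eqref{commutator B3a}--\eqref{commutator B3}, converting derivatives of $\wt\theta$ or $\wt p$ into $\|J_\e^{\f12}p\|_{H^s}+\|J_\e^{\f12}\theta\|_{H^s}$ via~\eqref{estimate for wt theta 2}. The most delicate piece is the third term of $\cM_\e$: since $\gamma_\e(\z,D_x)$ is $L^2$-self\-adjoint by Lemma~\ref{lem for operator gamma} and $Y_\e^{-1}B^{-1}(D)\p_x^4$ is a Fourier multiplier in $x$, the principal part again cancels in $\cS$, leaving two families of commutators: $[|D|^s,\gamma_\e(\z,D_x)]$, bounded by~\eqref{commutator gamma 0}; and commutators of $\gamma_\e(\z,D_x)$ with the anisotropic symbols $\p_x$, $\p_x^2$, $B(D)$, $B^{-1}(D)$, $B(D)Y_\e$, bounded by Lemma~\ref{lem for commutator gamma}, Lemma~\ref{lem for commutator gamma a} and Corollary~\ref{cor 1}. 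The apparent $\e^{-\f12}$ losses in~\eqref{commutator gamma 3b}--\eqref{commutator gamma 3c} are always paired with the $\e^{\f32}$-smoothing carried by $\e^2 Y_\e^{-2}B^{-2}(D)\p_x^4$ applied to $\wt p$ or $\wt\theta$. The transport $\cT$ is treated by integration by parts and~\eqref{commutator D}, giving $|\cT|\lesssim \e\|\vv V\|_{H^s}(\|\na\wt p\|_{H^{s-1}}^2+\|\na\wt\theta\|_{H^{s-1}}^2)\lesssim \e E_s^{\f32}(t)$ via~\eqref{estimate for wt theta 2}, and $\cN$ is exactly what Lemma~\ref{lem for nonlinear term 2 case 2} bounds by $\e E_s^{\f32}(t)$. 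Adding everything, $\f{d}{dt}\wt E_{s,h}\lesssim \e E_s^{\f32}(t)$; a suitable linear combination with the lower order inequality, time integration on $[0,T_0/\e]$, and the near-equivalence $\wt E_{s,h}\sim E_{s,h}+O(\e E_s^2)$ yield~\eqref{total energy estimate a}.

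The chief obstacle is the bookkeeping around the nonlinear, nonlocal, anisotropic operator $\gamma_\e(\z,D_x)Y_\e^{-1}B^{-1}(D)\p_x^4$ in $\cS$: one must check that every commutator involving $\gamma_\e(\z,D_x)$ is consistent with the anisotropic scaling of $B(D)$ and $Y_\e$, that the $L^2$-selfadjointness of $\gamma_\e(\z,D_x)$ really produces the expected cancellation at the $|D|^s$ level, and that the powers of $\e$ and the orders of $\p_x$ balance so that no $y$-derivative of $\z$ is ever lost. All of this is precisely what Lemmas~\ref{lem for operator gamma}--\ref{lem for commutator gamma a} and Corollary~\ref{cor 1} were constructed to deliver, so although the computation is long, it parallels Case~1 once these commutator tools are in place.
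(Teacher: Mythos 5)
Your highest-order argument is essentially the paper's: the same symmetrized system \eqref{equations for wt p theta 2}, the same functional $\wt E_{s,h}$, the same splitting $\cS+\cT+\cN$, the $L^2$-self-adjointness of $\gamma_\e(\z,D_x)$ to cancel the principal part of the $\e^2$-term, and the commutator toolbox of Lemmas \ref{lem for commutator gamma}--\ref{lem for commutator gamma a} and Corollary \ref{cor 1}; that part of the plan is sound.

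The genuine gap is in your lower-order step. In Case 2 the linear part of \eqref{WTB case 4} is \emph{not} symmetric for the multipliers you propose: the $v$-equation carries $Y_\e\z_x$ with $Y_\e=1-\f\e6\p_x^2$, while the $\z$-equation carries $J_\e v_x$ with $J_\e=1-\f\e2\p_x^2$. Pairing the equations with $(v,w,\z)$ (which is what differentiating your $E_{s,l}=\|J_\e^{\f12}\vv V\|_{H^s}^2+\|J_\e^{\f12}\z\|_{H^s}^2$ amounts to; pairing with $(J_\e v,J_\e w,J_\e\z)$ is inconsistent with that functional and is even worse), the $w$--$\z$ linear terms cancel but the $v$--$\z$ ones leave the residual
\beq\nonumber
\bigl(Y_\e\z_x\,|\,v\bigr)_{H^s}+\bigl(J_\e v_x\,|\,\z\bigr)_{H^s}
=\bigl(v_x\,|\,(J_\e-Y_\e)\z\bigr)_{H^s}
=-\f\e3\bigl(v_x\,|\,\p_x^2\z\bigr)_{H^s},
\eeq
which is quadratic in the unknowns, hence only \emph{linear} in the energy: $\f\e3|(\p_x^2v\,|\,\p_x\z)_{H^s}|\lesssim\sqrt\e\,\|J_\e^{\f12}\p_xv\|_{H^s}\|\p_x\z\|_{H^s}\lesssim\sqrt\e\,E_s(t)$. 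Such a term cannot be absorbed into the cubic right-hand side of \eqref{total energy estimate a}; Gr\"onwall over $[0,T_0/\e]$ would produce a factor $\exp(CT_0/\sqrt\e)$, so your scheme only reaches times $O(\e^{-1/2})$. The fix is exactly the paper's choice of weights: take $E_{s,l}=\|J_\e^{\f12}K_\e^{\f12}v\|_{H^s}^2+\|J_\e^{\f12}w\|_{H^s}^2+\|J_\e^{\f12}\z\|_{H^s}^2$ (equivalent to yours since $K_\e\sim1$) and compute $(J_\e v_t|K_\e v)_{H^s}+(J_\e w_t|w)_{H^s}+(J_\e\z_t|\z)_{H^s}$; since $Y_\e K_\e=J_\e$, the term $(Y_\e\z_x|K_\e v)_{H^s}=(J_\e\z_x|v)_{H^s}$ cancels $(J_\e v_x|\z)_{H^s}$ exactly, leaving only $O(\e)E_s^{3/2}$ nonlinear contributions. (The same phenomenon already occurs in Case 1, where the lower-order multipliers are $(J_\e v,\,w,\,\z)$ rather than uniform ones, which should have signalled that the weights must be adapted to the anisotropic linear operators.)
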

\begin{proof}
We divide the proof into two parts: the {\it lower order} energy estimate in terms of $(v,w,\z)$ and the {\it highest order} energy estimate in terms of $(\wt p,\wt\theta)$.

\smallskip

{\bf Step 1. Energy functionals.} We define the lower order and highest order energy functionals associated to \eqref{WTB case 4} as follows:
\beq\label{lower order functional a}
E_{s,l}(t)\eqdefa\|J^{\f12}_\e K^{\f12}_\e v\|_{H^s}^2+\|J^{\f12}_\e w\|_{H^s}^2
+\|J^{\f12}_\e\z\|_{H^s}^2\sim\|J^{\f12}_\e\vv V\|_{H^s}^2+\|J^{\f12}_\e\z\|_{H^s}^2,
\eeq
\beq\label{highest order functional a}
E_{s,h}(t)\eqdefa\|J^{\f12}_\e\na v\|_{\dH^s}^2+\|J^{\f12}_\e\na w\|_{\dH^s}^2+\|J^{\f12}_\e\na\z\|_{\dH^s}^2\sim\|J^{\f12}_\e p\|_{\dH^s}^2+\|J^{\f12}_\e\theta\|_{\dH^s}^2,
\eeq
where we used \eqref{equivalent 1} in the last equivalence.
Then there holds
\beq\label{equivalent 3}
E_s(t)\sim E_{s,l}(t)+E_{s,h}(t).
\eeq

We also define the highest order energy functional associated to \eqref{equations for wt p theta 2} as
\beno
\wt{E}_{s,h}(t)\eqdefa\|J^{\f12}_\e\wt p\|_{\dH^s}^2+\|J^{\f12}_\e\wt\theta\|_{\dH^s}^2.
\eeno

Due to \eqref{equivalent 2}, there exists $c_1>0$ such that
\beno
\wt{E}_{s,h}(t)\geq\|J^{\f12}_\e p\|_{\dH^s}^2+\|J^{\f12}_\e\theta\|_{\dH^s}^2
-c_1\e^2\bigl(\|J^{\f12}_\e\z\|_{H^s}^2+\|J^{\f12}_\e\vv V\|_{H^s}^2\bigr)\bigl(\|J^{\f12}_\e p\|_{H^s}^2+\|J^{\f12}_\e\theta\|_{H^s}^2\bigr),
\eeno
which along with ansatz \eqref{ansatz b 2} implies
\beno\begin{aligned}
\wt{E}_{s,h}(t)&\geq\|J^{\f12}_\e p\|_{\dH^s}^2+\|J^{\f12}_\e\theta\|_{\dH^s}^2
-c_1\e\bigl(\|J^{\f12}_\e p\|_{H^s}^2+\|J^{\f12}_\e\theta\|_{H^s}^2\bigr)\\
&\geq(1-c_2\e)\bigl(\|J^{\f12}_\e p\|_{\dH^s}^2+\|J^{\f12}_\e\theta\|_{\dH^s}^2\bigr)-c_2\e\bigl(\|J^{\f12}_\e p\|_{L^2}^2+\|J^{\f12}_\e\theta\|_{L^2}^2\bigr),
\end{aligned}\eeno
where $c_2>0$ is an universal constant and the fact that $\|f\|_{H^s}^2\sim\|f\|_{\dH^s}^2+\|f\|_{L^2}^2$ was used in the last inequality.

Thanks to \eqref{equivalent 1}, we have
\beno
\|J^{\f12}_\e p\|_{\dH^s}^2+\|J^{\f12}_\e\theta\|_{\dH^s}^2\sim E_{s,h}(t)
\quad\text{and}\quad
\|J^{\f12}_\e p\|_{L^2}^2+\|J^{\f12}_\e\theta\|_{L^2}^2\lesssim E_{s,l}(t).
\eeno
Then there exist constants $c_3,c_4>0$ such that
\beno
\wt{E}_{s,h}(t)\geq c_3E_{s,h}(t)-c_4E_{s,l}(t).
\eeno

On the other hand, it is easy to check that there exists $c_5>0$ such that
\beno
\wt{E}_{s,h}(t)\leq c_5 E_s(t).
\eeno

Thus, we obtain 
\beq\label{equivalent 4}
c_3E_{s,h}(t)-c_4 E_{s,l}(t)\leq \wt{E}_{s,h}(t)\leq c_5E_{s}(t),\quad\forall\, t\in[0,T_0/\e].
\eeq

{\bf Step 2. The lower order energy estimates.} 
Thanks to \eqref{WTB case 4}, one calculates 
\beno\begin{aligned}
\f12\f{d}{dt}E_{s,l}(t)
&=\bigl(J_\e  v_t\,|\,K_\e v\bigr)_{H^s}+\bigl(J_\e w_t\,|\,w\bigr)_{H^s}+\bigl(J_\e\z_t\,|\,\z\bigr)_{H^s}\\
&=-\e\bigl(\vv V\cdot\na v+\f12\z \z_x\,|\,K_\e v\bigr)_{H^s}-\e\bigl(\vv V\cdot\na w+ \f12\z \z_y\,|\,w\bigr)_{H^s}+\bigl(\vv V\cdot\na\z+ \f12 \z \div\vv V\,|\,\z\bigr)_{H^s}.
\end{aligned}\eeno
 Due to tame estimate \eqref{tame},  we obtain for $s>3$ that
\beno
\f12\f{d}{dt}E_{s,l}(t)\lesssim\e\bigl(\|\vv V\|_{H^s}^2\|\na\vv V\|_{H^s}+\|\z\|_{H^s}\|\na\z\|_{H^s}\|\vv V\|_{H^s}+\|\z\|_{H^s}^2\|\na\vv V\|_{H^s}\bigr).
\eeno
Then by virtue of \eqref{lower order functional a} and \eqref{highest order functional a} and \eqref{equivalent 3}, there exists $c_6>0$ such that
\beq\label{lower order energy a}
\f12\f{d}{dt}E_{s,l}(t)\leq c_6 \e E_s(t)^{\f32}.
\eeq

\smallskip

{\bf Step 3. The highest order energy estimates.} In this step, we derive the highest order energy estimate of \eqref{WTB case 4} via $(\wt p,\wt\theta)$.

{\it Step 3.1. Energy estimate of $(\wt p,\wt\theta)$.} Due to the definition of $\wt{E}_{s,h}(t)$ and system \eqref{equations for wt p theta 2}, one calculates 
\beq\label{R 9}
\f12\f{d}{dt}\wt{E}_{s,h}(t)=\bigl(J_\e\wt p_t\,|\,\wt p\bigr)_{\dH^s}+
\bigl(J_\e\wt\theta_t\,|\,\wt\theta\bigr)_{\dH^s}=\mathcal{S}+\mathcal{T} +\mathcal{N},
\eeq
where 
\beno\begin{aligned}
\mathcal{S} &\eqdefa\Bigl( \bigl(Y_\e B(D)+\f\e2\z B(D)
-\frac{\e^2}{6}\gamma_\e(\z,D_x) Y_\e^{-1} B^{-1}(D)\p_x^4\bigr)\wt\theta \,\Big|\,  \wt{p} \Bigr)_{\dH^s}\\
&\qquad-\Bigl( \bigl(Y_\e B(D)+\f\e2\z B(D)
-\frac{\e^2}{6}\gamma_\e(\z,D_x) Y_\e^{-1} B^{-1}(D)\p_x^4\bigr)\wt p\,\Big|\,  \wt{\theta} \Bigr)_{\dH^s}, \\
\mathcal{T} & \eqdefa -\e \bigl(\vv V\cdot \na\wt{p} \,|\, \wt{p} \bigr)_{\dH^s}-\e \bigl(\vv V\cdot \na \wt\theta \,|\, \wt{\theta} \bigr)_{\dH^s}, \\
\mathcal{N} & \eqdefa \e\bigl(N_{\wt{p}} \,|\, \wt{p} \bigr)_{\dH^s} + \e\bigl( N_{\wt\theta} \,|\, \wt{\theta} \bigr)_{\dH^s}. 
\end{aligned}\eeno

\smallskip

{\it Step 3.2. Estimate of $\cS$.} From the expression of $\cS$, we deduce that
\beno\begin{aligned}
\cS&=\f\e2\bigl\{\bigl(|D|^s(\z B(D)\wt\theta)\,\big|\,|D|^s\wt p\bigr)_{L^2}
-\bigl(|D|^s(\z B(D)\wt p)\,\big|\,|D|^s\wt\theta\bigr)_{L^2}\bigr\}\\
&\quad+\f{\e^2}{6}\bigl\{-\bigl(|D|^s(\gamma_\e(\z,D_x) Y_\e^{-1} B^{-1}(D)\p_x^4\wt\theta)\,\big|\,|D|^s\wt p\bigr)_{L^2}\\
&\qquad
+\bigl(|D|^s(\gamma_\e(\z,D_x) Y_\e^{-1} B^{-1}(D)\p_x^4\wt p)\,\big|\,|D|^s\wt\theta\bigr)_{L^2}\bigr\}\\
&\eqdefa\f\e2\cS_1+\f{\e^2}{6}\cS_2.
\end{aligned}\eeno

{\bf 1). Estimate of $\cS_1$.} Firstly, it is easy to check that 
\beno
\cS_1&=\bigl([|D|^s,\z] B(D)\wt\theta\,\big|\,|D|^s\wt p\bigr)_{L^2}
-\bigl([|D|^s,\z] B(D)\wt p\,\big|\,|D|^s\wt\theta\bigr)_{L^2}-\bigl([B(D),\z]|D|^s\wt\theta\,\big|\,|D|^s\wt p\bigr)_{L^2},
\eeno
which along with \eqref{commutator D} and \eqref{commutator B3a} implies
\beno\begin{aligned}
|\cS_1|&\lesssim\|\z\|_{H^s}\|B(D)\wt\theta\|_{H^{s-1}}\||D|^s\wt p\|_{L^2}
+\|\z\|_{H^s}\|B(D)\wt p\|_{H^{s-1}}\||D|^s\wt\theta\|_{L^2}
\|\z\|_{H^s}\||D|^s\wt\theta\|_{L^2}\||D|^s\wt p\|_{L^2}\\
&\lesssim  \|\z\|_{H^s}\|\wt p\|_{H^s}\|\na\wt\theta\|_{H^{s-1}}.
\end{aligned}\eeno
Thanks to \eqref{estimate for wt theta 2}, \eqref{lower order functional a} and \eqref{equivalent 4}, we obtain
\beq\label{R 10}
|\cS_1|\lesssim E_s(t)^{\f32}.
\eeq

{\bf 2). Estimate of $\cS_2$.}  The first term  of $\cS_2$ can be rewritten as 
\beq\label{R 10a}\begin{aligned}
&\quad-\bigl(|D|^s(\gamma_\e(\z,D_x) Y_\e^{-1} B^{-1}(D)\p_x^4\wt\theta)\,\big|\,|D|^s\wt p\bigr)_{L^2}\\
&=\bigl(\gamma_\e(\z,D_x) Y_\e^{-1} B^{-1}(D)\p_x^3|D|^s\wt\theta)\,\big|\,\p_x|D|^s\wt p\bigr)_{L^2}+\cQ_2(\wt\theta,\wt p),
\end{aligned}\eeq
where 
\beq\label{Q 2}\begin{aligned}
\cQ_2(\wt\theta,\wt p)&=\bigl(|D|^s([\p_x,\gamma_\e(\z,D_x)]Y_\e^{-1} B^{-1}(D)\p_x^3\wt\theta)\,\big|\,|D|^s\wt p\bigr)_{L^2}\\
&\qquad
+\bigl([|D|^s,\gamma_\e(\z,D_x)]Y_\e^{-1} B^{-1}(D)\p_x^3\wt\theta\,\big|\,\p_x|D|^s\wt p\bigr)_{L^2}.
\end{aligned}\eeq

While for the second term of $\cS_2$, there holds
\beno\begin{aligned}
&\quad \bigl(|D|^s(\gamma_\e(\z,D_x) Y_\e^{-1} B^{-1}(D)\p_x^4\wt p)\,\big|\,|D|^s\wt\theta\bigr)_{L^2}\\
&=-\bigl( |D|^s(\gamma_\e(\z,D_x)B^{-1}(D) Y_\e^{-1}\p_x^3\wt p)\,\big|\,\p_x|D|^s\wt\theta\bigr)_{L^2}-\bigl(|D|^s([\p_x,\gamma_\e(\z,D_x)]B^{-1}(D) Y_\e^{-1} \p_x^3\wt p)\,\big|\,|D|^s\wt\theta\bigr)_{L^2}\\ 
&=-\bigl( |D|^s(\gamma_\e(\z,D_x) Y_\e^{-1}\p_x^3\wt p)\,\big|\,B^{-1}(D)\p_x|D|^s\wt\theta\bigr)_{L^2}+\cQ_3(\wt p,\wt\theta),
\end{aligned}\eeno
where 
\beq\label{Q 3}\begin{aligned}
\cQ_3(\wt p,\wt\theta)&=-\bigl(|D|^s([\p_x,\gamma_\e(\z,D_x)]B^{-1}(D) Y_\e^{-1} \p_x^3\wt p)\,\big|\,|D|^s\wt\theta\bigr)_{L^2}\\
&\qquad+\bigl( |D|^s([B^{-1}(D),\gamma_\e(\z,D_x)] Y_\e^{-1}\p_x^3\wt p)\,\big|\,\p_x|D|^s\wt\theta\bigr)_{L^2}.
\end{aligned}\eeq
For term $-\bigl( |D|^s(\gamma_\e(\z,D_x) Y_\e^{-1}\p_x^3\wt p)\,\big|\,B^{-1}(D)\p_x|D|^s\wt\theta\bigr)_{L^2}$, direct calculation shows that
\beno\begin{aligned}
&\quad-\bigl( |D|^s(\gamma_\e(\z,D_x) Y_\e^{-1}\p_x^3\wt p)\,\big|\,B^{-1}(D)\p_x|D|^s\wt\theta\bigr)_{L^2}\\
&=-\bigl(\gamma_\e(\z,D_x) Y_\e^{-1}\p_x|D|^s\wt p\,\big|\,B^{-1}(D)\p_x^3|D|^s\wt\theta\bigr)_{L^2}+\cQ'_4(\wt p, \wt\theta)\\
&=-\bigl(\gamma_\e(\z,D_x)\p_x|D|^s\wt p\,\big|\,Y_\e^{-1}B^{-1}(D)\p_x^3|D|^s\wt\theta\bigr)_{L^2}+\cQ_4(\wt p, \wt\theta)
\end{aligned}\eeno
where
\beq\label{Q 4}\begin{aligned}
\cQ'_4(\wt p, \wt\theta)&=\bigl(|D|^s([\p_x^2,\gamma_\e(\z,D_x)] Y_\e^{-1}\p_x\wt p)\,\big|\,B^{-1}(D)\p_x|D|^s\wt\theta\bigr)_{L^2}\\
&\qquad-\bigl([|D|^s,\gamma_\e(\z,D_x)] Y_\e^{-1}\p_x\wt p)\,\big|\,B^{-1}(D)\p_x^3|D|^s\wt\theta\bigr)_{L^2},\\
\cQ_4(\wt p, \wt\theta)&=\f{\e}{6}\bigl([\p_x^2,\gamma_\e(\z,D_x)] Y_\e^{-1}\p_x|D|^s\wt p\,\big|\,Y_\e^{-1}B^{-1}(D)\p_x^3|D|^s\wt\theta\bigr)_{L^2}+\cQ'_4(\wt p, \wt\theta).
\end{aligned}\eeq

Then we deduce that
\beno\begin{aligned}
&\quad\bigl(|D|^s(\gamma_\e(\z,D_x) Y_\e^{-1} B^{-1}(D)\p_x^4\wt p)\,\big|\,|D|^s\wt\theta\bigr)_{L^2}\\
&=-\bigl(\gamma_\e(\z,D_x)\p_x|D|^s\wt p\,\big|\,Y_\e^{-1}B^{-1}(D)\p_x^3|D|^s\wt\theta\bigr)_{L^2}+\cQ_3(\wt p, \wt\theta)+\cQ_4(\wt p, \wt\theta).
\end{aligned}\eeno
Notice that for any $J^{\f12}_\e\wt p,J^{\f12}_\e\wt\theta\in\dH^s(R^2)$, there holds
\beno
\p_x|D|^s\wt p, \, Y_\e^{-1}B^{-1}(D)\p_x^3|D|^s\wt\theta\in L^2(\R^2).
\eeno
Thanks to Lemma \ref{lem for operator gamma}, we see that $\gamma_\e(\z,D_x)$ is a self-adjoint operator on $L^2(\R^2)$. Thus, using \eqref{adjoint}, we get
\beno\begin{aligned}
&\quad\bigl(|D|^s(\gamma_\e(\z,D_x) Y_\e^{-1} B^{-1}(D)\p_x^4\wt p)\,\big|\,|D|^s\wt\theta\bigr)_{L^2}\\
&=-\bigl(\p_x|D|^s\wt p\,\big|\,\gamma_\e(\z,D_x)Y_\e^{-1}B^{-1}(D)\p_x^3|D|^s\wt\theta\bigr)_{L^2}+\cQ_3(\wt p, \wt\theta)+\cQ_4(\wt p, \wt\theta),
\end{aligned}\eeno
which along with \eqref{R 10a} implies
\beq\label{S 2}
\cS_2=\cQ_2(\wt\theta,\wt p)+\cQ_3(\wt p, \wt\theta)+\cQ_4(\wt p, \wt\theta).
\eeq

Now, we estimate  the r.h.s terms of \eqref{S 2} one by one.

{\bf Estimate of $\cQ_2(\wt\theta,\wt p)$.} Due to {\eqref{commutator gamma X} for $X=J_\e^{-\f12}H^s(\R^2)$} and \eqref{commutator gamma 0}, we deduce from \eqref{Q 2} that
\beno\begin{aligned}
\e|\cQ_2(\wt\theta,\wt p)|&\lesssim\e\|J^{-\f12}_\e([\p_x,\gamma_\e(\z,D_x)]Y_\e^{-1} B^{-1}(D)\p_x^3\wt\theta)\|_{\dH^s}\|J^{\f12}_\e\wt p\|_{\dH^s}\\
&\qquad+\sqrt\e\|J^{\f12}_\e([|D|^s,\gamma_\e(\z,D_x)]Y_\e^{-1} B^{-1}(D)\p_x^3\wt\theta)\|_{L^2}\cdot\sqrt\e\|J^{-\f12}_\e\p_x\wt p\|_{\dH^s}\\
&\lesssim\|J^{\f12}_\e\z\|_{H^{s+1}}\cdot\e\|J^{-\f12}_\e Y_\e^{-1} B^{-1}(D)\p_x^3\wt\theta\|_{H^s}\cdot\|J^{\f12}_\e\wt p\|_{\dH^s}\\
&\qquad 
+\|J^{\f12}_\e\z\|_{H^s}\cdot\sqrt\e\|J^{\f12}_\e Y_\e^{-1} B^{-1}(D)\p_x^3\wt\theta\|_{H^{s-1}}\cdot\|\wt p\|_{\dH^s},
\end{aligned}\eeno
implying
\beq\label{R 11}
\e|\cQ_2(\wt\theta,\wt p)|\lesssim\|\z\|_{H^{s+1}}\|\na\wt\theta\|_{H^{s-1}}\|J^{\f12}_\e\wt p\|_{\dH^s}.
\eeq

{\bf Estimate of $\cQ_3(\wt p, \wt\theta)$.}
Using {\eqref{commutator gamma X} for $X=J_\e^{-\f12}H^s(\R^2)$} and \eqref{commutator gamma 4b}, we derive from \eqref{Q 3} that 
\beno\begin{aligned}
\e|\cQ_3(\wt p,\wt\theta)|&\lesssim 
\|J^{\f12}_\e\z\|_{H^{s+1}}\cdot\e\|J^{-\f12}_\e Y_\e^{-1} B^{-1}(D)\p_x^3\wt p\|_{H^s}\cdot\|J^{\f12}_\e\wt\theta\|_{\dH^s}\\
&\qquad 
+\|J^{\f12}_\e\z\|_{H^{s+1}}\cdot\sqrt\e\||D|^{-1} Y_\e^{-1}\p_x^3\wt p\|_{H^{s-1}}\cdot\sqrt\e\|\p_x\wt\theta\|_{\dH^s},
\end{aligned}\eeno
leading to
\beq\label{R 12}
\e|\cQ_3(\wt p,\wt\theta)|\lesssim\|J^{\f12}_\e\z\|_{H^{s+1}}\|\wt p\|_{H^s}\|J^{\f12}_\e\wt\theta\|_{\dH^s}.
\eeq

{\bf Estimate of $\cQ_4(\wt p, \wt\theta)$.}
By virtue of \eqref{commutator gamma 5} with { $r=0, s$} and \eqref{commutator gamma 0}, we obtain from \eqref{Q 4} that
\beno\begin{aligned}
\e|\cQ_4(\wt p, \wt\theta)|&\lesssim
\e\|J^{-\f12}_\e([\p_x^2,\gamma_\e(\z,D_x)] Y_\e^{-1}\p_x|D|^s\wt p)\|_{L^2}\cdot\e\|J^{\f12}_\e Y_\e^{-1}B^{-1}(D)\p_x^3\wt\theta\|_{\dH^s}\\
&\qquad+\e\|J^{-\f12}_\e([\p_x^2,\gamma_\e(\z,D_x)] Y_\e^{-1}\p_x\wt p)\|_{\dH^s}\cdot\|J^{\f12}_\e B^{-1}(D)\p_x\wt\theta\|_{\dH^s}\\
&\qquad
+\|J^{\f12}_\e([|D|^s,\gamma_\e(\z,D_x)] Y_\e^{-1}\p_x\wt p)\|_{L^2}\cdot\e\|J^{-\f12}_\e B^{-1}(D)\p_x^3\wt\theta\|_{\dH^s}\\
&\lesssim\sqrt{\e}\|J^{\f12}_\e\z\|_{H^{s+1}}\| Y_\e^{-1}\p_x|D|^s\wt p\|_{L^2}\cdot\|J^{\f12}_\e\wt\theta\|_{\dH^s}+\sqrt\e\|J^{\f12}_\e\z\|_{H^{s+1}}\|Y_\e^{-1}\p_x\wt p\|_{H^s}\cdot\|J^{\f12}_\e\wt\theta\|_{\dH^s}\\
&\qquad+\|J^{\f12}_\e\z\|_{H^s}\|J^{\f12}_\e Y_\e^{-1}\p_x\wt p\|_{H^{s-1}}\cdot\|J^{\f12}_\e\wt\theta\|_{\dH^s},
\end{aligned}\eeno
implying
\beq\label{R 13}
\e|\cQ_4(\wt p, \wt\theta)|\lesssim\|J^{\f12}_\e\z\|_{H^{s+1}}\|\wt p\|_{H^s}\|J^{\f12}_\e\wt\theta\|_{\dH^s}.
\eeq

{\bf Estimate of $\cS_2$.} Thanks to \eqref{R 11}, \eqref{R 12} and \eqref{R 13}, we get 
\beno
\e|\cS_2|\lesssim\|J^{\f12}_\e\z\|_{H^{s+1}}\|J^{\f12}_\e\wt p\|_{\dH^s}\|J^{\f12}_\e\na\wt\theta\|_{H^{s-1}},
\eeno
which along with \eqref{estimate for wt theta 2}, \eqref{lower order functional a} and \eqref{equivalent 4} implies
\beq\label{R 14}
|\cS_2|\lesssim E_{s}(t)^{\f32}.
\eeq

{\bf 3). Estimate for $\cS$.} Combining \eqref{R 10} and \eqref{R 14}, we obtain
\beq\label{estimate for S a}
|\cS|\lesssim\e E_{s}(t)^{\f32}.
\eeq

\smallskip

{\it Step 3.3. Estimate of $\cT$.} For the first term of $\cT$, it is easy to check that
\beno\begin{aligned}
\bigl(\vv V\cdot \na\wt{p} \,|\, \wt{p} \bigr)_{\dH^s}
&=\bigl([|D|^s,\vv V]\cdot \na\wt{p} \,|\, |D|^s\wt{p} \bigr)_{L^2}
+\bigl(\vv V\cdot \na |D|^s\wt{p} \,|\, |D|^s\wt{p} \bigr)_{L^2}\\
&=\bigl([|D|^s,\vv V]\cdot \na\wt{p} \,|\, |D|^s\wt{p} \bigr)_{L^2}
-\f12\bigl(\div\vv V |D|^s\wt{p} \,|\, |D|^s\wt{p} \bigr)_{L^2},
\end{aligned}\eeno
which along with \eqref{commutator D} and Sobolev embedding theorem yields to
\beq\label{R 15}
|\bigl(\vv V\cdot \na\wt{p} \,|\, \wt{p} \bigr)_{\dH^s}|
\lesssim\|\vv V\|_{H^s}\|\na\wt p\|_{H^{s-1}}^2.
\eeq
Similar estimate holds for the last term $\bigl(\vv V\cdot \na\wt\theta \,|\, \wt\theta \bigr)_{\dH^s}$. Then we obtain 
\beno
|\cT|\lesssim\e\|\vv V\|_{H^s}(\|\na\wt p\|_{H^{s-1}}^2+\|\na\wt\theta\|_{H^{s-1}}
^2)\lesssim\e\|\vv V\|_{H^s}(\|\wt p\|_{H^s}^2+\|\na\wt\theta\|_{H^{s-1}}
^2),
\eeno
which together with \eqref{estimate for wt theta 2}, \eqref{lower order functional a} and \eqref{equivalent 4} implies
\beq\label{estimate for T a}
|\cT|\lesssim\e E_{s}(t)^{\f32}.
\eeq

\smallskip

{\it Step 3.4. Estimate of $\cN$.} For $\cN$, it is easy to check that
\beno
|\cN|\lesssim\e\|J^{-\f12}_\e N_{\wt{p}}\|_{\dH^s}\|J^{\f12}_\e\wt{p}\|_{\dH^s} +\e\|J^{-\f12}_\e N_{\wt\theta}\|_{\dH^s}\|J^{\f12}_\e\wt\theta\|_{\dH^s},
\eeno
which along with \eqref{estimate for N wt p theta 2} and \eqref{estimate for wt theta 2} implies
\beno
|\cN|\lesssim\e\bigl(\|J^{\f12}_\e p\|_{H^s}+\|J^{\f12}_\e\theta\|_{H^s}\bigr)^3.
\eeno
Due to \eqref{lower order functional a} and \eqref{highest order functional a}, we get
\beq\label{estimate for N a}
|\cN|\lesssim\e E_{s}(t)^{\f32}.
\eeq

\smallskip

{\it Step 3.5. Estimate of $(\wt p,\wt\theta)$.} Combining \eqref{estimate for S a}, \eqref{estimate for T a} and \eqref{estimate for N a}, we deduce from \eqref{R 9} that
\beq\label{highest order energy a}
\f12\f{d}{dt}\wt{E}_{s,h}(t)\lesssim\e E_{s}(t)^{\f32}.
\eeq

\smallskip

{\bf Step 3. The total energy estimate.} Thanks to \eqref{lower order energy a} and \eqref{highest order energy a}, we arrive at
\beno
\f12\f{d}{dt}\bigl(2c_4E_{s,l}(t)+\wt{E}_{s,h}(t)\bigr)\lesssim \e E_s(t)^{\f32},
\eeno
leading to
\beno
2c_4E_{s,l}(t)+\wt{E}_{s,h}(t)\lesssim E_{s,l}(0)+\wt{E}_{s,h}(0)+\e t\max_{\tau\in[0,\f{T_0}{\e}]}E_s(\tau)^{\f32},\quad\forall\, t\in[0,T_0/\e].
\eeno
By virtue of \eqref{equivalent 3}, \eqref{equivalent 4}, \eqref{lower order functional a} and \eqref{highest order functional a}, there exist $C_1,C_2>1$ such that
\beno
E_s(t)\leq C_1 E_s(0)+C_2\e t\max_{\tau\in[0,T_0/\e]}E_s(\tau)^{\f32},\quad\forall\, t\in[0,T_0/\e].
\eeno
This is the desired total  energy estimate \eqref{total energy estimate a}. The proposition is proved.
\end{proof}
\begin{remark}
By virtue of \eqref{total energy estimate a}, taking $C_0=2C_1$ and $T_0=\f{1}{4\sqrt{C_1}C_2E^{\f12}_s(0)}$, one improves the ansatz \eqref{ansatz a 2} by replacing the constant $2C_0$ to $C_0$. Then taking $\e$ sufficiently small, one could improve the ansatz \eqref{ansatz b 2} by replacing the constant $\f{1}{C_s}$ to $\f{1}{2C_s}$. Therefore, the continuity argument is closed and Theorem \ref{main theorem} for case 2 is proven.
\end{remark}

	\section{Final remarks}
This paper is concerned with {\it long time existence}, that is an   important non trivial step  (in order to justify the models on the correct time scales) between local and global existence. The question of global well-posedness versus finite type blow-up is open for weakly transverse Boussinesq systems as well as the possible global existence of solutions  for small initial data (see \cite{KMPP, KM} for results to this last issue for the  isotropic "abcd" Boussinesq systems).

			\vspace{0.5cm}
			
			\noindent {\bf Acknowledgments.}  The work of the second  author was partially  supported by the ANR project ANuI (ANR-17-CE40-0035-02).
			The work
			of the first and the third authors were partially supported by NSF of China under grants 12171019.

		\end{document}